\newcommand{\Log}{\mathop{\mathrm{Log}}\nolimits}
\newcommand{\splitting}{\mathcal{M}}
\newcommand{\arr}{\longrightarrow}
\newcommand{\R}{\mathbb{R}}
\newcommand{\C}{\mathbb{C}}
\newcommand{\Z}{\mathbb{Z}}
\newcommand{\N}{\mathbb{N}}
\newcommand{\X}{\mathcal{X}}
\newcommand{\wt}{\widetilde}
\newcommand{\hide}[1]{}
\newtheorem{theorem}{Theorem}[section]
\newtheorem{proposition}[theorem]{Proposition}
\newtheorem{lemma}[theorem]{Lemma}
\theoremstyle{definition}
\newtheorem{defi}[theorem]{Definition}
\newtheorem{question}[theorem]{Question}
\newtheorem*{remark}{Remark}
\title[Locally connected Smale spaces]{Locally connected Smale spaces, pinched spectrum, and infra-nilmanifolds}
\author{Volodymyr Nekrashevych}
\thanks{Support by the Institut Mittag-Leffler (Djursholm, Sweden) and
  NSF grants DMS1006280 and DMS1709480 is gratefully acknowledged}
\begin{document}
\maketitle

\begin{abstract}
We show that if $(\X, f)$ is a locally connected Smale space (e.g., a basic set of an Axiom-A diffeomorphism) such that the local product
structure on $\X$ can be lifted by a covering with virtually nilpotent
group of deck transformations to a global direct product, then $(\X,
f)$ is topologically conjugate to a hyperbolic infra-nilmanifold
automorphism. We use this result to give a generalization to Smale
spaces of a theorem of M.~Brin and A.~Manning on Anosov
diffeomorphisms with pinched spectrum, and to show that every locally
connected codimension one Smale space is topologically conjugate to a
hyperbolic automorphism of a torus.
\end{abstract}

\tableofcontents

\section{Introduction}

Smale spaces were introduced by D.~Ruelle (see~\cite{ruelle:therm}) as generalizations
of Anosov diffeomorphisms and basic sets of Axiom-A diffeomorphisms. They were also extensively studied
before as ``spaces with hyperbolic canonical coordinates'' by
R.~Bowen~\cite{bowen:markovpartition,bowen:periodicmeasures}.

A Smale space is a compact
metric space $\X$ with a homeomorphism $f:\X\arr\X$ such that there
exists a local direct product structure on $\X$ with respect to which
$f$ is expanding in one and contracting in the other direction.
For example, every Anosov diffeomorphism
of a compact manifold is a Smale space. Restrictions of Axiom-A homemorphisms to the basic sets are also examples of Smale spaces. For more on Smale spaces, see~\cite{ruelle:therm,putnam,putnam:smalespaces}.

Smale spaces are classical objects of the theory of dynamical
systems, but many basic questions about them (and even about
Anosov diffeomorphisms) remain to be open.

For example, it seems that the following question is open.
\begin{question}
\label{quest}
Is it true that if $(\X, f)$ is a Smale space such that $\X$ is
connected and locally connected, then $(\X, f)$ is topologically
conjugate to an Anosov diffeomorphism?
\end{question}

Many well studied examples of Smale spaces are such that
one or both of the factors of the local direct product
structure are totally disconnected, e.g., the shifts of finite type, the Smale solenoid
(see~\cite[Section 1.9]{brin:book}), Williams attractors, etc..  See more 
examples in~\cite{williams:attractors,anderputn}. See also~\cite{wieler},
where it is proved that all such Smale spaces are inverse 
limits of iterations of one self-map, i.e., are natural
generalizations of solenoids.

Note that a question similar to Question~\ref{quest} for expanding
maps has a positive answer.
Namely, the following theorem is proved in
\cite[Theorem~6.1.6]{nek:book} and
\cite[Theorem~5.9]{nek:models} using Gromov's theorem on groups of polynomial growth~\cite{gro:gr}.
\begin{theorem}
\label{th:expanding}
Let $f:\X\arr\X$ be a self-covering map of a locally connected and
connected compact metric
space. Suppose that there exists a covering map $\pi:\wt\X\arr\X$ such
that $f$ can be lifted by $\pi$ to an expanding homeomorphism of
$\wt\X$. (This is true, for example, if $f:\X\arr\X$ is locally
expanding and $\X$ is semi-locally simply connected.) Then $(\X, f)$ is
topologically conjugate to an expanding infra-nilmanifold endomorphism.
\end{theorem}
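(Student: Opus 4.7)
My plan is to use the expansion of $\wt f$ together with the finite topological degree of $f$ to prove that the deck group $\Gamma$ of $\pi$ has polynomial growth, invoke Gromov's theorem to deduce that $\Gamma$ is virtually nilpotent, and then appeal to the existing classification in the virtually nilpotent case.

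First I set up the deck action. Let $\Gamma$ be the group of deck transformations of $\pi:\wt\X\arr\X$; since $\X$ is connected, locally connected and compact, $\Gamma$ acts properly discontinuously with compact fundamental domain. The intertwining $\pi\circ\wt f=f\circ\pi$ shows that $\phi(\gamma):=\wt f\gamma\wt f^{-1}$ lies in $\Gamma$ for every $\gamma$, defining an injective endomorphism $\phi:\Gamma\arr\Gamma$. A covering-space fibre-counting argument identifies the index $[\Gamma:\phi(\Gamma)]$ with the topological degree $d$ of $f$.

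The central step is the polynomial growth estimate. Fix $x_0\in\wt\X$ and equip $\Gamma$ with a \v{S}varc--Milnor word metric, so that word length is comparable to $d_{\wt\X}(x_0,\gamma x_0)$. Since $\wt f$ expands small distances by a factor $\lambda>1$, the identity $\phi(\gamma)\wt f(x_0)=\wt f(\gamma x_0)$ forces $\phi$ to expand word length asymptotically by $\lambda$; equivalently, $\phi^{-1}$ contracts the word length of elements of the index-$d$ subgroup $\phi(\Gamma)$ by $1/\lambda$. By pigeonhole, $B_\Gamma(r)$ meets some coset of $\phi(\Gamma)$ in at least $|B_\Gamma(r)|/d$ elements; translating by a coset representative (which adds a bounded constant to word length) and applying $\phi^{-1}$ produces that many distinct elements inside $B_\Gamma(r/\lambda+C)$, whence
\[
|B_\Gamma(r)|\le d\cdot|B_\Gamma(r/\lambda+C)|.
\]
Iterating about $\log r/\log\lambda$ times gives $|B_\Gamma(r)|=O(r^{\log d/\log\lambda})$, so Gromov's theorem~\cite{gro:gr} forces $\Gamma$ to be virtually nilpotent.

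With $\Gamma$ virtually nilpotent, the conclusion is supplied by \cite[Theorem~6.1.6]{nek:book} (equivalently \cite[Theorem~5.9]{nek:models}), which identifies expanding self-coverings of locally connected compact metric spaces whose expanding lift has virtually nilpotent deck group as expanding infra-nilmanifold endomorphisms; I would just need to verify that the hypotheses there match the present setup. I expect the polynomial growth step to be where the real work lies, since one must make the quasi-isometry between $\Gamma$ and $\wt\X$ sharp enough that the additive constant $C$ does not spoil the iterated recursion; local connectedness of $\X$ is what makes this possible, because it guarantees that sufficiently small open sets in $\X$ are evenly covered, so the orbit metric on $\Gamma$ is uniformly controlled at all scales.
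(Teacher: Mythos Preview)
The paper does not itself prove Theorem~\ref{th:expanding}; it is stated in the introduction as a known result, with the proof attributed to \cite{gro:gr}, \cite[Theorem~6.1.6]{nek:book}, and \cite[Theorem~5.9]{nek:models}. Your outline follows the standard route those sources take, so structurally there is nothing independent in the paper to compare against.

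There are, however, two gaps in your sketch. First, the recursion $|B_\Gamma(r)|\le d\,|B_\Gamma(r/\lambda+C)|$ requires $\phi$ to expand word length by a factor strictly bigger than one, but the \v{S}varc--Milnor comparison carries a \emph{multiplicative} constant $L$ as well as the additive $C$ you mention: from the global metric inequality $d_{\wt\X}(\wt f(a),\wt f(b))\ge\lambda\,d_{\wt\X}(a,b)$ one only obtains $|\phi(\gamma)|\ge(\lambda/L^{2})|\gamma|-C'$, which need not exceed $|\gamma|$. The cure is to pass to a high power $f^{n}$ so that $\lambda^{n}/L^{2}>1$, run the recursion for $\phi^{n}$ and index $d^{n}$, and observe that the resulting growth exponent tends to $\log d/\log\lambda$ as $n\to\infty$. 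Second, your final appeal is circular: by the paper's own Remark following Theorem~\ref{th:codimone} in the introduction, \cite[Theorem~6.1.6]{nek:book} \emph{is} Theorem~\ref{th:expanding}, not a separate ``virtually nilpotent deck group $\Rightarrow$ infra-nilmanifold'' lemma you can invoke after Gromov. That classification step is substantial in its own right --- it parallels the work of Section~\ref{s:nilpotentsplitting} here for Smale spaces (build an equivariant map to the Malcev completion, then upgrade it to a conjugacy) --- and must either be carried out or cited from a source that genuinely takes virtual nilpotence as a hypothesis. (You also tacitly assume $\pi$ is a regular covering so that $\wt\X/\Gamma=\X$ and the index computation goes through; this must be arranged.)
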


Here an \emph{infra-nilmanifold endomorphism} is a map
$\phi:G\backslash L\arr G\backslash L$, where $L$ is a simply
connected nilpotent Lie group, $G$ is a subgroup of the affine group
$\mathop{\mathrm{Aut}}(L)\ltimes L$ acting on $L$ freely, properly,
and co-compactly, and $\phi$ is induced by an automorphism $\Phi:L\arr
L$. If the automorphism $\Phi$ is expanding, i.e., if all eigenvalues
of $D\Phi$ have absolute value greater than one, then we say that the
corresponding endomorphism $\phi$ is \emph{expanding}. If $D\Phi$ has no
eigenvalues of absolute value one, then we say that $\phi$ is \emph{hyperbolic}.

Note, that the case when $f:\X\arr\X$ in Theorem~\ref{th:expanding} is
an expanding endomorphism of a Riemannian manifold, is a result of
M.~Gromov~\cite{gro:gr} (based on results of M.~Shub~\cite{shub2}).

All known examples of Anosov diffeomorphisms, and hence apparently all
known examples of locally connected Smale spaces are hyperbolic automorphisms
of infra-nilmanifolds. See~\cite{Smale:diffdynsyst}, and Problem~3 in
the additional list of problems in~\cite{smale:problems}. It was
proved by A.~Manning in~\cite{manning:nilmanifolds} that every Anosov
diffeomorphism of an infra-nilmanifold is topologically conjugate to
a hyperbolic automorphism of an infra-nilmanifold. Another
result in this direction is a theorem of J.~Franks~\cite{franks} and
S.E.~Newhouse~\cite{newhouse:codimone} stating that if $(\X, f)$ is an Anosov
diffeomorphism such that stable or unstable manifolds of $\X$ are
one-dimensional, then $(\X, f)$ is topologically conjugate to a hyperbolic
linear automorphism of the torus $\R^n/\Z^n$.

One of the main obstacles for proving that every Anosov
diffeomorphism is an automorphism of an infra-nilmanifold
is showing that the foliations of $\X$ into stable and unstable
manifolds when lifted to the universal covering $\wt\X$ of $\X$ come from a
direct product decomposition of $\wt\X$.

\begin{defi}
We say that a Smale space $(\X, f)$ is \emph{splittable} if there
exists a covering map $\pi:\splitting\arr\X$ and a direct product decomposition
$\splitting=A\times B$ of $\splitting$ such that $\pi$ maps plaques $\{a\}\times B$
and $A\times\{b\}$ of the direct product decomposition of $\splitting$
bijectively to stable and unstable leaves of $\X$.
\end{defi}

Here a \emph{stable} (resp.\ \emph{unstable}) \emph{leaf} of $\X$ is
the equivalence class
with respect to the equivalence relation $\lim_{n\to+\infty}d(f^n(x),
f^n(y))=0$ (resp.\ $\lim_{n\to-\infty}d(f^n(x), f^n(y))$).

If $\X$ is locally connected and connected, then every splitting is a
Galois covering with a finitely generated group of deck
transformations, see Proposition~\ref{pr:finitegeneration}.

One of the main results of our paper is the following
theorem.

\begin{theorem}
\label{th:main}
Let $(\X, f)$ be a Smale space such that $\X$ is connected and locally
connected. Suppose that it has a splitting with a virtually nilpotent
group of deck transformations. Then $(\X, f)$ is topologically
conjugate to a hyperbolic infra-nilmanifold automorphism.
\end{theorem}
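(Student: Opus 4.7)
The plan is to lift the dynamics to the splitting as a product, use Mal'cev rigidity on the virtually nilpotent deck group to produce a candidate hyperbolic infra-nilmanifold automorphism, and then build the required topological conjugacy by applying Theorem~\ref{th:expanding} to the expanding and contracting factors separately.

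First, I would analyze the lifted dynamics. Lift $f$ to $\tilde f:\splitting\to\splitting$. Because $f$ preserves the partitions into stable and unstable leaves, and these are exactly the images under $\pi$ of the plaques $\{a\}\times B$ and $A\times\{b\}$, the lift $\tilde f$ must permute both families of plaques and therefore has product form $\tilde f(a,b)=(\phi(a),\psi(b))$ with $\phi:A\to A$ expanding and $\psi:B\to B$ contracting. The same argument shows that every deck transformation $g\in G$ acts by a product $g=(g_A,g_B)$, giving compatible homomorphisms $G\to\mathrm{Homeo}(A)$ and $G\to\mathrm{Homeo}(B)$, and conjugation by $\tilde f$ defines an automorphism $\alpha$ of $G$ satisfying $\phi\circ g_A=\alpha(g)_A\circ\phi$ and the mirror relation for $B$.

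Next, I would exploit virtual nilpotence to extract the algebraic model. After passing to a finite cover of $\X$ and replacing $G$ by an $\alpha$-invariant torsion-free nilpotent finite-index subgroup, Mal'cev's rigidity theorem embeds $G$ as a cocompact lattice in a simply connected nilpotent Lie group $L$ and extends $\alpha$ to a unique Lie group automorphism $\Phi$ of $L$. The expansion and contraction of the two factor actions, intertwined with $\alpha$, force $\Phi$ to have no eigenvalues on the unit circle, so $L=L^+\cdot L^-$ is a diffeomorphic product of the expanding and contracting subgroups of $\Phi$ and $\Phi$ descends to a hyperbolic infra-nilmanifold automorphism of $L/G$, our target.

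The final step is to construct a $G$-equivariant homeomorphism $F:\splitting\to L$ sending $\tilde f$ to $\Phi$ and $A\times B$ to $L^+\cdot L^-$; descending through $G$ then yields the required conjugacy of $(\X,f)$ with $(L/G,\bar\Phi)$. I would obtain $F$ by identifying $(A,\phi)$ with $(L^+,\Phi|_{L^+})$ equivariantly through Theorem~\ref{th:expanding}, and symmetrically $(B,\psi^{-1})$ with $(L^-,\Phi^{-1}|_{L^-})$. The main obstacle is manufacturing a compact, connected, locally connected base on which Theorem~\ref{th:expanding} can be applied: the image of $G$ in $\mathrm{Homeo}(A)$ does not generally act cocompactly on $A$ (already for a hyperbolic toral automorphism it is a dense subgroup of the expanding eigenspace), so the required base must be extracted from the Smale structure on $\X$ itself, for example by collapsing the contracting direction inside a carefully chosen Markov-type domain. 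Verifying that this quotient is locally connected, that $\phi$ descends to a self-covering of it, and that the two identifications coming from the expanding and contracting sides assemble into a single $G$-equivariant homeomorphism form the technical heart of the argument.
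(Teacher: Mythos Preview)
Your plan has the right algebraic skeleton, but the final step---identifying $(A,\phi)$ with $(L^+,\Phi|_{L^+})$ via Theorem~\ref{th:expanding}---cannot be carried out, for exactly the reason you flag: the projected action of $G$ on $A$ is neither free nor cocompact, so there is no compact locally connected base to which Theorem~\ref{th:expanding} applies. Your suggested workaround (collapse the contracting direction of a Markov-type domain) is not a plan so much as a restatement of the difficulty. Even if such a quotient $Q$ were a well-behaved compact space with an expanding self-cover, Theorem~\ref{th:expanding} would only identify $Q$ with an infra-nilmanifold; it would not produce a $G$-equivariant homeomorphism $A\to L^+$, because the deck group of the relevant cover of $Q$ is not $G$ but an entirely different object (essentially a Ruelle groupoid), and there is no mechanism in sight for transferring equivariance from one to the other. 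A second gap is the assertion that expansion and contraction on the factors ``force $\Phi$ to have no eigenvalues on the unit circle'': expansion of $\phi$ on the \emph{space} $A$ says nothing a priori about eigenvalues of $D\Phi$ on the Lie algebra of $L$---establishing that implication is a substantial part of the theorem, not a preliminary observation. You also do not address why $f$ has a fixed point, which is needed for $\phi$ to be a genuine automorphism of $G$ rather than defined only up to inner automorphisms.

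The paper's route avoids the factor-by-factor difficulty altogether. Hyperbolicity of $\Phi$ and the existence of a fixed point are proved together by induction on the nilpotency class: one shows directly that $\phi|_{Z(G)}$ is hyperbolic, extends the $Z(G)\cong\Z^d$-action on $\splitting$ to a continuous $\R^d$-action compatible with the product structure, and verifies that $\R^d\backslash\splitting\to(\R^d/\Z^d)\backslash\X$ is again a splitting of a locally connected Smale space with deck group $G/Z(G)$. With hyperbolicity in hand, a Franks-type fixed-point argument in the space of continuous maps $\X\to L$ produces a continuous $G$-equivariant map $h:\splitting\to L$ intertwining $F$ and $\Phi$; crucially, $h$ is not shown to be a homeomorphism. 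The conjugacy is then deduced from a separate rigidity theorem (Theorem~\ref{th:equivariance}), proved by realizing each stable plaque as the boundary of a Gromov-hyperbolic graph built purely from $(G,\phi)$ and a coarse plaque. That rigidity statement is the genuine replacement for the step where your argument stalls.
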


Note that we do not assume in Theorem~\ref{th:main}  that
$\X$ is even locally simply connected. On the other hand, when restricted to the class of Anosov diffeomorphisms, it is a weaker statement than the result of~\cite{manning:nilmanifolds}. It is not clear what should be the statement generalizing A.~Manning's result in the class of locally connected Smale spaces.

M.~Brin in~\cite{brin:dan1,brin:dan2} gave a ``pinching'' condition on the Mather
spectrum of an Anosov diffeomorphism $(\X, f)$
(i.e., spectrum of the operator induced by $f$ on the Banach space of
vector fields on $\X$) ensuring that $(\X, f)$ has a splitting with a
virtually nilpotent group of deck transformations. In the case of
Anosov diffeomorphisms the
splitting map $\pi:\splitting\arr\X$ is necessarily the universal covering map,
so that the group of deck transformations is the fundamental group of
$\X$. M.~Brin and A.~Manning proved then in~\cite{brinmanning} that
all Anosov diffeomorphisms satisfying the Brin's pinching condition
are hyperbolic automorphisms of infra-nilmanifolds.

We generalize the results of M.~Brin and A.~Manning. Of course, we can
not use the original pinching condition, since we do not have vector
fields on Smale spaces. We find, however, a purely topological
condition, which follows from Brin's condition in the case of Anosov
diffeomorphism. In fact, we even improve the Brin's spectral
pinching condition for Anosov diffeomorphisms.

Here is an informal description of our condition. 
Consider a finite covering $\mathcal{R}$ of $\X$ by sufficiently small open
rectangles (i.e., such that their diameters are smaller than the expansivity constant). The covering
will induce coverings of the stable and unstable leaves by the
plaques of the elements of $\mathcal{R}$.
Define, for an stable leaf $V$ and $x,
y\in V$, the combinatorial distance
$d_{\mathcal{R}}(x, y)$ equal to the smallest length
$m$ of a chain $x\in R_0, R_1, \ldots, R_m\ni y$, $R_i\cap
R_{i+1}\ne\emptyset$, of plaques of the elements of $\mathcal{R}$ (which can be infinite).
Then $d_{\mathcal{R}}(f^{-n}(x), f^{-n}(y))$ grows exponentially for $x\ne y$, if it is finite. We say
that $\alpha_0>0$ and $\alpha_1>0$ are \emph{stable lower} and \emph{upper}
\emph{exponents} if there exists $C>1$ such that
\[C^{-1}e^{\alpha_0 n}\le d_{\mathcal{R}}(f^{-n}(x), f^{-n}(y))\le
Ce^{\alpha_1 n}\]
for all stably equivalent $x, y$ such that
the distance between $x$ and $y$ inside their stable
leaf belongs to some fixed interval $[\epsilon_1, \epsilon_2]$ for
$0<\epsilon_1<\epsilon_2$. \emph{Stable upper and lower critical
  exponents} are the infimum and the supremum of all stable upper and
lower exponents, respectively. We prove that
the stable critical exponents are uniquely
determined by the topological conjugacy class of the Smale space and
are positive and finite (if the Smale space is locally connected and connected). The unstable
upper and lower critical exponents are defined in the similar way (they
are stable upper and lower critical exponents of $(\X, f^{-1})$). For
more details, see Sections~\ref{s:lowerexponents}
and~\ref{s:upperexponents}.

Note that if the stable or unstable leafs of $(\X, f)$ are not locally
connected, then the corresponding upper exponents are infinite, since
the metric $d_{\mathcal{R}}(x, y)$ will be infinite for some $x, y$
(as we assume that the covering $\mathcal{R}$ consists of small rectangles).

\begin{theorem}
\label{th:brin}
Suppose that $(\X, f)$ is a Smale space such that $\X$ is connected
and locally connected. Let $a_0, a_1, b_0, b_1$ be the stable lower
and upper, and the unstable lower and upper critical exponents,
respectively. If \[\frac{a_0}{a_1}+\frac{b_0}{b_1}>1\]
then $(\X, f)$ is topologically conjugate to a hyperbolic
infra-nilmanifold automorphism.
\end{theorem}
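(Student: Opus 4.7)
The plan is to deduce Theorem~\ref{th:brin} from Theorem~\ref{th:main}. For this, one must build from the pinching hypothesis a splitting $\pi:\splitting\arr\X$ whose group $\Gamma$ of deck transformations is virtually nilpotent. By Proposition~\ref{pr:finitegeneration} the group $\Gamma$ is finitely generated, so by Gromov's theorem on groups of polynomial growth it is enough to prove that $\Gamma$ has polynomial growth, and the pinching hypothesis $\frac{a_0}{a_1}+\frac{b_0}{b_1}>1$ should enter precisely at that step.

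I would first produce a splitting $\pi:\splitting\arr\X$ with $\splitting=A\times B$ using the general Smale-space splitting machinery developed earlier in the paper. A lift $\tilde f$ of $f$ to $\splitting$ preserves the product foliations, and conjugation $\gamma\mapsto\tilde f\gamma\tilde f^{-1}$ produces an automorphism $\phi$ of $\Gamma$. A finite rectangle cover $\mathcal{R}$ lifts to covers of the two factors and induces combinatorial displacement functions $|\gamma|_A$ and $|\gamma|_B$ on $\Gamma$. By the very definition of the critical exponents, for $\gamma$ in a suitable shell of displacements one has
\[C^{-1}e^{a_0 n}|\gamma|_B\le|\phi^n(\gamma)|_B\le Ce^{a_1 n}|\gamma|_B,\qquad C^{-1}e^{-b_1 n}|\gamma|_A\le|\phi^n(\gamma)|_A\le Ce^{-b_0 n}|\gamma|_A,\]
so $\phi$ stretches $B$-displacements by factors in $[e^{a_0},e^{a_1}]$ and contracts $A$-displacements by factors in $[e^{-b_1},e^{-b_0}]$, up to bounded constants.

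The core of the argument should be a Brin--Manning-style box count. Fix a finite symmetric generating set $S\subset\Gamma$ of elements with bounded $A$- and $B$-displacement. Any element of word length $\le n$ then has $|\gamma|_A,|\gamma|_B=O(n)$; conversely, iterated conjugates $\phi^{k}(s)$, $s\in S$, $|k|\le K$, fill a box of $(A,B)$-displacements with sides $e^{b_1 K}$ and $e^{a_1 K}$, and an arbitrary group element of word length $n$ can be written as a product of such iterated conjugates with $K$ and the number of factors suitably balanced against $n$. Counting distinct elements in such a box, using that $\gamma\mapsto(|\gamma|_A,|\gamma|_B)$ has uniformly bounded-to-one fibers on shells, and then optimizing $K$ against $n$, yields a growth bound of the form $\exp(n^{\sigma})$ whose exponent $\sigma$ is a piecewise-linear function of the ratios $a_0/a_1$ and $b_0/b_1$. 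An arithmetic check shows that the threshold between $\sigma>0$ and $\sigma\le 0$ is exactly $\frac{a_0}{a_1}+\frac{b_0}{b_1}=1$, so the strict pinching inequality forces polynomial growth; Gromov's theorem and Theorem~\ref{th:main} then finish the proof.

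The main obstacle will be this box-count. The critical exponents are only asymptotic, so promoting them to the honest multiplicative estimates needed to iterate conjugation by $\phi$ requires uniform control of the combinatorial-to-intrinsic length comparison on both factors. In addition, $\tilde f$ in general does not split as a product map on $A\times B$, so the cross-coupling between the two factors has to be shown to be of lower order than the exponential rates of contraction and expansion. Finally, the bounded multiplicity of $\gamma\mapsto(|\gamma|_A,|\gamma|_B)$ on shells must be established uniformly, which is where local connectedness of $\X$ and the discreteness of the $\Gamma$-action on $\splitting$ enter essentially. Once these quantitative foundations are laid, the specific combination $\frac{a_0}{a_1}+\frac{b_0}{b_1}$ should emerge naturally as the polynomial/exponential growth threshold.
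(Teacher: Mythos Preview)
Your overall architecture is correct and matches the paper: construct a splitting, show the deck group has polynomial growth, apply Gromov's theorem, and finish with Theorem~\ref{th:main}. However, there is a substantial gap in the first step and a mismatch in the second.

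\textbf{The splitting is not free.} You write that you will ``produce a splitting $\pi:\splitting\arr\X$ \ldots\ using the general Smale-space splitting machinery developed earlier in the paper.'' No such general machinery exists: Section~\ref{s:splitting} studies splittings that are \emph{given}, it does not construct them. The existence of a splitting for a locally connected Smale space is not automatic, and producing one is exactly the content of Theorem~\ref{th:developable}. Its proof is a nontrivial continuation argument in which the pinching condition $\frac{a_0}{a_1}+\frac{b_0}{b_1}>1$ is used for the first time and in an essential way (it is equivalent to $(\alpha_1-\alpha_0)(\beta_1-\beta_0)<\alpha_0\beta_0$, which is precisely what makes the inductive continuation close up). So pinching enters \emph{twice}, once to build the splitting and once for the growth bound, not only in the second step as your plan suggests.

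\textbf{The growth argument in the paper is different.} The paper does carry out a Brin-style estimate, but not via displacement functions $|\gamma|_A$, $|\gamma|_B$ and $\phi$-conjugates as you outline. Instead, for a chain of rectangles of combinatorial length $r$ in $\splitting$ it bounds the stable and unstable diameters $D_+,D_-$ against one another using the upper and lower exponents, obtains $D_\pm\le C r^{p_\pm}$ with exponents $p_\pm$ finite exactly when $\frac{a_0}{a_1}+\frac{b_0}{b_1}>1$, and then converts this geometric bound into a counting bound using the SRB measures of Theorem~\ref{th:SRBmeasures}, which are holonomy-invariant and hence $G$-invariant on $\splitting$. Your proposed route via ``bounded-to-one fibers of $\gamma\mapsto(|\gamma|_A,|\gamma|_B)$ on shells'' is not established anywhere and is unlikely to hold in the form you need; the SRB measure substitutes for precisely this kind of counting lemma. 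Also, the target bound is polynomial $|B(r)|\le Cr^d$, not of the form $\exp(n^\sigma)$.
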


We show that the Brin's pinching condition on the Mather spectrum of an
Anosov diffeomorphism implies our condition on the critical
exponents. 

As another application of Theorem~\ref{th:main}, we show that the theorem
of J.~Franks and S.E.~Newhouse on co-dimension one Anosov diffeomorphisms is true for all locally connected Smale spaces.

\begin{theorem}
\label{th:codimone}
Let $(\X, f)$ be a Smale space such that $\X$ is connected and locally
connected, and either stable or unstable leaves of $(\X, f)$ are
homeomorphic (with respect to their intrinsic topology)
to $\R$. Then $(\X, f)$ is topologically conjugate to a hyperbolic
linear automorphism of a torus $\R^n/\Z^n$.
\end{theorem}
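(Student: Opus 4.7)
Replacing $f$ by $f^{-1}$ if necessary, I may assume that every unstable equivalence class of $(\X, f)$ is homeomorphic to $\R$. My plan is to verify the pinching hypothesis of Theorem~\ref{th:brin} and then invoke J.~Franks' theorem~\cite{franks} to upgrade the resulting infra-nilmanifold automorphism to a toral one.

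The heart of the argument is to show that one-dimensionality of the unstable leaves forces the unstable upper and lower critical exponents to coincide, $b_0 = b_1$. Fix a finite rectangular covering $\mathcal{R}$ of $\X$ and endow each unstable leaf $V$ with an intrinsic metric $d_V$ compatible with $V \cong \R$. Because $\X$ is compact and $\mathcal{R}$ is finite, a connected union of $m$ unstable plaques of $\mathcal{R}$ has intrinsic length comparable to $m$, with multiplicative constants that do not depend on the leaf. This yields a uniform bi-Lipschitz comparison $d_{\mathcal{R}} \asymp d_V$ on every unstable leaf. Applied to $f^n(x)$ and $f^n(y)$, the comparison forces any valid upper exponent to be automatically a lower exponent, so $b_0 = b_1$. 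Combined with $0 < a_0 \le a_1 < \infty$ (as recorded in Sections~\ref{s:lowerexponents} and~\ref{s:upperexponents}) this gives $a_0/a_1 + b_0/b_1 = a_0/a_1 + 1 > 1$, and Theorem~\ref{th:brin} produces a topological conjugacy of $(\X, f)$ with a hyperbolic automorphism $\phi$ of an infra-nilmanifold $G \backslash L$.

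Since $G \backslash L$ is a closed smooth manifold and topological conjugacy preserves the topological dimension of stable and unstable leaves, $\phi$ is a smooth Anosov diffeomorphism with one-dimensional unstable distribution. J.~Franks' theorem on codimension one Anosov diffeomorphisms then conjugates $\phi$ with a hyperbolic linear automorphism of a torus $\R^n/\Z^n$, and composing this conjugacy with the one from Theorem~\ref{th:brin} finishes the proof. The main obstacle is the equality $b_0 = b_1$: in dimension greater than one, different directions inside an unstable leaf can expand at genuinely different exponential rates and $b_0 < b_1$ does occur, so the argument must truly exploit the one-dimensional hypothesis to pin down $d_{\mathcal{R}}$ as bi-Lipschitz equivalent to a single intrinsic leaf metric, uniformly across all leaves; this uniform plaque-counting estimate on a one-dimensional leaf is where the main technical care will be required.
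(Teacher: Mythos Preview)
Your overall strategy---show that on the one-dimensional side the upper and lower critical exponents coincide, deduce pinched spectrum, and apply Theorem~\ref{th:brin}---matches the paper's. But there is a genuine gap in your argument for $b_0=b_1$, and your final step diverges from the paper's in a way worth flagging.

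On the gap: the phrase ``intrinsic metric $d_V$ compatible with $V\cong\R$'' is doing too much work. A uniform comparison $d_{\mathcal R}\asymp d_V$ across all leaves requires that plaques have uniformly bounded $d_V$-length, which in turn needs $d_V$ to be holonomy-invariant (or at least uniformly comparable under holonomy); an arbitrary identification $V\cong\R$ does not give this. More seriously, even granting such a comparison, applying it at $f^n(x),f^n(y)$ only yields $d_{\mathcal R}(f^n(x),f^n(y))\asymp d_{V_n}(f^n(x),f^n(y))$, which merely transfers the growth-rate question to the right-hand side; you still need $n\mapsto d_{V_n}(f^n(x),f^n(y))$ to grow at a \emph{single} exponential rate, and nothing in your setup forces that. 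The paper supplies both missing ingredients via the SRB measure of Theorem~\ref{th:SRBmeasures}: setting $d_V(x,y)=\mu((x,y))$ gives a metric that is holonomy-invariant (so plaques of a fixed rectangle have the same length in every leaf) and satisfies $d_{f(V)}(f(x),f(y))=e^{\pm\eta}d_V(x,y)$ exactly. With these two properties the plaque-counting argument becomes rigorous and pins both critical exponents to the entropy $\eta$.

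On the final step: the paper deliberately avoids Franks' codimension-one theorem (see the Remark in the introduction, which notes that some results in~\cite{franks} rest on a flawed lemma). Instead, with the splitting already in hand, it observes that the deck group $G$ acts on a one-dimensional plaque $W\cong\R$ by $\mu$-preserving homeomorphisms, hence by maps of the form $x\mapsto\pm x+a$; since the action on plaques is free, these are all translations, so $G$ is torsion-free abelian, and Theorem~\ref{th:virtuallyabelian} (rather than the general infra-nilmanifold theorem followed by Franks) finishes directly. Your route via Franks is shorter on paper but less self-contained; the paper's argument keeps everything internal to its own framework.
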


Here intrinsic topology of a leaf is the direct limit topology coming
from decomposition of a leaf into the union of plaques of rectangles
of $\X$.

Theorem~\ref{th:codimone}, for example, rules out basic sets of Axiom-A diffeomorphisms such that the stable leaves are homeomophic to $\R$, while the unstable leaves are locally connected but not homeomorphic to manifolds (e.g., are locally homeomorphic to the Sierpinski carpet). 

\begin{remark}
A more general notion of an endomorphism of an infra-nilmanifold is
discussed in~\cite{dekimpe:whatis,dekimpe:shouldbe}. It is also
noted there that some of the results of~\cite{franks} and~\cite{shub2} are
based on a false result. The proof of Theorem~\ref{th:main} 
shows that it is enough to
consider the narrower notion of an automorphism of an
infra-nilmanifold in the classification of Smale spaces and Anosov
diffeomorphisms up to topological conjugacy. We do not use the 
results of~\cite{franks} (except for his proof of Theorem~2.2, which
we repeat for our setting). 
The results of~\cite{shub2} are not used in the proof of
Theorem~\ref{th:expanding}, where also the narrower notion of an
endomorphism of an infra-nilmanifold is used,
see~\cite[Theorem~6.1.6]{nek:book}.
\end{remark}

\subsection*{Structure of the paper}

In Section~\ref{s:Smalespaces}, we collect basic facts and definitions
related to Smale spaces, and fix the related notations.

We study lower exponents of a Smale
space, and a family of metrics associated with lower exponents in
Section~\ref{s:lowerexponents}.
We also recall there properties of the SRB measures on leaves of Smale
spaces.

Locally connected Smale spaces are studied in
Section~\ref{s:upperexponents}.
We show that the following conditions for a Smale space $(\X,
f)$ are equivalent (see Theorem~\ref{th:connectedness}):
\begin{enumerate}
\item The space $\X$ is locally connected.
\item All stable and unstable leaves of $\X$ are locally connected.
\item All stable and unstable leaves of $\X$ are connected.
\item $\X$ has finite stable and unstable upper exponents.
\end{enumerate}

In Section~\ref{s:splitting} we study splittings of locally connected 
Smale spaces. We show that for any splitting $\pi:\splitting\arr\X$ of a
locally connected Smale space there exists a well defined group of
deck transformations $G$,
that $G$ is finitely generated, and that there exists a lift $F:\splitting\arr\splitting$ of
$f$ to $\splitting$, which is unique up to compositions with elements of $G$.

The lift $F$ defines then, for any point $x_0\in\splitting$, an automorphism
$\phi$ of $G$ by the rule $F(g(x_0))=\phi(g)(F(x_0))$.

\begin{question}
Does the pair $(G, \phi)$ uniquely determine the topological conjugacy
class of $(\X, f)$?
\end{question} 

We do not know the answer to this question, but we show that we can
reconstruct $(\X, f)$ after adding an extra piece of information to $(G,
\phi)$.

\begin{defi}
Let $\pi:\splitting\arr\X$ be a splitting of a locally connected and
connected Smale space $(\X, f)$. Let $W_+$ and $W_-$ be stable and
unstable plaques of a fixed point $x_0$ of a lift of $f$. We say that
$\Sigma_+, \Sigma_-\subset G$ are \emph{coarse stable and unstable
  plaques} if the Hausdorff distances between $\Sigma_+(x_0)$ and
$W_+$ and between $\Sigma_-(x_0)$ and $W_-$ are finite.
\end{defi}

Here the distance in $\splitting$ is measured with respect to a $G$-invariant
metric.

\begin{theorem}
\label{th:conjugacycriterionintro}
The quadruple $(G, \phi, \Sigma_+, \Sigma_-)$ uniquely determines the
topological conjugacy class of $(\X, f)$.
\end{theorem}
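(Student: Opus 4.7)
The plan is to prove the theorem by constructing a topological conjugacy between any two Smale spaces that share the same quadruple. Suppose $(\X, f)$ and $(\X', f')$ are locally connected Smale spaces whose splittings $\pi:\splitting\arr\X$ and $\pi':\splitting'\arr\X'$ yield isomorphic quadruples $(G,\phi,\Sigma_+,\Sigma_-)$. Pick lifts $F:\splitting\arr\splitting$ and $F':\splitting'\arr\splitting'$ with fixed points $x_0\in\splitting$ and $x_0'\in\splitting'$ such that the induced automorphisms of $G$ agree with the given $\phi$. Then the map $h_0:G\cdot x_0\arr G\cdot x_0'$ defined by $g\cdot x_0\mapsto g\cdot x_0'$ is a $G$-equivariant bijection of orbits, and it intertwines $F$ and $F'$ because both restrict to the action of $\phi$ on the group. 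The entire proof is devoted to extending $h_0$ continuously and equivariantly to a homeomorphism $h:\splitting\arr\splitting'$ and then descending it to $\X$.

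The central idea is to read off the stable and unstable plaques of every orbit point from the coarse data. Since $\Sigma_+$ has finite Hausdorff distance to the true stable plaque $W_+$ of $x_0$ (with respect to a $G$-invariant metric), the translate $g\cdot \Sigma_+$ determines the stable plaque at $g\cdot x_0$ up to a bounded error. The corresponding statement holds in $\splitting'$ with the same subset $\Sigma_+$ of $G$. Thus the combinatorial pattern ``which orbit points lie within bounded distance of the stable plaque through $g\cdot x_0$'' is the same in both $\splitting$ and $\splitting'$. The analogous assertion holds for unstable plaques via $\Sigma_-$. Because the splittings give global direct product decompositions, any point of $\splitting$ is the unique transverse intersection of a stable and an unstable leaf, and the coarse data described above pins down which pair of leaves this is.

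To turn this coarse matching into an actual homeomorphism I would proceed in three steps. First, extend $h_0$ to the stable plaque $W_+$ of $x_0$: for any $y\in W_+$, choose sequences $g_n\in G$ with $g_n\cdot x_0\to y$ along $W_+$, using that $G\cdot x_0$ is coarsely dense in the stable plaque (because $\Sigma_+$ is Hausdorff close to $W_+$); set $h(y)$ to be the corresponding limit of $g_n\cdot x_0'$ along $W_+'$. To show that this limit exists and is independent of the sequence I would invoke the contraction of $F$ on stable plaques: applying $F^N$ concentrates $y$ and its approximants inside an arbitrarily small neighborhood of $x_0$, so uniform continuity of $F^{-N}$ on the corresponding neighborhood in $\splitting'$ yields a Cauchy property, and expansiveness of $F$ produces injectivity. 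The identical construction with $F^{-1}$ extends $h$ to the unstable plaque $W_-$. Combining these two extensions through the local product structure defines $h$ on a rectangle around $x_0$; $G$-equivariance then propagates it to all of $\splitting$, and $F$-equivariance propagates it through iterates, yielding a globally defined equivariant homeomorphism. Finally, $h$ descends to a homeomorphism $\bar h:\X\arr\X'$ with $\bar h\circ f=f'\circ \bar h$.

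The main obstacle is the fine-to-coarse conversion in the extension step: a priori the datum $\Sigma_+$ only locates $W_+$ up to Hausdorff distance, so identifying individual points on a plaque requires more than just coarse coincidence. The resolution I anticipate is precisely the hyperbolic refinement described above—pushing forward by $F^n$ exponentially shrinks a bounded set, so coarse agreement of plaques after many forward (or backward) iterations becomes fine agreement of their preimages. Making this quantitative, showing it gives a well-defined homeomorphism rather than merely a bijection, and checking that the extension is compatible with both $G$ and $F$ simultaneously will be the technical heart of the argument; the remaining bookkeeping—taking quotients by $G$ and checking that $\bar h$ conjugates $f$ to $f'$—then follows formally.
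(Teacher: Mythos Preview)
Your overall strategy---recover the plaques $W_\pm$ together with the actions of $F$ and $G$ on them from the quadruple, then reassemble the Smale space as the product---is exactly the reduction the paper makes. The paper also observes that $(\X,f)$ is determined by the four dynamical systems $(W_+,F)$, $(W_+,G)$, $(W_-,F)$, $(W_-,G)$, and that it suffices to reconstruct these from $(G,\phi,\Sigma_\pm)$.

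Where your proposal breaks down is the extension step. You write ``choose sequences $g_n\in G$ with $g_n\cdot x_0\to y$ along $W_+$,'' but $G\cdot x_0$ is only \emph{coarsely} dense near $W_+$, not dense, so no such convergent sequence exists in general (typically $G\cdot x_0\cap W_+$ is just $\{x_0\}$). Your proposed fix, ``applying $F^N$ concentrates $y$ and its approximants inside an arbitrarily small neighborhood of $x_0$,'' is not correct either: $F^N$ contracts only the stable direction, so if $g\cdot x_0$ is at bounded $d_\splitting$-distance from $y\in W_+$, then $F^N(g\cdot x_0)=\phi^N(g)\cdot x_0$ moves \emph{away} from $F^N(y)$ in the unstable direction. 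A single approximant plus forward iteration does not convert coarse data to fine data; you need to track the coarse location at every scale simultaneously.

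The paper supplies exactly this missing mechanism. It encodes a point $y\in W_+$ not by one approximating orbit point but by a path $(g_n,n)_{n\ge 0}$ in a graph $\Xi$ with vertex set (essentially) $\Sigma_+\times\Z$, where $g_n$ records the coarse position of $F^{-n}(y)$ and consecutive vertices are joined when $g_n^{-1}\phi(g_{n+1})$ lies in a fixed finite set $S\subset G$. The paper proves $\Xi$ is Gromov hyperbolic and that $\partial\Xi\setminus\{\omega\}$ is canonically homeomorphic to $W_+$; under this identification $F$ becomes the level shift $(g,n)\mapsto(g,n+1)$ and $g\in G$ acts by $(g_n,n)\mapsto(\phi^{-n}(g)g_n,n)$ for large $n$. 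Since the graph $\Xi$ and these two actions on it are defined purely from $(G,\phi,\Sigma_+)$, the system $(W_+,F,G)$ is recovered. Hyperbolicity of $\Xi$ is precisely what makes the multi-scale encoding a homeomorphism rather than merely a coarse correspondence, and it is the technical substitute for the limit argument you were reaching for.
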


We prove Theorem~\ref{th:conjugacycriterionintro} by representing
$W_+$ and $W_-$ as boundaries of Gromov hyperbolic graphs constructed
using the quadruple. These graphs are quasi-isometric to Cayley
graphs of the \emph{Ruelle groupoids} associated with the Smale
space. A general theory of Cayley graphs of \emph{hyperbolic
  groupoids} is developed in~\cite{nek:hyperbolic}. We hope that these new techniques will be helpful in future studies of hyperbolic dynamics.

We get the following corollary of
Theorem~\ref{th:conjugacycriterionintro}.

\begin{theorem}
\label{th:semiconjintro}
Let $(\X_i, f_i)$ for $i=1, 2$ be connected and locally connected
Smale spaces. Let $\pi_i:\splitting_i\arr\X_i$ be splittings, and let
$F_i:\splitting_i\arr\splitting_i$ be lifts of $f_i$. Suppose that
$F_i$ have fixed points, and that the groups of deck transformations
of $\pi_i$ are both isomorphic to a group $G$. If there exists a continuous
map $\Phi:\splitting_1\arr\splitting_2$ such that $\Phi(g(x))=g(\Phi(x))$ and
$\Phi(F_1(x))=F_2(\Phi(x))$ for all $x\in\splitting_1$ and $g\in G$,
then the Smale spaces $(\X_1, f_1)$ and $(\X_2, f_2)$ are
topologically conjugate.
\end{theorem}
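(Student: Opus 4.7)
The plan is to apply Theorem~\ref{th:conjugacycriterionintro}: once the quadruples $(G,\phi_i,\Sigma_+^i,\Sigma_-^i)$ attached to the two Smale spaces are shown to coincide under the given identification of the deck groups, topological conjugacy follows at once. The hypothesis on $\Phi$ is most naturally read as providing a $G$-equivariant continuous map $\splitting_1\arr\splitting_2$ intertwining $F_1$ and $F_2$ (the displayed relations involve $x\in\splitting_1$), and I will treat it as such.

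First I would verify that the two automorphisms of $G$ agree. Let $p_1$ be the given fixed point of $F_1$; by intertwining, $p_2:=\Phi(p_1)$ is a fixed point of $F_2$. Combining equivariance, intertwining, and the defining relation $F_i(g(p_i))=\phi_i(g)(p_i)$ yields, for every $g\in G$,
\[ \phi_2(g)(p_2)=F_2(g(p_2))=F_2(\Phi(g(p_1)))=\Phi(F_1(g(p_1)))=\Phi(\phi_1(g)(p_1))=\phi_1(g)(p_2), \]
and freeness of the $G$-action on $\splitting_2$ forces $\phi_1=\phi_2=:\phi$. So the pair $(G,\phi)$ is common to the two systems.

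Next I would establish one inclusion of coarse plaques. Since $\Phi$ descends to a uniformly continuous $\bar\Phi:\X_1\arr\X_2$ intertwining $f_1$ and $f_2$, the condition $d(f_1^n(x),f_1^n(y))\to 0$ passes to images under $\bar\Phi$, so $\bar\Phi$ (and hence $\Phi$) preserves stable and unstable equivalence. This yields $\Phi(W_+^1(p_1))\subseteq W_+^2(p_2)$ and the analogous unstable containment. Identifying the two orbits through the fixed points via $g(p_1)\leftrightarrow g(p_2)$, the coarse plaque $\Sigma_+^i$ becomes a subset of $G$ (well-defined up to bounded perturbation) consisting of those $g$ for which $g(p_i)$ lies within bounded distance of $W_+^i(p_i)$ in a $G$-invariant proper metric on $\splitting_i$. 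Equivariance of $\Phi$ together with its coarse Lipschitz behaviour on $G$-orbits then gives $\Sigma_+^1\subseteq\Sigma_+^2$ and $\Sigma_-^1\subseteq\Sigma_-^2$, modulo bounded perturbation.

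The main obstacle will be the reverse inclusion. For this I would try to show that $\bar\Phi$ is surjective: its image is a closed $f_2$-invariant subset of $\X_2$ containing $p_2$ together with the full stable and unstable plaques through $p_2$, and local-product arguments combined with connectedness and local connectedness of $\X_2$ (together with finite upper exponents from Theorem~\ref{th:connectedness}) should force $\bar\Phi(\X_1)=\X_2$. Surjectivity promotes the one-sided containments to equalities of coarse plaques up to bounded perturbation, and Theorem~\ref{th:conjugacycriterionintro} then delivers the desired topological conjugacy between $(\X_1,f_1)$ and $(\X_2,f_2)$.
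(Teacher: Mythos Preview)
Your overall strategy matches the paper's: reduce to Theorem~\ref{th:conjugacycriterionintro} by showing the two quadruples $(G,\phi_i,\Sigma_+^i,\Sigma_-^i)$ agree. The identification $\phi_1=\phi_2$ and the forward inclusion $\Sigma_\pm^1\subseteq\Sigma_\pm^2$ are correct. The gap is in the reverse inclusion. Your surjectivity argument begs the question: you claim the image of $\bar\Phi$ contains ``the full stable and unstable plaques through $p_2$'', but from $\Phi(W_+^1(p_1))\subseteq W_+^2(p_2)$ you only know the image contains a \emph{subset} of that plaque; asserting it contains the whole plaque is precisely what remains to be shown, and closedness plus $f_2$-invariance of the image does not by itself force it to contain full leaves. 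Moreover, even granting surjectivity of $\bar\Phi$ onto $\X_2$, you have not explained why this yields $\Sigma_+^2\subseteq\Sigma_+^1$: a $\Phi$-preimage of a point on $W_+^2(p_2)$ need not lie on $W_+^1(p_1)$ unless you already know $\Phi^{-1}(W_+^2(p_2))\subseteq W_+^1(p_1)$.

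The paper's route is different and avoids arguing surjectivity of $\bar\Phi$. The key observation you are missing is that $\Phi$, as an equivariant map between proper cocompact $G$-actions, is automatically \emph{proper}. Then for any compact neighbourhood $U$ of $x_2$ the set $\Phi^{-1}(U)$ is a compact neighbourhood of $x_1$, and the dynamical description $W_-(x_i)=\bigcup_{n\ge 1}\bigcap_{k\ge n}F_i^k(U_i)$ (valid for any compact neighbourhood $U_i$ of the fixed point $x_i$) together with $\Phi\circ F_1=F_2\circ\Phi$ yields the \emph{exact} preimage identity $\Phi^{-1}(W_\pm(x_2))=W_\pm(x_1)$. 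This identity is symmetric in the two directions and replaces your surjectivity attempt. One then takes a compact $G$-transversal $K_2\subset\splitting_2$, sets $K_1=\Phi^{-1}(K_2)$ (compact by properness, a transversal by equivariance), and compares the coarse plaques $\{g:g(K_i)\cap W_\pm(x_i)\ne\emptyset\}$ directly via the preimage identity.
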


Section~\ref{s:nilpotentsplitting} is devoted to the proof of
Theorem~\ref{th:main}. As the first step we prove the following.

\begin{proposition}
\label{pr:hyperbolicandfixed}
Let $\pi:\splitting\arr\X$ be
a splitting of a locally connected and connected Smale space $(\X,
f)$ such that the group $G$ of deck transformations is torsion free
nilpotent. Then $f$ has a
fixed point, and the associated automorphism $\phi:G\arr G$ is
hyperbolic (i.e., its unique extension $\Phi$ to a simply connected
nilpotent Lie group containing $G$ as a lattice is hyperbolic).
\end{proposition}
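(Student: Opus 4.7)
I would exploit the product $\splitting=A\times B$ to decompose $F$ and the $G$-action factor-wise, use the Smale contraction/expansion on each factor to find a fixed point by a Banach-type argument, and then derive hyperbolicity of $\phi$ from the twisted equivariance $F\circ g=\phi(g)\circ F$. First, $F$ must send stable plaques $\{a\}\times B$ to stable plaques and unstable plaques $A\times\{b\}$ to unstable plaques, because it covers $f$, which permutes the corresponding leaves; combining these two constraints forces $F(a,b)=(F_1(a),F_2(b))$. The same argument applied to each deck transformation gives $g(a,b)=(g_1(a),g_2(b))$, and the equivariance splits as $F_ig_iF_i^{-1}=\phi(g)_i$ for $i=1,2$. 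By the Smale axioms $F_2$ is contracting and $F_1$ expanding on every plaque.

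\textbf{Fixed point.} Pulled back to $\splitting$, the combinatorial metric $d_{\mathcal{R}}$ of Section~\ref{s:lowerexponents} gives, on each plaque, a complete metric in which $F_2:B\to B$ is a uniform contraction and $F_1^{-1}:A\to A$ is a uniform contraction. Banach's fixed-point theorem applied separately on the two factors yields a unique fixed point $x_0=(a_0,b_0)$ of $F$. Its image $\pi(x_0)$ is a fixed point of $f$, and with $x_0$ as basepoint the map $\phi$ becomes a well-defined automorphism of $G$.

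\textbf{Hyperbolicity.} By Malcev's theorem, $\phi$ extends uniquely to an automorphism $\Phi$ of the simply connected nilpotent Lie group $L$ in which $G$ sits as a cocompact lattice. Decompose the Lie algebra of $L$ under $D\Phi$ into spectral components of absolute value $>1$, $<1$, and $=1$; the claim is that the neutral component vanishes. If not, there would exist a non-identity $g\in G$ for which the sequence $\{\phi^n(g)\}$ grows sub-exponentially. But $\phi^n(g)$ acts on $A$ as $F_1^ng_1F_1^{-n}$, and the uniform expansion of $F_1$ gives a displacement bound of order $\lambda^n\cdot d_A(g_1(a_0),a_0)$ with $\lambda>1$ as $n\to+\infty$, which is exponential whenever $g_1$ moves $a_0$. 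A symmetric argument on $B$ using $F_2^{-1}$ as $n\to-\infty$ handles the case $g_2$ moves $b_0$. Since the diagonal $G$-action on $A\times B$ is free, $g\neq e$ forces one of these alternatives, contradicting sub-exponential growth. Hence $D\Phi$ has no eigenvalue on the unit circle.

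\textbf{Main obstacle.} The delicate step is the fixed-point argument: upgrading the plaque-local Smale contraction of $F_2$ to a genuine uniform contraction of $F_2$ in a complete metric on the whole factor $B$ (and symmetrically for $F_1^{-1}$ on $A$). Controlling how the plaque metrics patch along the intrinsic leaf topology, and invoking local connectedness of $\X$ to ensure that plaques fill out each factor coherently, is where most of the work goes. By comparison, the factoring step is formal and the hyperbolicity step reduces to linear algebra once a canonical $\phi$ and the free diagonal $G$-action are in hand.
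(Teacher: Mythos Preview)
Your factorizations $F=(F_1,F_2)$ and $g=(g_1,g_2)$ are correct, but both substantive steps have genuine gaps.

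\textbf{Fixed point.} To run Banach on $F_2:B\to B$ you need a complete metric in which $F_2$ is a strict contraction. The natural candidate is the intrinsic leaf metric $d_+$ pulled back via one identification $\{a_*\}\times B\cong W_+(\pi(a_*,b_*))$. But $F$ carries this plaque to $\{F_1(a_*)\}\times B$, so expressing $F_2$ as a self-map of a single leaf forces a holonomy from $W_+(f(x_*))$ back to $W_+(x_*)$. Holonomies are only \emph{locally} bi-Lipschitz; under iteration of $F_2$ the holonomy is along the unstable orbit $a_*,F_1(a_*),F_1^2(a_*),\ldots$, and nothing prevents the bi-Lipschitz constants from accumulating and swamping the contraction of $f$. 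You flag this obstacle but do not overcome it.

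\textbf{Hyperbolicity.} The reduction ``a unit eigenvalue of $D\Phi$ yields $g\in G\setminus\{e\}$ with $\phi^n(g)$ growing sub-exponentially'' is false in general: the neutral subspace need not meet the lattice. Concretely, if $\phi\in GL_d(\Z)$ is the companion matrix of a Salem polynomial, then $\phi$ has eigenvalues on the unit circle, yet both $E_+\oplus E_0$ and $E_-\oplus E_0$ are irrational hyperplanes, so every nonzero $g\in\Z^d$ has $\|\phi^n(g)\|$ growing exponentially either as $n\to+\infty$ or as $n\to-\infty$. Your displacement argument therefore cannot start. Even when such a $g$ exists, comparing word length of $\phi^n(g)$ with the \emph{intrinsic} plaque distance $d_A(F_1^n g_1(a_0),a_0)$ requires that $d_\splitting$ restricted to an unstable plaque dominates $d_-$, which is only known locally.

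The paper circumvents both issues by a different route: it proves hyperbolicity of $\phi|_{Z(G)}$ (Proposition~\ref{pr:hyperbolicautomorphism}) not by locating a single neutral lattice element, but by taking, for each $R$, the finite set $A_R\subset Z(G)$ of lattice points near a circle of radius $R$ in the putative neutral plane, showing $A_R\subset\phi^{\pm n}(A_R)+(\text{bounded})$ for all $n$, and then using the transverse displacements $D_\pm$ (Lemma~\ref{lem:FCshift}, Proposition~\ref{pr:Awidth}) to trap $\bigcup_R A_R$ in a compact rectangle---contradicting properness of the $G$-action. It then builds an $\R^d$-action on $\splitting$ extending $Z(G)$, passes to the quotient Smale space with deck group $G/Z(G)$, and obtains both the fixed point (Proposition~\ref{pr:fixedpoint}) and full hyperbolicity (Proposition~\ref{pr:phihyperbolic}) by induction on the nilpotency class.
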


We prove both statements of Proposition~\ref{pr:hyperbolicandfixed}
by induction on the nilpotency class of
$G$. 
We show at first that the automorphism $\phi$ induces a
hyperbolic automorphism of the center $Z(G)\cong\Z^n$ of $G$.
Then we construct
an action of $\R^n$ on $\splitting$ naturally extending the action of
$\Z^n$, using the direct product structure on $\splitting$.
The action induces an action of the torus $\R^n/\Z^n$ on $\X$
and agrees with the local product structure, metric on $\X$, and the
dynamics, in such a way that the map induced by $f$ on $(\R^n/\Z^n)\backslash
\X$ is a Smale space with a splitting with the group of deck
transformations isomorphic to $G/Z(G)$. This provides us the
necessary inductive steps to prove Proposition~\ref{pr:hyperbolicandfixed}.
We use after that the arguments of~\cite[Theorem~2.2]{franks}, Theorem~\ref{th:semiconjintro}, and some additional algebraic arguments to prove Theorem~\ref{th:main}.

Theorem~\ref{th:brin} generalizing the Brin's pinching condition to
Smale spaces is proved in Section~\ref{s:pinched}. We prove
at first that every Smale space satisfying conditions of
Theorem~\ref{th:brin} has a splitting
(Theorem~\ref{th:developable}). Then we prove that the group of deck
transformations of the splitting 
is virtually nilpotent (Theorem~\ref{th:growth}) using
Gromov's theorem on groups of polynomial growth. Both proofs are
similar to the original proofs of M.~Brin, except that in the proof of
Theorem~\ref{th:developable} we use results of
Section~\ref{s:lowerexponents} on lower exponents of a Smale space,
which allows us to get a better pinching condition, and to
prove the theorem for all locally connected Smale space, and not only
for Anosov diffeomorphisms.

In Section~\ref{s:mather}, we show how our condition on critical
exponents is related to M.~Brin's pinching condition on the Mather
spectrum of a diffeomorphism. We show that M.~Brin's condition implies
the condition of Theorem~\ref{th:brin}.

Section~\ref{s:codimone} is devoted to the proof
Theorem~\ref{th:codimone} on co-dimension one Smale spaces. We prove it using Theorem~\ref{th:main} and the ideas of the proof of Theorem~\ref{th:brin}. 

\subsection*{Acknowledgments}
I started writing this paper during a visit to
Institut Mittag-Leffler (Djursholm, Sweden) 
as a participant of the semester ``Geometric and
Analytic Aspects of Group Theory'' in March of 2012.  I am very grateful to the
Institute for excellent conditions for work and to the
organizers of the semester for inviting me. I am also grateful to the
anonymous referee, who found an error in the first version of the paper and helped to improve the exposition.

This paper is based upon work supported by the National Science
Foundation under grants DMS1006280 and DMS1709480.

\section{Smale spaces}
\label{s:Smalespaces}

\subsection{Local product structures}

\begin{defi}
\label{def:directproduct}
A \emph{direct product structure} on a topological space $R$ is
defined by a continuous map $[\cdot, \cdot]:R\times R\arr R$
satisfying
\begin{enumerate}
\item $[x, x]=x$ for all $x\in R$;
\item $[[x, y], z]=[x, z]$ and $[x, [y, z]]=[x, z]$ for all $x, y,
  z\in R$.
\end{enumerate}
We call a space with a direct product structure on it a \emph{rectangle}.
\end{defi}

If $R=A\times B$ is a decomposition of $R$ into a direct product of
two topological spaces, then the corresponding direct product
structure is given by the operation
\begin{equation}
\label{eq:directproduct}
[(x_1, y_1), (x_2, y_2)]=(x_1, y_2).
\end{equation}

Let $R$ be a rectangle. For $x\in R$ the corresponding \emph{plaques} are the sets
\begin{equation}\label{eq:plaques}
P_1(R, x)=\{y\in R\;:\;[x, y]=x\},\qquad P_2(R, x)=\{y\in R\;:\;[x,
y]=y\}.
\end{equation}
See Figure~\ref{fig:rectangle}. If $R=A\times B$ with the
corresponding direct product structure~\eqref{eq:directproduct}, then
the plaques are given by
\[P_1(R, (a, b))=A\times\{b\},\qquad P_2(R, (a, b))=\{a\}\times B.\]

The map $P_1(R, x)\times P_2(R, x)\arr R$ given by
\[(y_1, y_2)\mapsto [y_1, y_2]\]
is a homeomorphism. 

For any pair $x, y\in R$ the natural maps
$P_1(R, x)\arr P_1(R, y)$ and $P_2(R, x)\arr P_2(R, y)$
given by $z\mapsto [y, z]$ and $z\mapsto [z, y]$, respectively, are
called \emph{holonomy maps} inside $R$, and are homeomorphisms.

These homeomorphism agree with the homeomorphisms
$P_1(R, x)\times P_2(R, x)\arr R$, so that we get a canonical
decomposition of $R$ into the direct product
of two spaces $A$ and $B$, which can be
identified with $P_1(R, x)$ and $P_2(R, x)$, respectively.

\begin{figure}
\includegraphics{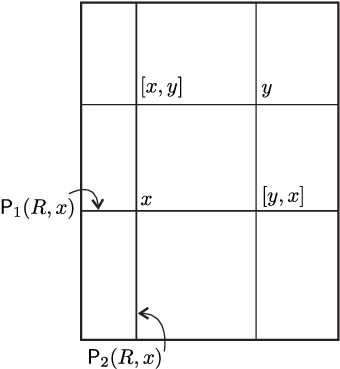}
\caption{Rectangle}
\label{fig:rectangle}
\end{figure}

\begin{defi}
\label{def:locprod}
Let $\X$ be a topological space. A \emph{local product
  structure} on $\X$ is given by a covering $\mathcal{R}$ of $\X$ by open sets $R$
with a direct product structure $[\cdot, \cdot]_R$ on each of them,
such that for any $R_1, R_2\in\mathcal{R}$, and for every $x\in\X$
there exists a neighborhood $U$ of $x$ such that $[y_1,
y_2]_{R_1}=[y_1, y_2]_{R_2}$ for all $y_1, y_2\in U\cap R_1\cap R_2$.

Two coverings of $\X$ by open rectangles define the same local product
structures if their union defines a local product structure, i.e.,
satisfied the above compatibility condition.
\end{defi}

If $\X$ is a space with a local direct product structure, then an open
subset $R\subset\X$ with a direct product structure $[\cdot, \cdot]$
is a \emph{(sub-)rectangle} of $\X$ if the union of $\{R\}$ with a covering
defining the local product structure satisfies the compatibility
conditions of Definition~\ref{def:locprod}.

\begin{defi}
\label{def:preserveslocprod}
We say that a continuous map $f:\X_1\arr\X_2$ between spaces with
local product structures \emph{preserves the local product structures}
if every point of $\X_1$ has a rectangular neighborhood $U$ such that
$f(U)$ is a rectangle of $\X_2$, and $f([x, y]_U)=[f(x), f(y)]_{f(U)}$
for all $x, y\in U$.
\end{defi}

\begin{defi}
\label{def:prodmetric}
Let $\X$ be a space with a local product structure. We say that a
metric $d$ on $\X$ \emph{agrees with the local product structure} if
for every point $x\in\X$ there exists an open rectangular neighborhood
$R=A\times B$ of $x$ and metrics $d_A$ and $d_B$ on $A$ and $B$,
respectively, such that the restriction of $d$ to $R$ is bi-Lipschitz
equivalent to the metric
\[d_R((x_1, y_1), (x_2, y_2))=d_A(x_1, x_2)+d_B(y_1, y_2).\]
\end{defi}

If a metric $d$ agrees with the local product structure, then for
every point $x\in\X$ there exists a rectangular neighborhood $R$ of
$x$ such that all holonomy maps inside $R$ are bi-Lipschitz with respect
to the metric $d$ with a fixed Lipschitz constant (depending only on
$R$). Conversely, it is easy to see that 
a metric $d$ agrees with the local product
structure if for every $x\in\X$ there exists a rectangular neighborhood
$R$ of $x$ such that the holonomies inside $R$ are uniformly
bi-Lipschitz, and $d(y, z)$ is bi-Lipschitz equivalent to $d([x, y],
[x, z])+d([y, x], [z, x])$.

\subsection{Smale spaces}

\begin{defi}
\label{def:smalespaces}
A \emph{Smale space} is a compact metrizable space $\X$ together with a
homeomorphism $f:\X\arr\X$ such that there exists a metric $d$ on
$\X$, constants $\lambda\in (0, 1)$ and $C>0$, and a local product structure on
$\X$ such that $f$ preserves the local product structure and for every
$x\in\X$ there exists a rectangular neighborhood $R$ of $x$ such that
for all $n\ge 0$ and $y, z\in P_1(R, x)$ we have
\[d(f^n(y), f^n(z))\le C\lambda^n d(y, z),\]
and for all $n\ge 0$ and $y, z\in P_2(R, x)$ we have
\[d(f^{-n}(y), f^{-n}(z)\le C\lambda^n d(y, z).\]
\end{defi}

We will denote $P_1(R, x)=P_+(R, x)$ and $P_2(R, x)=P_-(R, x)$.

Examples of Smale spaces are Anosov diffeomorphisms of \emph{compact}
manifolds, restrictions of Axiom-A diffeomorphisms to their basic
sets, shifts of finite type, spaces of substitutional tilings,
etc.. See~\cite{putnam:smalespaces} for more examples. Note that pseudo-Anosov
diffeomorphisms are not Smale spaces. 

\begin{defi}
\label{def:expansive}
A homeomorphism $f:\X\arr\X$ of a compact space $\X$ is said to be
\emph{expansive} if there exists a neighborhood $U$ of the diagonal in
$\X\times\X$ such that $(f^n(x), f^n(y))\in U$ for all $n\in\Z$ implies
$x=y$.
\end{defi}

Note that if $U$ satisfies the conditions of the definition, then
$\{(x, y)\in\X^2\;:\;(x, y), (y, x)\in U\}$ also satisfies the
conditions of the definition. Consequently, we may assume that $U$ is
symmetric.

\begin{proposition}
Every Smale space is an expansive dynamical system.
\end{proposition}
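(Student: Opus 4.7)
The plan is to exhibit a single neighborhood $U$ of the diagonal in $\X\times\X$ such that every bi-orbit contained in $U$ is constant. The idea is to use the bracket to split the displacement between two nearby orbits into an unstable component and a stable component, and then collapse each component by means of the contraction estimates of Definition~\ref{def:smalespaces} applied in opposite time directions.

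First, by compactness of $\X$, I would pass to a finite covering of $\X$ by rectangles on which the contraction estimates on $P_+$ and $P_-$ both hold with common constants $C>0$ and $\lambda\in(0,1)$, and fix a Lebesgue number $\delta_0>0$ for this covering. Using continuity of the bracket near the diagonal, I then pick $\delta\in(0,\delta_0/2)$ small enough that any two points $a,b\in\X$ with $d(a,b)<\delta$ lie together in a common rectangle of the covering and satisfy $C\cdot\max\{d(a,[a,b]),\,d(b,[a,b])\}<\delta_0/2$. Set $U=\{(a,b)\in\X\times\X:d(a,b)<\delta\}$.

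Now suppose $(f^n(x),f^n(y))\in U$ for every $n\in\Z$, and let $w=[x,y]$. The bracket axioms place $w$ on the unstable plaque of $x$ and on the stable plaque of $y$; preservation of the local product structure by $f$ gives $f^n(w)=[f^n(x),f^n(y)]$ for every $n$; and the stable contraction estimate together with the choice of $\delta$ guarantees $d(f^n(w),f^n(x))<\delta_0$ for all $n\ge 0$, so each pair $(f^n(x),f^n(w))$ sits in a rectangle $R_n$ of the fixed covering with $f^n(w)\in P_-(R_n,f^n(x))$. Iterating the backward contraction on $P_-$ yields
\[
d(w,x)\;\le\;C\lambda^n\,d(f^n(w),f^n(x))\;\le\;C\lambda^n\delta_0
\]
for every $n\ge 0$, and letting $n\to\infty$ forces $w=x$; that is, $y$ lies on the stable plaque of $x$. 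Applying the same argument to the Smale space $(\X,f^{-1})$, in which the roles of $P_+$ and $P_-$ are interchanged, places $y$ also on the unstable plaque of $x$, and since the stable and unstable plaques through $x$ meet only at $x$, we conclude $y=x$, so $U$ witnesses expansivity.

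The main obstacle is the initial uniformization step: Definition~\ref{def:smalespaces} supplies the contraction estimate only pointwise, with the rectangle $R$ (and a priori the constants $C,\lambda$) depending on the basepoint, whereas the iterative argument above needs a single pair of constants and a uniform rectangle scale governing the entire orbit. Compactness of $\X$ and the $f$-invariance of the local product structure make this step standard, but it has to be executed carefully before invoking the bracket, since it is precisely what keeps $(f^n(x),f^n(w))$ inside a rectangle on which the estimate applies for every $n\ge 0$; once this is in place the rest of the argument is a direct use of the Smale axioms in both time directions.
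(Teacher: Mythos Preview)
Your argument is correct and follows essentially the same route as the paper's proof: choose a finite rectangular cover with uniform constants and Lebesgue number, pick $\delta$ small enough that the bracket stays controlled along the whole bi-orbit, and then use the contraction on $P_-$ (backward in time) and on $P_+$ (forward in time) to force $x=[x,y]$ and $y=[x,y]$, hence $x=y$. Two small presentational points: the bound $d(f^n(w),f^n(x))<\delta_0$ comes directly from $f^n(w)=[f^n(x),f^n(y)]$ and your choice of $\delta$, not from any ``stable contraction estimate''; and when you invoke $(\X,f^{-1})$ you should note that the bracket flips to $[a,b]'=[b,a]$, so that the second pass actually yields $[y,x]=x$, i.e.\ $y\in P_-(R,x)$, which is what you need.
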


\begin{proof}
We can find a finite covering $\mathcal{R}$ of $\X$ by rectangles
satisfying the conditions of Definition~\ref{def:smalespaces}. Let
$\epsilon>0$ be a Lebesgue's number of the covering. There exists
$\delta>0$ such that for any points $x, y\in\X$ such that $d(x,
y)<\delta$ and any rectangle $R\in\mathcal{R}$ such that $x, y\in
R$ we have $d(x, [x, y])<C^{-1}\epsilon$ and $d(y, [x, y])<C^{-1}\epsilon$.

Let $x, y\in\X$ be such that $d(f^n(x), f^n(y))<\delta$ for all $n\in\Z$. Then for
every $n\in\Z$ there exists a rectangle $R_n\in\mathcal{R}$ such that
$f^n(x), f^n(y)\in R_n$. Then $d(f^n(x), [f^n(x),
f^n(y)])<C^{-1}\epsilon$. Note that $f^n(x)$ and $[f^n(x), f^n(y)]$ belong
to one plaque $P_-(R_n, f^n(x))$. It follows that
$d(f^{n-k}(x), [f^{n-k}(x), f^{n-k}(y)])\le C\lambda^k d(f^n(x), [f^n(x),
f^n(y)])<C\lambda^k C^{-1}\epsilon=\lambda^k\epsilon$ for all $k\ge 0$ and all
$n\in\Z$. In particular, $d(x, [x, y])<\lambda^k\epsilon$ for all
$k\ge 0$, i.e., $x=[x, y]$. It is shown in the same way that $y=[x,
y]$, which implies that $x=y$. Therefore, the set
$U\subset\X\times\X$ equal to the set of pairs $(x, y)$ such that
$d(x, y)<\delta$ satisfies the conditions of Definition~\ref{def:expansive}.
\end{proof}

\begin{defi}
\label{def:logscale}
 A \emph{log-scale} on a set $X$ is a function
 $\ell:\X\times\X\arr\R\cup\{\infty\}$ satisfying the following
 conditions:
\begin{enumerate}
\item $\ell(x, y)=\ell(y, x)$ for all $x, y\in X$;
\item $\ell(x, y)=\infty$ if and only if $x=y$;
\item there exists $\Delta>0$ such that
\[\ell(x, z)\ge\min\{\ell(x, y), \ell(y, z)\}-\Delta\]
for all $x, y, z\in X$.
\end{enumerate}
\end{defi}

We say that two log-scales $\ell_1, \ell_2$ are \emph{bi-Lipschitz
  equivalent} if the difference $|\ell_1(x, y)-\ell_2(x, y)|$ is
uniformly bounded for all $x\ne y$.

Let us  describe the natural class of metrics on expansive dynamical
systems defined in~\cite{fried:naturalmetric}, using log-scales.

Let $(\X, f)$ be an expansive dynamical system. Let $U$ be a symmetric
neighborhood of the diagonal, satisfying the conditions of Definition~\ref{def:expansive}.
Define $\ell(x, y)$ for $x, y\in\X$ to be maximal $n$ such that
$(f^k(x), f^k(y))\in U$ for all $k\in [-n, n]$.

\begin{lemma}
\label{lem:standlogscale}
The defined function $\ell$ is a log-scale. It does not depend, up to
bi-Lipschitz equivalence, on the choice of $U$.
\end{lemma}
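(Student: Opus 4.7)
The plan is to reduce everything to a single compactness step enabled by expansivity, after which properties (1)--(3) of a log-scale and the independence from $U$ all fall out of the same bookkeeping. Property (1) is immediate from the symmetry of $U$, and property (2) is a restatement of expansivity: $\ell(x,y)=\infty$ iff $(f^k(x),f^k(y))\in U$ for all $k\in\Z$, which by Definition~\ref{def:expansive} forces $x=y$.

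The core step is the following. After shrinking $U$ to a closed expansive symmetric neighborhood (using normality of the compact Hausdorff space $\X\times\X$), set
\[
U_n=\bigcap_{k=-n}^{n}(f\times f)^{-k}(U).
\]
The sets $U_n$ are closed, decreasing in $n$, and their intersection is the diagonal by expansivity; hence compactness of $\X\times\X$ yields, for every open neighborhood $W$ of the diagonal, an integer $N$ with $U_N\subset W$.

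Applying this to two symmetric expansive neighborhoods $U_1,U_2$ (with $W=U_2$) produces $N$ such that $\ell_{U_1}(x,y)\ge n+N$ implies $(f^j(x),f^j(y))\in(U_1)_N\subset U_2$ for all $j\in[-n,n]$, i.e.\ $\ell_{U_2}\ge\ell_{U_1}-N$; interchanging the roles of $U_1,U_2$ gives the bi-Lipschitz independence. For property~(3), pick a symmetric expansive $V\subset U$ with $V\circ V\subset U$; such $V$ exists because the uniform structure of a compact metric space admits a neighborhood base of the diagonal closed under ``square roots''. If $\ell_V(x,y)\ge m$ and $\ell_V(y,z)\ge m$, then $(f^k(x),f^k(z))\in V\circ V\subset U$ for all $k\in[-m,m]$, so
\[
\ell_U(x,z)\ge\min\{\ell_V(x,y),\,\ell_V(y,z)\}.
\]
Combined with the bi-Lipschitz bound $\ell_V\ge\ell_U-C$ produced in the previous step, this upgrades to property~(3) for $\ell_U$ alone, with $\Delta=C$.

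The only subtlety---really a bookkeeping point rather than a genuine obstacle---is the need to take $U$ closed so that $\bigcap_n U_n$ being the diagonal can be exploited via compactness of $\X\times\X$; a given open expansive $U$ is replaced by a closed expansive neighborhood $\overline{U'}\subset U$ (whose existence is guaranteed by normality), after which the bi-Lipschitz step itself shows that the log-scales for $U$ and $\overline{U'}$ differ by a bounded additive constant, so no generality is lost.
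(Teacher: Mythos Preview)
Your proof is correct and follows essentially the same approach as the paper: both arguments hinge on choosing $V$ with $V\circ V\subset U$ and using the compactness-driven nesting $U_N\subset V$ (equivalently, $\ell_V\ge\ell_U-N$) to produce the constant $\Delta$. The only cosmetic difference is ordering---you prove the bi-Lipschitz independence first and then invoke it to get property~(3), whereas the paper does these in the opposite order---and you make explicit the passage to a closed $U$ needed for the compactness step, which the paper leaves implicit.
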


We call $\ell$ \emph{the standard log-scale} of the expansive dynamical system.

\begin{proof}
We have $\ell(x, y)=\ell(y, x)$, since we assume that $U$ is
symmetric. We also have $\ell(x, y)=\infty$ if and only if $x=y$, by
Definition~\ref{def:expansive}.

It remains to show that there exists
$\Delta$ such that $\ell(x, z)\ge\min\{\ell(x, y), \ell(y,
z)\}-\Delta$ for all $x, y, z\in\X$.

Since a compact set has a unique uniform structure consisting of all
neighborhoods of the diagonal (see~\cite{bour:top}), there
exists a neighborhood of
the diagonal $V\subset\X^2$ such that $(x, y), (y, z)\in V$ implies
$(x, z)\in U$.

Note that the sets $U_n=\{(x, y)\;:\;\ell(x, y)\ge
n\}=\bigcap_{k=-n}^nf^k(U)$ are neighborhoods of the diagonal,
$U_{n+1}\subseteq U_n$ for all $n$, and $\bigcap_{n\ge 1}U_n$ is equal
to the diagonal. In particular, by compactness of $\X$, there exists
$\Delta>0$ such that $U_\Delta\subset V$.

Denote by $V_n=\bigcap_{k=-n}^nf^k(V)$ the set of pairs $(x, y)$ such
that $(f^k(x), f^k(y))\in V$ for all $k=-n, \ldots, n$. Then $(x, y),
(y, z)\in V_n$ implies $(x, z)\in U_n$.

Then for every $n>\Delta$ we have
$U_n\subset V_{n-\Delta}$,
since the conditions that $(f^k(x), f^k(y))\in U$ for all
$|k|\le n$ implies $(f^k(x), f^k(y))\in U_\Delta\subset V$ for
all $|k|\le n-\Delta$.

Let $\min\{\ell(x, y), \ell(y, z)\}=m$. Then $(x, y), (y,
z)\in U_m\subset V_{m-\Delta}$, hence $(x, z)\in U_{m-\Delta}$, i.e.,
$\ell(x, z)\ge m-\Delta$.

Let us show that $\ell$ does not depend on the choice of $U$. Let $U'$
and $U''$ be two neighborhoods of the diagonal, satisfying the
conditions of Definition~\ref{def:expansive}. Then, as above, there
exists $C>0$ such that $U'_C\subset U''$ and $U''_C\subset U'$. By the
same arguments as above, we conclude that $U'_{n+C}\subset U''_n$
and $U''_{n+C}\subset U'_n$ for all $n\ge 0$. But this implies that
the values of the log-scales defined by $U'$ and $U''$ differ from
each other not more than by $C$.
\end{proof}

It is proved in~\cite[Lemma~5.4.2]{nek:hyperbolic} that for every Smale space $(\X, f)$ the log-scale $\ell$ agrees with the local product structure on $\X$.

\begin{defi}
\label{def:leaves}
Let $(\X, f)$ be an expansive dynamical system, and let $\ell$ be the
standard log-scale. We say that $x, y\in\X$ are
\emph{stably equivalent} (denoted $x\sim_+ y$) if $\ell(f^n(x), f^n(y))\to+\infty$ as
$n\to+\infty$. They are \emph{unstably equivalent} (denoted $x\sim_-
y$) if $\ell(f^{-n}(x), f^{-n}(y))\to+\infty$ as $n\to+\infty$.
We call stable and unstable equivalence classes \emph{stable and
  unstable leaves}.
\end{defi}

Note that $\ell(x_n, y_n)\to\infty$ is equivalent to $d(x_n, y_n)\to
0$ for any pair of sequences $x_n, y_n\in\X$ and for any metric $d$ on $\X$.

Two points $x, y\in\X$ are stably equivalent if and only if $(f^n(x),
f^n(y))\in U$ for all $n$ big enough. Denote, for $x\in\X$ and $n\in\Z$,
by $W_{n, +}(x)$ the set of points $y\in\X$ such that $(f^k(x),
f^k(y))\in U$ for all $k\ge -n$. Similarly, we denote by $W_{n, -}(x)$
the set of points $y\in\X$ such that $(f^k(x), f^k(y))\in U$ for all
$k\le n$.

Then $W_{n, +}(x)$ and $W_{n, -}(x)$ are decreasing sequences of sets, and
$W_+(x)=\bigcup_{n\in\N}W_{-n, +}(x)$ and
$W_-(x)=\bigcup_{n\in\N}W_{-n, -}(x)$ are
equal to the stable and unstable leaves of $x$, respectively.

Note that for all $n\in\N$, $x\in\X$, $*\in\{+, -\}$, and
$y_1, y_2\in W_{n, *}(x)$ we have $\ell(y_1, y_2)\ge n$.

If $W$ is a stable leaf, then we denote, for $y_1, y_2\in W$, by
$\ell_+(y_1, y_2)$ or $\ell_W(y_1, y_2)$ the biggest
$n_0$ such that $(f^n(y_1), f^n(y_2))\in U$ for all $n\ge -n_0$.

The following properties of $\ell_+$ follow directly from the
definitions.
\begin{itemize}
\item $\ell(y_1, y_2)\le\ell_+(y_1, y_2)$ for all stably equivalent
  $y_1, y_2$;
\item if $y_1, y_2$ are stably equivalent and $\ell_+(y_1, y_2)>0$,
  then $\ell(y_1, y_2)=\ell_+(y_1, y_2)$;
\item for all stably equivalent $y_1, y_2$ we have
\begin{equation}
\label{eq:ellplusf}
\ell_+(f(y_1), f(y_2))=\ell_+(y_1, y_2)+1.
\end{equation}
\end{itemize}

Similarly, if $W$ is an unstable leaf, then $\ell_-(y_1,
y_2)=\ell_W(y_1, y_2)$, for $y_1, y_2\in W$, is the biggest
$n_0$ such that $(f^n(y_1), f^n(y_2))\in U$ for all $n\le n_0$.
We also have $\ell(y_1,
y_1)\le\ell_-(y_1, y_2)$, $\ell(y_1, y_2)=\ell_-(y_1, y_2)$ if
$\ell_-(y_1, y_2)>0$, and
\begin{equation}
\label{eq:ellminusf}
\ell_-(f(y_1), f(y_2))=\ell_-(y_1, y_2)-1
\end{equation}
for all pairs $y_1, y_2$ of unstably equivalent points.

\begin{lemma}
\label{lem:ellplusellminus}
Let $W$ be a stable or unstable leaf. Then the
corresponding function $\ell_+$ or $\ell_-$ is a log-scale on $W$.
\end{lemma}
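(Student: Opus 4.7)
The plan is to verify the three axioms of Definition~\ref{def:logscale} for $\ell_+$ on a stable leaf $W$; the unstable case is identical after replacing~\eqref{eq:ellplusf} by~\eqref{eq:ellminusf} and shifting by $f^{-k}$ in place of $f^k$. Symmetry and the infinity characterization are essentially immediate: $\ell_+$ depends only on membership in the symmetric set $U$, so $\ell_+(y_1,y_2)=\ell_+(y_2,y_1)$; and $\ell_+(y_1,y_2)=\infty$ forces $(f^n(y_1),f^n(y_2))\in U$ for every $n\in\Z$, which by expansiveness (Definition~\ref{def:expansive}) forces $y_1=y_2$.

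The real content is the coarse ultrametric inequality. The guiding idea is that once $\ell_+$ is positive it coincides with the standard log-scale $\ell$, for which Lemma~\ref{lem:standlogscale} already supplies a constant $\Delta$ witnessing the log-scale inequality. So for arbitrary $y_1,y_2,y_3\in W$, set $m=\min\{\ell_+(y_1,y_2),\ell_+(y_2,y_3)\}$ and choose an integer $k$ with $k>-m$. By the shift identity~\eqref{eq:ellplusf}, both $\ell_+(f^k(y_1),f^k(y_2))=\ell_+(y_1,y_2)+k$ and $\ell_+(f^k(y_2),f^k(y_3))=\ell_+(y_2,y_3)+k$ are positive, so by the second bullet preceding the lemma these quantities equal $\ell(f^k(y_1),f^k(y_2))$ and $\ell(f^k(y_2),f^k(y_3))$ respectively. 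Applying the log-scale inequality for $\ell$ with its constant $\Delta$ gives $\ell(f^k(y_1),f^k(y_3))\geq m+k-\Delta$.

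To transfer this back to $\ell_+(y_1,y_3)$, I use the first bullet preceding the lemma ($\ell\leq\ell_+$ on stably equivalent pairs) together with~\eqref{eq:ellplusf} once more: $\ell_+(y_1,y_3)+k=\ell_+(f^k(y_1),f^k(y_3))\geq\ell(f^k(y_1),f^k(y_3))\geq m+k-\Delta$, whence $\ell_+(y_1,y_3)\geq m-\Delta$ with the same constant $\Delta$ as for $\ell$. I do not anticipate a substantive obstacle: the only mild subtlety is that $\ell_+$ can take negative values on pairs whose forward orbits have already left $U$, which is precisely what the forward shift is designed to neutralize, and the two bullet points preceding the lemma are tailored to enable the passage between $\ell$ and $\ell_+$.
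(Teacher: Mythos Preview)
Your proof is correct and follows essentially the same approach as the paper: shift forward via~\eqref{eq:ellplusf} until the relevant $\ell_+$ values are positive, invoke the log-scale inequality already established for $\ell$ in Lemma~\ref{lem:standlogscale}, then shift back. The paper's version makes all three shifted values positive so that each equals the corresponding $\ell$ value, whereas you only force the two known ones positive and then use the bullet $\ell\le\ell_+$ to handle the third; this is a minor streamlining of the same argument.
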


\begin{proof}
If $\ell_+(x, y), \ell_+(y, z), \ell_+(x, z)$ are all positive, then
they are equal to the corresponding values of $\ell$, hence, by
Lemma~\ref{lem:standlogscale},
$\ell_+(x, z)\ge\min(\ell_+(x, y), \ell_+(y, z))-\Delta$, for $\Delta$ not
depending on $x, y, z$.

If they are not positive, then we can find $n\ge 0$ such that
$\ell_+(f^n(x), f^n(y))=\ell_+(x, y)+n$, $\ell_+(f^n(y),
f^n(z))=\ell_+(y, z)+n$, and $\ell_+(f^n(x), f^n(z))=\ell_+(x, z)+n$
are positive, and applying the above argument for $f^n(x), f^n(y)$,
and $f^n(z)$ conclude that $\ell_+(x, z)+n\ge\min(\ell_+(x, y)+n,
\ell_+(y, z)+n)-\Delta$, which is equivalent to $\ell_+(x,
z)\ge\min(\ell_+(x, y), \ell_+(y, z))-\Delta$.
\end{proof}

We call the log-scales $\ell_+$ and $\ell_-$ the \emph{internal}
log-scales on the respective leaf. 

The \emph{internal topology} on a
leaf is the topology defined by the corresponding log-scale $\ell_+$
or $\ell_-$. Here topology defined by a log-scale $\ell$ on a set $X$
is given by the the basis $B(n, x)=\{y\in
X\;:\;\ell(x, y)\ge n\}$ of neighborhoods of points $x\in X$. Note
that $B(n, x)$ is not necessarily open or closed.

Equivalently, the internal topology of a leaf $W$ is equal to the
direct limit topology of representation of $W$ as the union of the
sequence $W_{-n, *}(x)$ for $n\in\N$ and $x\in W$.

Note also that leaves of a Smale space are locally compact, since
neighborhoods of points of a leaf are continuous images of
neighborhoods of points of $\X$.


\subsection{Irreducible Smale spaces}

Let $(\X, f)$ be a Smale space. A point $x\in\X$ is said to be \emph{non-wandering} if for every neighborhood $U$ of $x$ there exists a positive integer $n$ such that $f^n(U)\cap U\ne\emptyset$. The set of non-wandering points is obviously $f$-invariant and closed.

We say that $(\X, f)$ is \emph{irreducible} if for every pair of open
sets $U, V\subset \X$ there exists a positive integer $n$ such that
$f^n(U)\cap V\ne\emptyset$. We say that it is \emph{mixing} if for every pair of open sets $U, V\subset\X$ there exists $N$ such that $f^n(U)\cap V\ne\emptyset$ for all $n\ge N$.

Smale spaces were introduced by D.~Ruelle~\cite{ruelle:therm} as purely topological generalizations of basic sets of Axiom-A diffeomorphisms. Note, however, that general Smale spaces may have non-empty wandering sets. For example, every shift of finite type is a Smale space, but shifts of finite type may have wandering points.

On the other hand, the Smale's
Spectral Decomposition Theorem~\cite[Theorems~6.2, 6.6]{Smale:diffdynsyst} holds for Smale spaces.

\begin{theorem}
\label{th:spectdecomp}
Let $(\X, f)$ be a Smale space. Then the dynamical system $(NW(\X), f)$, where $NW(\X)$ is the set of non-wandering points, is a Smale space. The set $NW(\X)$ can be decomposed into a finite disjoint union of closed $f$-invariant sets $\X_1, \X_2, \ldots, \X_n$ such that $(\X_i, f)$ is an irreducible Smale space without wandering points.

We write $\X_i\prec\X_j$ if there exists a wandering point $x\in\X$ such that the set of the accumulation points of $f^n(x)$, $n\ge 0$, is contained in $\X_j$, and the set of the accumulation points of $f^n(x)$, $n\le 0$, is contained in $\X_i$. Then $\prec$ is a partial order on the set $\{\X_1, \ldots, \X_n\}$.
\end{theorem}

The sets $\X_1, \ldots, \X_n$ are called \emph{irreducible components}
of the Smale space $(\X, f)$.

A proof of the above theorem is similar to the proof of the classical
Smale's spectral decomposition theorem, and can be found in~\cite{putnam:smalespaces}. We also have the following relation between the notions of an
irreducible and mixing Smale spaces, see~\cite{putnam:smalespaces}.
 
\begin{theorem}
\label{th:mixing}
Suppose that $(\X, f)$ is an irreducible Smale space. Then $\X$ can be
decomposed into a finite union $\X=A_1\sqcup A_2\sqcup\cdots\sqcup
A_k$ of disjoint clopen sets cyclically permuted by $f$ and such that
$(A_i, f^k)$ are mixing Smale spaces.
\end{theorem}

\section{Lower exponents}
\label{s:lowerexponents}

\subsection{Lower exponents of log-scales}
We say that $d$ is a metric \emph{associated}
with a log-scale $\ell$, if there exist constants $\alpha>0$ and $C>1$ such
that
\[C^{-1}e^{-\alpha\ell(x, y)}\le d(x, y)\le Ce^{-\alpha\ell(x, y)}.\]
The number $\alpha$ is called the \emph{exponent} of the
metric. Topology defined by an associated metric obviously coincides
with the topology defined by the log-scale.

Note that if $d$ is a metric associated with $\ell$ of exponent
$\alpha$, then for any $0<r<1$ the function $(d(x, y))^r$ is
a metric associated with $\ell$ of exponent $r\alpha$. It follows that
the set of exponents $\alpha$ for which there exists a metric
associated with a given log-scale is an interval of the form $(0,
\alpha_0)$ or $(0, \alpha_0]$, where $\alpha_0\in [0, \infty]$. We
will see later that $\alpha_0>0$ (see also~\cite{nek:hyperbolic}).
The number $\alpha_0$ is the \emph{metric critical exponent} of the
log-scale.

Let $X$ be a set with a log-scale $\ell$.
Let $\Gamma_n$, for $n\in\R$, be the graph with
the set of vertices $X$ in which two points $x, y$ are connected by an edge
if and only if $\ell(x, y)\ge n$. Denote then by
$d_n$ the combinatorial distance in $\Gamma_n$ (we assume that $d_n(x,
y)=\infty$ if $x$ and $y$ belong to different connected components of $\Gamma_n$).

\begin{proposition}
\label{pr:lowerexp}
Let $\Delta$ be such as in Definition~\ref{def:logscale}.
There exist $C>0$ such that
\[d_n(x, y)\ge Ce^{\alpha(n-\ell(x, y))}\] for all $x,
y\in X$ and all $n\in\mathbb{N}$, where $\alpha=\frac{\log 2}{\Delta}$.
\end{proposition}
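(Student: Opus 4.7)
The plan is to prove, by induction on $j$, the following claim: if $x,y\in X$ satisfy $d_n(x,y)\le 2^j$, then $\ell(x,y)\ge n-j\Delta$. Equivalently, each time the scale $n$ is relaxed by an additive $\Delta$, the combinatorial distance doubles, which is exactly the exponential rate encoded by $\alpha=\frac{\log 2}{\Delta}$.

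For the base case $j=0$, if $d_n(x,y)\le 1$ then either $x=y$ (so $\ell(x,y)=\infty\ge n$) or $x,y$ are joined by an edge of $\Gamma_n$ (so $\ell(x,y)\ge n$). For the inductive step, suppose the claim holds for $j$ and let $x,y$ with $d_n(x,y)\le 2^{j+1}$. Choose a shortest chain $x=x_0,x_1,\dots,x_k=y$ in $\Gamma_n$ and let $z=x_{\lfloor k/2\rfloor}$, so that both halves of the chain have length at most $2^{j}$. Then $d_n(x,z),d_n(z,y)\le 2^j$, so by the inductive hypothesis $\ell(x,z)\ge n-j\Delta$ and $\ell(z,y)\ge n-j\Delta$. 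Condition (3) of Definition~\ref{def:logscale} then gives
\[
\ell(x,y)\ge\min\{\ell(x,z),\ell(z,y)\}-\Delta\ge n-(j+1)\Delta,
\]
completing the induction.

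To finish, let $x,y\in X$ with $x\ne y$ and set $k=d_n(x,y)$. If $k=\infty$ there is nothing to prove; otherwise choose $j=\lceil\log_2 k\rceil$, so $k\le 2^j\le 2k$. The claim yields $\ell(x,y)\ge n-j\Delta$, hence
\[
\alpha(n-\ell(x,y))\le j\alpha\Delta = j\log 2 = \log 2^j\le\log(2k),
\]
which rearranges to $d_n(x,y)=k\ge \tfrac{1}{2}e^{\alpha(n-\ell(x,y))}$. Thus $C=\tfrac{1}{2}$ works, and the remaining degenerate cases ($x=y$, or $\ell(x,y)\ge n$ so that $d_n(x,y)\in\{0,1\}$) are immediate.

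The argument is essentially just a dyadic bisection of chains, combined with the ultrametric-up-to-$\Delta$ property of log-scales; there is no real obstacle, only the bookkeeping to relate $\lceil\log_2 k\rceil$ to $\log k$ so as to produce the constant $C$.
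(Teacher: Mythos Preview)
Your proof is correct and rests on the same key observation as the paper's: two consecutive edges of $\Gamma_n$ collapse to a single edge of $\Gamma_{n-\Delta}$ by the approximate ultrametric inequality. The paper iterates this as a recursion $d_{n+\Delta}(x,y)\ge 2d_n(x,y)-1$ starting from the level $\ell(x,y)+1$, whereas you package it as a bisection induction on chain length at a fixed level; these are dual formulations of the same argument, and your version is slightly tidier (and yields the better constant $C=\tfrac12$ versus the paper's $C=2^{(-1-\Delta)/\Delta}$).
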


\begin{proof}
If $(x_0, x_1, x_2)$ is a path in $\Gamma_n$, then $\ell(x_0, x_2)\ge
n-\Delta$, hence $(x_0, x_2)$ is a path in $\Gamma_{n-\Delta}$. It
follows that $d_{n-\Delta}(x, y)\le \frac 12(d_n(x, y)+1)$. In other terms:
\[d_{n+\Delta}(x, y)\ge 2d_n(x, y)-1.\]

If $\ell(x, y)=m$, then $d_{m+1}(x, y)\ge 2$, and hence
\[d_{m+1+k\Delta}(x, y)\ge 2^{k+1}-2^{k-1}-2^{k-2}-\cdots
-1=2^k+1\]
Note that $d_m(x, y)\ge d_n(x, y)$ whenever $m\ge n$. It follows that
for $k=\left\lfloor\frac{n-\ell(x, y)-1}{\Delta}\right\rfloor\ge \frac{n-\ell(x,
  y)-1}\Delta-1$ we have
\[d_n(x, y)\ge d_{\ell(x, y)+1+k\Delta}(x, y)>2^k.\]
Consequently,
\[d_n(x, y)\ge 2^{(n-\ell(x,
  y)-1-\Delta)/\Delta}=Ce^{\alpha(n-\ell(x, y))}\]
for all $x, y\in X$ and $n\in\R$, where $C=2^{(-1-\Delta)/\Delta}$ and
$\alpha=\frac{\ln 2}\Delta$.
\end{proof}

\begin{defi}
\label{def:lowerexponent}
We say that $\alpha$ is a \emph{lower exponent} of a log-scale $\ell$
if there exists $C>0$ such that
\[d_n(x, y)\ge Ce^{\alpha(n-\ell(x, y))}\]
for all $x, y\in X$ and $n\in\Z$. The supremum of
all lower exponents is called the \emph{lower critical exponent}.
\end{defi}

The proof of the following proposition is straightforward.

\begin{proposition}
\label{pr:bLipschlexp}
Let $\ell_1$ and $\ell_2$ be bi-Lipschitz equivalent log-scales on
$X$. A number $\alpha>0$ is a lower exponent of $\ell_1$ if and only
if it is a lower exponent of $\ell_2$.
\end{proposition}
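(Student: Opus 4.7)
The plan is to transfer the exponential lower bound for combinatorial distances directly across the bi-Lipschitz equivalence, using a shift in the parameter $n$. Let $K>0$ be such that $|\ell_1(x,y)-\ell_2(x,y)|\le K$ for all $x\ne y$ in $X$, and denote by $\Gamma_n^{(i)}$ the graph defined by $\ell_i$, and by $d_n^{(i)}$ the corresponding combinatorial distance.

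First I would make the key observation that every edge of $\Gamma_n^{(2)}$ is an edge of $\Gamma_{n-K}^{(1)}$: if $\ell_2(x,y)\ge n$ then $\ell_1(x,y)\ge\ell_2(x,y)-K\ge n-K$. Consequently any path of combinatorial length $m$ in $\Gamma_n^{(2)}$ is a path of combinatorial length $m$ in $\Gamma_{n-K}^{(1)}$, so
\[d_{n-K}^{(1)}(x,y)\le d_n^{(2)}(x,y)\]
for all $x,y\in X$ and all $n\in\R$. The symmetric statement with the roles of $\ell_1$ and $\ell_2$ reversed holds as well.

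Assuming $\alpha$ is a lower exponent of $\ell_1$, so that $d_n^{(1)}(x,y)\ge C_1 e^{\alpha(n-\ell_1(x,y))}$ for some $C_1>0$, I would then combine the above inequality with the bound $\ell_1(x,y)\le\ell_2(x,y)+K$ to obtain
\[d_n^{(2)}(x,y)\ge d_{n-K}^{(1)}(x,y)\ge C_1 e^{\alpha(n-K-\ell_1(x,y))}\ge C_1 e^{-2\alpha K}\,e^{\alpha(n-\ell_2(x,y))},\]
which witnesses $\alpha$ as a lower exponent of $\ell_2$ with constant $C_2=C_1 e^{-2\alpha K}$. The converse direction is identical after swapping the indices. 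There is no serious obstacle here; the only thing to keep in mind is that the constants $C$ in the definition of lower exponent are allowed to change, while the exponent $\alpha$ itself is preserved, which is exactly what the additive shift by $K$ produces.
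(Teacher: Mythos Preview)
Your proof is correct and is exactly the straightforward argument the paper has in mind; in fact the paper gives no proof at all beyond declaring the statement ``straightforward,'' and what you wrote is the natural way to cash that in.
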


\begin{theorem}
The metric critical exponent of a log-scale $\ell$ is equal to its
lower critical exponent. In particular, the metric critical exponent is
positive.
\end{theorem}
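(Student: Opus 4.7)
The plan is to prove equality of the metric and lower critical exponents by establishing both inequalities; positivity then follows immediately from Proposition~\ref{pr:lowerexp}, which exhibits $\log 2/\Delta$ as a lower exponent.

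For the easy direction (metric critical exponent $\le$ lower critical exponent), suppose $d$ is a metric associated with $\ell$ of exponent $\alpha$, so $C^{-1}e^{-\alpha\ell(x,y)}\le d(x,y)\le Ce^{-\alpha\ell(x,y)}$. Let $x_0,x_1,\ldots,x_m$ be any chain from $x$ to $y$ in $\Gamma_n$, i.e., with each $\ell(x_i,x_{i+1})\ge n$. The triangle inequality for $d$ gives
\[C^{-1}e^{-\alpha\ell(x,y)}\le d(x,y)\le\sum_{i=0}^{m-1}d(x_i,x_{i+1})\le Cm\,e^{-\alpha n},\]
so $m\ge C^{-2}e^{\alpha(n-\ell(x,y))}$. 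Taking the infimum over such chains, $d_n(x,y)\ge C^{-2}e^{\alpha(n-\ell(x,y))}$, showing $\alpha$ is a lower exponent.

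For the harder direction, suppose $\alpha$ is a lower exponent with constant $C$. I construct a metric via the chain-infimum formula
\[d(x,y)=\inf\sum_{i=0}^{m-1}e^{-\alpha\ell(x_i,x_{i+1})},\]
over finite chains from $x$ to $y$. This is clearly a pseudo-metric with $d(x,y)\le e^{-\alpha\ell(x,y)}$, so the content lies in the reverse bound $d(x,y)\ge c\,e^{-\alpha\ell(x,y)}$. The key observation is that the lower exponent condition sharpens the $\Delta$-inequality: applying $d_n(x,z)\le 2$ for the chain $(x,y,z)$ with $n=\min(\ell(x,y),\ell(y,z))$ yields $\ell(x,z)\ge\min(\ell(x,y),\ell(y,z))-\alpha^{-1}\log(2/C)$, and iterating to $m$-chains gives $\ell(x_0,x_m)\ge\min_i\ell(x_i,x_{i+1})-\alpha^{-1}\log(m/C)$.

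The hard step, and the one I expect to be the real obstacle, is converting this enhanced inequality into the lower bound on $d$. A first pass shows that every chain contains at least one edge with $e^{-\alpha\ell(x_i,x_{i+1})}\ge(C/m)e^{-\alpha\ell(x_0,x_m)}$, yielding $T\ge(C/m)e^{-\alpha\ell(x_0,x_m)}$, which is too weak because of the $m$ in the denominator. To remove this dependence, I would dyadically partition any chain into halves and apply the enhanced inequality at each scale: at depth $k$ the sub-chains have length $m/2^k$, and the enhanced inequality gives an effective per-level $\Delta_k=\alpha^{-1}\log(m/(2^kC))$. Summing the "heavy" edges across scales, or equivalently applying the Macías--Segovia/Frink metrization lemma to the quasi-ultrametric $\rho=e^{-\alpha\ell}$ with the sharper per-edge $\Delta$-constants coming from the lower exponent condition, shows that for any $\epsilon>0$ one can construct a metric of exponent $\alpha-\epsilon$. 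Since the lower exponents form an interval, passing to the supremum gives metric critical exponent $\ge$ lower critical exponent, which combined with the first direction finishes the proof.
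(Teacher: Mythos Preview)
Your easy direction is correct and identical to the paper's. Your framework for the hard direction---define the chain-infimum pseudometric and prove a lower bound---is also the paper's, though the paper works from the start with an auxiliary exponent $\beta<\alpha$ rather than with $\alpha$ itself.

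The genuine gap is the step you flag as ``the real obstacle.'' Your enhanced inequality $\ell(x_0,x_m)\ge\min_i\ell(x_i,x_{i+1})-\alpha^{-1}\log(m/C)$ is just a restatement of the lower-exponent hypothesis, and invoking Frink/Mac\'ias--Segovia on the three-point version does not reach exponent $\alpha-\epsilon$: the three-point constant you extract is $\Delta'=\alpha^{-1}\log(2/C)$ with $C\le 1$, and the standard dyadic Frink argument then only produces metrics of exponent $\beta<\log 2/\Delta'=\alpha\cdot\log 2/\log(2/C)$, which is strictly below $\alpha$ whenever $C<1$. The dyadic sketch with ``heavy edges at each depth'' and level-dependent constants $\Delta_k$ is not a recognized form of that lemma, and you have not shown how the pieces sum.

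The paper closes the gap differently. It proves a subsequence lemma: from any chain one extracts a subsequence $y_0,\dots,y_t$ whose consecutive $\ell$-values all lie in the window $[n_0-2\Delta,n_0)$, where $n_0=\min_i\ell(x_{i-1},x_i)$. One then inducts on chain length: the inductive hypothesis bounds each sub-chain between consecutive $y_j$'s, and the lower-exponent hypothesis (in full strength, not just for $m=2$) gives $t\ge d_{n_0-2\Delta}(x,y)\ge Ce^{\alpha(n_0-2\Delta-\ell(x,y))}$. Multiplying $t$ by the inductive bound and using the gap $\alpha-\beta>0$ yields $\sum e^{-\beta\ell(x_{i-1},x_i)}\ge C_2e^{-\beta\ell(x,y)}$. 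The essential point you are missing is that the lower-exponent condition must be applied to the \emph{length of the extracted uniform-scale subsequence}, not merely converted into an improved triangle constant.
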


\begin{proof}
Let $d$ be a metric on $X$ of exponent $\alpha$ associated with
$\ell$, and let $C_1>1$ be such that
\[C_1^{-1}e^{-\alpha\ell(x, y)}\le d(x, y)\le C_1e^{-\alpha\ell(x,
  y)}\]
for all $x, y\in X$.

Then for every $n$ the inequality $\ell(x, y)\ge n$ implies $d(x,
y)\le C_1e^{-\alpha n}$, hence
\[d(x, y)\le C_1d_n(x, y)e^{-\alpha n}\] for all $x, y$. It follows that
\[d_n(x, y)\ge C_1^{-1}d(x, y)e^{\alpha n}\ge C_1^{-2}e^{\alpha
  n-\alpha\ell(x, y)},\]
for all $x, y\in X$ and $n\in\N$, i.e., $\alpha$ is a lower exponent.

Let $\alpha$ be a lower exponent. Let $\beta$ be an arbitrary number such
that $\alpha>\beta>0$. It is enough to show that there exists a metric
on $X$ of exponent $\beta$ associated with $\ell$.

Define, for $x, y\in X$, $d_\beta(x, y)$ as the infimum of
$\sum_{i=1}^me^{-\beta\ell(x_{i-1}, x_i)}$
over all sequences $x_0=x, x_1, x_2, \ldots, x_m=y$. The function
$d_\beta(x, y)$ obviously satisfies the triangle inequality, is symmetric,
and
\[d_\beta(x, y)\le e^{-\beta\ell(x, y)}\]
for all $x, y\in X$.

It remains to prove that
there exists a constant $C_2$ such that for any sequence $x_0=x, x_1,
x_2, \ldots, x_m=y$ we have
\[\sum_{i=1}^me^{-\beta\ell(x_{i-1}, x_i)}\ge C_2e^{-\beta\ell(x, y)}.\]

Let $C$ be such that $0<C<1$ and $d_n(x, y)\ge Ce^{\alpha(n-\ell(x,
  y))}$ for all $x, y\in X$ and all $n$.
Let us prove our statement by induction on $m$ for
$C_2=\exp\left(\frac{\beta(\log
    C-2\alpha\Delta)}{\alpha-\beta}\right)$.

The statement is true for $m=1$, since $C_2<1$.
Suppose that we have proved it for
all $k<m$, let us prove it for $m$.

\begin{lemma}
\label{lem:subsequence}
Let $x_0, x_1, \ldots, x_m$ be a sequence such that $\ell(x_i,
x_{i+1})\ge n$ for all $i=0, 1, \ldots, m-1$. Let $n_0\le n$. Then
there exists a sub-sequence $y_0=x_0, y_1, \ldots, y_{t-1}, y_t=x_m$
of the sequence $x_i$ such that
\[n_0-2\Delta\le\ell(y_i, y_{i+1})< n_0\]
for all $i=0, 1, \ldots, t-1$.
\end{lemma}

\begin{proof}
Let us construct the subsequence $y_i$ by the following algorithm. Define $y_0=x_0$.
Suppose we
have defined $y_i=x_r$ for $r<m$. Let $s$ be the largest index such that $s>r$
and $\ell(x_r, x_s)\ge n_0$. Note that since $\ell(x_r, x_{r+1})\ge n\ge
n_0$, such $s$ exists.

If $s<m$, then $\ell(x_r, x_{s+1})<
n_0$, and
\[\ell(x_r, x_{s+1})\ge \min\{\ell(x_r, x_s), \ell(x_s,
x_{s+1})\}-\Delta\ge\min\{n_0, \ell(x_s,
x_{s+1})\}-\Delta=n_0-\Delta.\]
Define then $y_{i+1}=x_{s+1}$. We have
\[n_0-\Delta\le\ell(y_i, y_{i+1})<n_0.\]
If $s+1=m$, we stop and get our sequence $y_0, \ldots, y_t$, for $t=i+1$.

If $s=m$, then $\ell(x_r, x_m)=\ell(y_i, x_m)\ge n_0$, and
\[\ell(y_{i-1}, x_m)\ge\min\{\ell(y_{i-1}, y_i), \ell(y_i,
x_m)\}-\Delta\ge\min\{n_0-\Delta, n_0\}-\Delta=n_0-2\Delta\]
and
\[\ell(y_{i-1}, x_m)<n_0,\]
since $y_i$ was defined and was not equal to $x_m$. Then we redefine
$y_i=x_m$ and stop the algorithm.

In all the other cases we repeat the procedure. It is easy to see that
at the end we get a sequence $y_i$ satisfying the conditions of the lemma.
\end{proof}

Let $x_0=x, x_1, \ldots, x_m=y$ be an arbitrary sequence of points of
$X$. Let $n_0$ be the minimal value of $\ell(x_i, x_{i+1})$.
Let $y_0=x, y_1, \ldots, y_t=y$ be a sub-sequence of the sequence
$x_i$ satisfying conditions of Lemma~\ref{lem:subsequence}.

Suppose at first that
\[n_0<\ell(x, y)+\frac{2\alpha\Delta-\log C}{\alpha-\beta}.\]

Remember
that $n_0=\ell(x_i, x_{i+1})$ for some $i$, hence
\[\sum_{i=1}^me^{-\beta\ell(x_{i-1}, x_i)}\ge e^{-\beta
  n_0}>\exp\left(-\beta\ell(x, y)-\frac{\beta(2\alpha\Delta-\log
    C)}{\alpha-\beta}\right)=C_2e^{-\beta\ell(x, y)},\]
and the statement is proved.

Suppose now that $n_0\ge \ell(x,
y)+\frac{2\alpha\Delta-\log C}{\alpha-\beta}$, which is equivalent to
\begin{equation}
\label{eq:n0}
(\alpha-\beta)n_0-(\alpha-\beta)\ell(x, y)-2\alpha\Delta+\log C\ge 0.
\end{equation}

If $t=1$, then $n_0-2\Delta\le\ell(x, y)<n_0$, hence
\[n_0\le\ell(x, y)+2\Delta=\ell(x,
y)+\frac{2\alpha\Delta-2\beta\Delta}{\alpha-\beta}<
\ell(x, y)+\frac{2\alpha\Delta-\log C}{\alpha-\beta},\]
since $\log C<0<2\beta\Delta$. But this contradicts our assumption.

Therefore $t>1$, and the inductive assumption implies
\[\sum_{i=1}^me^{-\beta\ell(x_{i-1}, x_i)}\ge
\sum_{i=0}^{t-1}C_2e^{-\beta\ell(y_i, y_{i+1})}>tC_2e^{-\beta n_0}.\]

We have $t\ge d_{n_0-2\Delta}(x, y)\ge Ce^{\alpha(n_0-2\Delta-\ell(x,
  y))}$, hence
\begin{multline*}\sum_{i=1}^me^{-\beta\ell(x_{i-1}, x_i)}\ge CC_2e^{-\beta
n_0+\alpha n_0-2\alpha\Delta-\alpha\ell(x, y)}=\\
C_2\exp\left(\log C-\beta
n_0+\alpha n_0-2\alpha\Delta-\alpha\ell(x, y)\right)=\\
C_2\exp\left(-\beta\ell(x, y)+(\alpha-\beta)n_0-(\alpha-\beta)\ell(x,
  y)-2\alpha\Delta+\log C\right)\ge C_2e^{-\beta\ell(x, y)},
\end{multline*}
by~\eqref{eq:n0}.
\end{proof}

\subsection{Lower exponents of Smale spaces}

\begin{defi}
\label{def:lowerexppartial}
Let $(\X, f)$ be a Smale space.
A number $\alpha>0$ is a \emph{stable (resp.\ unstable) lower exponent} of the Smale
space if there exists a constant $C>0$ such that for any stable
(resp.\ unstable) leaf $W$ and any
$x, y\in W$ we have
\[d_n(x, y)\ge Ce^{\alpha(n-\ell_W(x, y))}\]
for the internal log-scale on $W$. The supremum of the stable (resp.\ unstable) lower
exponents is called the \emph{stable} (resp.\
\emph{unstable}) \emph{lower critical exponent}.
\end{defi}

Note that by Proposition~\ref{pr:lowerexp} lower stable and
unstable exponents exist and are positive for any Smale
space. Proposition~\ref{pr:bLipschlexp} implies that the lower
critical exponents of a Smale space depend only on the topological
conjugacy class of the Smale space.

\begin{proposition}
\label{pr:lowerexpchar}
Let $l\in\R$. A number $\alpha>0$ is a lower stable
(resp.\ unstable) exponent of $(\X, f)$
if and only if there exists $C_l>0$ such that for every stable (resp.\
unstable) leaf $W$
and for every two points $x, y\in W$ such that
$\ell_W(x, y)\le l$ we have
\begin{equation}\label{eq:lowerexpchar}
d_n(x, y)\ge C_le^{\alpha n}
\end{equation}
for all $n$.
\end{proposition}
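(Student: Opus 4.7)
The forward implication is immediate from the definition of a lower exponent: if $C$ is a constant witnessing that $\alpha$ is a lower stable exponent, then for any $x,y\in W$ with $\ell_W(x,y)\le l$ the monotonicity $-\ell_W(x,y)\ge -l$ gives
\[ d_n(x,y)\ge Ce^{\alpha(n-\ell_W(x,y))}\ge Ce^{\alpha(n-l)}=(Ce^{-\alpha l})e^{\alpha n}, \]
so $C_l=Ce^{-\alpha l}$ works.

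For the converse the plan is to bring in the dynamics via~\eqref{eq:ellplusf}. The crucial observation is that, because $\ell_+$ shifts by $+1$ under $f$ on a stable leaf, the bijection $f^k:W\to f^k(W)$ is for every integer $k$ an isomorphism between the graph $\Gamma_n$ on $W$ and the graph $\Gamma_{n+k}$ on $f^k(W)$: an edge $(a,b)$ of $\Gamma_n$ on $W$ satisfies $\ell_W(a,b)\ge n$ if and only if $\ell_{f^k(W)}(f^k(a),f^k(b))\ge n+k$. Consequently the combinatorial distances are related by
\[ d_n(a,b)=d_{n+k}(f^k(a),f^k(b)) \]
for all stably equivalent $a,b\in W$, all $n\in\R$, and all $k\in\Z$, where the left-hand side is computed in $W$ and the right-hand side in $f^k(W)$.

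Given an arbitrary pair $x,y$ in a stable leaf $W$, I would let $m=\ell_W(x,y)$ (an integer) and choose $k=\lfloor l\rfloor-m\in\Z$. Then $x'=f^k(x)$ and $y'=f^k(y)$ lie in the leaf $W'=f^k(W)$ with $\ell_{W'}(x',y')=\lfloor l\rfloor\le l$, so the hypothesis~\eqref{eq:lowerexpchar} applies and yields $d_{n+k}(x',y')\ge C_l e^{\alpha(n+k)}$ for every $n$. Transferring back with the equivariance above and substituting $k=\lfloor l\rfloor-m$ gives
\[ d_n(x,y)=d_{n+k}(x',y')\ge C_l e^{\alpha(n+\lfloor l\rfloor-m)}=(C_l e^{\alpha\lfloor l\rfloor})e^{\alpha(n-m)}, \]
which is exactly the lower-exponent inequality with the uniform constant $C=C_l e^{\alpha\lfloor l\rfloor}$.

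The unstable case is handled symmetrically by using~\eqref{eq:ellminusf} in place of~\eqref{eq:ellplusf}, so that $f^{-k}$ plays the role of $f^k$. I do not expect any real obstacle in carrying this out; the whole argument is essentially bookkeeping once one notices that $\Gamma_n$ on a leaf is $f$-equivariant up to an index shift, which is what allows the hypothesis on pairs with $\ell_W\le l$ to self-improve into the universal lower-exponent bound.
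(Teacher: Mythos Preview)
Your proof is correct and follows essentially the same approach as the paper: both directions are handled identically, using the equivariance $d_n(a,b)=d_{n+k}(f^k(a),f^k(b))$ that comes from~\eqref{eq:ellplusf} to shift an arbitrary pair so that the hypothesis applies, then shifting back. Your use of $\lfloor l\rfloor$ to ensure the exponent of $f$ is an integer is in fact slightly more careful than the paper, which tacitly treats $l$ as an integer when writing $f^{l-n_0}$.
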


\begin{proof}
Let us assume that $W$ is a stable leaf (the proof for an
unstable leaf is the same).
If $\alpha$ is a lower exponent and $C$ is as in
Definition~\ref{def:lowerexppartial},
then for any $x, y\in W$ such that
$\ell_W(x, y)\le l$ we have
\[d_n(x, y)\ge Ce^{\alpha(n-\ell_W(x, y))}\ge Ce^{-\alpha
  l}\cdot e^{\alpha n},\]
and we can take $C_l=Ce^{-\alpha l}$.

Suppose now that $C_l>0$ is such that $d_n(x, y)\ge C_le^{\alpha n}$
for all $x, y$ belonging to one stable leaf $W$ and such
that $\ell_W(x, y)\ge l$.

Let $x$ and $y$ be arbitrary stably equivalent points of $\X$. Let
$W_0$ be their stable leaf. Denote $n_0=\ell_{W_0}(x, y)$.
Then $\ell_W(f^{l-n_0}(x), f^{l-n_0}(y))=l$, where $W$ is
the stable leaf of $f^{l-n_0}(x)\sim_+f^{l-n_0}(y)$.
Consequently,
\[d_n(f^{l-n_0}(x), f^{l-n_0}(y))\ge C_le^{\alpha n}\]
for all $n$.

The map $z\mapsto f^{l-n_0}(z)$ transforms every path in
$\Gamma_n(W_1)$ to a path in $\Gamma_{n+l-n_0}(W_2)$, where $W_1$ and
$W_2$ are the stable leaves of $z$ and $f^{l-n_0}(z)$, see~\eqref{eq:ellplusf}.

It follows that
\[
d_n(x, y)\ge d_{n+l-n_0}(f^{l-n_0}(x), f^{l-n_0}(y))\ge
 C_le^{\alpha(n+l-n_0)}=C_le^l\cdot e^{\alpha(n-\ell_{W_0}(x, y))},
\]
which shows that $\alpha$ is a lower exponent.
\end{proof}

\subsection{Metric properties of leaves}

Let $(\X, f)$ be a Smale space, and let $\ell, \ell_+$, and $\ell_-$
be the standard log-scale on $\X$, and the internal log-scales on the
stable and unstable leaves of $\X$. Let $U$ be a neighborhood of the
diagonal satisfying the conditions of Definition~\ref{def:expansive}.

The following theorem describes the classical theory of Bowen-Margulis
measure on Smale spaces, see~\cite{bowen:periodicmeasures}. See its exposition
in~\cite{nek:psmeasure}, which is notationally close to our paper.

Denote by $d$, $d_+$, and
$d_-$ metrics associated with the log-scales $\ell$, $\ell_+$, and
$\ell_-$, respectively. Denote by $B_*(r, x)$ the ball of radius $r$,
with respect to the metric $d_*$, with center in $x$, where $*\in\{+, -\}$.

\begin{theorem}
\label{th:SRBmeasures} Suppose that $(\X, f)$ is mixing.
There exists a number $\eta>0$ (called the \emph{entropy} of $(\X,
f)$), and  a family of Radon measures $\mu_+$ and $\mu_-$ on the stable
and unstable leaves of $\X$ satisfying the following properties.
\begin{enumerate}
\item There exists a number $C>1$ such that
 \[C^{-1}r^{\eta/\alpha_*}\le\mu_*(B_*(r, x))\le
 Cr^{\eta/\alpha_*}\]
  for all $r\ge 0$, $x\in\X$, and $*\in\{+, -\}$, where $\alpha_*$ is the exponent of the metric $d_*$.
\item The measures are preserved under holonomies.
\item The measures are quasi-invariant with respect to $f$, and
  $\frac{df_*(\mu_+)}{d\mu_+}=e^\eta$, $\frac{df_*(\mu_-)}{d\mu_-}=e^{-\eta}$.
\end{enumerate}
\end{theorem}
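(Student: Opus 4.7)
The plan is to follow Bowen's classical construction via Markov partitions and shifts of finite type. First I would build a Markov partition: a finite cover of $\X$ by closed rectangles $R_1, \ldots, R_N$ with pairwise disjoint interiors, such that whenever $f(\mathrm{int}\,R_i) \cap \mathrm{int}\,R_j \neq \emptyset$, the image $f(P_+(R_i, x))$ crosses $R_j$ fully in the stable direction, and symmetrically for unstable plaques under $f^{-1}$. Existence follows from expansivity and the shadowing lemma, which use only the local product structure and the contraction/expansion of Definition~\ref{def:smalespaces}. The coding map $\pi\colon\Sigma_A\arr\X$, sending $(i_n)_{n\in\Z}$ to the unique point of $\bigcap_n f^{-n}(R_{i_n})$, is a semiconjugacy from the shift of finite type $(\Sigma_A,\sigma)$, with transition matrix $A_{ij}=1$ iff $\mathrm{int}\,R_i\cap f^{-1}(\mathrm{int}\,R_j)\neq\emptyset$, to $(\X,f)$.

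Next I would set $\eta=\log\lambda_A$, where $\lambda_A$ is the Perron--Frobenius eigenvalue of $A$, and work with the Parry (maximal entropy) measure on $\Sigma_A$. Its conditional measures on stable and unstable cylinders have mass proportional to $\lambda_A^{-n}$ on cylinders of length $n$, with multiplicative constants determined by the left/right Perron eigenvectors of $A$. Pushing these conditional measures forward through $\pi$ yields Radon measures $\mu_+$ and $\mu_-$ on the stable and unstable leaves of $\X$; well-definedness requires the $\pi$-preimage of each rectangle boundary to be a null set, which is a standard consequence of mixing of the Parry measure.

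Property (2) is then built in: on $\Sigma_A$ the cylinder conditionals depend only on future (resp.\ past) coordinates, so holonomy inside a rectangle, which on the symbolic side merely changes the transverse coordinate, preserves them. Property (3) reduces to checking how $\sigma$ rescales cylinder measures, and the Parry formula yields exactly the factor $e^{\pm\eta}=\lambda_A^{\pm 1}$. For Property (1), an $n$-th dynamical stable plaque through $x$, namely the $\pi$-image of a stable cylinder $[i_0\cdots i_n]$, has $d_+$-diameter comparable to $e^{-\alpha_+ n}$ by the defining exponent relation between $d_+$ and $\ell_+$, while its $\mu_+$-mass is comparable to $\lambda_A^{-n}=e^{-\eta n}$; taking $r=e^{-\alpha_+ n}$ gives $\mu_+(B_+(r,x))\asymp r^{\eta/\alpha_+}$, and the unstable bound is symmetric.

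The main technical obstacle is the construction of the Markov partition in this abstract Smale-space setting without a smooth structure, and the verification that the induced stratification of plaques by stable/unstable cylinders matches the internal log-scales of Section~\ref{s:lowerexponents} closely enough for the comparison between $d_+$-balls and cylinder plaques that underlies Property (1). Both are handled in Bowen's original work~\cite{bowen:periodicmeasures}, and in the exposition~\cite{nek:psmeasure} in notation closer to the present paper; one may invoke them directly rather than repeating the details here.
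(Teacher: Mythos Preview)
The paper does not actually prove Theorem~\ref{th:SRBmeasures}; it is stated as a known result with the sentence ``The following theorem describes the classical theory of Sinai--Ruelle--Bowen measure on Smale spaces, see~\cite{bowen:periodicmeasures}. See its exposition in~\cite{nek:psmeasure}, which is notationally close to our paper.'' Your sketch is a faithful outline of exactly that classical Bowen construction (Markov partition, coding by a shift of finite type, Parry measure, push-forward of conditionals), so you are in complete agreement with the paper's intended argument, and your closing paragraph invoking~\cite{bowen:periodicmeasures} and~\cite{nek:psmeasure} is precisely what the paper itself does.
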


It follows from condition (1) of the theorem, that $\mu_+$ and $\mu_-$
are equivalent to the Hausdorff measures of the metrics $d_+$ and
$d_-$ of dimension $\frac{\eta}{\alpha_+}$ and
$\frac{\eta}{\alpha_-}$, respectively.

\section{Locally connected Smale spaces and upper exponents}
\label{s:upperexponents}

\subsection{Connectivity}
The aim of this section is to prove the following description of
locally connected Smale spaces. 

\begin{theorem}
\label{th:connectedness}
Let $(\X, f)$ be a Smale space. The following conditions are
equivalent.
\begin{enumerate}
\item The space $\X$ is locally connected.
\item All stable and unstable leaves are locally connected.
\item All stable and unstable leaves are connected.
\item All stable and unstable leaves are locally path connected.
\item All stable and unstable leaves are path connected.
\item The graphs $\Gamma_0(W)$ are connected for every (stable or unstable)
  leaf $W$.
\item The graphs $\Gamma_n(W)$ are connected for every leaf $W$ and every $n$.
\item There exist $\alpha>0$ and $C>0$ such that for every
  leaf $W$ we have
\[d_n(x, y)\le Ce^{\alpha(n-\ell_W(x, y))}\]
for all $x, y\in W$ and all $n\ge \ell_W(x, y)$.
\end{enumerate}
\end{theorem}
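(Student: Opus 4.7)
The plan is to prove the six conditions equivalent via a cycle $(1) \Rightarrow (2) \Rightarrow (3) \Rightarrow (4) \Rightarrow (5) \Rightarrow (6) \Rightarrow (1)$, combining the local direct product structure, the dynamics of $f$, and compactness of $\X$. For $(1) \Rightarrow (2)$, I would refine any rectangular neighborhood $R \cong P_+(R,x) \times P_-(R,x)$ to a connected one; a connected product forces both factors to be connected, so the plaques of a fine enough rectangle cover are connected and provide a basis of connected open sets for the internal topology of leaves, giving (2). For $(2) \Rightarrow (3)$, suppose a stable leaf $W$ has $x, y$ in distinct connected components; for $n$ large, stable contraction places $f^n(x), f^n(y)$ in a common small rectangle and hence in a common connected stable plaque, so they lie in the same component of $f^n(W)$. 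Since $f$ is a homeomorphism preserving connected components, this contradicts the assumption, and the unstable case is symmetric under $f^{-1}$.

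For $(3) \Rightarrow (4)$, the $\Gamma_0$-component of any $x \in W$ is open in the internal topology: every $y$ in it has the neighborhood $\{z : \ell_W(y, z) \ge 0\}$ consisting of points $\Gamma_0$-adjacent to $y$, hence in the same component. The complement is a union of other open components, so by connectedness of $W$ there is a single $\Gamma_0$-component. For $(4) \Rightarrow (5)$, use the shift identities $\ell_+(f(x), f(y)) = \ell_+(x, y) + 1$ and $\ell_-(f(x), f(y)) = \ell_-(x, y) - 1$: the homeomorphism $f$ gives graph isomorphisms $\Gamma_n(W) \cong \Gamma_{n \pm 1}(f(W))$, propagating connectedness of $\Gamma_0$ to every $\Gamma_n$.

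The crux is $(5) \Rightarrow (6)$. I would define
\[
\Psi(k) = \sup\{d_0(x, y) : W \text{ a leaf}, \ x, y \in W, \ \ell_W(x, y) \ge -k\}
\]
and prove (a) $\Psi(1) < \infty$ via compactness, and (b) the submultiplicative inequality $\Psi(k) \le \Psi(1) \cdot \Psi(k-1)$ via dynamics. For (a), after shifting by $f$ the set of pairs with $\ell_W(x, y) \ge 0$ is (taking $U$ closed) a compact subset of $\X \times \X$ contained in the union of the plaque-diagonals $\{(x', y') \in R \times R : [x', y']_R = x'\}$ for rectangles $R$ of a finite cover; combined with (5) and local boundedness of $d_0$, compactness upgrades pointwise finiteness to a uniform bound. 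For (b), if $\ell_W(x, y) \ge -k$, then $\ell_{f(W)}(f(x), f(y)) \ge -(k-1)$, giving a $\Gamma_0(f(W))$-chain of length $\le \Psi(k-1)$; pulling back by $f^{-1}$ yields a chain in $\Gamma_{-1}(W)$ each of whose edges (at $\ell_W \ge -1$) is bridged in $\Gamma_0(W)$ by $\le \Psi(1)$ edges. Iterating, $\Psi(k) \le \Psi(1)^k$, and the $f$-shift turns this into the upper exponent with $\alpha = \log \Psi(1)$.

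Finally $(6) \Rightarrow (1)$ closes the loop: the upper exponent, together with an associated metric on leaves of exponent $\alpha$, lets one turn $\Gamma_n$-chains (as $n \to \infty$) into rectifiable paths of uniformly bounded length inside plaques, forcing plaques---and hence small rectangles of $\X$---to be connected. I expect the main obstacle to be the $\Psi(1) < \infty$ step in $(5) \Rightarrow (6)$, which requires upgrading pointwise finiteness of the graph distance $d_0$ on individual leaves (given by (5)) to a uniform bound across all leaves via a careful compactness argument that accommodates the fact that the distances live on a family of different leaves parameterised by points of $\X$.
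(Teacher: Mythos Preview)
Your cycle of implications matches the paper's strategy essentially step for step: the paper also proves $(1)\Leftrightarrow(2)$ via the product structure, $(2)\Rightarrow(3)$ by pushing forward with $f^n$ into a common plaque, $(3)\Rightarrow(4)\Leftrightarrow(5)$ via clopen components and the $f$-shift, $(5)\Rightarrow(6)$ by iterating a uniform one-step expansion factor, and then closes the loop by building actual curves from increasingly fine chains.

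The one substantive difference is in how the uniform one-step bound (your $\Psi(1)<\infty$) is obtained. The paper does not work directly with $\Gamma_n$ for this step but introduces auxiliary graphs $\Gamma'_n(W)$ whose edges are pairs lying in a common plaque $f^n(T)$ of a fixed finite rectangle cover. The payoff is that a witness for a short $\Gamma'_1$-path is a finite \emph{sequence of rectangles} $R_1,\ldots,R_n$, which is stable under perturbation: the same sequence certifies a path for every nearby pair $(x',y')$. Compactness of the closure of the edge set then immediately gives the uniform constant. Your sketch (``local boundedness of $d_0$ \ldots compactness upgrades pointwise finiteness'') is pointing at the right phenomenon, but working directly with $\Gamma_n$, a witnessing chain is a sequence of \emph{points} in a particular leaf, and transferring it to a nearby leaf requires a holonomy argument and control of how holonomies affect $\ell_W$; once you spell this out you will essentially have reinvented the paper's $\Gamma'_n$. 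This is not a gap so much as a place where your outline is thinner than it looks.

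One minor overstatement: in $(6)\Rightarrow(1)$ you claim the limiting curves are \emph{rectifiable}. The paper only shows (and only needs) that their \emph{diameter} is bounded by $C\,d(x,y)$; the curves are continuous limits of chains and need not have finite length. Bounded diameter already gives local connectedness of plaques and hence of $\X$.
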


Recall that for a stable or unstable
leaf $W$, we denote by $\Gamma_n(W)$ the graph with the set of
vertices $W$ in which two vertices $x, y$ are connected by an edge if and
only if $\ell_W(x, y)\ge n$, where $\ell_W$ is the corresponding
($\ell_+$ or $\ell_-$) internal log-scale on $W$.

Let us start by proving equivalence of conditions (1) and (2).

\begin{proposition}
\label{pr:oneandtwo}
Let $(\X, f)$ be a Smale space. The space $\X$ is locally connected if
and only if each leaf is locally connected.
\end{proposition}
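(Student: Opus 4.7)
The plan is to prove both implications by exploiting the homeomorphism $R \cong P_+(R,x) \times P_-(R,x)$ that every small rectangle enjoys, together with the standard fact that connectedness and local connectedness interact well with finite products. The one conceptual point I want to keep under control throughout is that the internal topology on a leaf (a direct limit) differs from the $\X$-subspace topology globally, but \emph{agrees with it on any single plaque} $P_\pm(R,x)$, since such a plaque is contained in some $W_{-n,\pm}(x)$. All of my comparisons between leaf topology and ambient topology will be made at the level of one plaque.

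\textbf{Leaves locally connected $\Rightarrow$ $\X$ locally connected.} Given $x\in\X$, I would choose any open rectangle $R\ni x$. The plaques $P_+(R,x)$ and $P_-(R,x)$ are open neighborhoods of $x$ in the stable (resp.\ unstable) leaf of $x$ with respect to the internal topology. Assuming local connectedness of each leaf, I can find connected neighborhoods $C_+\subseteq P_+(R,x)$ and $C_-\subseteq P_-(R,x)$ of $x$ in the internal topology; by the remark above these are also neighborhoods of $x$ in the $\X$-subspace topology on the respective plaques. Transporting via the homeomorphism $R\cong P_+(R,x)\times P_-(R,x)$, the set $[C_+,C_-]\cong C_+\times C_-$ is a connected neighborhood of $x$ in $R$, hence in $\X$. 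Since $R$ can be taken arbitrarily small, this produces a neighborhood basis of connected sets at $x$.

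\textbf{$\X$ locally connected $\Rightarrow$ leaves locally connected.} A neighborhood basis of $x$ in the internal topology of its stable leaf is given by the plaques $P_+(R,x)$ as $R$ ranges over arbitrarily small open rectangles containing $x$. Given such an $R\cong A\times B$, the connected component of $x$ in $R$ is $A_0\times B_0$, where $A_0,B_0$ are the connected components of the projections of $x$ in $A$ and $B$ respectively — this is purely formal, using that a product of connected sets is connected and that projections send connected sets to connected sets. Since $R$ is open in the locally connected space $\X$, it is itself locally connected, so its connected components are open; projecting the open set $A_0\times B_0$ onto each factor shows that $A_0$ and $B_0$ are open in $A$ and $B$. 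Hence $A_0\times B_0$ is an open sub-rectangle of $R$ with connected stable plaque $A_0=P_+(A_0\times B_0,x)$, which is a connected open neighborhood of $x$ in $W_+(x)$ in the internal topology. As $R$ shrinks, so does $A_0$, giving the desired basis; the unstable case is identical.

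The main obstacle, as noted, is to avoid conflating the internal topology of a leaf with its $\X$-subspace topology. In direction one, one must verify that the "connected neighborhood in internal topology" really yields a connected neighborhood in $\X$; in direction two, one must verify that the connected open plaque obtained from the connected component really is a neighborhood of $x$ in internal topology. Both issues are resolved by working inside a fixed plaque, where the two topologies coincide, and then using the product decomposition of the ambient rectangle to pass back to $\X$.
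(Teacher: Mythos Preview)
Your proof is correct and follows essentially the same approach as the paper: both directions exploit the product decomposition $R\cong P_+(R,x)\times P_-(R,x)$, building connected neighborhoods in $\X$ as products of connected leaf-neighborhoods (forward) and obtaining connected leaf-neighborhoods by projecting connected sets from $R$ (backward). The only cosmetic difference is that in the backward direction the paper simply takes an arbitrary connected neighborhood $U\subset R$ of $x$ and projects, whereas you take the connected component of $x$ in $R$; both yield the same conclusion, and your explicit attention to the distinction between internal and subspace topologies on a plaque is a point the paper leaves implicit.
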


\begin{proof}
Each point $x\in\X$ has a neighborhood
homeomorphic to the direct product of the neighborhoods of $x$ in the
corresponding stable and unstable leaves. It follows that if $x$
has bases of connected neighborhoods in the leaves, then $x$ has a
basis of connected neighborhoods in $\X$.

In the other direction, if $x$ has a basis of connected neighborhoods
in $\X$, then for any rectangular neighborhood $R$ of $x$ there
exists a connected neighborhood $U\subset R$ of $x$. Its projection
onto the direct factors of $R$ will be connected, hence the point $x$
has bases of connected neighborhoods in its leaves.
\end{proof}

\begin{proposition}
\label{pr:locthencon}
If every stable leaf of $(\X, f)$ is locally connected, then every
stable leaf of $(\X, f)$ is connected.
\end{proposition}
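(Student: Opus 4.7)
The plan is to exploit the fact that $f$ contracts stable leaves: under iteration, any two stably equivalent points converge in $\X$, so we can transport the connectivity question down to a small neighborhood where local connectedness of the leaf delivers a connected stable plaque, and then pull back by the homeomorphism $f^{-n}$.

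First I would build a finite open cover of $\X$ by sub-rectangles whose stable plaques are all connected. For each $p \in \X$, fix a rectangular neighborhood $R_p = A_p \times B_p$ with $A_p = P_+(R_p, p)$. Local connectedness of the stable leaf $W_p$ at $p$, together with the fact that $P_+(R_p, p)$ is a basic neighborhood of $p$ in the internal topology of $W_p$, yields a connected open neighborhood $V_p$ of $p$ with $V_p \subset A_p$. The sub-rectangle $R'_p := V_p \times B_p$ is then an open rectangular neighborhood of $p$ whose stable plaques are the translates $V_p \times \{b\}$ for $b \in B_p$, each homeomorphic to $V_p$ (via the product structure) and therefore connected. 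By compactness of $\X$, some finite subcollection $R'_1, \ldots, R'_k$ covers $\X$; let $\epsilon > 0$ be a Lebesgue number for this cover.

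Now take arbitrary $y, z$ in a stable leaf $W$. Since $y \sim_+ z$, we have $d(f^n(y), f^n(z)) \to 0$ as $n \to \infty$, so for large $n$ the set $\{f^n(y), f^n(z)\}$ has diameter less than $\epsilon$ and lies inside some $R'_i$. The key step is to check that these two stably equivalent points in fact lie in the \emph{same} stable plaque of $R'_i$: forming $w := [f^n(y), f^n(z)]$ inside $R'_i$, the inclusions $P_+(R'_i, f^n(y)) \subset W_+(f^n(y))$ and $P_-(R'_i, f^n(z)) \subset W_-(f^n(z))$ give $w \sim_+ f^n(z)$ (using also $f^n(y) \sim_+ f^n(z)$) and $w \sim_- f^n(z)$, so $w \in W_+(f^n(z)) \cap W_-(f^n(z)) = \{f^n(z)\}$ by expansiveness. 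Therefore $f^n(z) \in P := P_+(R'_i, f^n(y))$. Since $P$ is connected by construction of $R'_i$ and $f^{-n}$ restricts to a homeomorphism of the stable leaf of $f^n(y)$ onto $W$, the set $f^{-n}(P) \subset W$ is a connected subset containing both $y$ and $z$. Hence $y$ and $z$ lie in the same connected component of $W$, and since $y, z$ were arbitrary, $W$ is connected.

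The main technical point is the bracket-and-expansiveness argument showing that stably equivalent points in a sufficiently small rectangle must share a stable plaque; everything else is a routine compactness-plus-local-refinement construction that merely leverages local connectedness to shrink an arbitrary rectangular cover into one with connected stable plaques.
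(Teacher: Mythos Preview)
Your approach is essentially the paper's: cover $\X$ by finitely many rectangles with connected stable plaques, iterate until two given stably equivalent points land in a single such rectangle, verify they share a stable plaque, and pull back by $f^{-n}$. The paper is simply terser at the verification step, asserting without further detail that for large $n$ the points $f^n(x), f^n(y)$ lie in one plaque of some $R_i$.

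There is a small bookkeeping slip in your bracket computation. With $w := [f^n(y), f^n(z)]$ one has $w \in P_-(R'_i, f^n(y))$ and $w \in P_+(R'_i, f^n(z))$ (not the plaques you cite), giving $w \sim_- f^n(y)$ directly and $w \sim_+ f^n(z) \sim_+ f^n(y)$ by transitivity; expansiveness then yields $w = f^n(y)$, not $w = f^n(z)$. That is exactly what forces $f^n(z) \in P_+(R'_i, f^n(y))$, so your final conclusion stands once the indices are straightened out.
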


\begin{proof}
For every point $x\in\X$ there exists a connected neighborhood $U$ of
$x$ in its stable leaf and a rectangular neighborhood $R$ of $x$ in
$\X$ such that $P_+(R, x)=U$. Then each plaque of $R$ will be
homeomorphic to $U$, hence will be connected. It follows that every
point of $\X$ has a rectangular neighborhood $R$ such that all its
stable plaques are connected. Since $\X$ is compact,
there exists a finite covering $\mathcal{R}=\{R_i\}$ of $\X$ by open rectangles
with connected stable plaques.

Let $W$ be a stable leaf, and let $x, y\in W$. By Lebesgue's covering
lemma, there exists $n$ such that $f^n(x)$ and $f^n(y)$ belong to
one plaque $V$ of a rectangle $R_i\in\mathcal{R}$. Then $f^{-n}(V)$ is a
connected subset of $W$ containing $x$ and $y$. We have shown that any
two points of $W$ belong to one connected component of $W$, i.e., that
$W$ is connected.
\end{proof}

Let $\mathcal{R}=\{R_i\}_{i\in I}$ be a finite covering of $\X$ by
open rectangles. Let $\mathcal{R}_+$ be the
set of all stable plaques of elements of $\mathcal{R}$.

Every stable leaf $W$ is a union $\bigcup_{T\in\mathcal{R}_+, T\subset
W} T$ of stable plaques
contained in $W$. Each plaque is an open subset of $W$ and the
internal topology on $W$ coincides with the direct limit topology of the union
of the plaques.

Denote by $\Gamma'_n(W)$ the graph with the set of vertices $W$ in which
two vertices are connected by an edge if they belong to one set of the
form $f^n(T)$, $T\in\mathcal{R}_+$.

The map $f:W\arr f(W)$ induces isomorphisms
$\Gamma_n(W)\arr\Gamma_{n+1}(f(W))$
and $\Gamma_n'(W)\arr\Gamma_{n+1}'(f(W))$, see~\eqref{eq:ellplusf}
and~\eqref{eq:ellminusf} for the first isomorphism.

\begin{lemma}
\label{lem:inclusiongn}
There exists a number $k_0$ such that if $x, y\in W$ are adjacent in
$\Gamma_n(W)$, then they are adjacent in $\Gamma_{n-k_0}'(W)$, and if $x, y\in
W$ are adjacent in $\Gamma_n'(W)$, then they are adjacent in $\Gamma_{n-k_0}(W)$.
\end{lemma}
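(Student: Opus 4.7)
The plan is to reduce to $n=0$ and prove both inclusions from the local product structure. The isomorphisms $f : \Gamma_n(W) \to \Gamma_{n+1}(f(W))$ and $f : \Gamma_n'(W) \to \Gamma_{n+1}'(f(W))$ recorded just before the lemma show that both inclusions are shift-equivariant in $n$, so a constant $k_0$ that works for $n=0$ works for all $n$. I treat the stable case; the unstable case is symmetric under $f \leftrightarrow f^{-1}$.

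For the inclusion $\Gamma_n(W) \hookrightarrow \Gamma_{n-k_0}'(W)$, I would first replace $U$ by a smaller symmetric neighborhood of the diagonal (a $U_m$ for $m$ large), which is harmless because by Lemma~\ref{lem:standlogscale} this only shifts the log-scale by a bounded constant that can be absorbed into $k_0$. I shrink $U$ so that (i) $U$ is contained in the Lebesgue set of $\mathcal R$, and (ii) any stably equivalent pair in a common $R \in \mathcal R$ at distance in $U$ lies in a single stable plaque of $R$. Then $\ell_+(x,y) \ge n$ gives $(a,b) := (f^{-n}(x), f^{-n}(y)) \in U$ with $a \sim_+ b$, hence $a,b \in R$ for some $R \in \mathcal R$. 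The standard Smale canonical-coordinate mechanism then finishes: writing $c = [b,a] \in P_+(R,a) \cap P_-(R,b)$, one has $c \sim_+ a \sim_+ b$ and $c \sim_- b$, and a pair that is simultaneously stably and unstably equivalent inside a small enough rectangle must coincide, so $b = c \in P_+(R,a)$. Thus $a,b$ lie in a common stable plaque $T \in \mathcal R_+$, and $x, y \in f^n(T)$ are adjacent in $\Gamma_n'(W)$.

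For the reverse inclusion $\Gamma_n'(W) \hookrightarrow \Gamma_{n-k_0}(W)$, the ingredient is uniform forward contraction inside plaques of $\mathcal R_+$: finiteness of $\mathcal R_+$ bounds their diameters uniformly, and compactness of $\X$ yields uniform contraction constants in Definition~\ref{def:smalespaces} across all rectangles of $\mathcal R$. Hence there is $k_0$ with $(f^j(a), f^j(b)) \in U$ for every $j \ge k_0$ and every pair $a,b$ lying in a common plaque of $\mathcal R_+$. If $x = f^n(a), y = f^n(b)$ with $a,b \in T \in \mathcal R_+$, this gives $\ell_+(a,b) \ge -k_0$ and hence $\ell_+(x,y) \ge n - k_0$ by \eqref{eq:ellplusf}, which is exactly adjacency in $\Gamma_{n-k_0}(W)$.

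The main technical subtlety is the first direction's shrinking of $U$ so that the canonical-coordinate correspondence ``stably equivalent and close implies same stable plaque'' applies; once that is arranged, the rest is standard exponential bookkeeping with the Smale contraction estimates.
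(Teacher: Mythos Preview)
Your proof is correct and follows essentially the same approach as the paper's: both directions use the shift-equivariance under $f$, uniform forward contraction on plaques of $\mathcal{R}_+$ for the inclusion $\Gamma_n'(W)\hookrightarrow\Gamma_{n-k_0}(W)$, and a Lebesgue-number argument for $\Gamma_n(W)\hookrightarrow\Gamma_{n-k_0}'(W)$. You are more explicit than the paper about why two close stably equivalent points must lie in a common stable plaque (the canonical-coordinate step), whereas the paper simply asserts this via ``Lebesgue's covering lemma''; the underlying argument is the same.
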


\begin{proof}
There exists $k_1$ such that for every plaque $V\in\mathcal{R}_+$ and
every pair $x, y\in V$ we have $(f^k(x), f^k(y))\in U$ for all $k\ge
k_1$ (where $U$ is a neighborhood of the diagonal defining $\ell,
\ell_+$, and $\ell_-$). If $x, y\in W$ are connected by an edge in
$\Gamma_n'(W)$, then $x, y\in f^n(T)$ for $T\in\mathcal{R}_+$, hence
$(f^{k-n}(x), f^{k-n}(y))\in U$ for all $k\ge k_1$, hence $\ell_+(x,
y)\ge n-k_1$, i.e., $x$ and $y$ are connected by an edge in
$\Gamma_{n-k_1}(W)$.

By Lebesgue's covering lemma, there exists
$k_2$ such that if $x, y\in W$ are such that $\ell_W(x,
y)\ge k_2$, then $x$ and $y$ belong to one plaque $V\in\mathcal{R}_+$.
Then every edge of $\Gamma_{k_2}(W)$ is an edge in
$\Gamma_0'(W)$. Consequently, every edge of $\Gamma_n(W)$ is an edge in $\Gamma_{n-k_2}(W)$.
\end{proof}

\begin{proposition}
\label{pr:primenonprime}
The following conditions are equivalent.
\begin{enumerate}
\item The graph $\Gamma_0(W)$ is connected for every stable leaf $W$.
\item The graph $\Gamma_n(W)$ is connected for every stable leaf $W$ and
  for every $n\in\Z$.
\item The graph $\Gamma_0'(W)$ is connected for every stable leaf $W$.
\item The graph $\Gamma_n'(W)$ is connected for every stable leaf $W$ and
  for every $n\in\Z$.
\end{enumerate}
\end{proposition}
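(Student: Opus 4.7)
The plan is to prove the four equivalences by splitting them into two vertical pairs connected by the dynamics of $f$ (the equivalences (1)$\Leftrightarrow$(2) and (3)$\Leftrightarrow$(4)) and one horizontal bridge via Lemma \ref{lem:inclusiongn} (say, (2)$\Leftrightarrow$(4)). Chaining these together gives the full equivalence.

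For (1)$\Leftrightarrow$(2) I would invoke the observation already recorded in the text preceding Lemma \ref{lem:inclusiongn}: the map $f:W\arr f(W)$ induces a graph isomorphism $\Gamma_n(W)\arr\Gamma_{n+1}(f(W))$. Iterating, $\Gamma_n(W)$ is isomorphic to $\Gamma_0(f^{-n}(W))$, and $f^{-n}(W)$ is again a stable leaf. Hence if every $\Gamma_0(W')$ is connected, so is every $\Gamma_n(W)$, and the reverse implication is trivial. The same argument, applied to the analogous isomorphism $\Gamma'_n(W)\arr\Gamma'_{n+1}(f(W))$, yields (3)$\Leftrightarrow$(4).

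For the bridge (2)$\Leftrightarrow$(4), I would directly apply Lemma \ref{lem:inclusiongn}. Assuming (2), fix $n\in\Z$ and a stable leaf $W$. The lemma tells us that every edge of $\Gamma_{n+k_0}(W)$ is also an edge of $\Gamma'_n(W)$; since $\Gamma_{n+k_0}(W)$ is connected by (2), so is $\Gamma'_n(W)$. The reverse implication is symmetric: by the second half of the lemma, every edge of $\Gamma'_{n+k_0}(W)$ is an edge of $\Gamma_n(W)$, so (4) gives (2).

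As for obstacles: since Lemma \ref{lem:inclusiongn} is already available, the proof is essentially bookkeeping about the shift parameter $k_0$. The only care needed is to get the direction of the shift right — one always has to ask for connectedness of a \emph{finer} graph in order to deduce connectedness of a coarser one — but this comes out correctly from the inclusions of edge sets provided by the lemma. The two isomorphism-based equivalences and the one lemma-based bridge then combine to give (1)$\Leftrightarrow$(2)$\Leftrightarrow$(4)$\Leftrightarrow$(3), as required.
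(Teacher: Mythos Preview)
Your proposal is correct and follows essentially the same route as the paper: the isomorphisms $\Gamma_n(W)\cong\Gamma_0(f^{-n}(W))$ and $\Gamma_n'(W)\cong\Gamma_0'(f^{-n}(W))$ handle the vertical equivalences (1)$\Leftrightarrow$(2) and (3)$\Leftrightarrow$(4), and the edge inclusions from Lemma~\ref{lem:inclusiongn} give the bridge (2)$\Leftrightarrow$(4). Your write-up is in fact slightly more careful than the paper's in making explicit that $f^{-n}(W)$ is again a stable leaf, which is what allows the quantifier over all stable leaves in (1) and (3) to do the work.
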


\begin{proof}
Since the map $f^k:W\arr f^k(W)$ induces isomorphisms
$\Gamma_n(W)\arr\Gamma_{n+k}(f^k(W))$ and $\Gamma_n'(W)\arr\Gamma_{n+k}'(f^k(W))$, (1) is
equivalent to (2), and (3) is equivalent to (4).

Let $k_0$ be as in Lemma~\ref{lem:inclusiongn}. If all graphs
$\Gamma_n(W)$ are connected, then all graphs
$\Gamma_{n-k_0}'(W)\supseteq\Gamma_n(W)$ are connected. If all graphs
$\Gamma_n'(W)$ are connected, then all graphs
$\Gamma_{n-k_0}(W)\supseteq\Gamma_n'(W)$ are connected. This shows that all
conditions (1)--(4) are equivalent to each other.
\end{proof}

\begin{proposition}
\label{pr:congraphcon}
If a stable leaf $W$ is connected, then the graph $\Gamma_n'(W)$ is
connected for every $n$.
\end{proposition}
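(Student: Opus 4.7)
The plan is to reduce to showing connectedness of $\Gamma_0'(W)$, then argue that the connected components of this graph are clopen subsets of $W$ with respect to its internal topology.

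First, I would invoke Proposition~\ref{pr:primenonprime} (or rather its proof): since $f:W\arr f(W)$ induces a graph isomorphism $\Gamma_n'(W)\arr\Gamma_{n+1}'(f(W))$, connectedness of $\Gamma_n'(W)$ for one $n$ and all stable leaves $W$ is equivalent to connectedness for all $n$ and all stable leaves. So it is enough to prove that $\Gamma_0'(W)$ is connected whenever $W$ is a connected stable leaf.

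Next, I would define an equivalence relation $\sim$ on $W$: put $x\sim y$ iff $x$ and $y$ lie in the same connected component of $\Gamma_0'(W)$. Equivalently, $x\sim y$ iff there is a chain $T_0,T_1,\ldots,T_m$ of plaques in $\mathcal{R}_+$ contained in $W$ with $x\in T_0$, $y\in T_m$, and $T_i\cap T_{i+1}\ne\emptyset$ for all $i$. The key structural observation is that every equivalence class $C$ is a union of plaques: if $T\in\mathcal{R}_+$ meets $C$, then every point of $T$ is joined to every point of $C\cap T$ by a single edge of $\Gamma_0'(W)$, so $T\subset C$. Consequently, the complement $W\setminus C$ is also a union of plaques from $\mathcal{R}_+$.

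Recall from the discussion preceding Lemma~\ref{lem:inclusiongn} that the internal topology on $W$ coincides with the direct limit topology given by the representation $W=\bigcup_{T\in\mathcal{R}_+,\,T\subset W}T$, and each such plaque $T$ is open in $W$. Therefore both $C$ and $W\setminus C$ are open in $W$, i.e. $C$ is clopen. Since $W$ is connected and $C$ is nonempty, $C=W$, which means $\Gamma_0'(W)$ has a single connected component. The only thing that requires any real attention is making sure that ``$T$ meets $C$ implies $T\subset C$'' is used correctly to show that $W\setminus C$ is also a union of plaques — but this is immediate from the definition of $\sim$, so there is no serious obstacle in this argument.
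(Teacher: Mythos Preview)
Your proof is correct and follows essentially the same approach as the paper: take a connected component of $\Gamma_0'(W)$, observe that it is a union of plaques (hence open), that its complement is likewise a union of plaques (hence open), and conclude by connectedness of $W$. The only difference is that you make the reduction to $n=0$ explicit via the isomorphism $\Gamma_n'(W)\cong\Gamma_0'(f^{-n}(W))$ and the fact that $f^{-n}(W)$ is again connected, whereas the paper's proof simply treats $\Gamma_0'(W)$ and leaves this step implicit.
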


\begin{proof}
Suppose that $W$ is a connected stable leaf.
Let $A$ be a connected component of $\Gamma_0'(W)$. Let $W_A$ be the union of the
plaques $V\in\mathcal{R}_+$ containing vertices of $A$. It follows
from the definition of the graph $\Gamma_0'(W)$ that every plaque
$V\in\mathcal{R}_+$ is either contained in $W_A$, or is disjoint with
it. Consequently, $W_A$ is clopen, which implies that $W=W_A$, hence
$A=W$, and $\Gamma_0'(W)$ is connected.
\end{proof}

\begin{proposition}
\label{pr:A}
Suppose that the graphs $\Gamma_n'(W)$ are connected for all stable
leaves $W$ and all $n$. Then there exists
$A\ge 1$ such that any two adjacent vertices in $\Gamma_0'(W)$ are on
distance at most $A$ in $\Gamma_1'(W)$.
\end{proposition}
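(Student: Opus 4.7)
My plan is to shift the question by the dynamics and then apply a counting-plus-connectedness argument. Since the map $f$ induces a graph isomorphism $\Gamma_0'(f^{-1}(W))\arr\Gamma_1'(W)$, the proposition is equivalent to the statement: for every $T\in\mathcal{R}_+$ with $T\subseteq R_j\in\mathcal{R}$ contained in a leaf $W$, the set $f^{-1}(T)$ has diameter at most $A$ in $\Gamma_0'(f^{-1}(W))$, uniformly in $T$.

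The key geometric observation is that $f^{-1}(T)$ is a stable plaque of the rectangle $f^{-1}(R_j)$, since $f$ preserves the local product structure, and the closure $\overline{f^{-1}(R_j)}$ is a compact subset of $\X$, hence covered by some finite number $m_j$ of rectangles $R_{i_1},\ldots,R_{i_{m_j}}$ from the finite cover $\mathcal{R}$. I set $A:=\max_j m_j$, which is finite. Passing to a refinement of $\mathcal{R}$ by small enough rectangles whose direct-product factors are connected and which meet each stable leaf in a single plaque (a standard refinement for Smale spaces), the set $f^{-1}(T)$ is connected and is contained in the union of at most $m_j$ plaques $P_1,\ldots,P_{m_j}\in\mathcal{R}_+$ of the leaf $f^{-1}(W)$, namely $P_k=R_{i_k}\cap f^{-1}(W)$. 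A connected topological space covered by open sets has a connected intersection graph, so the cover $\{f^{-1}(T)\cap P_k\}$ has connected intersection graph on at most $m_j$ vertices, hence diameter at most $m_j-1$. Any two points $x,y\in f^{-1}(T)$ are then joined in $\Gamma_0'(f^{-1}(W))$ by a chain of length at most $m_j\le A$, by walking along an intersection-graph path from the plaque containing $x$ to the one containing $y$ and inserting witness points in each successive overlap. Transporting this chain via $f$ yields the required bound in $\Gamma_1'(W)$.

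The main technical obstacle is justifying the refinement of $\mathcal{R}$: that it can be chosen with connected plaques and with each stable leaf meeting each rectangle in a single plaque. This is routine for Smale spaces via a Lebesgue-number argument applied to sufficiently small rectangles, but the paper has not explicitly set it up. An alternative route, which uses the connectedness hypothesis directly, is a compactness-by-contradiction argument: extract a limit from a divergent sequence of counterexamples $(T_n,x_n,y_n)$ in a fixed $\overline{R_i}$ via pigeonhole, produce a finite chain in the limit leaf $W_\infty$ from the hypothesis, and transfer it leaf-by-leaf to the $W_n$ via holonomies inside each rectangle $f(R_{k_j})$ visited by the chain. The subtle point in that alternative is gluing the holonomy images at each junction vertex using a bounded number of additional edges, which requires that the two images lie in a common $f(S)$ for some $S\in\mathcal{R}_+$ once $n$ is large.
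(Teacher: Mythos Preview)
Your first approach has two genuine problems. First, the refinement you invoke does not exist: in any topologically transitive Smale space (e.g.\ a hyperbolic toral automorphism) stable leaves are dense, so every open rectangle meets every stable leaf in infinitely many plaques, never one. The formula $P_k = R_{i_k}\cap f^{-1}(W)$ therefore does not define a single plaque, and your bound $m_j$ on the number of plaques covering $f^{-1}(T)$ is unsupported. Second, you cannot at this point assume that the plaques of $\mathcal{R}$ can be taken connected. That would require local connectedness of the leaves, i.e.\ condition~(2) of Theorem~\ref{th:connectedness}; but the present proposition is a step in the implication $(5)\Rightarrow(6)$, which is then used (via Proposition~\ref{pr:pathcon}) to obtain~(2). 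Assuming connected plaques here is circular, and without them you lose the key step ``$f^{-1}(T)$ is connected, hence the intersection graph of its plaque cover is connected.'' Note also that your first argument never actually invokes the hypothesis that the graphs $\Gamma_n'(W)$ are connected --- it substitutes a stronger, unavailable topological assumption --- which should already be a warning sign.

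Your alternative compactness route is essentially what the paper does, except that the paper argues directly rather than by contradiction. It takes the closure $E\subset\X\times\X$ of the edge relation of $\Gamma_0'$, observes that $E$ is compact and contained in the stable equivalence relation, and uses the hypothesis to produce for each $(x,y)\in E$ a finite chain of rectangles from $\mathcal{R}$ witnessing a $\Gamma_1'$-path from $x$ to $y$. Because the conditions defining such a chain (membership in given rectangles, nonempty plaque intersections) are open, the same chain serves all pairs $(x',y')$ in a neighborhood of $(x,y)$ in $E$; compactness of $E$ then yields the uniform bound $A$. This direct form sidesteps both obstacles above: it uses only connectedness of the graphs $\Gamma_1'(W)$, not of plaques, and the ``holonomy transfer'' you flag as subtle in the contradiction version is replaced by the robustness of an open condition.
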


\begin{proof}
Let $E$ be the closure in $\X\times\X$ of the set pairs of points $(x, y)$ such that
there exists a plaque $V\in\mathcal{R}_+$ such that $x, y\in V$. It is
easy to see that $E$ is compact. It contains the set of edges of every
graph $\Gamma_0'(W)$, and is contained in the stable equivalence relation.

Any pair of points $x, y$ such that $(x, y)\in E$ is connected
by a path in $\Gamma_1'(W)$, as all graphs $\Gamma_n'(W)$ are connected. It
means that there exists a sequence of rectangles $R_1, \ldots,
R_n\in\mathcal{R}$, and a sequence of points $x_i\in R_i$ such that $x$
and $x_1$ belong to the same stable plaque of $R_1$, $y$ and $x_n$
belong to the same stable plaque of $R_n$, and the stable plaque of
$x_i$ in $R_i$ intersects with the stable plaque of $x_{i+1}$ in
$R_{i+1}$. This sequence $R_1, \ldots, R_n$ will define a path in $\Gamma_1'(W)$
connecting any two points $(x', y')$ belonging to a neighborhood of
$(x, y)$ in $E$. It follows then from compactness of $E$ that we can
find a finite upper bound on the length of a path connecting any two
points of $E$, which finishes the proof.
\end{proof}

Recall that $d_n(x, y)$ is the distance in the graph $\Gamma_n(W)$.

\begin{proposition}
\label{pr:upperexp}
Suppose that all graphs $\Gamma_n'(W)$ are connected. Then there exist positive
constants $\alpha$ and $C$ such that for any two points $x, y\in W$ we have
\[d_n(x, y)\le Ce^{\alpha(n-\ell_W(x, y))}\]
for all $n\ge\ell_W(x, y)$.
\end{proposition}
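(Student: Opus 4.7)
The plan is to use Proposition~\ref{pr:A} as the engine and then transfer estimates between the combinatorial distances in $\Gamma_n(W)$ and in $\Gamma_n'(W)$ using Lemma~\ref{lem:inclusiongn}. Let me write $d_n'(x,y)$ for the combinatorial distance in $\Gamma_n'(W)$.

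First I would upgrade Proposition~\ref{pr:A} to a bound valid at every level. Since $f:W\to f(W)$ induces a graph isomorphism $\Gamma_n'(W)\to\Gamma_{n+1}'(f(W))$, the constant $A$ from Proposition~\ref{pr:A} works on every scale: any edge of $\Gamma_n'(W)$ is a path of length at most $A$ in $\Gamma_{n+1}'(W)$. Subdividing each edge of a geodesic gives the contraction inequality
\[
d_{n+1}'(x,y)\le A\cdot d_n'(x,y)\qquad\text{for all } x,y\in W,\ n\in\Z.
\]

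Next I would iterate this to produce an upper bound of the desired exponential form in terms of $d_m'$ at the ``starting level'' $m$ where $x$ and $y$ are already adjacent. If $x,y\in W$ satisfy $\ell_W(x,y)\ge m$, then by Lemma~\ref{lem:inclusiongn} they are adjacent in $\Gamma_{m-k_0}'(W)$, so $d_{m-k_0}'(x,y)\le 1$. Iterating the contraction inequality $(n-m+k_0)$ times yields
\[
d_n'(x,y)\le A^{\,n-\ell_W(x,y)+k_0}
\]
for every $n\ge \ell_W(x,y)-k_0$, applied with $m=\ell_W(x,y)$.

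Finally I would transfer this back to $d_n$. The other half of Lemma~\ref{lem:inclusiongn} says that any edge of $\Gamma_n'(W)$ is an edge of $\Gamma_{n-k_0}(W)$, hence $d_{n-k_0}(x,y)\le d_n'(x,y)$, equivalently $d_n(x,y)\le d_{n+k_0}'(x,y)$. Combining with the previous step, for every $n\ge \ell_W(x,y)$ we obtain
\[
d_n(x,y)\le d_{n+k_0}'(x,y)\le A^{\,n+k_0-\ell_W(x,y)+k_0}=C\,e^{\alpha(n-\ell_W(x,y))},
\]
where $\alpha=\log A$ and $C=A^{2k_0}$. This is exactly the claimed inequality.

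I do not expect any real obstacle here: the whole content of the proposition is already packaged in Proposition~\ref{pr:A} (which gives the crucial ``edges subdivide into at most $A$ edges on the next scale'' bound) and in Lemma~\ref{lem:inclusiongn} (which bridges the $\Gamma_n$ and $\Gamma_n'$ graphs). The only care needed is to keep the additive constant $k_0$ on the right side of the inequality at every transfer, which is absorbed into the multiplicative constant $C$.
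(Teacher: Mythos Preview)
Your proof is correct and is essentially the same as the paper's: both use Lemma~\ref{lem:inclusiongn} to pass from $\Gamma_{\ell_W(x,y)}(W)$ to $\Gamma_{\ell_W(x,y)-k_0}'(W)$, iterate Proposition~\ref{pr:A} (transported to every level via the isomorphisms induced by $f$) to get $d_n'(x,y)\le A^{n-\ell_W(x,y)+k_0}$, and then use Lemma~\ref{lem:inclusiongn} again to return to $d_n$, arriving at the identical constants $\alpha=\log A$ and $C=A^{2k_0}$. The only cosmetic difference is that you state the contraction inequality $d_{n+1}'\le A\,d_n'$ explicitly before iterating, whereas the paper jumps directly to the iterated bound.
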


\begin{proof}
Let $A$ and $k_0$ be as in Propositions~\ref{pr:A}
and~\ref{lem:inclusiongn}, and let $x, y\in W$ be arbitrary. Denote
$n_0=\ell_W(x, Y)$. The
points $x$ and $y$ are connected by an edge in $\Gamma_{n_0}(W)$,
hence they are connected by an edge in $\Gamma_{n_0-k_0}'(W)$. It follows from
Proposition~\ref{pr:A} that for every $k\ge 0$,
distance between $x$ and $y$ in $\Gamma_{n_0-k_0+k}'(W)$ is not greater
than $A^k$. As the set of edges of $\Gamma_{n_0-k_0+k}'(W)$ is contained
in the set of edges of $\Gamma_{n_0-2k_0+k}(W)$, we have
\[d_{n_0-2k_0+k}(x, y)\le A^k.\]
for all $k\ge 0$. Consequently,
\[d_n(x, y)\le A^{n-n_0+2k_0}=A^{2k_0}\cdot A^{n-\ell_W(x, y)}\]
for all $n\ge\ell_W(x, y)-2k_0$.
\end{proof}

\begin{proposition}
\label{pr:pathcon}
Suppose that all graphs $\Gamma_n'(W)$ are connected. Let $d$ be a metric
on $W$ associated with $\ell_W$. There exists a
constant $C$ such that for any two
points $x, y\in W$ there exists a curve $\gamma:[0, 1]\arr W$
connecting $x$ to $y$ and such that the diameter the range of $\gamma$
is not larger than $C d(x, y)$.
\end{proposition}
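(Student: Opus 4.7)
The plan is to build the path as a concatenation of short paths inside a chain of plaques at a level $m$ chosen so that individual plaque diameters are comparable to $d(x,y)$.

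First I would refine $\mathcal{R}$ so that each $V\in\mathcal{R}_+$ has a connected, compact closure in its leaf. Under our hypothesis, the equivalences of Theorem~\ref{th:connectedness} available at this point yield local connectedness of leaves, so such a refinement exists and each closed plaque becomes a Peano continuum. By the Hahn-Mazurkiewicz theorem (or, equivalently, the classical fact that a compact connected locally connected metric space is locally path-connected), each of these closed plaques is path-connected, and since $f^m$ is a homeomorphism of $\X$, so is every plaque $f^m(T)$ with $T\in\mathcal{R}_+$ and $m\in\Z$.

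Next, for arbitrary $x, y\in W$, let $n_0=\ell_W(x, y)$ and apply Proposition~\ref{pr:upperexp} with $n=n_0$ to obtain $d_{n_0}(x, y)\le C_1$ for some constant $C_1$ independent of $x$ and $y$. By Lemma~\ref{lem:inclusiongn} this yields a chain $V_0, V_1, \ldots, V_s$ of plaques at level $m=n_0-k_0$ with $x\in V_0$, $y\in V_s$, $V_i\cap V_{i+1}\ne\emptyset$, and $s\le C_1$. Picking $z_i\in V_i\cap V_{i+1}$ (with $z_{-1}=x$, $z_s=y$) and joining $z_{i-1}$ to $z_i$ by a path inside the path-connected $V_i$, I get by concatenation a continuous $\gamma$ from $x$ to $y$ whose range lies in $\bigcup_i V_i$. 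Since any $u, v\in V_i=f^m(T_i)$ satisfy $\ell_W(u, v)\ge m-k_1$ as in the proof of Lemma~\ref{lem:inclusiongn}, each $V_i$ has $d$-diameter at most $C_2 e^{-\alpha_0 m}$, where $\alpha_0$ is the exponent of $d$. Hence the diameter of the range of $\gamma$ is at most $(s+1)C_2 e^{-\alpha_0 m}\le C_3 e^{-\alpha_0 n_0}\le C\, d(x, y)$, the last inequality being the definition of an associated metric.

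The step I expect to be the main obstacle is the first: converting the combinatorial connectedness supplied by the hypothesis into genuine path-connectedness of individual plaques. This is where Hahn-Mazurkiewicz must enter, and where one must ensure that the leaves have enough local connectedness so that $\mathcal{R}$ can be refined into a cover by plaques whose closures are Peano continua.
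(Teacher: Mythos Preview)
Your proposal has a genuine circularity. Proposition~\ref{pr:pathcon} is precisely the step in the proof of Theorem~\ref{th:connectedness} that closes the cycle by showing (5)$\Rightarrow$(2): given only the connectedness of all graphs $\Gamma_n'(W)$, it produces the curves that witness local connectedness of the leaves. The implications established \emph{before} this proposition are (1)$\Leftrightarrow$(2), (2)$\Rightarrow$(3), (3)$\Rightarrow$(4)$\Leftrightarrow$(5), and (5)$\Rightarrow$(6). None of these lets you pass from the hypothesis (5) back to (2); that is exactly what Proposition~\ref{pr:pathcon} is for. So your sentence ``the equivalences of Theorem~\ref{th:connectedness} available at this point yield local connectedness of leaves'' is unjustified, and without it you cannot refine $\mathcal{R}$ to plaques with Peano closures, and Hahn--Mazurkiewicz cannot be invoked. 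You identified this as the main obstacle yourself, but the proposal does not overcome it.

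The paper's argument avoids this entirely by constructing the curve intrinsically, without assuming any topological connectedness of plaques. Starting from the edge $\{x,y\}$ in $\Gamma_{n_0-k_0}'(W)$, it uses Proposition~\ref{pr:A} to refine it to a path $\gamma_1$ in $\Gamma_{n_0-k_0+1}'(W)$ of length at most $A$, then refines each edge of $\gamma_1$ to a path in $\Gamma_{n_0-k_0+2}'(W)$, and so on. The successive finite sets $\gamma_n$ are nested, consecutive points in $\gamma_n$ are at distance $\lesssim e^{-\alpha(n_0+n)}$, and the diameters of the $\gamma_n$ are controlled by a geometric series bounded by a constant times $d(x,y)$. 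The closure of $\bigcup_n\gamma_n$ is then the image of a continuous curve. This iterative refinement is the substantive content: it manufactures path-connectedness directly from the combinatorial connectedness of the $\Gamma_n'$, rather than importing it via Hahn--Mazurkiewicz. Once you have this, your single-level chaining argument becomes unnecessary (and, had local connectedness been available for free, your approach would indeed have been a shorter route).
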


\begin{proof}
Let $A$ and $k_0$ be as in Propositions~\ref{pr:A}
and~\ref{lem:inclusiongn}. Let $C_1>1$ and $\alpha>0$ be such that
$C_1^{-1}e^{-\alpha\ell_W(x, y)}\le d(x, y)\le C_1e^{-\alpha\ell_W(x,
  y)}$ for all $x, y\in W$.

Take arbitrary $x, y\in W$. Let $n_0=\ell_W(x, y)$. Then $x$ and $y$
are adjacent in $\Gamma_{n_0-k_0}'(W)$, hence they are connected by a path $\gamma_1:
\{x=x_{1,1}, x_{1, 2}, \ldots, x_{1, m_1}=y\}$ of length at most $A$ in
$\Gamma_{n_0-k_0+1}'(W)$. Each pair of points $x_{1, i}, x_{1, i+1}$ is
connected by a path of length at most $A$ in $\Gamma_{n_0-k_0+2}'(W)$. We
get then a path $\gamma_2=\{x=x_{2, 1}, x_{2, 2}, \ldots, x_{1,
  m_2}\}$ of length at most $A^2$ in $\Gamma_{n_0-k_0+2}'(W)$, containing
$\gamma_1$. We get then inductively defined sequence of paths
$\gamma_n=\{x=x_{n, 1}, x_{n, 2}, \ldots, x_{n, m_n}=y\}$ in
$\Gamma_{n_0-k_0+n}'(W)$ such that each next path $\gamma_n$
is obtained from $\gamma_{n-1}$ by inserting at most $A-1$ points
between each pair of neighbors of $\gamma_n$.

Every pair of points $x_{n, j}$, $x_{n, j+1}$ is adjacent in
$\Gamma_{n_0-k_0+n}'(W)$, hence $\ell_W(x_{n, j}, x_{n, j+1})\ge
n_0-2k_0+n$, hence $d(x_{n, j}, x_{n, j+1})\le
C_1e^{-\alpha(n+n_0-2k_0)}$. In particular, for every point $t_1$ of
$\gamma_{n+1}$ there exists a point $t_2$ of $\gamma_n$ such that
$d(t_1, t_2)<AC_1e^{-\alpha(n+1+n_0-2k_0)}$.

It follows that the diameter of the set $\gamma_n$ is not greater than
\begin{multline*}
C_1e^{-\alpha(n_0-2k_0)}+2\sum_{i=1}^nAC_1e^{-\alpha(i+n_0-2k_0)}<\\
2C_1Ae^{-\alpha(n_0-2k_0)}\cdot\frac{1}{1-e^{-\alpha}}=\\
\frac{2C_1Ae^{2\alpha k_0}}{1-e^{-\alpha}}\cdot e^{-\alpha\ell_W(x,
  y)}\le \frac{2C_1^2Ae^{2\alpha k_0}}{1-e^{-\alpha}}\cdot d(x, y).
\end{multline*}

Since $d(x_{n, j}, x_{n, j+1})\le C_1e^{-\alpha(n_0-2k_0+n)}$, the
closure of $\bigcup_{n=1}^\infty\gamma_n$ is the image of a continuous
curve connecting $x$ to $y$. Diameter of the image of the curve is not greater
than $\frac{2C_1^2Ae^{2\alpha k_0}}{1-e^{-\alpha}}\cdot d(x, y)$.
\end{proof}

Let us summarize now the proof of
Theorem~\ref{th:connectedness}. The equivalence of (1) and (2) is shown in
Proposition~\ref{pr:oneandtwo}. The implication (2)$\Rightarrow$(3) is
given in Proposition~\ref{pr:locthencon}. The equivalence of (6) and (7)
is contained in
Proposition~\ref{pr:primenonprime}. Proposition~\ref{pr:congraphcon}
shows then that (3) implies (6). Proposition~\ref{pr:upperexp} proves
that (7) implies (8). Condition (8) obviously implies
(7). Proposition~\ref{pr:pathcon} shows that (7) implies path connectivity
and local path connectivity of the leaves, i.e., that (7) implies (4) and (5). The implications (3)$\Rightarrow$(1), (5)$\Rightarrow$(3), and (4)$\Rightarrow$(2) are obvious.
This finishes the proof of Theorem~\ref{th:connectedness}.

\subsection{Local product structure on locally connected Smale spaces}

\begin{proposition}
\label{pr:uniquerectangle}
Let $R$ be a sub-rectangle of a Smale space $(\X, f)$. If $R$ is
connected and locally connected, then the direct product structure on $R$
compatible with the local product structure on $\X$ is unique.
\end{proposition}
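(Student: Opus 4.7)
Let $[\cdot,\cdot]_1$ and $[\cdot,\cdot]_2$ be two direct product structures on $R$ compatible with the local product structure on $\X$, with corresponding plaques $P_\pm^i(R,x)$. By the compatibility condition of Definition~\ref{def:locprod}, the two operations agree on a neighborhood of the diagonal in $R\times R$, so near every $x\in R$ the plaques $P_\pm^1(R,x)$ and $P_\pm^2(R,x)$ coincide; moreover each $P_+^i(R,x)$ is contained in the stable leaf $W_+(x)$ (since $f$ contracts distances on its stable plaques) and each $P_-^i(R,x)$ in the unstable leaf $W_-(x)$. My plan is first to prove $P_+^1(R,x)=P_+^2(R,x)$ and $P_-^1(R,x)=P_-^2(R,x)$ by giving a purely topological characterization of the plaques, and then to deduce that $[u,v]_1=[u,v]_2$ for all $u,v$.

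For the first step I will show that the $P_+^i$-plaques are precisely the connected components of $R\cap W_+(x)$ with respect to the internal topology on $W_+(x)$. Since $R$ is connected, the factors of the decomposition $R\cong P_+^i(R,x)\times P_-^i(R,x)$ are both connected, so each plaque is connected. The family $\{P_+^i(R,y):y\in R\cap W_+(x)\}$ is a partition of $R\cap W_+(x)$ into pairwise disjoint sets (two such plaques are either equal or disjoint by the product axioms), and each plaque is open in the internal topology of $W_+(x)$, since this topology is by definition the direct limit topology arising from the decomposition of the leaf into plaques. Local connectedness of $R$ implies local connectedness of each factor and hence of each plaque, so $R\cap W_+(x)$ is locally connected in the internal topology, and consequently its connected components are open. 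A disjoint union of two or more nonempty open subsets cannot be connected, so each component of $R\cap W_+(x)$ contains exactly one plaque; hence the plaques are exactly the connected components of $R\cap W_+(x)$. This characterization is intrinsic and independent of the product structure, so $P_+^1(R,y)=P_+^2(R,y)$ for every $y$. The same argument with $f$ replaced by $f^{-1}$ gives $P_-^1(R,y)=P_-^2(R,y)$.

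For the second step, fix $u,v\in R$. The axioms of a direct product structure imply $[u,v]_i\in P_-^i(R,u)\cap P_+^i(R,v)$, and by step one this intersection is the same subset $S\subseteq R$ for both $i=1,2$. Writing either product structure as $R=A\times B$ in coordinates with $u=(a_u,b_u)$ and $v=(a_v,b_v)$, the set $S$ equals $(A\times\{b_v\})\cap(\{a_u\}\times B)=\{(a_u,b_v)\}$, a single point. Therefore $[u,v]_1=[u,v]_2$. The main obstacle lies in the first step: one has to carefully distinguish the internal topology on a leaf from the subspace topology inherited from $\X$ and to exploit simultaneously that the plaques are open in the internal topology (from the direct limit structure), connected (from connectedness of $R$), and that the connected components of the open set $R\cap W_+(x)$ are open (from local connectedness of $R$).
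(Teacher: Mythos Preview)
Your proof is correct and takes a genuinely different route from the paper's. The paper argues directly via a chain of small rectangles: given two points $x,y$ in a $[\cdot,\cdot]_1$-plaque $P_+$, connectedness of $P_+$ yields a chain $U_0,\ldots,U_n$ of rectangles from a defining cover with $U_i\cap U_{i+1}\cap P_+\ne\emptyset$; since each $U_i\cap P_+$ is a plaque of the sub-rectangle $U_i\cap R$ (on which both structures agree with $[\cdot,\cdot]_{U_i}$), it lies in a single $[\cdot,\cdot]_2$-plaque, and the chain forces all pieces into the same one. Your argument instead characterizes the plaques intrinsically as the connected components of $R\cap W_+(x)$ in the internal leaf topology, a description independent of the choice of product structure. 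Both approaches ultimately rest on the compatibility condition and connectedness of $R$; the paper's is more elementary in that it never leaves the subspace topology of $\X$, while yours gives a cleaner conceptual picture at the cost of invoking the internal topology.

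One step in your argument deserves a word more. When you say the plaques are connected, this follows from $R$ being connected, hence the factors are connected in the subspace topology from $\X$. But you then use connectedness in the \emph{internal} topology of $W_+(x)$. These two topologies coincide on $P_+^i(R,x)$ --- the compatibility condition guarantees that if $y_1,y_2\in P_+^i(R,x)$ are close in $\X$, they lie in a common small rectangle $U$ with $[\cdot,\cdot]_i=[\cdot,\cdot]_U$ on $U\cap R$, hence $y_2\in P_+(U,y_1)$ and they are close internally --- but this is not entirely automatic and should be stated. (You flag the issue in your last paragraph without quite resolving it.) Separately, your appeal to local connectedness is unnecessary: once the plaques are open, connected, and partition $R\cap W_+(x)$, they are clopen and hence already \emph{are} the connected components. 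Neither your proof nor the paper's seems to use local connectedness in an essential way.
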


\begin{proof}
Suppose that, on the contrary, there exist two different direct
product structures $[\cdot, \cdot]_1$ and $[\cdot, \cdot]_2$, both
compatible with the local product structure on $\X$.

By Definition~\ref{def:locprod}, there exists a covering
$\mathcal{R}$ of $\X$ by open rectangles
such that for any $U\in\mathcal{R}$ and $x, y\in R\cap U$ we have $[x,
y]_U=[x, y]_1=[x, y]_2$. Then for every
$U\in\mathcal{R}$ the intersection $U\cap R$ is a (possibly empty)
sub-rectangle of $R$ with respect to both direct product structures;
and restrictions of the direct products structures $[\cdot, \cdot]_i$,
$i=1, 2$, onto $U\cap R$ coincide.

Since $R$ is connected, all plaques of $R$ (with respect to both direct
products structures) are connected. Let $P_+$ be a stable plaque of
$(R, [\cdot, \cdot]_1)$. Let $x, y\in P_+$. Since $P_+$ is connected, there exists a
sequence of points $x_0=x, x_1, \ldots, x_n=y$ and a sequence of
rectangles $U_0, U_1, \ldots, U_n\in\mathcal{R}$ such that $x_i\in
U_i$, and $U_i\cap U_{i+1}\cap P_+\ne\emptyset$. The set $U_i\cap P_+$ is a
plaque of the rectangle $U_i\cap R$, hence it is a subset of the stable
plaque of $(R, [\cdot, \cdot]_2)$. We get a sequence $U_i\cap P_+$ of subsets of
plaques of $(R, [\cdot, \cdot]_2)$ such that $(U_i\cap P_+)\cap
(U_{i+1}\cap P_+)\ne\emptyset$. But it means that $U_i\cap P_+$ are
subsets of one plaque of $(R, [\cdot, \cdot]_2)$. We have shown that
if two points belong to one stable plaque of $(R, [\cdot, \cdot]_1)$,
then they belong to one stable plaque of $(R, [\cdot, \cdot]_2)$. The
converse is proved in the same way. Consequently, the stable plaques of $R$
with respect to $[\cdot, \cdot]_1$ are the same as the stable plaques
of $R$ with respect to $[\cdot, \cdot]_2$. The same statement is
obviously true for the unstable plaques, which implies that the direct
product structures $[\cdot, \cdot]_1$ and $[\cdot, \cdot]_2$ on $R$ coincide.
\end{proof}

\subsection{Upper exponents}
\begin{defi}
\label{def:upperexponent}
A positive number $\alpha>0$ is a  \emph{stable (resp.\ unstable)
  upper exponent} of the Smale space
if there exists $C>0$ such that for any stable (resp.\ unstable) leaf
$W$ and every pair of points
$x, y\in W$ we have
\[d_n(x, y)\le Ce^{\alpha(n-\ell_W(x, y))}\]
for all $n\ge\ell_W(x, y)$.
\end{defi}

Note that changing $\ell_W$ to a bi-Lipschitz equivalent log-scale,
one does not change the set of upper exponents, i.e., this notion is well defined
and depends only on the topological conjugacy class of the Smale space
(see Lemma~\ref{lem:standlogscale}).

By Theorem~\ref{th:connectedness} a finite upper exponent
exists if $\X$ is locally connected.

The proof of the next proposition is analogous to the proof of
Proposition~\ref{pr:lowerexpchar}.

\begin{proposition}
\label{pr:upperexpchar}
Fix $l\in\R$. A number $\alpha>0$ is a stable (resp.\ unstable)
upper exponent if and only if there exists a
constant $C_l>0$ such that for any stable (resp.\ unstable) leaf $W$
and any $x, y\in W$ such that $\ell_W(x, y)\ge l$ we have
\[d_n(x, y)\le C_le^{\alpha n}\]
for all $n\ge 0$.
\end{proposition}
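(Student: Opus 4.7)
The plan is to mimic the proof of Proposition~\ref{pr:lowerexpchar}, reversing all inequalities and swapping the regime $\ell_W(x,y)\le l$ with $\ell_W(x,y)\ge l$. The key mechanism is the same in both directions: by~\eqref{eq:ellplusf} the map $f^k$ is a graph isomorphism $\Gamma_n(W)\to\Gamma_{n+k}(f^k(W))$, so a bound valid at one "level" of internal log-scale propagates uniformly. I treat the stable case; the unstable one follows by applying the stable argument to $(\X, f^{-1})$.

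For the forward implication, suppose $\alpha$ is a stable upper exponent with constant $C$, and let $x, y\in W$ with $\ell_W(x, y)\ge l$. If $n\ge \ell_W(x, y)$, the upper-exponent bound gives
\[d_n(x, y)\le Ce^{\alpha(n-\ell_W(x, y))}\le Ce^{-\alpha l}\cdot e^{\alpha n}.\]
If $0\le n\le \ell_W(x, y)$, then $x$ and $y$ are already adjacent in $\Gamma_n(W)$, so $d_n(x, y)\le 1\le e^{\alpha n}$. Taking $C_l:=\max(1, Ce^{-\alpha l})$ yields the characterization.

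For the reverse implication, suppose the bound holds with constant $C_l$; fix stably equivalent $x, y$ and set $n_0:=\ell_W(x, y)$. Since $\ell_W$ takes integer values, we may replace $l$ by $\lceil l\rceil$ and assume $l\in\Z$. Apply $f^{l-n_0}$: by~\eqref{eq:ellplusf} the images $x', y'$ lie in the stable leaf $W'=f^{l-n_0}(W)$ with $\ell_{W'}(x', y')=l$, so the hypothesis delivers $d_m(x', y')\le C_l e^{\alpha m}$ for all $m\ge 0$. Because $f^{l-n_0}$ induces a graph isomorphism $\Gamma_n(W)\to\Gamma_{n+l-n_0}(W')$, we have $d_n(x, y)=d_{n+l-n_0}(x', y')$, and setting $m=n+l-n_0$ gives
\[d_n(x, y)\le C_l e^{\alpha l}\cdot e^{\alpha(n-n_0)}\qquad\text{whenever }n\ge n_0-l.\]
If $l\ge 0$, this covers all $n\ge n_0$. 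If $l<0$, the remaining range $n_0\le n<n_0-l$ is handled by observing that $\Gamma_n$ contains more edges than $\Gamma_{n_0-l}$, so $d_n(x, y)\le d_{n_0-l}(x, y)\le C_l\le C_l e^{\alpha(n-n_0)}$ on this range. Taking $C:=\max(C_l e^{\alpha l}, C_l)$ thus exhibits $\alpha$ as a stable upper exponent.

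I do not anticipate any obstacle here; the only subtlety is the bookkeeping of signs of $l$ in the reverse direction, which is resolved by combining the dynamic rescaling with the trivial bound $d_n(x, y)\le 1$ on adjacent pairs.
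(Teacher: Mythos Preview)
Your proof is correct and follows exactly the route the paper intends: the paper simply states that the argument is analogous to Proposition~\ref{pr:lowerexpchar}, and you have carried out that analogy carefully, including the extra bookkeeping (the split $0\le n<\ell_W(x,y)$ versus $n\ge\ell_W(x,y)$ in the forward direction, and the sign of $l$ in the reverse direction) that the restriction to $n\ge 0$ forces. One minor simplification: since replacing $l$ by any $l'\ge l$ only weakens the hypothesis, you could take $l':=\max(\lceil l\rceil,0)$ at the outset and avoid the separate $l<0$ case entirely.
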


\section{Splittings of Smale spaces}
\label{s:splitting}

\subsection{Groups of deck transformations}

\begin{defi}
Let $(\X, f)$ be a Smale space. A \emph{splitting} of $(\X, f)$ is a
covering map $\pi:\splitting\arr\X$, where $\splitting$ is a space with a (global) direct
product structure, such that
\begin{enumerate}
\item $\pi$ preserves the local product
structures on $\splitting$ and $\X$, see Definition~\ref{def:preserveslocprod};
\item restriction of $\pi$ onto every plaque $P_1(\splitting, x)$ of $\splitting$ is a homeomorphism
  with the stable leaf $W_+(\pi(x))$, and restriction of $\pi$
  onto every plaque $P_2(\splitting, x)$ of $\splitting$ is a homeomorphism with the
  unstable leaf $W_-(\pi(x))$, with respect to their intrinsic topology.
\end{enumerate}
\end{defi}

\begin{proposition}
\label{pr:splittingmixing}
If there exists a splitting of a Smale space $(\X, f)$, then the Smale
space is irreducible and the set of non-wandering points of $(\X, f)$
is equal to $\X$. If, in addition, $\X$ is connected, then $(\X, f)$
is mixing.
\end{proposition}

\begin{proof}
The proof is the same as, for example, the proof of Theorem~5 in~\cite{brin:asterisque}. Suppose that $\pi:\splitting\arr\X$ is a splitting. Let $\X_1,
\X_2\subset\X$ be irreducible components of $(\X, f)$, see
Theorem~\ref{th:spectdecomp}. Take $x_1, x_2\in\splitting$ such that
$\pi(x_1)\in\X_1$ and $\pi(x_2)\in\X_2$. Consider the point $[x_1,
x_2]\in\splitting$. Then $\pi([x_1, x_2])\in W_+(\pi(x_1))\cap
W_-(\pi(x_2))$. It follows that the set of the accumulation points of
$f^n(\pi([x_1, x_2]))$ for $n\ge 0$ belongs to $\X_1$, and the set of
the accumulation points of $f^n(\pi([x_1, x_2]))$ for $n\le 0$ belongs
to $\X_2$, hence $\X_2\prec\X_1$. But we will also have
$\X_1\prec\X_2$ by considering $[x_2, x_1]$, which implies
$\X_1=\X_2$, by Theorem~\ref{th:spectdecomp}. Consequently, $(\X, f)$
has no wandering points and is irredusible. The rest of the
proposition follows from Theorem~\ref{th:mixing}.
\end{proof}

\begin{proposition}
\label{pr:uniqesplitting}
Suppose that $\X$ is connected and locally connected.
Let $\pi_1:\splitting_1\arr\X$ and $\pi_2:\splitting_2\arr\X$ be splittings of $(\X,
f)$. If $x_1\in\splitting_1$ and $x_2\in\splitting_2$ are such that
$\pi_1(x_1)=\pi_2(x_2)$, then there exists a unique homeomorphism
$F:\splitting_1\arr\splitting_2$ preserving the local product structures such that
$\pi_1=\pi_2\circ F$ and $F(x_1)=x_2$.
\end{proposition}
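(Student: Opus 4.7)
My plan is to recognize this as a standard covering-space lifting problem. Since $\X$ is connected and locally connected, Theorem~\ref{th:connectedness} combined with Proposition~\ref{pr:pathcon} implies that every stable and unstable leaf of $\X$ is connected and locally path connected in its intrinsic topology. By the definition of a splitting, the plaques of each $\splitting_i=A_i\times B_i$ are intrinsically homeomorphic to leaves of $\X$, so $\splitting_i$ is a direct product of two path connected, locally path connected spaces, hence is itself path connected and locally path connected; this is exactly what is needed to invoke the lifting criterion for covering maps.

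Write $y=\pi_1(x_1)=\pi_2(x_2)$ and $x_i=(a_i,b_i)\in A_i\times B_i$. I would then compute the subgroup $H_i:=(\pi_i)_*\bigl(\pi_1(\splitting_i, x_i)\bigr)\subseteq\pi_1(\X,y)$, where $\pi_1(\cdot,\cdot)$ denotes the fundamental group functor. By the product decomposition, $\pi_1(\splitting_i, x_i)\cong\pi_1(A_i,a_i)\times\pi_1(B_i,b_i)$, with the two factors generated by loops lying in $A_i\times\{b_i\}$ and $\{a_i\}\times B_i$ respectively. Since $\pi_i$ restricts to a homeomorphism from $A_i\times\{b_i\}$ onto $W_+(y)$ in its intrinsic topology, $(\pi_i)_*$ identifies $\pi_1(A_i,a_i)$ with $\pi_1(W_+(y),y)$ and sends it into $\pi_1(\X,y)$ via the continuous inclusion of the intrinsic leaf; analogously for $B_i$ and $W_-(y)$. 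Therefore $H_i$ is the subgroup of $\pi_1(\X,y)$ generated by the images of the intrinsic fundamental groups of $W_+(y)$ and $W_-(y)$, a description which is independent of the chosen splitting, so $H_1=H_2$.

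The standard lifting criterion then produces a unique continuous $F:\splitting_1\arr\splitting_2$ with $\pi_2\circ F=\pi_1$ and $F(x_1)=x_2$, and symmetrically a unique continuous $F':\splitting_2\arr\splitting_1$ with $\pi_1\circ F'=\pi_2$ and $F'(x_2)=x_1$. The compositions $F'\circ F$ and $F\circ F'$ are continuous self-maps satisfying $\pi_1\circ(F'\circ F)=\pi_1$ and $\pi_2\circ(F\circ F')=\pi_2$, fixing $x_1$ and $x_2$ respectively; the identity maps have the same property, so by uniqueness of lifts they equal the identities. Thus $F$ is a homeomorphism with inverse $F'$, and is uniquely determined among continuous maps with the stated lifting property.

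It remains to verify that $F$ preserves local product structures. For any $z\in\splitting_1$, choose small rectangular neighborhoods $U\ni z$ in $\splitting_1$ and $V\ni F(z)$ in $\splitting_2$ such that $\pi_1|_U$ and $\pi_2|_V$ are homeomorphisms onto rectangles of $\X$; shrinking if necessary so that $\pi_1(U)=\pi_2(V)$, we obtain $F|_U=(\pi_2|_V)^{-1}\circ\pi_1|_U$. Since both $\pi_1$ and $\pi_2$ preserve the local product structure by the definition of a splitting, so does this composition, and hence $F$ does globally. The main technical point of the proof is the identification of $H_i$ as the subgroup of $\pi_1(\X,y)$ generated by the intrinsic leaf fundamental groups; this identification is exactly where the defining property of a splitting, namely that plaques correspond homeomorphically to leaves in their intrinsic topology, enters in an essential way.
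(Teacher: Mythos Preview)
Your proof is correct but takes a genuinely different route from the paper's. The paper constructs $F$ explicitly: since $\pi_i$ restricts to a homeomorphism from each plaque through $x_i$ onto the corresponding leaf of $\X$, one sets $F([y_1,y_2]_{\splitting_1})=[z_1,z_2]_{\splitting_2}$, where $z_j$ is the unique point of the appropriate plaque through $x_2$ with $\pi_2(z_j)=\pi_1(y_j)$. This $F$ is then a product of two homeomorphisms on the factors, hence a homeomorphism; the paper invokes the uniqueness of the direct product structure on a connected locally connected rectangle (the argument of Proposition~\ref{pr:uniquerectangle}) to see that $F$ intertwines the \emph{global} operations $[\cdot,\cdot]_{\splitting_i}$, and finishes with a clopen argument to obtain $\pi_2\circ F=\pi_1$ everywhere. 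Your approach via the lifting criterion is more conceptual and makes transparent that this is a covering-space statement, with the key observation that $(\pi_i)_*\pi_1(\splitting_i,x_i)$ is the subgroup generated by the intrinsic leaf fundamental groups and hence independent of $i$. The paper's approach avoids fundamental groups entirely and yields for free the stronger fact that $F$ preserves the global direct product (not just the local structure), which is used immediately afterwards when defining deck transformations and lifts of $f$; with your argument one would still need to appeal to Proposition~\ref{pr:uniquerectangle} to upgrade local to global preservation.
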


\begin{proof}
Since the leaves of locally connected Smale spaces are connected and
locally connected, the spaces $\splitting_i$ are connected and locally connected.

Denote $x=\pi_1(x_1)=\pi_2(x_2)$.
Restriction of $\pi_i$ onto the plaques $P_1(\splitting_i, x_i)$ and $P_2(\splitting_i,
x_i)$ are homeomorphisms with the leaves $W_+(x)$ and $W_-(x)$,
respectively. Therefore, the only possible way to define $F$ is by the
equality
\[F([y_1, y_2]_{\splitting_1})=[z_1, z_2]_{\splitting_2},\]
where $y_1\in P_1(\splitting_1, x_1)$, $y_2\in P_2(\splitting_1, x_1)$ are arbitrary,
while $z_1\in P_1(\splitting_2, x_2)$, $z_2\in P_2(\splitting_2, x_2)$ are uniquely
determined by the condition $\pi_1(y_1)=\pi_2(z_1)$ and
$\pi_1(y_2)=\pi_2(z_2)$.

The defined map $F$ is a homeomorphism, since it is a direct product
of two homeomorphisms. Consequently,
$\pi_2\circ F:\splitting_1\arr\X$ is a covering map. Since $F$ and $\pi_2$
preserve the local product structures of $\splitting_i$ and $\X$, their
composition $\pi_2\circ F$ preserves the local product
structures, i.e., the image of the direct product structure on $\splitting_1$
by $F$ defines the same local product structure on $\splitting_2$ as the direct product
structure $[\cdot, \cdot]_{\splitting_2}$. By the same arguments as in the
proof of Proposition~\ref{pr:uniquerectangle}, the direct
product structure on $\splitting_2$ is uniquely determined by the corresponding local product
structure. Consequently, $F$ preserves the direct product structures,
i.e., $F([y_1, y_2]_{\splitting_1})=[F(y_1), F(y_2)]_{\splitting_2}$ for all $y_1,
y_2\in \splitting_1$. It follows that if $\pi_1(y)=\pi_2(F(y))$ for $y\in \splitting_1$,
then $\pi_1=\pi_2\circ F$ on a rectangular neighborhood of
$y$. Consequently, the set of points $y\in \splitting_1$ such that
$\pi_1(y)=\pi_2(F(y))$ is open and closed, it contains $x$, hence it
is equal to $\splitting_1$.
\end{proof}

We assume now that $(\X, f)$ is a locally connected and connected
Smale space.

Let $\pi:\splitting\arr\X$ be a splitting. Let $G$ be the set of all
homeomorphisms $g:\splitting\arr\splitting$ preserving the direct product structure on
$\splitting$ and such that $\pi=\pi\circ F$. Then $G$ is obviously a group. By
Proposition~\ref{pr:uniqesplitting} (for the case $\pi_1=\pi_2$),
the action of the group on $\splitting$ is
free and transitive on $\pi^{-1}(x)$ for every $x\in\X$. We call $G$
the \emph{group of deck transformations} of the splitting. It acts
properly on $\splitting$, since every point of $\splitting$ has a neighborhood $U$ such
that $\pi(U)$ is evenly covered, i.e., such that $g(U)\cap U=\emptyset$ for all
not-trivial $g\in G$.

Note that if $\pi:\splitting\arr\X$ is a splitting,
then $\pi\circ f$ is also a splitting. Choose $x_1,
x_2\in\splitting$ such that $f\circ\pi(x_1)=\pi(x_2)$, and apply
Proposition~\ref{pr:uniqesplitting} to $\pi_1=f\circ\pi$ and
$\pi_2=\pi$. We get that there exists a unique homeomorphism $F:\splitting\arr
\splitting$ preserving the direct product structure such that $F(x_1)=x_2$ and
$\pi\circ F=f\circ\pi$.

\begin{defi}
Let $\pi:\splitting\arr\X$ be a splitting.
We say that a homeomorphism $F:\splitting\arr\splitting$ preserving the direct product
structure is a \emph{lift of $f$} if $\pi\circ F=f\circ\pi$.
\end{defi}

It follows
from the above arguments that lifts of $f$ exist, and if $F_1$ and
$F_2$ are two lifts, then $F_1^{-1}F_2$ and $F_1F_2^{-1}$ belong to $G$.

It also follows that for every $g\in G$, and every lift $F$ of $f$, we
have $F^{-1}gF\in G$. The map $g\mapsto F^{-1}gF$ is an automorphism
of $G$. We say that it is \emph{induced by $f$}. Any two automorphisms
induced by $f$ on $G$ differ from each other by an inner automorphism.

\begin{proposition}
\label{pr:finitegeneration}
The group of deck transformations of a splitting of a connected and locally
connected Smale space is finitely generated.
\end{proposition}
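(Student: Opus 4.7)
The plan is to carry out the classical proof of finite generation of a deck transformation group, adapted to our setting where $\splitting$ is not compact but the $G$-action on it is cocompact.

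First, I would use compactness of $\X$ together with local connectedness to produce a finite cover $R_1,\ldots,R_n$ of $\X$ by open connected rectangles such that each $R_i$ is evenly covered by $\pi$, and in addition each closure $\overline{R_i}$ is contained in an evenly covered open neighborhood. This is a routine refinement: start from a finite cover by evenly covered connected open rectangles, and for each such rectangle pass to sub-rectangles whose closures remain inside it. For each $i$, fix a single sheet $\wt R_i\subset\splitting$ over $R_i$. The hypothesis on $\overline{R_i}$ guarantees that $\overline{\wt R_i}$ is compact in $\splitting$: the sheet over the evenly covered neighborhood that contains $\wt R_i$ is homeomorphic to the whole neighborhood, and one checks using disjointness of sheets that the closure of $\wt R_i$ cannot escape this sheet, so it is homeomorphic to the compact set $\overline{R_i}$.

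Next, define
\[
S \;=\; \{g\in G : g(\wt R_i)\cap \wt R_j\neq\emptyset \text{ for some } i,j\}
\]
and set $K=\bigcup_i\overline{\wt R_i}$, a compact subset of $\splitting$. Since the $G$-action on $\splitting$ is proper (as already noted earlier in this section), the set $\{g\in G : g(K)\cap K\neq\emptyset\}$ is finite, and $S$ is contained in it, so $S$ is finite.

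Finally, I would show that $S$ generates $G$. Let $H=\langle S\rangle$ and $V=\bigcup_i \wt R_i$. Then $H\cdot V$ is open as a union of open translates. It is also closed: if $p\notin H\cdot V$, then $p\in g\wt R_i$ for some $g\in G\setminus H$, and any non-empty intersection $g\wt R_i\cap h\wt R_j$ with $h\in H$ would force $h^{-1}g\in S\subset H$ and hence $g\in H$, a contradiction; so $g\wt R_i$ is a neighborhood of $p$ disjoint from $H\cdot V$. Since $\splitting=A\times B$ is connected (each factor being homeomorphic to a stable, respectively unstable, leaf, which is connected by Theorem~\ref{th:connectedness}), the non-empty clopen set $H\cdot V$ must equal $\splitting$. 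To conclude, fix $y_0\in\wt R_1$ and take arbitrary $g\in G$; then $g(y_0)\in H\cdot V$, so $g(y_0)\in h\wt R_i$ for some $h\in H$ and some $i$, which yields $g\wt R_1\cap h\wt R_i\ne\emptyset$ and hence $g^{-1}h\in S$, so $g\in H$. Therefore $G=H$ is finitely generated.

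The only technical point is the construction in the first step of a cover with compact lifts; all subsequent steps are formal consequences of properness of the $G$-action and connectedness of $\splitting$. I do not anticipate a serious obstacle, as this is the standard template for finite generation of deck groups of coverings of compact locally connected spaces.
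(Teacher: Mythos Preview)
Your proof is correct and follows essentially the same approach as the paper: both exploit properness of the $G$-action and connectedness of $\splitting$ to show that the finite set of elements moving a compact ``fundamental piece'' into itself generates $G$. The only cosmetic difference is that the paper assembles the chosen sheets into a single connected compact set $V$ and argues via chains of overlapping translates $g_i(V)$, whereas you keep the sheets separate and phrase the same connectivity argument as ``$H\cdot V$ is clopen''; these are two standard packagings of the same idea.
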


\begin{proof}
Let $\pi:\splitting\arr\X$ be a splitting. Consider a finite covering $\mathcal{U}$ of
$\X$ by open connected evenly covered by $\pi$ subsets. Let
$\mathcal{U}_\splitting$ be the union of the sets of connected components of
$\pi^{-1}(U)$ for all $U\in\mathcal{U}$. Then $\mathcal{U}_\splitting$ is a
covering of $\splitting$. Since $\splitting$ is connected, every two elements
$U, V\in\mathcal{U}_\splitting$ are connected with each other by a chain of elements
$U_0=U, U_1, \ldots, U_n=V$ such that $U_i\cap
U_{i+1}\ne\emptyset$. It follows that there exists a connected finite
union $V$ of elements of $\mathcal{U}_\splitting$ such that $\pi(V)=\X$. Note
that $\overline V$ is compact.

Let $S$ be the set of elements $g\in G$ such that $g(V)\cap
V\ne\emptyset$. It is finite, since the action of $G$ on $\splitting$ is proper.

Let $g\in G$ and $x_0\in V$. Since $\splitting$ is connected and
$\splitting=\bigcup_{g\in G}g(V)$, there exists a sequence $g_i\in G$, $i=0, 1,
\ldots, k$ such that $g_0=1$, $g_k=g$, and $g_i(V)\cap
g_{i+1}(V)\ne\emptyset$ for all $i=0, 1, \ldots, k-1$. Note that
$g_i(V)\cap g_{i+1}(V)\ne\emptyset$ implies $V\cap
g_i^{-1}g_{i+1}(V)\ne\emptyset$, hence $g_i^{-1}g_{i+1}\in S$. We see
that $g=g_0^{-1}g_1\cdot g_1^{-1}g_2\cdots g_{k-1}^{-1}g_k$ is a
product of $k$ elements of $S$.
\end{proof}

\subsection{Splittable Smale spaces and hyperbolic graphs}
\label{ss:hypgraphs}
A connection between Smale spaces and Gromov hyperbolic graphs
described in this subsection is a particular case of the theory of
Cayley graphs of hyperbolic groupoids, described
in~\cite{nek:hyperbolic}. Since the theory for Smale spaces is simpler than
the general case, and in order to make our paper more self-contained,
we describe them directly.

Let $(\X, f)$ be a Smale space with locally connected and connected
space $\X$, and let $\pi:\splitting\arr\X$ be a
splitting. We denote by $[\cdot, \cdot]$ the local product structures
on $\splitting$ and $\X$. Let $G$ be the group of deck transformations of the
splitting. Let $F:\splitting\arr\splitting$ be a lift of $f$.

Let $d$ be a metric on $\X$ associated with the standard log-scale
$\ell$. We will denote by $d_+$ and $d_-$ metrics on the stable and
unstable leaves of $\X$ associated with the respective standard
log-scales $\ell_+$ and $\ell_-$. We assume that exponents of the
metrics $d$, $d_+$, and $d_-$ are equal (by taking them sufficiently small). Then we have the following
corollary of~\cite[Lemma~7.8]{nek:hyperbolic}, see
also~\cite{fried:naturalmetric}.

\begin{proposition}
\label{pr:agreeslocprod}
There
exist constants $\epsilon, L>1$ such that for every point $x\in\X$ and
every rectangle $R$ contained in the $\epsilon$-neighborhood of $x$ the
restriction of $d$ to $R$ is $L$-bi-Lipschitz equivalent to the
metric given by \[d_x(y_1, y_2)=d_+([y_1, x], [y_2, x])+d_-([x,
  y_1], [x, y_2]).\]
\end{proposition}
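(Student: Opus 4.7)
My plan exploits the bracket identities $[[y_1, y_2], x] = [y_1, x]$ and $[x, [y_1, y_2]] = [x, y_2]$: the point $[y_1, y_2]$ lies on the unstable leaf of $y_1$ and on the stable leaf of $y_2$, and the internal holonomies of $R$ slide it onto $[y_1, x]$ along the unstable plaque through $y_1$, and onto $[x, y_2]$ along the stable plaque through $y_2$. All three steps below exploit this.

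The first ingredient is that on sufficiently small plaques the restrictions of $d$ and of the leaf metric $d_\pm$ are bi-Lipschitz equivalent with a uniform constant. This comes from the properties of the internal log-scales recorded after Definition~\ref{def:leaves}: on each stable leaf one has $\ell(y_1, y_2) = \ell_+(y_1, y_2)$ whenever $\ell_+(y_1, y_2) > 0$, and similarly on unstable leaves. Since by hypothesis $d, d_+, d_-$ all have the same exponent, this yields a constant $L$ (depending only on the scale $\epsilon$) such that on every stable or unstable plaque of diameter less than $\epsilon$ the restriction of $d$ is $L$-bi-Lipschitz equivalent to $d_\pm$, and such that every holonomy inside a rectangle of diameter less than $\epsilon$ is $L$-bi-Lipschitz with respect to the leaf metrics.

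For the upper bound, in a rectangle $R \subset B(x, \epsilon)$ the triangle inequality through $[y_1, y_2]$ gives
\[d(y_1, y_2) \le d(y_1, [y_1, y_2]) + d([y_1, y_2], y_2).\]
By the previous step, the first summand is bounded by $L\, d_-(y_1, [y_1, y_2])$, which the unstable holonomy pushes to $L^2\, d_-([x, y_1], [x, y_2])$; the second summand is handled symmetrically. The lower bound is equivalent to the log-scale comparison
\[\ell(y_1, y_2) \le \min\!\bigl(\ell_+([y_1, x], [y_2, x]),\; \ell_-([x, y_1], [x, y_2])\bigr) + O(1).\]
To see this, use that $f$ commutes with the bracket: if the iterates $f^n(y_1), f^n(y_2)$ stay in a common small rectangle for $|n| \le N$, then so do $f^n([y_1, x]) = [f^n(y_1), f^n(x)]$ and $f^n([y_2, x])$ up to a bounded delay, which forces $\ell_+([y_1, x], [y_2, x]) \ge N - O(1)$; the argument for $\ell_-$ is symmetric. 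Exponentiating with the common exponent $\alpha$ and using $e^{-\alpha \min(a, b)} \ge \tfrac{1}{2}\bigl(e^{-\alpha a} + e^{-\alpha b}\bigr)$ yields the lower bound $d(y_1, y_2) \ge L^{-2}\, d_x(y_1, y_2)$.

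The main obstacle is the uniformity of the constants $\epsilon$, $L$, and the $O(1)$-error across all rectangles $R \subset B(x, \epsilon)$ as the base point $x$ ranges over $\X$. This requires choosing $d, d_+, d_-$ in a coordinated way so that the additive error $\Delta$ in the log-scale axiom of Definition~\ref{def:logscale} is uniform in $x$, which is precisely the content of Fried's construction in \cite{fried:naturalmetric} and of \cite[Lemma 7.8]{nek:hyperbolic}. Once these metrics are in place, the three steps above propagate uniformly and the proposition follows.
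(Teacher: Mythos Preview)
The paper does not give a proof of this proposition; it merely records it as a corollary of \cite[Lemma~7.8]{nek:hyperbolic} and \cite{fried:naturalmetric}. So there is no argument in the paper to compare yours against, and your outline is in fact more detailed than what the paper provides. Your overall strategy---compare $d$ with $d_\pm$ on plaques via the equality $\ell=\ell_\pm$ for positive values, then route through $[y_1,y_2]$ using the triangle inequality for the upper bound and a log-scale comparison for the lower bound---is the natural one and is correct.

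There is, however, a genuine gap in your lower-bound step. You write $f^n([y_1,x])=[f^n(y_1),f^n(x)]$ and use this for $|n|\le N=\ell(y_1,y_2)$. But that bracket identity only holds while $f^n(y_1)$ and $f^n(x)$ remain in a common small rectangle, i.e.\ for $|n|\le\ell(y_1,x)$, and $\ell(y_1,x)$ is only bounded below by a constant depending on $\epsilon$, not by $N$. When $y_1,y_2$ are much closer to each other than either is to $x$, the identity fails long before $|n|$ reaches $N$, and the argument as written does not control $f^n([y_i,x])$ for the relevant range of $n$.

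The fix is short. Either (i) first compare with the plaque through $y_1$ rather than through $x$: one has $f^n([y_2,y_1])=[f^n(y_2),f^n(y_1)]$ for all $|n|\le N$ since $f^n(y_1),f^n(y_2)$ stay close, whence $\ell(y_1,[y_2,y_1])\ge N-O(1)$, and then transfer to $[y_i,x]$ by the uniformly bi-Lipschitz holonomy you already invoked; or (ii) split the range: for $n\le 0$ use that $[y_i,x]$ lies on the unstable leaf of $y_i$, so $f^n([y_i,x])$ stays close to $f^n(y_i)$, and chain through $f^n(y_1),f^n(y_2)$; for $n\ge 0$ use stable contraction of $[y_1,x],[y_2,x]$. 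Either route closes the gap and yields $\ell_+([y_1,x],[y_2,x])\ge N-O(1)$ with uniform constants.
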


Let $\epsilon>0$ be such that it satisfies the conditions of Proposition~\ref{pr:agreeslocprod} and for every $x\in\X$ the
$\epsilon$-neighborhood of $x$ is evenly covered by $\pi$.

Define then $d_\splitting(x, y)$ as the infimum of the sum
$\sum_{i=0}^{m-1}d(\pi(x_i), \pi(x_{i+1}))$ over all sequences $x_0,
x_1, \ldots, x_m$ such that $x_0=x$, $x_m=y$, and $d(\pi(x_i),
\pi(x_{i+1}))<\epsilon$ for all $i=0, 1, \ldots, m-1$.

Then $d_\splitting$ is a $G$-invariant metric on $\splitting$ such that $d_\splitting(x, y)=d(\pi(x), \pi(y))$
for all $x, y$ such that $d_\splitting(x, y)<\epsilon$.

The map $\pi$ bijectively identifies the plaques $W_+(x)$ and $W_-(x)$
of $\splitting$ with their images $W_+(\pi(x))$ and $W_-(\pi(x))$, respectively.
We obtain the metrics $d_+$ and $d_-$ on
$W_+(x)$ and $W_-(x)$, respectively.
Then Proposition~\ref{pr:agreeslocprod} holds when we replace $\X$ by $\splitting$ and $d$ by $d_\splitting$.

Let $R\subset\splitting$ be an open relatively compact
rectangle such that $\bigcup_{g\in G}g(R)=\splitting$.
Let $W_+$ be a stable plaque of $\splitting$. Denote by $\Omega_n(W_+,
R)$, for $n\in\Z$, the set of elements $g\in G$ such that of
$F^n(g(R))$ intersects $W_+$.

Denote by $\Xi(W_+, R)$ the graph with the set of vertices
$\bigcup_{n\in\Z}\Omega_n(W_+, R)\times\{n\}$ in which two vertices
are connected by an edge if and only if they are either of the form
$(g_1, n)$, $(g_2, n)$, where $g_1(R)\cap g_2(R)\ne\emptyset$, or of
the form $(g_1, n)$, $(g_2, n+1)$, where $g_1(R)\cap
F(g_2(R))\ne\emptyset$. In other words, we connect two vertices of
$\Xi(W_+, R)$ if and only if they belong to the same or neighboring
levels $\Omega_n(W_+, R)$ and the corresponding rectangles intersect.

\begin{theorem}
\label{th:hyperbolicgraphs}
The graph $\Xi=\Xi(W_+, R)$ is Gromov hyperbolic. There exists a point $\omega\in\partial\Xi$ such that all paths of the form $(g_n, -n)\in\Xi$, $n\ge 0$,
converge to $\omega$.
The correspondence mapping the limit in $\partial\Xi$ of a path $(g_n, n)$,
$n\ge 0$ to the limit of the intersections of $F^n(g_n(R))$ with $W_+$
induces a homeomorphism between $\partial\Xi\setminus\{\omega\}$
and $W_+$.
\end{theorem}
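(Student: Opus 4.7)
The plan is to interpret $\Xi$ as encoding a self-similar family of covers of $W_+$ at all scales, and to derive hyperbolicity from the uniform expansion/contraction of $F$. To each vertex $v=(g,n)\in\Xi$ I associate its \emph{shadow} $S(v):=F^n(g(R))\cap W_+$, viewed as a subset of $W_+$. Because $F$ contracts the stable direction by a factor comparable to the contraction constant $\lambda$, the shadow $S(v)$ has $d_+$-diameter comparable to $\lambda^n$ when $n\ge 0$, while for $n$ very negative the shadows cover arbitrarily large compact subsets of $W_+$. A horizontal edge $v_1\sim v_2$ at level $n$ signals $F^n(g_1(R))\cap F^n(g_2(R))\neq\emptyset$, which via holonomies inside the ambient rectangles forces $S(v_1),S(v_2)$ to lie within $d_+$-distance $O(\lambda^n)$; a vertical edge between levels $n$ and $n+1$ describes how the level-$(n+1)$ cover refines the level-$n$ one.

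The technical core is the following metric estimate: for any vertices $v_i=(g_i,n_i)$ and any representatives $x_i\in S(v_i)$,
\[
d_\Xi(v_1,v_2)\asymp (n_1-m)+(n_2-m),\qquad m:=\min\{n_1,n_2,\lfloor\ell_+(x_1,x_2)\rfloor\},
\]
with multiplicative and additive constants independent of the $v_i$. The lower bound follows from the lower-exponent estimates of Section~\ref{s:lowerexponents}: one horizontal step at level $n$ moves the shadow-representative by $d_+$-distance $O(\lambda^n)$ and a vertical step changes the level by $1$, so realising a large log-scale gap $\ell_+(x_1,x_2)$ by edge moves requires at least the stated number of steps. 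The upper bound uses the locally connected hypothesis via Theorem~\ref{th:connectedness} and Proposition~\ref{pr:A}: a finite upper exponent for $\ell_+$ guarantees that shadows at a common level meeting $W_+$ inside a compact region are connected by horizontal paths of uniformly bounded length, and iteratively refining the level then realises the upper bound.

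From this estimate Gromov hyperbolicity of $\Xi$ is immediate: the quantities $m_{ij}:=\min\{n_i,n_j,\ell_+(x_i,x_j)\}$ satisfy the approximate ultrametric inequality $m_{ik}\ge\min(m_{ij},m_{jk})-\Delta$ inherited from the log-scale axiom on $\ell_+$, which translates into the four-point condition for $d_\Xi$ up to a uniform additive constant. For the boundary I distinguish the two types of rays. A ray $v_n=(g_n,-n)$, $n\ge 0$, satisfies $d_\Xi(v_n,v_m)\asymp |n-m|$ because the $m$-term in the metric estimate saturates at $\min(-n,-m)$ for large $n,m$; hence the Gromov products $(v_n|v_m)_\bullet$ tend to infinity and all such rays converge to a common boundary point $\omega$. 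A ray $v_n=(g_n,n)$ with $n\to+\infty$ has shadows $S(v_n)$ of $d_+$-diameter $O(\lambda^n)$ forming a Cauchy family in $W_+$; they converge to a single point $x\in W_+$, and two such rays have Gromov product tending to infinity iff their limits in $W_+$ agree. The assignment $\partial\Xi\setminus\{\omega\}\to W_+$ so defined is surjective because $\bigcup_{g\in G}g(R)=\splitting$ ensures that each $x\in W_+$ lies in a shadow at every level, and a homeomorphism because the Gromov product controls and is controlled by $\ell_+$ on the limit points.

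The main obstacle is the upper bound in the metric estimate. The lower bound and the boundary identification follow fairly directly from the dynamical estimates already established, but the upper bound requires turning the abstract local-connectedness hypothesis into a quantitative bound on combinatorial distances at every scale, which is exactly what Section~\ref{s:upperexponents} provides through Proposition~\ref{pr:A} and the finite upper exponent.
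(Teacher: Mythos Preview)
Your approach is correct but takes a genuinely different route from the paper. The paper verifies a single \emph{merging property}: given any two vertices at the same level, one can descend a bounded number of levels (depending only on their initial horizontal distance) until the descended vertices are at distance at most $2$. It then invokes a black-box criterion, \cite[Theorem~2.10]{nek:hyperbolic}, to conclude Gromov hyperbolicity, that $(g,n)\mapsto n$ is a Busemann function of a single boundary point $\omega$, and that all descending paths converge to $\omega$; the homeomorphism $\partial\Xi\setminus\{\omega\}\cong W_+$ is then checked by hand. Your route instead establishes an explicit two-sided metric estimate $d_\Xi(v_1,v_2)\asymp (n_1-m)+(n_2-m)$ and reads off hyperbolicity from the four-point inequality, with the boundary identification falling out of the same estimate. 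Your argument is more self-contained (no appeal to the external reference) and yields finer quantitative information; the paper's argument is shorter modulo the black box.

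One point to make explicit in your write-up: the upper bound invokes Proposition~\ref{pr:A}, which is a statement about the graphs $\Gamma_n'(W)$ on a leaf, not directly about horizontal distance in $\Xi$, whose edges record rectangle-intersections in $\splitting$ rather than plaque-overlaps in $W_+$. The needed bridge is short but should be stated: if two vertices $(h_1,m),(h_2,m)\in\Omega_m$ have shadows meeting at a point $z\in W_+$, then $F^{-m}(z)\in h_1(R)\cap h_2(R)$, so they are adjacent in $\Xi$; hence a bounded $\Gamma_m'$-chain of overlapping plaques on $W_+$ lifts to a bounded horizontal path in $\Omega_m$. Conversely, for the lower bound you really only need the basic contraction estimate (a horizontal step at level $k$ moves the shadow by $O(e^{-\alpha_+ k})$ in $d_+$), which is weaker than the full lower-exponent machinery of Section~\ref{s:lowerexponents}; citing that section is fine but slightly more than is required.
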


\begin{proof}
Consider two vertices $(g, 0)$ and $(h, 0)$. Let $m$
be the distance between them in $\Omega_0(W_+, R)$.

Let $(g_k, -k)$ and $(h_k, -k)$ for $k\ge 0$ be arbitrary paths in $\Xi$ such
that $g_0=g$ and $h_0=h$.

There exists $\epsilon>0$ such that for every $x\in\splitting$ there exist
$g\in G$ such that the ball of radius $\epsilon$
with center $x$ is contained in $g(R)$. It follows that for every
$m\in\N$ there exists
$N_m>0$ such that for any path $(a_0, 0), (a_1, 0), \ldots,
(a_m, 0)$ of length $m$ in $\Omega_0(W_+, R)$ there exists
$g\in G$ such that $F^{N_m}(a_0(R))\cup\cdots\cup
F^{N_m}(a_m(R))\cap W_+\subset g(R)\cap W_+$. It follows that $(g_{N_m},
-N_m)$ and $(h_{N_m}, -N_m)$ are on distance at most $2$ in $\Omega_{-N_m}(W_+, R)$.

It follows now from~\cite[Theorem~2.10]{nek:hyperbolic}
that the graph $\Xi$ is Gromov hyperbolic,
that the level function $(g(R), n)\mapsto n$ is a Busemann function of
a point $\omega\in\partial\Xi$, and that every path $(g_n, n)$
for $n\ge 0$ converges to a point of $\partial\Xi\setminus\omega$,
whereas every path of the form $(g_n, -n)$, $n\ge 0$, converges to
$\omega$.

\begin{lemma}
Every point of
$\partial\Xi\setminus\omega$ is the limit of a sequence of the form
$(g_n, n)$.
\end{lemma}

\begin{proof}
Note that since $(g, n)\mapsto n$ is a Busemann function associated
with $\omega\in\partial\Xi$, every point
$\xi\in\partial\Xi\setminus\omega$ is the limit of a sequence of
the form $(g_n, n)$ (which is not necessarily a path). For every $n\ge
0$ there exists a path $(h_{n, k}, k)$, $k\le n$, such that $h_{n,
  n}=g_n$. All these paths converge to $\omega$, and by the above
arguments, for any $n_1, n_2$, distance from $(h_{n_1, k}, k)$ to
$(h_{n_2, k}, k)$ is not more than 2 for all $k$ smaller than some
$k(n_1, n_2)$. Moreover, $k(n_1,
n_2)\to\infty$ as $n_1, n_2\to\infty$. It follows then by compactness
arguments (since $\Xi$ has bounded valency) that there exists a
path $(h_n, n)$, $n\ge 0$, converging to $\xi$.
\end{proof}

Let $(g_n, n)$, for $n\ge 0$, be a path in $\Xi$. The sets
$V_n=F^n(g_n(R))\cap W_+$ are compact, their diameters decrease
exponentially, and we have $V_n\cap V_{n+1}\ne\emptyset$ for every
$n$. It follows that the sequence $V_n$ converges (in the Hausdorff
metric) to a point $x\in W_+$. Let us show that the map
$\Lambda:\lim_{n\to\infty}(g_n, n)\mapsto\lim_{n\to\infty}F^n(g_n(R))$ is a
homeomorphism between $\partial\Xi\setminus\omega$ and $W_+$.

The arguments basically repeat the proof
of~\cite[Theorem~6.9]{nek:hyperbolic}.
Let us show that the map is well defined. If $(g_n, n)$ and $(h_n, n)$ converge to the same
limit in $\partial\Xi$, then the distance between $g_n$ and $h_n$
in $\Omega_n(W_+, R)$ is uniformly bounded. But this implies that the
Hausdorff distance between $F^n(g_n(R))$ and $F^n(h_n(R))$ is
exponentially decreasing, hence
$\lim_{n\to\infty}F^n(g_n(R))=\lim_{n\to\infty}F^n(h_n(R))$. The same
argument shows that the map $\Lambda$ is continuous, since if $\xi_1$ and
$\xi_2$ are close to each other, then the sequences $(g_n, n)$ and
$(h_n, n)$ are close to each other for an initial interval $n=0,
\ldots, L$, where $L$ is big. But then the limits $\lim_{n\to\infty}F^n(g_n(R))$ and
$\lim_{n\to\infty}F^n(h_n(R))$ are close to each other.

The map $\Lambda$ is onto, since for every point $x\in W_+$ there
exists a path $(g_n, n)$ defined by the condition $F^n(g_n(R))\ni x$.

Using Lebesgue's covering lemma, we show that if $x$ and $y$ are close
to each other, then there exists a sequence $(g_n, n)$ such that
$F^n(g_n(R))\supset\{x, y\}$ for all $n=0, \ldots, L$, where $L$ is
big. This shows that $\Lambda^{-1}$ exists and is continuous.
\end{proof}

Suppose now that the map $F:\splitting\arr\splitting$ has a fixed point $x_0$. Let
$\phi:G\arr G$ be the automorphism defined by the condition
$F(g(x_0))=\phi(g)(x_0)$. Let $W_+$ and $W_-$ be the stable and
the unstable plaques of $\splitting$ containing $x_0$. Let $R$, $\Omega_n(W_+, R)$,
and $\Xi(W_+, R)$ be as above. We assume that $R$ is connected and
$x_0\in R$.

Note that $F^n(g(R))\cap W_+\ne\emptyset$ is equivalent to $g(R)\cap
F^{-n}(W_+)=g(R)\cap W_+\ne\emptyset$. It follows that the set $\Omega_n(W_+,
R)$ does not depend on $n$.

The graph $\Xi(W_+, R)$ is isomorphic then to the graph with the
set of vertices $\Omega_0(W_+, R)\times\Z$ in which two vertices are
connected by an edge if and only if they are either of the form $(g_1,
n)$ and $(g_2, n)$, where $g_1, g_2\in\Omega_0(W_+, R)$ and
$g_1(R)\cap g_2(R)\ne\emptyset$, or of the form $(g_1, n)$ and $(g_2,
n+1)$, where $g_1, g_2\in\Omega_0(W_+, R)$ and $F^n(g_1(R))\cap
F^{n+1}(g_2(R))\ne\emptyset$, which is equivalent to
$g_1(R)\cap\phi(g_2)(F(R))\ne\emptyset$.
Note that $(g, n)\mapsto (g, n+1)$ is an automorphism of $\Xi(W_+, R)$.

Let $A\subset G$ be a finite set containing the identity, and let
$S$ be a finite generating set of $G$. We assume that $S$
contains all elements $g\in G$ such that $R\cap g(R)\ne\emptyset$ or
$R\cap g(F(R))\ne\emptyset$ and that $A\subset S$.

Let $\Omega_0'\subset G$ be any set such that $\Omega_0(W_+,
R)\subset\Omega_0'\subset\Omega_0(W_+, R)A$.

Denote then by $\Xi'$ the graph with the set of vertices
$\Omega_0'\times\Z$ with edges of two kinds: vertical
and horizontal. The horizontal edges connect two vertices $(g_1, n),
(g_2, n)$ if and only if $g_1^{-1}g_2\in S$. The
vertical edges connect a vertex $(g_1, n)$ to a vertex
$(g_2, n+1)$ if and only if $g_1^{-1}\phi(g_2)\in S$.

Note that if $g_1(R)\cap g_2(R)\ne\emptyset$, then $R\cap
g_1^{-1}g_2(R)\ne\emptyset$. If $g_1(R)\cap F(g_2(R))\ne\emptyset$,
then $g_1(R)\cap\phi(g_2)(F(R))\ne\emptyset$, hence $R\cap
g_1^{-1}\phi(g_2)(F(R))\ne\emptyset$. It follows that $\Xi(W_+, R)$
is a sub-graph of $\Xi'$.

\begin{proposition}
\label{pr:gammaprime}
The inclusion $\Xi(W_+, R)\hookrightarrow\Xi'$ is a quasi-isometry.
\end{proposition}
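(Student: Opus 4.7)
The plan is to verify the two standard conditions for the inclusion $\Xi\hookrightarrow\Xi'$ to be a quasi-isometry: coarse density of the image, and a two-sided Lipschitz bound on distances. Coarse density is immediate: every vertex $(g,n)\in\Xi'$ has $g\in\Omega_0'\subset\Omega_0\cdot A$ with $A\subset S$, so writing $g=g_0a$ gives $(g_0,n)\in\Xi$ joined to $(g,n)$ by a single horizontal edge of $\Xi'$. The upper Lipschitz inequality $d_{\Xi'}\leq d_\Xi$ is trivial since $\Xi$ is a subgraph of $\Xi'$.

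The substance of the proof is the reverse bound. By a standard coarse-geometry argument (combining coarse density with a replacement of intermediate vertices along $\Xi'$-geodesics by nearby $\Xi$-vertices), it suffices to exhibit a uniform constant $K$ such that any two vertices of $\Xi$ connected by a single $\Xi'$-edge are within $\Xi$-distance at most $K$. I would analyze $\Xi'$-edges by type. A horizontal edge corresponds to some $s=g_1^{-1}g_2\in S$, and a vertical one to some $s=g_1^{-1}\phi(g_2)\in S$. When $s$ is a \emph{geometric} generator of type 1, i.e.\ $R\cap s(R)\neq\emptyset$, the identity $g_1(R)\cap g_2(R)=g_1(R\cap s(R))\neq\emptyset$ shows the corresponding horizontal $\Xi'$-edge is already a $\Xi$-edge; the type 2 generators (those with $R\cap sF(R)\neq\emptyset$) play the analogous role for vertical edges via the relation $Fg=\phi(g)F$. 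This reduces the task to the finitely many $s\in S$ that are neither of type 1 nor of type 2.

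For each such ``extra'' generator $s$ with $g_1,g_1s\in\Omega_0(W_+,R)$, I would pick $p_1\in g_1(R)\cap W_+$ and $p_2\in g_1s(R)\cap W_+$. By $G$-invariance of $d_\splitting$, the set $g_1(R)\cup g_1s(R)=g_1\cdot(R\cup s(R))$ has $d_\splitting$-diameter bounded by a constant $D_s$ depending only on $s$. I would then use the local product structure on $\splitting$ (Proposition~\ref{pr:agreeslocprod}) together with the connectedness and local connectedness of the stable plaque $W_+$ (Theorem~\ref{th:connectedness}) to produce a chain of translates $g_1=h_0,h_1,\ldots,h_{m_s}=g_1s$ in $\Omega_0(W_+,R)$ with $h_i(R)\cap h_{i+1}(R)\neq\emptyset$, giving a $\Xi$-path of length $m_s$. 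Setting $K=\max_{s\in S}m_s$ and handling vertical edges analogously through $F$ completes the argument.

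The hard part will be establishing the uniform bound $m_s$ independent of $g_1$ (equivalently, independent of the position of $W_+$ relative to the rectangles). The delicate point is that two points of a stable plaque at bounded ambient $d_\splitting$-distance need not be at bounded intrinsic leaf distance, since a plaque can twist through $\splitting$. I would address this by exploiting cocompactness of the $G$-action on $\splitting$: the number of translates $h(R)$ that meet any $d_\splitting$-ball of fixed radius is bounded by a constant depending only on the radius. Within the $D_s$-neighborhood of $g_1(R)\cup g_1s(R)$, any chain of rectangles inside $W_+$ linking $g_1(R)$ to $g_1s(R)$ must be drawn from this bounded collection, and its existence is guaranteed by connectedness of $W_+$ together with the local product structure, which ensures that $W_+$ looks like a plaque in a nearby rectangle around each of its points; together these yield the desired uniform bound.
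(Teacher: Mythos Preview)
Your reduction to bounding the $\Xi$-distance between $(g_1,n)$ and $(g_1s,n)$ for $g_1,g_1s\in\Omega_0(W_+,R)$ and $s\in S$ is exactly right, and you correctly isolate the delicate point. But the resolution you sketch has a gap: you assert that a chain of rectangles in $W_+$ linking $g_1(R)$ to $g_1s(R)$ can be found inside the $D_s$-neighbourhood, because ``connectedness of $W_+$ together with the local product structure'' guarantees it. Connectedness of $W_+$ gives a chain somewhere, not one confined to a given $d_\splitting$-ball, and you give no reason the ball's intersection with $W_+$ is connected---this would require $d_\splitting|_{W_+}$ to control the intrinsic $d_+$, which fails in general since global holonomies between distant stable plaques are not uniformly bi-Lipschitz. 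The fix is to replace the ball by a compact connected \emph{rectangle} $R_s\subset\splitting$ containing $R\cup s(R)$: then $g_1(R_s)\cap W_+$ is a single stable plaque of the rectangle $g_1(R_s)$, hence connected, contains both $g_1(R)\cap W_+$ and $g_1s(R)\cap W_+$, and is covered by the $h(R)$ with $h(R)\cap g_1(R_s)\neq\emptyset$; those $h$ form a set of size bounded independently of $g_1$ (substitute $h\mapsto g_1^{-1}h$ and use properness), and those among them meeting the plaque lie in $\Omega_0(W_+,R)$ and furnish the bounded horizontal chain. Vertical edges are handled the same way with a rectangle containing $R\cup sF(R)$.

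The paper reaches the same bound by a different mechanism. It first proves a lemma that $(g,n)$ and $(\phi^{-n_1}(g),n+n_1)$ are uniformly close in $\Xi(W_+,R)$ for suitable $n_1$, and then, given $g_1^{-1}g_2\in B$, applies $F^{-n_2}$ to contract $g_1(R_B)$ (for a fixed rectangle $R_B\supset B(x_0)$) in the unstable direction until the whole set $F^{-n_2}(g_1(R_B))=\phi^{-n_2}(g_1)(F^{-n_2}(R_B))$ lies in an $\epsilon$-neighbourhood of $W_+$ and is therefore covered by boundedly many $h(R)$ with $h\in\Omega_0(W_+,R)$; the preliminary lemma then transports this chain back to level $n$. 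Your repaired argument is more direct: it avoids both the auxiliary lemma and the use of the dynamics of $F$, exploiting instead the global product structure of $\splitting$ to produce a connected plaque at the original level.
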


\begin{proof}
Let us prove at first the following lemmas.

\begin{lemma}
\label{lem:n1D}
There exists $n_1>0$ such that $\phi^{-n_1}(\Omega_0(W_+,
R))\subset\Omega_0(W_+, R)$.

For every such $n_1$ there exists $D_{n_1}$
such that distance from $(g, n)$ to
$(\phi^{-n_1}(g), n+n_1)$ in $\Xi(W_+, R)$ is less than $D_{n_1}$.
\end{lemma}

\begin{proof} By Lebesgue's covering lemma, there exists $\epsilon>0$
  such that for every $x\in\splitting$ there exists $g\in G$ such that the
  $\epsilon$-neighborhood of $x$ is contained in $g(R)$.
It follows that if distance from $x$ to $W_+$ is less
than $\epsilon$, then there exists $g\in\Omega_0(W_+, R)$ such that
$x\in g(R)$.

There exists an upper bound (equal to the diameter of $R$)
on the distance from $g(x_0)$ to $W_+$ for all $g\in\Omega_0(W_+,
R)$.  Consequently, there exists $n_1>0$ such that $\phi^{-n_1}(\Omega_0(W_+,
R))\subset\Omega_0(W_+, R)$.

Let us prove the second part of the lemma. Let $(g, n)\in\Xi(W_+,
R)$, and let $x\in g(R)\cap W_+$. Choose for $k=1, \ldots, n_1$,
$h_k\in\Omega_0(W_+, R)$ such that $F^{-k}(x)\in h_k(R)$. Then
$F^{-1}(x)\in h_1(R)$ and $x\in g(R)$, hence $F(h_1(R))\cap
g(R)\ne\emptyset$, which implies that $(g, n)$ is
connected to $(h_1, n+1)$. Similarly, $F^{-k}(x)\in
h_k(R)$ and $F^{-(k+1)}(x)\in h_{k+1}(R)$, hence $F(h_{k+1}(R))\cap
h_k(R)\ne\emptyset$, so that $(h_k, n+k)$ is connected
to $(h_{k+1}, n+k+1)$. We have $g(x_0)\in g(R)$, hence $F^{-n_1}(x)\in
h_{n_1}(R)$ and $F^{-n_1}(x), \phi^{-n_1}(g)(x_0)\in
F^{-n_1}(g(R))=\phi^{-n_1}(g)(F^{-n_1}(R))$. The set $F^{-n_1}(g(R))$ is
connected, hence there exists a path $f_1, f_2, \ldots, f_m$ in
$\Omega_0(W_+, R)\times\{n+n_1\}$ connecting $(h_{n_1}, n+n_1)$ to
$(\phi^{-n_1}(g), n+n_1)$. Since $F^{-n_1}(R)$ is relatively compact, there
exists a uniform bound $M$ such that we may assume that $m\le M$. It
follows that the distance in $\Xi(W_+, R)$ from $(g, n)$ to
$(\phi^{-n_1}(g), n+n_1)$ is not more than $n_1+m-1$.
\end{proof}

\begin{lemma}
\label{lem:B1}
For every finite set $B\subset G$ there exists $D>0$
such that if $g_1^{-1}g_2\in B$ for $g_1, g_2\in\Omega_0(W_+, R)$,
then the distance between $(g_1, n)$ and $(g_2, n)$ in $\Xi(W_+, R)$ is not
greater than $D$.
\end{lemma}

\begin{proof}
Let $R_B$ be a compact connected rectangle of $\splitting$ containing
$B(x_0)$. Note that it follows from Proposition~\ref{pr:agreeslocprod} that
 there exists a uniform upper bound on the $d_-$-diameter of the unstable
 plaques of $R_B$. Then it is also a uniform upper bound on the $d_-$-distance
from $W_+$ to a point of $g(R_B)$ for $g\in\Omega_0(W_+, R)$.

It
follows that for every $\epsilon>0$ there exists $n_2>0$ such that for every
$g\in\Omega_0(W_+, R)$ the set $F^{-n_2}(g(R_B))$ belongs to the
$\epsilon$-neighborhood of $W_+$, hence (if $\epsilon$ is small
enough) it is covered by the sets $h(R)$
for $h\in\Omega_0(W_+, R)$. Since $F^{-n_2}(g(R_B))$ is connected, for
every two points $x, y\in g(R_B)$ there exists a sequence $h_1, h_2,
\ldots, h_m\in\Omega_0(W_+, R)$ such that $F^{-n_2}(x)\in h_1(R)$,
$F^{-n_2}(y)\in h_m(R)$, $h_i(R)\cap h_{i+1}(R)\ne\emptyset$ for all
$i=1, \ldots, m-1$, and $h_i$ are pairwise different.
Since $F^{-n_2}(g(R_B))=\phi^{-n_2}(g)(F^{-n_2}(R_B))$ belong to the
$G$-orbit of $F^{-n_2}(R_B)$,
there exists a uniform upper bound $M(n_2)$ on the length $m-1$ of the
corresponding path $h_1, \ldots, h_m$ in $\Omega_0(W_+,
R)\times\{0\}$. We can choose $n_2$ bigger than the number
$n_1$ from Lemma~\ref{lem:n1D}.

Let $g_1, g_2\in\Omega_0(W_+, R)$ be such that $g_1^{-1}g_2\in
B$. Since $g_1^{-1}g_2\in B$, $g_1(x_0),
g_2(x_0)\in g_1(R_B)$. It follows that there exists a path $h_1,
\ldots, h_m\in\Omega_0(W_+, R)$ such that $m<M$, and
$F^{-n_2}(g_1(x_0))\in h_1(R)$, $F^{-n_2}(g_2(x_0))\in h_m(R)$. The
last two conditions are equivalent to $\phi^{-n_2}(g_1)(x_0)\in
h_1(R)$ and $\phi^{-n_2}(g_2)(x_0)\in h_m(R)$, which imply that
$(\phi^{-n_2}(g_1), 0)$ and $(\phi^{-n_2}(g_2), 0)$ are connected to
$(h_1, 0)$ and
$(h_m, 0)$, respectively, by horizontal edges. By Lemma~\ref{lem:n1D}, we
have a uniform bound on the distances from $(g_1, n)$ to
$(\phi^{-n_2}(g_1), n+n_1)$ and from $(g_2, n)$ to $(\phi^{-n_2}(g_2),
n+n_1)$, which finishes the
proof.
\end{proof}

The proof of the following lemma is analogous.

\begin{lemma}
\label{lem:B2}
For every finite set $B\subset G$ there exists $D>0$ such that if
$g_1^{-1}\phi(g_2)\in B$ for $g_1, g_2\in\Omega_0(W_+, R)$,
then the distance between $(g_1, n)$ and
$(g_2, n+1)$ in $\Xi(W_+, R)$ is not greater than $D$.
\end{lemma}

Let us go back to proving Proposition~\ref{pr:gammaprime}.
The image of $\Xi(W_+,
R)$ under the inclusion map is a 1-net in $\Xi'$. Distance between
vertices in $\Xi(W_+, R)$ is not less than the distance between them in $\Xi'$.

Let us show that there exists a constant $D>1$ such that distance
between $(g_1, n_1), (g_2, n_2)$ in $\Xi(W_+, R)$ is not more than $D$ times the
distance from $(g_1, n_1)$ to $(g_2, n_2)$ in $\Xi'$. Let $(g_1, n_1)=v_0, v_1, \ldots,
v_n=(g_2, n_2)$ be a geodesic path in $\Xi'$. Since $\Xi(W_+, R)$ is a
net in $\Xi'$, there exists a constant $C>1$ such that every such
geodesic path can be replaced by a path $(g_1, n_1), v_1', \ldots, v_{n-1}',
(g_2, n_2)$, where $v_i'\in\Xi(W_+, R)$, and distance from $v_i'$ to
$v_{n-1}'$ in $\Xi'$ is bounded from above by $C$. Moreover, we may
assume that each $v_i'$ belongs to the same level $\Omega_n'$ as
$v_i$. Then $v_i'$ and $v_{i+1}'$ either belong to one level, or to
two neighboring levels. Then Lemmas~\ref{lem:B1} and~\ref{lem:B2}
finish the proof.
\end{proof}

\begin{defi}
Let $G$ be the group of deck transformations of the splitting
$\pi:\splitting\arr\X$, and let $F:\splitting\arr\splitting$ be a lift of $f$ with a fixed point
$x_0$. A set $\Sigma\subset G$ is a \emph{coarse stable
  (resp.\ unstable) plaque} if the
stable plaque $W_+(x_0)$ (resp.\ unstable plaque $W_-(x_0)$)
and the set $\Sigma$ are of finite Hausdorff distance from each other.
\end{defi}

Recall that two subsets $A_1, A_2$ of a metric space $(X, d)$ are of a finite
Hausdorff distance from each other if there exists $D>0$ such that for
every $x\in A_1$ there exists $y\in A_2$, and for every $y\in A_2$
there exists $x\in A_1$ such that $d(x, y)<D$.

\begin{theorem}
\label{th:conjugacycriterion}
Let $(\X, f)$ be a connected and locally connected Smale
space. Suppose that there exists a splitting $\pi:\splitting\arr\X$ and a lift
$F:\splitting\arr\splitting$ of $f$ with a fixed point $x_0$. Let $\phi$ be the
associated automorphism of $G$, and let $\Sigma_+$ and $\Sigma_-$ be coarse
stable and unstable plaques of $x_0$. Then $(\X, f)$ is uniquely determined, up
to topological conjugacy by the quadruple $(G, \phi, \Sigma_+, \Sigma_-)$.
\end{theorem}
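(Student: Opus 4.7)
The plan is to show that if $(\X_i,f_i)$, $i=1,2$, are two connected, locally connected Smale spaces, each endowed with a splitting $\pi_i:\splitting_i\arr\X_i$, a lift $F_i$ of $f_i$ with a fixed point $x_0^{(i)}$, and if the associated quadruples $(G,\phi,\Sigma_+,\Sigma_-)$ coincide (after identifying the two copies of $G$), then there is a $G$-equivariant homeomorphism $\psi:\splitting_1\arr\splitting_2$ satisfying $\psi\circ F_1=F_2\circ\psi$. It then descends to a topological conjugacy $\X_1\arr\X_2$. The key observation is that the stable and unstable plaques at the fixed point, together with their dynamics, can be recovered as boundaries of Gromov hyperbolic graphs built purely combinatorially from $(G,\phi,\Sigma_\pm)$.

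The first step is the reconstruction of the plaques. Fix a finite symmetric generating set $S\subset G$ large enough for Proposition~\ref{pr:gammaprime} to apply to both Smale spaces simultaneously; this is possible because $\Sigma_\pm(x_0^{(i)})$ and $W_\pm(x_0^{(i)})$ are at finite Hausdorff distance for each $i$. From the quadruple alone I build the graph $\Xi'_+$ on $\Sigma_+\times\Z$ and $\Xi'_-$ on $\Sigma_-\times\Z$ with the edge rules of Proposition~\ref{pr:gammaprime}, using $\phi$ and $\phi^{-1}$ respectively. Proposition~\ref{pr:gammaprime} shows that the inclusion of the geometric graph $\Xi(W_\pm(x_0^{(i)}),R_i)$ into $\Xi'_\pm$ is a quasi-isometry, and Theorem~\ref{th:hyperbolicgraphs} identifies $\partial\Xi(W_\pm,R_i)\setminus\{\omega_\pm\}$ canonically with $W_\pm(x_0^{(i)})$ in its intrinsic topology. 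Composing the two boundary identifications for $i=1,2$ yields canonical homeomorphisms $\psi_\pm:W_\pm(x_0^{(1)})\arr W_\pm(x_0^{(2)})$. Since the level shift $(g,n)\mapsto(g,n\pm 1)$ on $\Xi'_\pm$ realizes $F_i|_{W_\pm}$ on the boundary (Theorem~\ref{th:hyperbolicgraphs} together with~\eqref{eq:ellplusf}--\eqref{eq:ellminusf}), these $\psi_\pm$ intertwine the restricted lifts. Using the global direct product $\splitting_i\cong W_-(x_0^{(i)})\times W_+(x_0^{(i)})$ via the bracket, define $\psi([a,b]):=[\psi_-(a),\psi_+(b)]$. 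Because $F_i$ fixes $x_0^{(i)}$ and preserves the direct product, $F_i=F_i|_{W_-}\times F_i|_{W_+}$, and hence $\psi\circ F_1=F_2\circ\psi$.

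It remains to establish $G$-equivariance, which is the key step. Each $g\in G$ preserves the direct product on $\splitting_i$, so its action splits as $g([a,b])=[g_-^{(i)}(a),g_+^{(i)}(b)]$, where $g_+^{(i)}(y)=[x_0^{(i)},g(y)]$ is the stable shadow of $g(y)$ onto $W_+(x_0^{(i)})$. I claim $g_+^{(i)}$ is realized on the boundary $\partial\Xi'_+\setminus\{\omega_+\}$ by the twisted translation $\tau_g:(h,n)\mapsto(\phi^{-n}(g)h,n)$. The verification has three parts. First, if $(h_n,n)$ is a boundary sequence converging to $y$, so that $F_i^n(h_n(R_i))\ni y$, then a direct calculation using $F_i(x_0^{(i)})=x_0^{(i)}$ gives $F_i^{-n}(g_+^{(i)}(y))=[x_0^{(i)},\phi^{-n}(g)h_n(x_0^{(i)})]$, and by Lemma~\ref{lem:n1D} applied to the unstable-distance contraction of $\phi^{-n}(g)h_n(x_0^{(i)})$ towards $W_+(x_0^{(i)})$, we have $\phi^{-n}(g)h_n\in\Sigma_+$ for all sufficiently large $n$. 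Second, $\tau_g$ maps edges of $\Xi'_+$ to edges of $\Xi'_+$: the $\phi$-twist in the vertical edge rule $g_1^{-1}\phi(g_2)\in S$ is exactly compensated by the $n$-dependent twist in $\tau_g$. Third, after the bounded re-entry into $\Sigma_+$ controlled by Lemmas~\ref{lem:B1} and~\ref{lem:B2}, the map $\tau_g$ induces a well-defined homeomorphism of the boundary which coincides with $g_+^{(i)}$ under the identification of $\partial\Xi'_+\setminus\{\omega_+\}$ with $W_+(x_0^{(i)})$. Since $\tau_g$ depends only on $G$, $\phi$, and $\Sigma_+$, the resulting boundary map is the same for $i=1,2$; hence $\psi_+\circ g_+^{(1)}=g_+^{(2)}\circ\psi_+$, and similarly for $g_-$, yielding $\psi\circ g=g\circ\psi$.

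The main obstacle is precisely the claim in the last paragraph: one must verify that the shadow map admits a purely combinatorial description via the twisted translation $\tau_g$, and that the uniformly bounded correction needed to keep $\phi^{-n}(g)h_n$ inside $\Sigma_+$ does not alter the boundary point. The tools are Lemmas~\ref{lem:n1D}, \ref{lem:B1}, and~\ref{lem:B2}: the first places the twisted sequence into $\Sigma_+$ for large $n$, while the latter two show that any bounded left-multiplicative adjustment in $G$ (horizontal or vertical) costs only a uniformly bounded number of edges in $\Xi'_+$, hence does not change the limit point on the Gromov boundary. Once this shadow-map identification is in hand, every ingredient of the conjugacy $\psi$ is built from the quadruple alone, and the theorem follows.
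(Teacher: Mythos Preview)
Your approach is essentially the paper's: reconstruct $W_\pm$ as boundaries of the hyperbolic graphs built from $(G,\phi,\Sigma_\pm)$ via Proposition~\ref{pr:gammaprime} and Theorem~\ref{th:hyperbolicgraphs}, recover $F|_{W_\pm}$ from the level shift, recover the $G$-action on $W_\pm$ from the twisted left translation $\tau_g:(h,n)\mapsto(\phi^{-n}(g)h,n)$, and then assemble everything through the global product structure. Your edge-preservation check for $\tau_g$ is in fact cleaner than the paper's somewhat elliptical treatment of this point. Two small corrections: under the paper's bracket convention the stable shadow onto $W_+(x_0)$ is $[g(y),x_0]$ (equivalently $[g(y),y]$), not $[x_0,g(y)]$, and to invoke Proposition~\ref{pr:gammaprime} you must first enlarge $\Sigma_+$ to $\Sigma_+A$ for a finite $A$ large enough that $\Omega_0(W_+,R)\subset\Sigma_+A$, exactly as the paper does.
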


\begin{proof}
The dynamical system $(\X, f)$ is uniquely determined by the $G$-space
$\splitting$ and the map $F:\splitting\arr\splitting$. The group $G$ acts on the plaques $W_+$
and $W_-$ by the actions
\[g:x\mapsto [g(x), x],\qquad g:x\mapsto [x, g(x)],\]
respectively. The action of $G$ on $\splitting\cong W_+\times W_-$ is reconstructed from these
actions by the formula
\[g(x)=[[g(x), x], [x, g(x)]].\]
Similarly, the map $F:\splitting\arr\splitting$ is determined by the action of $F$ on
$W_+$ and $W_-$, since
\[F([x, y])=[F(x), F(y)].\]

Consequently, it is enough to show that the quadruple $(G, \phi,
\Sigma_+, \Sigma_-)$ uniquely determines the dynamical systems $(W_+,
G)$, $(W_+, F)$, $(W_-, G)$, and $(W_-, F)$, up to topological
conjugacy. Let us prove that the triple $(G, \phi, \Sigma_+)$ uniquely
determines the dynamical systems $(W_+, G)$ and $(W_+, F)$. The same
proof will show that $(G, \phi, \Sigma_-)$ uniquely determines $(W_-,
G)$ and $(W_-, F)$.

Let $R\subset\splitting$ be a relatively compact open rectangle such that
$x_0\in R$ and $\bigcup_{g\in G}g(R)=\splitting$. Let $\Omega_0(W_+, R)$ be, as before, the set
of elements $g\in G$ such that $g(R)\cap W_+\ne\emptyset$.

For a set $\Sigma\subset G$ and a finite generating set $S$ of $G$,
denote by $\Xi(\Sigma, S)$ the graph with the set of vertices $\Sigma\times\Z$ in which two
vertices are adjacent either if they are of the form $(g, n)$ and
$(gs, n)$ for $g, gs\in\Sigma$ and $s\in S$, or of the form $(g, n)$
and $(\phi^{-1}(gs), n+1)$ for $g, \phi^{-1}(gs)\in\Sigma$ and $s\in
S$. Note that the map $(g, n)\mapsto (g, n+1)$ is an automorphism of
$\Xi(\Sigma, S)$.

If $A$ is big enough, then $\Sigma_+A$ contains $\Omega_0(W_+,
R)$. Then, by Proposition~\ref{pr:gammaprime}, the identical embedding $\Xi(W_+,
R)\hookrightarrow\Xi(\Sigma_+A, S)$ is a quasi-isometry, provided
$S$ is big enough. It follows then from
Theorem~\ref{th:hyperbolicgraphs}
that $\Xi(\Sigma_+A, S)$ is Gromov hyperbolic, and that the
boundary of $\Xi(\Sigma_+A, S)$ minus the common limit $\omega$ of
quasi-geodesic paths of the form $(g_n, -n)$, $n\ge 1$, is
homeomorphic to $W_+$. Moreover, it follows directly from
Theorem~\ref{th:hyperbolicgraphs} that the natural homeomorphism
$\Phi:\partial\Xi(\Sigma_+A, S)\setminus\omega\arr W_+$ maps the limit of a
sequence $(g_n, n)\in\Xi(\Sigma_+A, S)$ to the limit of the
sequence $[F^n(g_n(x_0)), x_0]=[\phi^n(g_n)(x_0), x_0]\in W_+$.

Consequently, the homeomorphism $\Phi$ conjugates $F:W_+\arr W_+$ with the
map on the boundary of $\Xi(\Sigma_+A, S)$ induced by the automorphism $(g,
n)\mapsto (g, n+1)$. This shows that the dynamical system $(F, W_+)$
is uniquely determined by $(G, \phi, \Sigma_+)$.

It remains to show that for every $g\in G$ the homeomorphism $g:x\mapsto
[g(x), x_0]$ of $W_+$ is uniquely determined by $(G, \phi, \Sigma_+)$
and $g$.

Let $\xi$ be the limit of a sequence $(g_n, n)\in\Xi(\Sigma_+A, S)$, where
$s_n=g_n^{-1}\phi(g_{n+1})\in S$ for all $n\ge 0$. Note that every point
of $\partial\Xi(\Sigma_+A, S)$ can be represented this way, provided $S$
is big enough (see Theorem~\ref{th:hyperbolicgraphs}).

There exists $\epsilon>0$ such that for every $x\in\splitting$ there exists
$g\in G$ such that the $\epsilon$-neighborhood of $x$ is contained in
$g(R)$. Let $x\in W_+$. Choose for every $n\ge 0$ an element $g_n\in
G$ such that the $\epsilon$-neighborhood of $F^{-n}(x)$ is contained
in $g_n(R)$. Then $x$ is contained in $F^n(g_n(R))$. In particular,
$F^n(g_n(R))\cap F^{n+1}(g_{n+1}(R))\ne\emptyset$, i.e, the sequence
$(g_n, n)$ is a path in $\Xi(W_+, R)$, and its limit in
$\partial\Xi(W_+, R)$ is mapped by the natural homeomorphism to
$x$.

The rectangles $gF^n(g_n(R))$ contain $g(x)$ for all $n$. Since
$F$ is expanding in the unstable direction, the sets $gF^n(g_n(R))$
intersects $W_+$, i.e., $\phi^{-n}(g)g_n\in\Omega_0(W_+, R)$,
for all $n$ big enough.

The limit of the
intersections of $gF^n(g_n(R))$ with $W_+$ is equal to $[x_0, g(x)]$. It follows that
$(\phi^{-n}(g)g_n, n)$, where $n$ is big enough, is a path in
$\Xi(W_+, R)$ converging to the point of $\partial\Xi(W_+, R)$
corresponding to $[x_0, g(x)]\in W_+$, i.e., to the image of $x$ under
the action of $g$ on $W_+$.

Note that the left multiplication by $g$ preserves the distances between
the vertices of the graph $\Xi(\Sigma_+A, S)$ (when the images of the vertices
belong to the graph). It also follows from the classical properties
of Gromov hyperbolic graphs that there exists a
constant $\Delta_1$ such that if two paths $(g_n, n)$ and $(h_n, n)$
of $\Xi(\Sigma_+A, S)$ converge to the same point of the boundary,
then the distance between $(g_n, n)$ and $(h_n, n)$ is less than $\Delta_1$
for all $n$ big enough.

It follows that the action of $g$ on $W_+$ can
be modeled on the boundary of $\Xi(\Sigma_+A, S)$ by the following
rule. Take a path $(g_n, n)\in\Xi(\Sigma_+A, S)$ converging to
a point $\xi\in\partial\Xi(\Sigma_+A, S)$. If $(gg_n, n)$ for $n$
big enough belong to $\Xi(\Sigma_+A, S)$, then its limit is
$g(\xi)$. Since $\Xi(W_+, R)\subset\Xi(\Sigma_+A, S)$, this rule
will determine the action of $g$ on $\partial\Xi(\Sigma_+A, S)\setminus\{\omega\}$.
\end{proof}

\begin{theorem}
\label{th:equivariance}
Let $(\X_1, f_1)$ and $(\X_2, f_2)$ be connected and locally connected
Smale spaces. Suppose that there exist fixed points of $f_i$ and splittings
$\pi_i:\splitting_i\arr\X_i$. Let $G_i$ be the groups of deck
transformations of the splittings. Let $F_i$ be lifts of $f_i$, with
fixed points $x_i\in\splitting_i$. If there exists a continuous map
$\Phi:\splitting_1\arr\splitting_2$ and an isomorphism $\Psi:G_1\arr
G_2$ such that $\Phi(x_1)=x_2$, and
\[\Phi(F_1(x))=F_2(\Phi(x)),\qquad\Phi(F_1(g(x)))=F_2(\psi(g)(\Phi(x)))\]
for all $x\in\splitting_1$ and $g\in G_1$, then $(\X_1, f_1)$ and
$(\X_2, f_2)$ are topologically conjugate.
\end{theorem}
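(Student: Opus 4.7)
The plan is to apply Theorem~\ref{th:conjugacycriterion} by verifying that the isomorphism $\Psi$ carries the quadruple $(G_1,\phi_1,\Sigma_+^1,\Sigma_-^1)$ of $(\X_1,f_1)$ to a quadruple for $(\X_2,f_2)$. Intertwining of the automorphisms is the easy step: since both $x_i$ are fixed by $F_i$ and $\Phi(x_1)=x_2$, applying $\Phi$ to the identity $F_1(g(x_1))=\phi_1(g)(x_1)$ and using the $G$- and $F$-equivariance gives $\Psi(\phi_1(g))(x_2)=\phi_2(\Psi(g))(x_2)$, and freeness of the $G_2$-action yields $\Psi\circ\phi_1=\phi_2\circ\Psi$.

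The central step is to show that $\Phi$ preserves the direct product structures. If $y_1,y_2$ lie on a common stable plaque of $\splitting_1$, then Smale-space contraction on stable plaques (lifted from $\X_1$ via $\pi_1$) gives $d_{\splitting_1}(F_1^n(y_1),F_1^n(y_2))\to 0$, and continuity of $\Phi$ together with the intertwining $\Phi\circ F_1=F_2\circ\Phi$ yields $d_{\splitting_2}(F_2^n(\Phi(y_1)),F_2^n(\Phi(y_2)))\to 0$. Writing $\splitting_2=A_2\times B_2$ and using that $F_2$ acts as a product with uniformly expanding $A$-component, this convergence forces $\Phi(y_1)$ and $\Phi(y_2)$ to share a common $A_2$-coordinate; so $\Phi$ sends each stable plaque of $\splitting_1$ into a single stable plaque of $\splitting_2$, and the same argument works in the unstable direction. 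The identity $\Phi([y,z])=[\Phi(y),\Phi(z)]$ therefore holds in a neighborhood of the diagonal in $\splitting_1\times\splitting_1$, and by continuity it extends to the whole connected space, so $\Phi$ factors globally as a product $\Phi_A\times\Phi_B$ with $\Phi_A(a_1)=a_2$.

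Next I would show $\Phi$ is a quasi-isometry. The group $G$ (identified via $\Psi$) acts properly and cocompactly on each $\splitting_i$ by Proposition~\ref{pr:finitegeneration}; combining $G$-equivariance with cocompactness and continuity of $\Phi$ gives uniform continuity, hence a coarse-Lipschitz upper bound. For the lower bound, on the orbit map $G_1\cdot x_1\to G_2\cdot x_2$ the map $\Phi$ coincides with $g(x_1)\mapsto\Psi(g)(x_2)$; since each orbit is a net in the respective $\splitting_i$ (by cocompactness) and the group isomorphism $\Psi$ is a quasi-isometry of Cayley graphs, $\Phi$ is globally a quasi-isometry. By the product decomposition, the factors $\Phi_A\colon A_1\to A_2$ and $\Phi_B\colon B_1\to B_2$ are quasi-isometries themselves, and in particular coarsely surjective.

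Coarse surjectivity of $\Phi_B$ combined with $\Phi_A(a_1)=a_2$ gives $\Phi(W_+^{\splitting_1}(x_1))=\{a_2\}\times\Phi_B(B_1)\subset W_+^{\splitting_2}(x_2)$, with the image at finite Hausdorff distance from all of $W_+^{\splitting_2}(x_2)$. Together with the coarse-Lipschitz bound on $\Phi$ and the defining property of $\Sigma_+^1$, it follows that $\Psi(\Sigma_+^1)\cdot x_2=\Phi(\Sigma_+^1\cdot x_1)$ lies at finite Hausdorff distance from $W_+^{\splitting_2}(x_2)$, so $\Psi(\Sigma_+^1)$ is a coarse stable plaque of $x_2$; the unstable case is symmetric. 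The quadruples $(G_1,\phi_1,\Sigma_+^1,\Sigma_-^1)$ and $(G_2,\phi_2,\Psi(\Sigma_+^1),\Psi(\Sigma_-^1))$ are thus identified by $\Psi$, and Theorem~\ref{th:conjugacycriterion} yields topological conjugacy of $(\X_1,f_1)$ and $(\X_2,f_2)$. The main obstacle is establishing the quasi-isometry of $\Phi$, where the interaction of its continuity, the $G$-equivariance, and the cocompactness of the $G$-actions must be carefully exploited to turn the purely algebraic isomorphism $\Psi$ into the geometric control needed for Hausdorff bounds.
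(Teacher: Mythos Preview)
Your overall strategy---transport the quadruple $(G,\phi,\Sigma_+,\Sigma_-)$ through $\Psi$ and invoke Theorem~\ref{th:conjugacycriterion}---matches the paper's. Your intertwining step and your forward argument that $\Phi$ carries stable plaques into stable plaques are fine (though the latter needs uniform continuity of $\Phi$, which you only establish afterwards; reorder this).

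The genuine gap is in the claim that ``$\Phi_A$ and $\Phi_B$ are quasi-isometries themselves, and in particular coarsely surjective.'' You know $\Phi$ is a quasi-isometry for the $G$-invariant metric $d_{\splitting}$, and you know $\Phi$ respects the set-theoretic product decomposition. But $d_{\splitting}$ is only \emph{locally} bi-Lipschitz to a product metric $d_++d_-$; globally a $d_{\splitting}$-geodesic between two points of a stable plaque may leave the plaque, so you cannot read off control of the factor maps in the intrinsic plaque metrics. Without coarse surjectivity of $\Phi_B$ you have only $\Phi(W_+(x_1))\subset W_+(x_2)$, and hence only $\Psi(\Sigma_+^1)\subset\Sigma_+^2$ up to a finite set---one half of the Hausdorff bound. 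The missing half (every point of $W_+(x_2)$ is near $\Phi(W_+(x_1))$) is exactly what you need and have not supplied.

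The paper closes this gap differently and more cheaply. Rather than pushing plaques forward, it pulls them back: since $\Phi$ is proper (equivariant between proper cocompact actions), the preimage $\Phi^{-1}(U)$ of any compact neighborhood $U$ of $x_2$ is a compact neighborhood of $x_1$, and then the characterization
\[
W_\mp(x_i)=\bigcup_{n\ge 1}\bigcap_{k\ge n}F_i^{\pm k}(U)
\]
(valid for any compact neighborhood $U$) gives $\Phi^{-1}(W_\pm(x_2))=W_\pm(x_1)$ as an equality, not merely an inclusion. Taking a compact $G_2$-transversal $K_2$ and $K_1=\Phi^{-1}(K_2)$, one gets the exact correspondence $g(K_1)\cap W_\pm(x_1)\ne\emptyset\Leftrightarrow\Psi(g)(K_2)\cap W_\pm(x_2)\ne\emptyset$; surjectivity of $\Phi$ (needed for one direction) follows because $\Phi(\splitting_1)$ is a nonempty closed $G_2$-saturated subset of $\splitting_2$ and $\X_2$ is connected. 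This makes the product-map and factor-QI machinery unnecessary.
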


Note that we do not require $\Phi$ to be a homeomorphism.

\begin{proof}
The map $\Phi$ is proper as an equivariant map between two proper
actions (see, for instance~\cite[Lemma~5.2]{nek:models}).

Let $U$ be a compact neighborhood of $x_2\in\splitting_2$. Then
$\bigcup_{n\ge 1}\bigcap_{k\ge n}F_2^k(U)$ is equal to the unstable
plaque $W_-(x_2)$ in $\splitting_2$. Similarly, \[\bigcup_{n\ge
  1}\bigcap_{k\ge
  n}F_1^k(\Phi^{-1}(U))=\Phi^{-1}\left(\bigcup_{n\ge
    1}\bigcap_{k\ge n}F_2^k(U)\right)\] is equal to the unstable
plaque $W_-(x_1)$ in $\splitting_1$. It follows that
$\Phi^{-1}(W_-(x_2))=W_-(x_1)$. Similarly, $\Phi^{-1}(W_+(x_2))=W_+(x_1)$.

Let $K_2$ be a compact subset of $\splitting_2$ such that $K_2$ and
$K_1=\Phi^{-1}(K)$ are $G_i$-transversals, i.e., intersect every $G_i$-orbit. They exists, since the
actions of $G_i$ are co-compact, proper, and the map $\Phi$ is
continuous and proper.

Then $g_2(K_2)\cap W_-(x_2)\ne\emptyset$ for $g_2\in G_2$ is equivalent to
$\Psi^{-1}(g_2)(K_1)\cap W_-(x_1)\ne\emptyset$. The same is true for the stable
plaques $W_+(x_1)$ and $W_+(x_2)$. It follows that the sets $\Sigma_{i, *}=\{g_i(K_i)\cap W_*(x_i)\ne\emptyset\}$ for $*\in\{+, -\}$ and $i\in\{1, 2\}$ are coarse stable and unstable plaques for which we can use
Theorem~\ref{th:conjugacycriterion} to show that $(\X_1, f_1)$ and $(\X_2, f_2)$ are topologically conjugate.
\end{proof}

In fact, it follows from 
Theorem~\ref{th:hyperbolicgraphs} and the proof of
Theorem~\ref{th:equivariance} that any continuous map $\Phi$
satisfying the conditions of Theorem~\ref{th:equivariance} is a homeomorphism.

\section{Smale spaces with virtually nilpotent splitting}
\label{s:nilpotentsplitting}

Let $L$ be a simply connected nilpotent Lie group. 
Let $G$ be a finitely generated subgroup of $\mathop{\mathrm{Aut}}
L\ltimes L$ such that the action
of $G$ on $L$ is free, proper, and co-compact. Here we identify the
elements of $A$ with the transformations $g\mapsto \alpha(g)\cdot h$ of
$L$, where $\alpha\in\mathop{\mathrm{Aut}} L$ and $h\in L$.

Let $F\in\mathop{\mathrm{Aut}} L$ be a \emph{hyperbolic automorphism} of
$L$ (i.e., such that its differential $DF$ at the identity of $L$
has no eigenvalues of absolute value one). Then $F$ induces an
automorphism $\phi$ of $\mathop{\mathrm{Aut}}\ltimes L$ by conjugation. Suppose
that $G$ is invariant under this automorphism. Then $F$ induces an
Anosov homeomorphism $f:G\backslash L\arr G\backslash L$. Such homeomorphisms are called
\emph{hyperbolic infra-nilmanifold automorphisms}.

The aim of this section is to prove the following description of
locally connected Smale spaces that have a splitting with a virtually nilpotent group of
deck transformation.

\begin{theorem}
\label{th:Smaleinfranil}
Let $(\X, f)$ be a Smale space such that $\X$ is connected and locally connected, and
there exists a splitting $\pi:\splitting\arr\X$ with a virtually nilpotent
group of deck transformations. Then $(\X, f)$ is topologically
conjugate to a hyperbolic infra-nilmanifold automorphism.
\end{theorem}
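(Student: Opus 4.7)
The plan is to mirror the two-step strategy already signalled in the introduction: first treat the case where the group $G$ of deck transformations of $\pi:\splitting\arr\X$ is torsion free nilpotent, and then bootstrap to the virtually nilpotent case by passing to a finite index subgroup. In the torsion free nilpotent case, the main work is to produce, via Proposition~\ref{pr:hyperbolicandfixed}, a fixed point of (a lift of) $f$ and to show that the induced automorphism $\phi$ of $G$ is hyperbolic; once this is in hand, Theorem~\ref{th:equivariance} will let me identify $(\X, f)$ with the associated infra-nilmanifold automorphism by constructing an equivariant map to the Malcev completion.

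I would prove Proposition~\ref{pr:hyperbolicandfixed} by induction on the nilpotency class $c$ of $G$. In the base case $G\cong\Z^n$, hyperbolicity of $\phi$ should follow from the fact that $f$ contracts stable and expands unstable leaves exponentially together with $G$ acting properly and co-compactly: any $\phi$-invariant direction in $\R^n\supset\Z^n$ with eigenvalue on the unit circle would produce elements of $G$ whose displacement is incompatible with Smale dynamics. Existence of a fixed point follows once $\phi-\mathrm{id}$ is invertible on $G\otimes\R$, by a standard periodic-point/Lefschetz-type argument together with the splitting picture. For the inductive step with class $c\ge 2$, the center $Z(G)\cong\Z^n$ is $\phi$-invariant; the restriction $\phi|_{Z(G)}$ is again hyperbolic by the same argument. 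Using the direct product structure on $\splitting$ and the fact that central elements act by uniformly bi-Lipschitz transformations preserving plaques, I would extend the $\Z^n$-action on $\splitting$ to a continuous action of $\R^n$ commuting with the lift of $f$ up to $\phi|_{Z(G)}$. This descends to an action of $\R^n/\Z^n$ on $\X$ compatible with the local product structure and the dynamics, so the quotient is a Smale space with a splitting whose deck group is $G/Z(G)$, of class $c-1$, to which the inductive hypothesis applies; pulling the fixed point and hyperbolicity back up the tower completes the induction.

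Once the proposition is established, I would follow J.~Franks' argument from~\cite[Theorem~2.2]{franks} to construct a continuous $G$-equivariant map $h:\splitting\arr L$, where $L$ is the Malcev completion of $G$ and $G$ acts on $L$ by left translations, intertwining the lift $F$ with the extension $\Phi$ of $\phi$ to $L$. The automorphism $\Phi$ being hyperbolic makes $(G\backslash L, f_L)$ a Smale space with the obvious splitting, and because $F$ and $\Phi$ both have fixed points with matching deck group and $h$ is $G$-equivariant, Theorem~\ref{th:equivariance} yields a topological conjugacy between $(\X, f)$ and $(G\backslash L, f_L)$.

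For the virtually nilpotent case, I would choose a normal, torsion free, nilpotent subgroup $G_1\triangleleft G$ of finite index that is $\phi$-invariant (replacing $\phi$ by a power if necessary and intersecting over the finitely many conjugates). Applying the torsion free nilpotent case to $G_1$ produces a homeomorphism $h:\splitting\arr L_1$ to the Malcev completion of $G_1$ conjugating the $G_1$-action and the lifts of $f$ and of the infra-nilmanifold automorphism of $G_1\backslash L_1$. Transporting the $G$-action on $\splitting$ to $L_1$ via $h$, I then need Proposition~\ref{pr:byaffinetransformations}: the induced action of the finite group $G/G_1$ on $L_1$ is by affine transformations, so $G$ sits in $\mathrm{Aut}(L_1)\ltimes L_1$ and $(\X, f)$ is conjugate to an honest hyperbolic infra-nilmanifold automorphism. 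The hardest step, in my view, will be the central extension argument in the inductive step of Proposition~\ref{pr:hyperbolicandfixed}: verifying that the extended $\R^n$-action is continuous, preserves the local product structure and a compatible metric, and produces a genuine Smale space on the quotient requires careful matching of the central-action orbits with plaques of $\splitting$ at all scales. The affineness of the $G/G_1$-action on $L_1$ is the other critical point, and should follow from topological rigidity of the Malcev completion combined with the $\phi$-equivariant commutation relations coming from normality of $G_1$ in $G$.
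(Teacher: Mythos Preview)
Your proposal is correct and follows essentially the same route as the paper: induction on nilpotency class via the center, extending the $Z(G)\cong\Z^n$-action to an $\R^n$-action and passing to the quotient Smale space with deck group $G/Z(G)$, then Franks' equivariant-map construction combined with Theorem~\ref{th:equivariance} in the torsion free nilpotent case, and finally the reduction from virtually nilpotent to torsion free nilpotent via a normal $\phi$-invariant finite-index subgroup together with Proposition~\ref{pr:byaffinetransformations}. The only minor deviations are cosmetic: the paper obtains the $\phi$-invariant normal $G_1$ by intersecting all $g^{-1}\phi^n(G_0)g$ rather than by passing to a power of $\phi$, and in the abelian base case the fixed point is produced directly by showing the $\R^d$-action is transitive and solving $(1-\phi)w_0=v_0$, which is exactly the ``$\phi-\mathrm{id}$ invertible'' observation you make.
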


\begin{proof}
Let $(\X, f)$ satisfy conditions of the theorem. Let us assume at
first that the group $G$ of deck transformations is nilpotent and
torsion free.

Let $F$ be a lift of $f$, and let $F(x_0)=x_1$. Then the map $g\mapsto \phi(g)$
defined by $F(g(x_0))=\phi(g)(x_1)$ is an automorphism of $G$.

Denote by $Z(G)$ the \emph{center} of $G$, i.e., the set of elements
of $G$ that commute with every element of $G$.

The group $Z(G)$ is obviously abelian and torsion free. It is finitely
generated, since all subgroups of a finitely generated nilpotent group
are finitely generated (see~\cite[5.2.17]{robinson}). Consequently,
$Z(G)$ is isomorphic to $\Z^d$ for some $d$.

\begin{lemma}
\label{lem:FCshift}
Let $g\in Z(G)$. There exist positive constants $D_-$ and $D_+$  such that for every
stable (resp.\ unstable) plaque $V$ of $\splitting$ and any $x\in V$, $y\in
g(V)$ we have $d_-(x, [x, y])\le D_-$ (resp.\ $d_+(x, [y, x])\le D_+$).
\end{lemma}

\begin{proof}
Let us prove the lemma for stable plaques.
Note that $[x, y]$ is equal to the intersection of $g(V)$ with
$W_-(x)$, and so does not depend on the choice of $y\in g(V)$.
Therefore, it is enough to show that $d_-(x, [x, g(x)])$ is bounded
for all $x\in\splitting$, see Figure~\ref{fig:center}.
Let $R\subset\splitting$ be a compact rectangle such that $\pi(R)=\X$. Let
$D$ be an upper bound on the value of $d_-(x, [x, y])$ for $x\in R$
and $y\in g(R)$.
It is finite, since there exists a compact rectangle
$P$ such that $P\supset R\cup g(R)$ (see also
Proposition~\ref{pr:agreeslocprod}).

\begin{figure}
\includegraphics{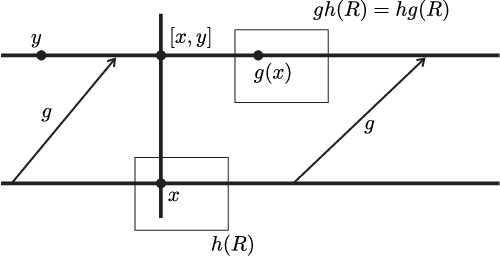}
\caption{}
\label{fig:center}
\end{figure}

For every $x\in\splitting$ there exists $h\in G$ such that $h(x)\in R$. Then
$d_-(x, [x, g(x)])=d_-(h(x), [h(x), hg(x)])=d_-(h(x), [h(x), gh(x)])<D$.
\end{proof}

Denote for $g\in Z(G)$
\[D_-(g)=\sup_{x\in\splitting}d_-(x, [x, g(x)]),\qquad D_+(g)=\sup_{x\in\splitting}d_+(x, [g(x), x]),
\] which are finite by Lemma~\ref{lem:FCshift}.
Note that we obviously have
\begin{equation}
\label{eq:Dstnorm}
D_+(g_1g_2)\le D_+(g_1)+D_+(g_2),\qquad D_-(g_1g_2)\le
D_-(g_1)+D_-(g_2)
\end{equation}
for all $g_1, g_2\in Z(G)$.

Let $\lambda\in (0, 1)$ and $C>1$ be such that for any two stably (resp.\
unstably) equivalent points $x, y\in \X$ we have
\[d_+(f^n(x),
f^n(y))\le C\lambda^nd_+(x, y)\] (resp.\ $d_-(f^{-n}(x), f^{-n}(y))\le
C\lambda^nd_-(x, y)$) for all $n\ge 0$. Then the same estimates will hold for $F$ and
$x, y\in\splitting$ belonging to one stable (resp.\ unstable) plaque.

Note that the center $Z(G)$ is characteristic (i.e., invariant under
automorphisms of $G$), hence $\phi(Z(G))=Z(G)$.

\begin{proposition}
\label{pr:Dstcontraction}
For every $g\in Z(G)$ and $n\ge 0$ we have
\[D_+(\phi^n(g))\le C\lambda^n D_+(g),\qquad D_-(\phi^{-n}(g))\le
C\lambda^n D_-(g).\]
\end{proposition}

\begin{proof}
Let us prove the first inequality. The second is proved the same
way.

Let $V$ be an unstable plaque of $\splitting$, and let $x\in V$ and
$y\in\phi^n(g)(V)$ be such that $x$ and $y$ belong to the same stable
plaque. It is enough to prove that $d_+(x, y)\le C\lambda^n D_+(g)$.

The points $F^{-n}(x)$
and $F^{-n}(y)$ belong to one stable plaque, and $F^{-n}(x)\in
F^{-n}(V)$, $F^{-n}(y)\in F^{-n}(\phi^n(g)(V))=g (F^{-n}(V))$, hence
$d_+(F^{-n}(x), F^{-n}(y))\le D_+(g)$. But this implies $d_+(x, y)\le
C\lambda^n d_+(x, y)$.
\end{proof}

\begin{proposition}
\label{pr:Awidth}
For every finite set $S\subset Z(G)$ there exists a constant $D_S>0$
satisfying the following condition. For every finite set $A\subset
Z(G)$ there exists $n_0$ such that for all $n\ge n_0$,  $g_1,
g_2\in\phi^n(A)\phi^{n-1}(S)\phi^{n-2}(S)\cdots\phi(S)S$, and every
unstable plaque $V$ we have
\[d_+(x, [y, x])<D_S\]
for all $x\in g_1(V)$ and $y\in g_2(V)$.
\end{proposition}

\begin{proof}
Let $\Delta_S$ and
$\Delta_A$ be upper bounds on $D_+(g)$ for $g\in S$ and $g\in A$, respectively.

Then, by~\eqref{eq:Dstnorm} and Proposition~\ref{pr:Dstcontraction}, we
have, for all $h\in A$, $g_i\in S$, and all $n$ big enough,
\begin{multline*}D_+(\phi^n(h)\phi^{n-1}(g_1)\cdots \phi(g_{n-1})g_n)\le\\
C\lambda^n\Delta_A+C(\lambda^{n-1}+\cdots +\lambda+1)\Delta_S<
1+\frac{C\Delta_S}{1-\lambda}.
\end{multline*}
It follows that we can take $D_S=2+\frac{2C\Delta_S}{1-\lambda}$.
\end{proof}

\begin{proposition}
\label{pr:hyperbolicautomorphism}
The restriction of the automorphism $\phi$ to $Z(G)\cong\Z^d$ is hyperbolic, i.e., has no eigenvalues of absolute value 1.
\end{proposition}

\begin{proof}
Suppose that on the contrary, there exists an eigenvalue
$\cos\alpha+i\sin\alpha$ of $\phi$ of absolute value 1. Suppose at
first that $\alpha\notin \pi\cdot\Z$. Then there
exists a two-dimensional subspace $L\le\R^d$ and a Euclidean structure
on it such that $\phi$ acts on
$L$ as a rotation by the angle $\alpha$. Denote
$K=\{(x_i)_{i=1}^d\in\R^d\;:\;|x_i|<1\}$, and let $S$ be
the set of elements $g\in Z(G)=\Z^d$ such that $\phi(K)\cap
(K+g)\ne\emptyset$ or $\phi^{-1}(K)\cap (K+g)\ne\emptyset$.
The set $S$ is obviously finite.

Let $R>0$ be
arbitrary, and consider the circle $\gamma$ of radius $R$ in $L$ with
center in the origin. Then $\phi(\gamma)=\gamma$. Let $A_R$ be the set of elements
$g\in Z(G)$ such that $K+g\cap\gamma\ne\emptyset$. It is finite and
non-empty. Note that union of the sets $A_R$ for all $R>0$ is infinite.

Let $h$ be an arbitrary element of $A_R$, and let $x\in K+h\cap\gamma$. Then
$\phi^{-1}(x)\in\gamma$, and there exists $g\in\Z^d$ such that
$\phi^{-1}(x)\in K+g$. Then $g\in A_R$, and $x\in\phi(K)+\phi(g)\cap
K+h$. It follows that $K+h-\phi(g)\cap \phi(K)\ne\emptyset$, so that
$h-\phi(g)\in S$. We see that $h=\phi(g)+(h-\phi(g))\in\phi(A_R)+S$. We
have proved that $A_R\subset\phi(A_R)+S$. It is proved the same way
that $A_R\subset\phi^{-1}(A_R)+S$. By induction we conclude that
\[A_R\subset\phi^n(A_R)+\phi^{n-1}(S)+\cdots+\phi(S)+S\]
and
\[A_R\subset\phi^{-n}(A_R)+\phi^{-(n-1)}(S)+\cdots+\phi^{-1}(S)+S\]
for all $n\ge 1$.

Fix an arbitrary point $x_0\in\splitting$. Since
$\phi^n(A_R)+\phi^{n-1}(S)+\cdots \phi(S)+S\supset A_R$ for all $n$,
it follows from Proposition~\ref{pr:Awidth} that
there exists $D_S>0$, not depending on $R$, such that $d_+(x_0, [g(x_0),
x_0])<D_S$ and $d_-(x_0, [x_0,
g(x_0)])<D_S$ for all $g\in A_R$. It follows that $g(x_0)$ belongs to
the rectangle $[B_+, B_-]$, where $B_\pm$ are the balls of radius $D_S$
with center in $x_0$ in the corresponding plaque containing
$x_0$. Note that the set $\{g\in G\;:\;g(x_0)\in [B_+, B_-]\}$ is
finite, does not depend on $R$, and contains $A_R$. But this is a
contradiction.

The case when the eigenvalue is equal to $\pm 1$ is similar (with
one-dimensional space $L$).
\end{proof}

Let $E_+$ (resp.\ $E_-$) be the sum of the
root subspaces of $\R^d$ of the eigenvalues $\lambda$ of $\phi$ such
that $|\lambda|<1$ (resp.\ $|\lambda|>1$). We have $\R^d=E_+\oplus
E_-$. Denote by $P_+$ and $P_-=1-P_+$ the projections onto $E_+$ and $E_-$,
respectively.

Denote
$K=\{(x_i)_{i=1}^d\in\R^d\;:\;|x_i|<1\}$.
Let $S\subset Z(G)$ be a finite set containing all elements $g\in Z(G)$
such that $K+g\cap (\phi(K)\cup\phi^{-1}(K))\ne\emptyset$.

\begin{proposition}
\label{pr:limits}
For every point $x\in E_+$ there exists a sequence $g_i\in
S$, $i=1, 2,\ldots$, and an element $g_0\in Z(G)$ such that
\[x=\lim_{n\to\infty}P_+(\phi^n(g_n)+\phi^{n-1}(g_{n-1})+\cdots+\phi(g_1)+g_0).\]

There exists a finite set $N\subset Z(G)$ such that an equality
\begin{multline*}
\lim_{n\to\infty}P_+(\phi^n(g_n)+\phi^{n-1}(g_{n-1})+\cdots+\phi(g_1)+g_0)=\\
\lim_{n\to\infty}P_+(\phi^n(g_n')+\phi^{n-1}(g_{n-1}')+\cdots+\phi(g_1')+g_0')
\end{multline*}
holds for $g_i, g_i'\in S$, $i\ge 1$, and
$g_0, g_0'\in Z(G)$ if and only if there exists
a sequence $h_n\in N$ such that
\[\phi^n(g_n')+\phi^{n-1}(g_{n-1}')+\cdots+\phi(g_1')+g_0'=
\phi^n(h_n+g_n)+\phi^{n-1}(g_{n-1})+\cdots+\phi(g_1)+g_0\]
for all $n$ big enough.
\end{proposition}

\begin{proof}
The sets $K+g$ cover $\R^d$ for $g\in Z(G)=\Z^d$, and the group $Z(G)$
is $\phi$-invariant, hence
for every $x\in E_+$ and $n\ge 0$ there exists $h_n\in Z(G)$ such that
$x\in \phi^n(K)+h_n$. We have then
$\phi^{n-1}(K)+h_{n-1}\cap\phi^n(K)+h_n\ne\emptyset$, hence
$\phi^{-1}(K)\cap K+\phi^{-n}(h_n)-\phi^{-n}(h_{n-1})\ne\emptyset$
which implies that $\phi^{-n}(h_n)-\phi^{-n}(h_{n-1})=g_n\in S$, i.e.,
$h_n=\phi^n(g_n)+h_{n-1}$. It follows that there exists a sequence
$g_i\in S$ such that
$h_n=\phi^n(g_n)+\phi^{n-1}(g_{n-1})+\cdots+\phi(g_1)+h_0$.

Note that since $x\in\phi^n(K)+h_n$, we have $\|P_+(h_n)-x\|<C\lambda^n$ for
some constant $C$. It follows that $x=\lim_{n\to\infty}P_+(h_n)$.

Note that the set of all limits
$\lim_{n\to\infty}P_+(\phi^n(g_n)+\phi^{n-1}(g_{n-1})+\cdots+\phi(g_1))$
for all choices of $g_i\in S$ is a bounded subset $T_+$ of $E_+$.

Suppose
that
\begin{multline*}\lim_{n\to\infty}P_+(\phi^n(g_n)+\phi^{n-1}(g_{n-1})+\cdots+\phi(g_1)+h)=\\
\lim_{n\to\infty}P_+(\phi^n(g_n')+\phi^{n-1}(g_{n-1}')+\cdots+\phi(g_1')+h')\end{multline*}
for $g_i, g_i'\in S$ and $h, h'\in Z(G)$.

Then for every $n\ge 0$ we have
\begin{multline*}
P_+(\phi^n(g_n)+\phi^{n-1}(g_{n-1})+\cdots+\phi(g_1)+h)-P_+(\phi^n(g_n')+\phi^{n-1}(g_{n-1}')+\cdots+\phi(g_1')+h')=\\
\left(P_+(\phi^{n+1}(g_{n+1}'))+P_+(\phi^{n+2}(g_{n+2}'))+\cdots\right)-\\
\left(P_+(\phi^{n+1}(g_{n+1}))+P_+(\phi^{n+2}(g_{n+2}))+\cdots\right)\in\phi^n(T_+-T_+).
\end{multline*}
It follows that
\begin{multline*}
P_+((g_n+\phi^{-1}(g_{n-1})+\cdots+\phi^{-(n-1)}(g_1)+\phi^{-n}(h))-\\
(g_n'+\phi^{-1}(g_{n-1}')+\cdots+\phi^{-(n-1)}(g_1')+\phi^{-n}(h')))\in
T_+-T_+.
\end{multline*}

Since $\phi^{-1}$ is contracting on $E_-$, there exists a compact set
$T_-\subset E_-$ such that for any $h$ and any sequence $g_i\in S$ we
have
\[P_-(g_n+\phi^{-1}(g_{n-1})+\cdots+\phi^{-(n-1)}(g_1)+\phi^{-n}(h))\in
T_-\]
for all $n$ big enough.

It follows that for all $n$ big enough the difference
\begin{multline*}(g_n+\phi^{-1}(g_{n-1})+\cdots+\phi^{-(n-1)}(g_1)+\phi^{-n}(h))-\\
(g_n'+\phi^{-1}(g_{n-1}')+\cdots+\phi^{-(n-1)}(g_1')+\phi^{-n}(h'))\end{multline*}
belongs to a bounded set $T=(T_+-T_+)\oplus(T_--T_-)$, hence we can
take $N=T\cap\Z^d$.
\end{proof}

Fix a stable plaque
$W_+=W_+(x_0)$ of $\splitting$. The group $G$ acts on $W_+$ by $x\mapsto [g(x), x]$,
since $G$ preserves the direct product structure of $\splitting$.

Let $v\in\R^d$, and denote $v_+=P_+(v)$ and $v_-=P_-(v)$. Using
Proposition~\ref{pr:limits},
find a sequence $g_i\in S$, $i\ge 1$, and $g_0\in Z(G)$ such that
\[v_+=\lim_{n\to\infty}P_+(\phi^n(g_n)+\cdots+\phi(g_1)+g_0),\]
and define for $x\in W_+$
\begin{equation}
\label{eq:eplusaction}
v_+(x)=\lim_{n\to\infty}[(\phi^n(g_n)+\cdots+\phi(g_1)+g_0)(x), x].
\end{equation}

We also define for $x\in W_-$, where $W_-$ is an unstable plaque:
\begin{equation}
\label{eq:minusaction}
v_-(x)=\lim_{n\to\infty}[x,
(\phi^{-n}(g_n)+\cdots+\phi^{-1}(g_1)+g_0)(x)],
\end{equation}
where $g_i\in S$, for $i\ge 1$, and $g_0\in Z(G)$ are such that
\[v_-=\lim_{n\to\infty}P_-(\phi^{-n}(g_n)+\cdots+\phi^{-1}(g_1)+g_0).\]

(Replacing in Proposition~\ref{pr:limits} $\phi$ by $\phi^{-1}$ and
$E_+$, $P_+$ by $E_-$, $P_-$, we see that such a sequence $g_n$ exists.)

\begin{proposition}
The limit~\eqref{eq:eplusaction} exists and depends only on $v_+$ and
$x$. The limit~\eqref{eq:minusaction} exists and depends only on $v_-$
and $x$.
\end{proposition}

\begin{proof}
It follows directly from~\eqref{eq:Dstnorm},
Propositions~\ref{pr:Dstcontraction} and~\ref{pr:limits}.
\end{proof}

\begin{theorem}
\label{th:rdaction}
The limits~\eqref{eq:eplusaction} and~\eqref{eq:minusaction} define
continuous actions of $E_+$ and $E_-$ 
on $W_+=W_+(x_0)$ and $W_-=W_-(x_0)$, respectively.
Their direct sum is a continuous action
of $\R^d$ on $\splitting$. This action satisfies the following conditions:
\begin{enumerate}
\item it is free and proper;
\item its restriction onto $Z(G)=\Z^d<\R^d$ coincides with the original action
  of $Z(G)$ on $\splitting$;
\item it preserves the direct product structure, i.e., $v([x,
  y])=[v(x), v(y)]$ for all $v\in\R^d$ and $x, y\in\splitting$;
\item $F(v(x))=\phi(v)(F(x))$ for all $x\in\splitting$ and $v\in\R^d$;
\item the action commutes with $G$, i.e.,
$v(g(x))=g(v(x))$ for all $x\in\splitting$, $g\in G$, and $v\in\R^d$;
\item if $g(x)=v(x)$ for $g\in G$ and $v\in\R^d$, then $g=v\in Z(G)$.
\end{enumerate}
\end{theorem}

\begin{proof}
The fact that conditions~\eqref{eq:eplusaction} and~\eqref{eq:minusaction}
define actions follows directly from the fact
that the limits do not depend on $S$ and the choice of the sequences
$g_i$.

Let us prove that the action is continuous. It is enough to prove that
the action of $E_+$ on $W_+$ is continuous. We have to
show that for every $v_1\in E_+$, $x\in W_+$, and
$\epsilon>0$ there exists $\delta>0$ such that if
$v_2\in E_+$ and $y\in W_+$ are such that $\|v_1-v_2\|<\delta$
and $d_+(x, y)<\delta$, then $d_+(v_1(x), v_2(y))<\epsilon$.

Take an arbitrary $\epsilon>0$.
For every $n$ there exists $\delta_1(n)$ and a sequence $g_0\in H$, $g_i\in
S$, $i\ge 1$, such that
$v_1=\lim_{m\to\infty}P_+(g_0+\phi(g_1)+\cdots+\phi^m(g_m))$ and all
points $v_2$ in the $\delta_1(n)$-neighborhood of $v_1$ can be represented
as limits $v_2=\lim_{m\to\infty}P_+(h_0+\phi(h_1)+\cdots+\phi^m(h_m))$
for $h_0\in Z(G)$, and $h_i\in S$, $i\ge 1$, such that $h_i=g_i$ for $i=0,
1, \ldots, n$ (see the proof of Proposition~\ref{pr:limits} and use
Lebesgue's covering lemma).
There exists $\delta_2(n)$ such that if
$y\in W_+$ is such that $d_+(x, y)<\delta_2(n)$, then
\[
d_+([(g_0+\phi(g_1)+\cdots+\phi^n(g_n))(x), x],
[(g_0+\phi(g_1)+\cdots+\phi^n(g_n))(y), y])<\epsilon/2,
\]
since the function $y\mapsto [(g_0+\phi(g_1)+\cdots+\phi^n(g_n))(y),
y]$ is continuous. There exist constants $C>0$ and $\lambda\in (0, 1)$
such that
\[d_+(u(z), [(g_0+\phi(g_1)+\cdots+\phi^n(g_n))(z), z])<C\lambda^n\]
for all $z\in W_+$ and $u\in E_+$ such that
$u=\lim_{m\to\infty}P_+(g_0+\phi(g_1)+\cdots+\phi^m(g_m))$ for $g_0\in
Z(G)$ and $g_i\in S$ for $i\ge 1$. 

Take $n\ge
-\frac{\log(\epsilon/4C)}{\log\lambda}$.
Then for all $v_2\in E_+$ and
$y\in W_+$ such that $\|v_1-v_2\|<\delta_1(n)$ and $d_+(x, y)<\delta_2(n)$ we
have
\begin{multline*}
d_+(v_1(x), v_2(y))\le\\
d_+(v_1(x),
[(g_0+\phi(g_1)+\cdots+\phi^n(g_n)(x),
x])+\\ d_+([(g_0+\phi(g_1)+\cdots+\phi^n(g_n)(x), x],
[(g_0+\phi(g_1)+\cdots+\phi^n(g_n)(y), y])+\\
d_+(v_2(y), [(g_0+\phi(g_1)+\cdots+\phi^n(g_n)(y), y])\le
\epsilon/4+\epsilon/2+\epsilon/4=\epsilon.
\end{multline*}
Which shows that the action of $E_+$ on $W_+$ is continuous.

The same arguments (using Proposition~\ref{pr:Dstcontraction} and
inequalities~\eqref{eq:Dstnorm}) 
as in the proof of the criterion of equality of two
limits in Proposition~\ref{pr:limits}
show that an equality
\[\lim_{n\to\infty}[(\phi^n(g_n)+\cdots\phi(g_1)+g_0)(x), x]=
\lim_{n\to\infty}[(\phi^n(g_n')+\cdots\phi(g_1')+g_0')(x), x]\] for
$g_i, g_i'\in S$, $i\ge 1$, and $g_0, g_0'\in Z(G)$
is equivalent to the equality
\[\lim_{n\to\infty}P_+(\phi^n(g_n)+\cdots\phi(g_1)+g_0)=\lim_{n\to\infty}P_+(\phi^n(g_n')+\cdots\phi(g_1')+g_0').\]
This (and a similar statement for $P_-$ and the action on $W_-$) shows
that the action of $\R^d$ is free.

Let us show that the action is proper. Let $B\subset\splitting$ be a compact
set.
We have to show that the set
$\{v\in\R^d\;:\;v(B)\cap B\ne\emptyset\}$ is compact. It is closed,
since the action is continuous.

Denote $K=\{(x_i)_{i=1}^d\in\R^d\;:\;|x_i|\le 1\}$. Then for every
$v\in\R^d$ there exists $h\in Z(G)=\Z^d$ such that $v-h\in K$.
The set $K(B)=\{v(x)\;:\;x\in B, v\in K\}$ is compact, since the
action is continuous and the sets $B$ and $K$ are compact. The action
of $G$ on $\splitting$ is proper, hence the set $A$ of elements $h\in Z(G)$ such
that $h(K(B))\cap B\ne\emptyset$ is finite.

Suppose
that $x\in B$ and $v\in\R^d$ are such that $v(x)\in B$. There exists
$h\in Z(G)$ such that $v-h\in K$. Then $v(x)=(h+v-h)(x)\in h(K(B))\cap
B$, hence $h\in A$, so that $v\in K+A$. But the set $K+A$ is compact,
which proves that the action of $\R^d$ on $\splitting$ is proper.

The proof of statements (2)--(5) is straightforward, using the
fact that the action does not depend on the choice of $S$.

Let us prove the last statement. Suppose that $g(x)=v(x)$ for $g\in G$
and $v\in\R^d$. Then $g$ leaves invariant the orbit $\R^d(x)$ of $x$. Let
$G_1$ be the group of all elements leaving the $\R^d(x)$
invariant. The action of $Z(G)$ on $\R^d(x)$ is co-compact, the action
of $G$ on $\splitting$ is proper, hence the index of $Z(G)$ in $G_1$ is
finite, i.e., the image of $G_1$ in $G/Z(G)$ is finite. But $G/Z(G)$
is torsion free (see~\cite[5.2.19]{robinson}). Consequently, $G_1=Z(G)$. Since the action of $\R^d$
on $\R^d(x)$ is free, this implies that $g=v\in Z(G)$.
\end{proof}

\begin{proposition}
\label{pr:Htransitive}
If $G$ is abelian, then the action of $\R^d$ on $\splitting$ is transitive (i.e., has exactly one orbit).
\end{proposition}

\begin{proof}
It is enough to show that for every point
$x\in W_+$ there exists a sequence $g_i\in S$, $i\ge 1$, and an
element $g_0\in G$ such that
\begin{equation}
\label{eq:surjective}
x=\lim_{n\to\infty}[(\phi^n(g_n)+\cdots+\phi(g_1)+g_0)(x_0), x_0].
\end{equation}

The action of $G$ on $\splitting$ is co-compact, hence
there exists a relatively compact open rectangle
$R\subset\splitting$ containing $x_0$ and such that $\bigcup_{h\in G}h(R)=\splitting$.

Then for every $x\in W_+$ and every $n\ge 0$ there exists $h_n\in G$ such that
$x\in h_n(F^n(R))$. Assume that $S$ is big enough so that it contains all
elements $h\in G$ such that $h(R)\cap(F(C)\cup
F^{-1}(C))\ne\emptyset$. Then the same arguments as in the proof of
Proposition~\ref{pr:limits} show that there exists a sequence $g_i\in
S$, $i\ge 1$, and an element $g_0\in G$ for which~\eqref{pr:limits} holds.
\end{proof}

\begin{theorem}
\label{th:virtuallyabelian}
Let $(\X, f)$ be a locally connected and connected Smale space which
has a splitting with a free abelian group of deck transformations
$G\cong\Z^d$. Let $\phi$ be the automorphism of $G$ induced by a lift
of $f$. Then $(\X, f)$ is topologically conjugate to the hyperbolic automorphism
of the torus $\R^d/\Z^d$ induced by $\phi$. In particular, $(\X, f)$
has a fixed point.
\end{theorem}

\begin{proof}
By Proposition~\ref{pr:Htransitive}, the action of $\R^d$ on
$\splitting$ defined in Theorem~\ref{th:rdaction} is transitive.

Fix a basepoint $x_0\in\splitting$, define
$\rho_0:\R^d\arr\splitting$ by $v\mapsto v(x_0)$. The map $\rho_0$ is a homeomorphism, since
it is continuous, bijective, and proper.
Denote $v_0=\rho_0^{-1}(F(x_0))$, i.e., $v_0\in\R^d$ is such that $v_0(x_0)=F(x_0)$.

Then the map $\phi_0=\rho_0^{-1}F\rho_0:\R^d\arr\R^d$ satisfies
\[\phi_0(v)=\rho_0^{-1}(F(v(x_0))=\rho_0^{-1}(\phi(v)(F(x_0)))=
\rho_0^{-1}((\phi(v)+v_0)(x_0))=\phi(v)+v_0.\]

The linear operator $1-\phi$ is invertible, since $\phi$ is
hyperbolic. Therefore, there exists $w_0\in\R^d$ such that
$w_0-\phi(w_0)=v_0$, i.e., $\phi_0(w_0)=\phi(w_0)+v_0=w_0$. Define then
\[\rho_1(v)=\rho_0(v+w_0).\]
We have then
\begin{multline*}
F(\rho_1(v))=F(\rho_0(v+w_0))=\rho_0(\phi_0(v+w_0))=\\
\rho_0(\phi(v+w_0)+v_0)=\rho_0(\phi(v)+\phi(w_0)+v_0)=\rho_0(\phi(v)+w_0)=\rho_1(\phi(v)).
\end{multline*}
The statement of the theorem follows now directly from Theorem~\ref{th:rdaction}.
\end{proof}

Let us go back to the case when $G$ is torsion free nilpotent.

\begin{proposition}
\label{pr:lipschitzaction}
The action of $\R^d$ on $\splitting$ is uniformly locally Lipschitz, i.e.,
there exist $\epsilon>0$ and $C>1$ such that for every $v\in\R^d$ and
all $x, y\in\splitting$ such that $d_\splitting(x, y)<\epsilon$ we have
\[d_\splitting(v(x), v(y))\le Cd_\splitting(x, y).\]
\end{proposition}

Note that Proposition~\ref{pr:lipschitzaction} implies that $C^{-1}d_\splitting(x, y)\le d_\splitting(v(x), v(y))$ for all $x,
y\in\splitting$ such that $d_\splitting(x, y)\le C^{-1}\epsilon$.

\begin{proof}
By Theorem~\ref{th:rdaction}, $G$ maps $\R^d$-orbits to
$\R^d$-orbits. 

Let $K=\{(x_i)_{i=1}^d\in\R^d\;:\;|x_i|\le 1\}$, and let $R\subset\splitting$ be a
relatively compact rectangle such that $\pi(R)=\X$. Let
$\epsilon>0$ and $C>1$ be such that 
\begin{equation}
\label{eq:dWdpm}
C^{-1}d_\splitting(x, y)\le d_+(x, [y,
x])+d_-(x, [x, y])\le Cd_\splitting(x, y)
\end{equation}
for all $x, y\in\splitting$ such that $d_\splitting(x,
y)<\epsilon$, see Subsection~\ref{ss:hypgraphs}, where the metric
$d_\splitting$ is defined.
We also assume that $\epsilon$ is sufficiently small so that
for all $x, y\in\splitting$ such that $d_\splitting(x,
y)<\epsilon$ there exists $g\in G$ such that $g(x), g(y)\in R$.

Let $\delta$ is such that $d_\splitting(v(x), v(y))<\epsilon$ for all $v\in K$
and all $x, y\in R$ such that $d_\splitting(x, y)<\delta$. It exists, since the
action of $\R^d$ is continuous, and the set $K$ and the closure of
$R$ are compact.
For all $x, y\in\splitting$ such that $d_\splitting(x, y)<\delta$ and all
$v\in\R^d$ there exists $g\in G$ and $h\in Z(G)$ such that $g(x), g(y)\in
R$, and $v+h\in K$.
Then $d_\splitting(g(x), g(y))<\delta$, hence
\begin{multline*}d_\splitting(v(x), v(y))=d_\splitting(g^{-1}\cdot v\cdot g(x),
g^{-1}\cdot v\cdot g(y))=\\
d_\splitting((h+v)(g(x)), (h+v)(g(y)))<\epsilon.
\end{multline*}
We have shown that for all $x, y\in\splitting$ such that $d_\splitting(x, y)<\delta$
and all $v\in\R^d$ we have $d_\splitting(v(x), v(y))<\epsilon$.

Let $x, y\in\splitting$ be such that $d_\splitting(x, y)<\epsilon$.
Let $n$ be the biggest positive integer such
that $d_+(F^{-n}(x), F^{-n}([y, x]))<C^{-1}\delta$. Then $n$ is
equal, up to an additive constant, to $-\log d_+(x, [y, x])/\alpha_+$,
where $\alpha_+$ is the exponent of $d_+$.

We have then $d_+(v(F^{-n}(x)), v(F^{-n}([y, x]))<\epsilon$ for all
$v\in\R^d$. Applying $F^n$, and using the fact that $\phi$ is an
automorphism of $\R^d$, we get that $d_+(u(x), u([y,
x]))\le C_1e^{-n\alpha_+}\le C_2d_+(x, [y, x])$
for all $u\in\R^d$, where $C_1, C_2$ are constant (not depending on
$x, y$).

In the same way we prove that $d_-(u(x), u([x,
y]))<C_3d_-(x, [x, y])$ for all $u\in\R^d$, if $d_\splitting(x, y)<\epsilon$.
It follows then from~\eqref{eq:dWdpm} that there
exist $\epsilon_1>0$ and $C_4>0$ such that if
$x, y\in\splitting$ are such that $d_\splitting(x, y)<\epsilon_1$, then $d_\splitting(u(x),
u(y))<C_4d_\splitting(x, y)$ for all $u\in\R^d$.
\end{proof}

Let $\overline\splitting$ be the quotient of $\splitting$ by the $\R^d$-action defined
in Theorem~\ref{th:rdaction}.
We denote for $x\in\splitting$ by $\overline x$ the $\R^d$-orbit of
$x$. Since $G$ maps $\R^d$-orbits to $\R^d$-orbits, the action of $G$
on $\splitting$ induces a well-defined action of $G$ on $\overline\splitting$. Denote
$\overline G=G/Z(G)$, it is a torsion-free finitely generated
nilpotent group of nilpotency class one less than the class of $G$. By
Theorem~\ref{th:rdaction}, $Z(G)$ is equal to the kernel of the action
of $G$ on $\overline\splitting$, and the action of $\overline G$ on $\overline
\splitting$ is free.

The action of $\R^d$ on $\splitting$ descends to a free action of $\R^d/\Z^d$ on
$\X$, whose orbits are the images of the $\R^d$-orbits under the map
$\pi:\splitting\arr\X$.
Let $\overline\pi:\overline\splitting\arr\overline\X$ be the corresponding map
induced by $\pi:\splitting\arr\X$.

Denote, for $\overline x, \overline y\in\overline\splitting$
\[\delta(\overline x, \overline y)=\inf\{x_1\in\overline x,
y_1\in\overline y\;:\;d_\splitting(x_1, y_1)\}.\]

\begin{lemma}
\label{lem:overlined}
There exist $\epsilon>0, C>1$ and a $\overline G$-invariant metric
$\overline d$ on $\overline\splitting$ such that
\[C^{-1}\delta(\overline x, \overline y)\le
\overline d(\overline x, \overline y)\le C\delta(\overline x,
\overline y)\]
for all $x, y$ such that $\delta(\overline x, \overline
y)<\epsilon$.
\end{lemma}

\begin{proof}
The function $\delta$ is $\overline G$-invariant, since the metric
$d_\splitting$ is $G$-invariant. Note that
$\delta(\overline x, \overline y)>0$ for all $\overline x\ne\overline
y$, since the quotient map $\pi:(\splitting, d_\splitting)\arr(\X, d)$ is a local
isometry, the images of $\overline x$ and $\overline y$ in $\X$ are
compact, hence distance between any two points of $\pi(\overline x)$
and $\pi(\overline y)$ are bounded from below.

Let $\epsilon$ be as in Proposition~\ref{pr:lipschitzaction}.
Define $\overline d(\overline x, \overline y)$ as infimum of
$\sum\delta(\overline x_i, \overline x_{i+1})$ over all sequences
$\overline x=\overline x_0, \ldots, \overline x_n=\overline y$ such
that $\delta(\overline x_i, \overline x_{i+1})<\epsilon$. Note
that by Proposition~\ref{pr:lipschitzaction} there exists $C>1$ such that
$\delta(\overline x, \overline y)\le C\sum\delta(\overline
x_i, \overline x_{i+1})$, hence
\[\delta(\overline x,
\overline y)\le C\overline d(\overline x, \overline y)\]
for all $\overline x, \overline y\in\overline\splitting$. We also have
\[\overline d(\overline x, \overline y)\le\delta(\overline x,
\overline y)\]
for all $\overline x, \overline y$ such that $\delta(\overline x,
\overline y)<\epsilon$.
\end{proof}

\begin{proposition}
\label{pr:topologyonoverline}
The topology defined by the metric $\overline d$ coincides with the
quotient topology on $\overline\splitting$. The map $\overline\pi:\overline
\splitting\arr\overline\X$ is uniformly locally bi-Lipschitz with respect to $\overline
d$ and the metric on $\overline\X$ coming from the Hausdorff distance
between compact subsets of $\X$.
\end{proposition}

\begin{proof}
By Proposition~\ref{pr:lipschitzaction} and the definition of $\overline
d$, there exist $C>1$ and $\epsilon>0$ such that if $\overline
d(\overline x, \overline y)<\epsilon$, then for every $y\in\overline
y$ there exists $x\in\overline x$ such that $d_\splitting(x, y)<C\epsilon$.
Suppose that $\overline U\subset\overline\splitting$ is open with respect to
$\overline d$. Let $U\subset\splitting$ be the preimage of $\overline U$. Then
for every $\overline x\in\overline U$ there exists $\epsilon>0$ such
that the $\epsilon$-neighborhood of $\overline x$ (with respect to
$\overline d$) is contained in $\overline U$. Let $\overline y$ be
such that $\overline d(\overline x, \overline y)<C^{-1}\epsilon$. Then for every
$y\in\overline y$ there exists $x\in\overline x$ such that $d_\splitting(x,
y)<\epsilon$. It follows that the $\epsilon$-neighborhood of the set
$\overline x$ contains the set $\overline y$. It follows that $U$ is
open in $\splitting$.

Suppose that $U\subset\splitting$ is an $\R^d$-invariant open subset of
$\splitting$. Then for every $x\in U$ there exists $\epsilon>0$ such that the
$\epsilon$-neighborhood of $x$ is contained in $U$. Suppose that
$\overline y\subset U$ is such that $\overline d(\overline x,
\overline y)<C^{-1}\epsilon/2$. Then there exists $y\in\overline y$ such
that $d_\splitting(x, y)\le\epsilon$. Then $y\in U$, hence $\overline y\subset
U$, since $U$ is $\R^d$-invariant. We have shown that every set that
is open in the quotient topology is also open with respect to
$\overline d$.

The statement about the Hausdorff distance also follows directly from the
definition of $\overline d$ and Proposition~\ref{pr:lipschitzaction}
(and the fact that $\pi:\splitting\arr\X$ is a local isometry).
\end{proof}

Note that $v(x)=[v_+(x), v_-(x)]$, where $v_+=P_+(v)$ and $v_-=P_-(v)$. It
follows that for any $v, u\in\R^d$ and $x, y\in\splitting$ we have \[[v(x),
u(y)]=[v_+(x), u_-(y)]=[(v_++u_-)(x), (v_++u_-)(y)]=(v_++u_-)([x,
  y]),\] i.e., the value
of $\overline{[x, y]}$ depends only on $\overline x$ and $\overline
y$. It follows that the function $[\overline x, \overline y]=\overline{[x, y]}$ 
is well defined and satisfies the equalities (1) and (2) of
Definition~\ref{def:directproduct}. We will prove that it is
continuous in the next proposition.

We also have a well defined homeomorphism
$\overline F(\overline x)=\overline{F(x)}$, by condition (4) of
Theorem~\ref{th:rdaction}.

\begin{lemma}
\label{lem:locproddelta}
The metric $\overline d$ agrees with the local product structure on
$\overline\splitting$. In particular, the map $[\cdot, \cdot]$ is continuous.
\end{lemma}

\begin{proof}
We know that the metric $d_\splitting$ on $\splitting$ agrees with the local product structure on
$\splitting$, since it is locally isometric to the standard metric on $\X$.
Let $\overline x$, $\overline y$ be points of $\overline\splitting$ such that
$\delta(\overline x, \overline y)$ is small. Let us prove that
\begin{equation}
\label{eq:delta1}
\delta(\overline x, \overline y)\asymp\delta(\overline x,
[\overline x, \overline y])+\delta(\overline y, [\overline x,
\overline y])
\end{equation}
for all $\overline x$ and $\overline y$ that are close enough to each
other. (Here $F_1\asymp F_2$ means that there exists a constant $C>1$
such that $C^{-1}F_1\le F_2\le CF_1$.)

There exist points $x\in\overline x$ and $y\in\overline
y$ such that $d_\splitting(x, y)\le 2\delta(\overline x, \overline y)$. Since
$d$ agrees with the local product structure on $\splitting$, we have
\[d_\splitting(x, y)\asymp d_\splitting(x, [x, y])+d_\splitting(y, [x, y]).\]

We have $\delta(\overline x, [\overline x, \overline y])\le d_\splitting(x, [x,
y])$ and $\delta(\overline y, [\overline x, \overline y])\le d_\splitting(y, [x,
y])$, hence there exists a constant $C_1>1$ such that
\[\delta(\overline x, \overline y)\ge C_1^{-1}(\delta(\overline x,
[\overline x, \overline y])+\delta(\overline y, [\overline x,
\overline y])).\]
On the other hand, since $\delta$ is equivalent to a metric
(see Lemma~\ref{lem:overlined}), there exists $C_2>1$ such that
\[\delta(\overline x, \overline y)\le C_2(\delta(\overline x,
[\overline x, \overline y])+\delta(\overline y, [\overline x,
\overline y])),\] by the triangle inequality.
This proves~\eqref{eq:delta1}.

\begin{lemma}
\label{lem:stablemin}
There exist $C>1$ and $\epsilon>0$ such that if
$\overline x, \overline y\in\overline{\splitting}$ are such that
$[\overline x, \overline y]=\overline x$ (i.e., $\overline x$ and
$\overline y$ belong to the same stable plaque of $\overline
\splitting$) and $\delta(\overline x, \overline y)<\epsilon$, then there exist
$x\in\overline x$ and $y\in\overline y$ such that $[x, y]=x$ and $d_\splitting(x,
y)\le C\delta(\overline x, \overline y)$.
\end{lemma}

\begin{proof}
Let $x, y$ belong to one stable plaque of $\splitting$.
There exist
$\epsilon>0$ and $C>1$ (not depending on $x, y$) such that if $d_\splitting([x, g(x)], [y,
h(y)])<\epsilon$ for $g, h\in G$, then
\begin{multline*}
d_\splitting([x, g(x)], [y, h(y)])\ge\\ C^{-1}(d_\splitting([x, g(x)],
[y, g(y)])+d_\splitting([y, g(y)], [y, h(y)]))\ge\\ C^{-1}d_\splitting([x, g(x)], [y,
g(y)]),
\end{multline*} see Figure~\ref{fig:prod}.

\begin{figure}
\includegraphics{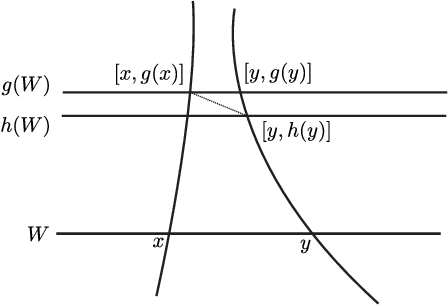}
\caption{}
\label{fig:prod}
\end{figure}

It follows that
\begin{equation}
\label{eq:2inf}
\inf\{d_\splitting(v(x), u(y))\;:\;v, u\in E_-\}\ge C^{-1}\inf\{d_\splitting(v(x),
v(y))\;:\; v\in E_-\},
\end{equation}
if the left hand side of the inequality is less than $\epsilon$.

We always can find
$x_1\in\overline x$ and $y_1\in\overline y$ such that
$d_\splitting(x_1, y_1)\le 2\delta(\overline x, \overline y)$. Then $[x_1, y_1]=v(x_1)$ for some
$v\in\R^d$.

Note that $[x_1, y_1]$ and $x_1$ belong to the same unstable
plaque of $\splitting$, hence $v\in E_-$. Then $[v(x_1), y_1]=[x_1, y_1]=v(x_1)$, i.e.,
the points $v(x_1)$ and $y_1$ belong to the same stable plaque.

It follows from~\eqref{eq:2inf} that
\begin{multline*}\inf\{d_\splitting(uv(x_1), u(y_1))\;:\;u\in E_-\}\le\\
C\inf\{d_\splitting(uv(x_1), w(y_1))\;:\;u,
w\in E_-\}\le Cd_\splitting(x_1, y_1)\le\\
2C\delta(\overline x, \overline y).
\end{multline*}

It follows that there exists a pair of points $x'\in\overline x,
y'\in\overline y$ such that $x'$ and $y'$ belong to the same stable
plaque, and $d_\splitting(x', y')\le 2C\delta(\overline x, \overline y)$.
\end{proof}

Let $\overline x, \overline y\in \overline\splitting$ be such that
$\delta(\overline x, \overline y)$ is small, and
$[\overline x, \overline y]=\overline x$. Let $\overline x_2\in
\overline\splitting$ be a point close to $\overline x$. Using
Lemma~\ref{lem:stablemin}, find $x\in\overline x$ and $y\in\overline
y$ such that $[x, y]=x$ and $d_\splitting(x, y)\le C_1\delta(\overline x, \overline
y)$. By Proposition~\ref{pr:lipschitzaction},
there exists $x_2\in\overline x_2$ such that $d_\splitting(x, x_2)\le
C_2\delta(\overline x, \overline x_2)$. We conclude from this, and from the fact
that $d_\splitting$ agrees with the
local product structure on $\splitting$, that there exists a constant $C_3>1$
such that if $d_\splitting(x, y)$ and $\delta(\overline x, \overline x_2)$ are
small enough, we have
\[d_\splitting([x, x_2], [y, x_2])\le C_3d_\splitting(x, y).\]
Consequently,
\[\delta([\overline x, \overline x_2], [\overline y, \overline
x_2])\le
d_\splitting([x, x_2], [y, x_2])\le C_3d_\splitting(x, y)\le  C_3C\delta(\overline x,
\overline y).\]
It follows that the maps $W_+(\overline x_1)\arr 
W_+(\overline x_2):\overline x\mapsto [\overline x, \overline x_2]$
are locally Lipschitz with respect to the metric $\overline d$. Since the
inverses of these maps are also maps of the same form, they are in
fact locally bi-Lipschitz. This shows that the local product structure
on $\overline\splitting$ agrees with $\overline d$.
\end{proof}

\begin{proposition}
\label{pr:quotientstep}
The dynamical system $(\overline{\X}, \overline f)$, where $\overline
f$ is the map induced by $\overline F$ is a connected and locally
connected Smale space. The quotient map $\overline
\splitting\arr\overline{\X}$ is a splitting with the group of deck
transformations $\overline G$.
\end{proposition}

\begin{proof}
It follows from Theorem~\ref{th:rdaction} that $\overline
f:\overline{\X}\arr\overline{\X}$ is a well defined homeomorphism.
The plaques of the direct product structure of $\overline\splitting$ are
continuous images of plaques of $\splitting$, hence they are connected. The map
$\overline F$ is a lift of $\overline f$.

The map $\overline\pi:\overline\splitting\arr\overline\X$ is uniformly locally
bi-Lipschitz with respect to $\overline d$ and the Hausdorff distance on
$\overline\X$, by Proposition~\ref{pr:topologyonoverline}. It follows
that the Hausdorff distance agrees with the quotient topology on
$\overline\X$ and that $\overline\pi$ is a covering.

It follows from Proposition~\ref{pr:topologyonoverline},
Lemma~\ref{lem:locproddelta}, and the fact that $\overline G$
preserves the direct product structure on $\overline\splitting$,
that the image under $\overline\pi$ of
the local product structure on $\overline\splitting$ is a well defined local
product structure on $\overline\X$.

Suppose that $\overline x, \overline y\in\overline{\splitting}$ are such that
$[\overline x, \overline y]=\overline x$ (i.e., $\overline x$ and
$\overline y$ belong to the same stable plaque of $\overline\splitting$). By
Lemma~\ref{lem:stablemin}, there exist $x\in\overline x$ and $y\in\overline
y$ belonging to the same stable plaque of $\splitting$. Note also that it
follows from Lemmas~\ref{lem:overlined} and~\ref{lem:locproddelta}
that there exists a constant $C>1$, not depending on $\overline x$ and
$\overline y$, such that we can find $x$, $y$ satisfying
\[C^{-1}\overline d(\overline x, \overline y)\le d_\splitting(x, y)\le C\overline
d(\overline x, \overline y),\]
provided $\overline d(\overline x, \overline y)$ is small enough.

Then $d_\splitting(F^n(x),
F^n(y))\le C\lambda^nd_\splitting(x, y)$ for some fixed $C>1$ and $\lambda\in (0,
1)$. It follows that there exists a constant $C_2>1$ such that
for any two points $\overline x, \overline
y\in\overline\splitting$ such that $[\overline x, \overline y]=\overline x$,
and  $\overline d(\overline x, \overline y)$ is
small enough we have
$\overline d({\overline F}^n(\overline x), {\overline
  F}^n(\overline y))\le C_2\lambda^n\overline d(\overline x, \overline
y)$ for all $n$. Analogous statement about the unstable plaques of
$\overline\splitting$ is proved in the same way.

The map $\overline{\pi}:\overline{\splitting}\arr\overline{\X}$ is locally
bi-Lipschitz with respect to $\overline d$ and the Hausdorff distance on
$\overline{\X}$. It follows
that the images of the stable and unstable plaques of $\overline\splitting$
are stable and unstable leaves of $(\overline{\X}, \overline f)$.

Suppose that $\overline x, \overline y\in\overline\splitting$ are such that
$\overline\pi(\overline x)=\overline\pi(\overline y)$, i.e., the
$\R^d$-orbits $\overline x$ and $\overline y$ are mapped to the same
set in $\X$. Then there
exist $x\in\overline x$ and $y\in\overline y$ such that
$\pi(x)=\pi(y)$, i.e., there exists $g\in G$ such that $g(x)=y$. Then
$\overline g(\overline x)=\overline y$. Since the action of $\overline
G$ on $\overline\splitting$ is free, we conclude that $\overline G$ is the
group of deck transformations of the splitting $\overline\pi:\overline
\splitting\arr\overline\X$.
\end{proof}

\begin{proposition}
\label{pr:fixedpoint}
If a connected and locally connected Smale space $(\X, f)$ has a splitting with a
nilpotent torsion free group of deck transformations, then $f$ has a fixed point.
\end{proposition}

\begin{proof}
We argue by induction on the nilpotency class. We know that the
statement is true for abelian groups of deck
transformations, see Theorem~\ref{th:virtuallyabelian}. The Smale space
$(\overline{\X}, \overline f)$ is a locally connected Smale space with
the group of deck transformations $\overline G$ of lower
class.
Therefore, by the inductive hypothesis, $\overline f$ has a
fixed point. Its preimage in $\X$ is an $f$-invariant
torus $T\subset\X$ equal to an orbit of the $\R^d/\Z^d$-action.
It follows from the definition of the action of
$\R^d$ on $\splitting$ that $T$ is locally closed with respect to the local product
operation $[\cdot, \cdot]$, hence $(T, f)$ is a Smale space, and
$f$ restricted to this torus is a hyperbolic automorphism, hence it
has a fixed point (see Theorem~\ref{th:virtuallyabelian}).
\end{proof}

If $G$ is a torsion-free finitely generated nilpotent group, then
there exists a unique simply connected nilpotent Lie group $L$ such
that $G$ is isomorphic to a co-compact lattice in $L$,
see~\cite{malcev2_en}. Moreover, every automorphism of $G$ is uniquely
extended to $L$. The Lie group $L$ is called the \emph{Malcev
  completion} of $G$.

Let $G$ be a finitely generated torsion free nilpotent group, and let
$\phi$ be its automorphism. We say that $\phi:G\arr G$ is
\emph{hyperbolic} if its unique extension $\phi:L\arr L$ to the Malcev
completion is hyperbolic, i.e., if the automorphism $D\phi$ of the
Lie algebra of $L$ has no eigenvalues on the unit circle.

\begin{proposition}
\label{pr:phihyperbolic}
Let $(\X, f)$ be a connected and locally connected Smale space with a
splitting $\pi:\splitting\arr\X$ with a torsion free nilpotent group of deck
transformations $G$. Let $\phi$ be an automorphism of $G$ induced by a
lift $F$ of $f$ which has a fixed point in $\splitting$. Then $\phi$ is hyperbolic.
\end{proposition}

\begin{proof}
Let us prove our proposition by induction on the nilpotency class of
$G$. It is true for abelian groups, by
Proposition~\ref{pr:hyperbolicautomorphism}.

Suppose that we have proved the proposition for all nilpotent groups
of class $n$. Suppose that nilpotency class of $G$ is $n+1$.
By Proposition~\ref{pr:quotientstep}
and the inductive hypothesis, the
automorphism of $\overline G$ induced by $\phi$ is hyperbolic.

Let $x_0\in\splitting$ be the fixed point of $F$. Then $\overline
x_0\in\overline\splitting$ is a fixed point of $\overline F$. The image of
$\overline x_0$ in $\X$ is an $f$-invariant torus $T$, such that $(T,
f)$ is topologically conjugate to a hyperbolic automorphism of the torus.

The map $\pi:\overline x_0\arr T$ is its splitting with the group of deck
transformations equal to $Z(G)=\Z^d$. It follows then from
Proposition~\ref{pr:hyperbolicautomorphism} that the
restriction of $\phi$ onto $Z(G)$ is hyperbolic.

We see that restriction of $\phi:L\arr L$ onto $Z(L)$ and the
automorphism induced by $\phi$ on $L/Z(L)$ both are hyperbolic, hence
$\phi$ itself is hyperbolic.
\end{proof}

\begin{theorem}
\label{th:nilpotrigidity}
Let $(\X, f)$ be a connected and locally connected Smale space with a
splitting $\pi:\splitting\arr\X$ with a torsion free nilpotent group of deck
transformations $G$. Let $\phi$ be an automorphism of $G$ induced by a
lift of $f$. Let $f_L:G\backslash L\arr G\backslash L$ be the
diffeomorphism induced by $\phi$. Then
$(\X, f)$ and $(G\backslash L, f_L)$ are topologically conjugate.
\end{theorem}

\begin{proof}
Let $F$ be a lift of $f$ to $\splitting$ with a fixed point
$x_0$. Extend the automorphism $\phi:G\arr G$ to an automorphism
$\phi:L\arr L$ of the Lie group.

\begin{proposition}
\label{pr:equivariantmap}
There exists a $G$-equivariant map
$h:\splitting\arr L$ such that $\phi\circ h=h\circ F$ and $h(x_0)=1$. 
\end{proposition}

\begin{proof}
Let us show at first that there exists a $G$-equivariant map $h_0:\splitting\arr L$.
The space $\X$ is a quotient of the Cantor set under a finite-to-one
map (see~\cite{bowen:markovpartition,fried}) with an upper bound on
the cardinality of its fibers. It follows then from
Hurewicz formula~\cite{kur_top} that $\X$ has finite topological dimension.

By a theorem of Alexandroff~\cite{alexandroff:simplicialapprox}, $\X$
is homeomorphic to an inverse limit of simplicial complexes, which are
nerves of finite open coverings of $\X$. We can make the elements of
the coverings sufficiently small, so that they can be lifted to a
$G$-invariant covering of $\splitting$. It follows that $\splitting$ is an inverse limit
of a sequence of simplicial complexes with $G$-actions and
$G$-equivariant maps between them. In particular, there exists a
$G$-equivariant map $A$ from $\splitting$ to a simplicial complex $\Delta$ with a $G$ action
on it. Since $L$ is homeomorphic to $\R^n$, there exists
a $G$-equivariant map $B:\Delta\arr L$. 
Composition $h_0=B\circ A$ is then a
$G$-equivariant map from $\splitting$ to $L$.

Let us show now that there exists a $G$-equivariant map
$h:\splitting\arr L$ such that $\phi\circ h=h\circ F$. 
We will use the arguments of~\cite[Theorem~2.2]{franks}, which we
repeat here for the sake of completeness, and since our setting is
slightly different.

Consider the space $Q$ of all continuous maps $\gamma:\X\arr L$ such
that $\gamma(\pi(x_0))=1$ with the topology of uniform convergence on $\X$. It is
a nilpotent group (of the same class as $L$) with respect to pointwise
multiplication. Define $\Phi_0(\gamma)=\phi^{-1}\circ\gamma\circ f$. Then
$\Phi_0$ is a continuous automorphism of the group $Q$.

Let $\mathfrak{L}$ be the Lie algebra of $L$, and let
$\exp:\mathfrak{L}\arr L$ be the exponential map. It is a
diffeomorphism, since $L$ is simply connected and nilpotent.
Let $\mathfrak{Q}$ be the Banach space of continuous maps
$\X\arr\mathfrak{L}$ mapping $\pi(x_0)$ to zero. Then
$\Log:\gamma\mapsto\exp^{-1}\circ\gamma$ is a homeomorphism of $Q$
with $\mathfrak{Q}$.

Let $T_0:Q\arr Q$ be defined by $T_0(\gamma)=\Phi_0(\gamma)\gamma^{-1}$. Let
us show that $T_0$ is a homeomorphism. We show at first that it is a
local homeomorphism at the identity (i.e., the constant map $x\mapsto
1$), using the homeomorphism
$\Log:Q\arr\mathfrak{Q}$ and computing the derivative of $T=\Log\circ
T_0\circ\Log^{-1}$. Denote $\Phi=\Log\circ \Phi_0\circ\Log^{-1}$.

We have $\exp\circ d\phi=\phi\circ\exp$, where $d\phi$ is the
derivative of $\phi:L\arr L$ at the identity. 
It follows that $d\phi^{-1}\circ\exp^{-1}=\exp^{-1}\circ\phi^{-1}$,
and for every $\gamma\in\mathfrak{Q}$ we have
\begin{multline*}\Phi(\gamma)=\Log\circ \Phi_0\circ\Log^{-1}(\gamma)=\\
\exp^{-1}\circ\phi^{-1}\circ\exp\circ\gamma\circ f
=d\phi^{-1}\circ\exp^{-1}\circ\exp\circ\gamma\circ
f=\\
d\phi^{-1}\circ\gamma\circ f.
\end{multline*} It follows that
$\Phi:\mathfrak{Q}\arr\mathfrak{Q}$ is linear.

For every $\gamma\in\mathfrak{Q}$, we have
\begin{multline*}
T(\gamma)=\Log\circ T_0\circ\Log^{-1}(\gamma)=\\
\exp^{-1}(T_0(\exp\circ\gamma))=\exp^{-1}(\Phi_0(\exp\circ\gamma)\cdot (\exp\circ\gamma)^{-1})=\\
\exp^{-1}(\exp\circ \Phi(\gamma)\cdot (\exp\circ\gamma)^{-1})=\\
\Log(\Log^{-1}(\Phi(\gamma))\cdot\Log^{-1}(\gamma)^{-1})=
\Log(\Log^{-1}(\Phi(\gamma))\cdot\Log^{-1}(-\gamma)),
\end{multline*}
since
$(\Log^{-1}(\gamma(x)))^{-1}=(\exp(\gamma(x)))^{-1}=\exp(-\gamma(x))$
for all $x\in\X$.

Let us compute the derivative of $T$ at zero. If
$\gamma\in\mathfrak{Q}$, then
\[\lim_{t\to 0}\frac 1tT(t\gamma)=\lim_{t\to 0}\frac
1t\Log(\Log^{-1}(\Phi(t\gamma))\Log^{-1}(-t\gamma))=\Phi(\gamma)-\gamma,\]
by the Campbell-Hausdorf formula.

Since $d\phi$ is hyperbolic, there exist a direct sum decomposition
$\mathfrak{L}=\mathfrak{L}_+\oplus\mathfrak{L}_-$ and constants $C>0$
and $0<\lambda<1$ such that $\|d\phi^n(v)\|\le C\lambda^n\|v\|$ for
all $n\ge 0$ and $v\in\mathfrak{L}_+$, and $\|d\phi^{-n}(v)\|\le
C\lambda^n\|v\|$ for all $n\ge 0$ and $v\in\mathfrak{L}_-$. Define
\[\mathfrak{Q}_*=\{\gamma\in\mathfrak{Q}\;:\;\gamma(\X)\subset\mathfrak{L}_*\},\]
for $*\in\{+, -\}$. Since $\mathfrak{L}_+$ and $\mathfrak{L}_-$ are
$d\phi$-invariant, the spaces $\mathfrak{Q}_+$ and $\mathfrak{Q}_-$
are $\Phi$-invariant. We obviously have
$\mathfrak{Q}=\mathfrak{Q}_+\oplus\mathfrak{Q}_-$,
$\|\Phi^n(\gamma)\|\le C\lambda^n\|\gamma\|$ for $n\ge 0$,
$\gamma\in\mathfrak{Q}_+$, and $\|\Phi^{-n}(\gamma)\|\le
C\lambda^n\|\gamma\|$ for $n\ge 0$, $\gamma\in\mathfrak{Q}_-$. It
follows that $\Phi-I$ is invertible.

This shows that $T$ is a local homeomorphism at zero. Consequently,
$T_0$ is a local homeomorphism at the identity of $Q$. Let us show
that $T_0$ is surjective. 
 Let $Z_1(Q)=Z(Q)\subset Z_2(Q)\subset\cdots\subset Z_n(Q)=Q$ be the upper
central series of $Q$. Let us prove by induction on $i$ that
$T_0(Q)\supset Z_i(Q)$.
It is easy to see that
$T_0(\gamma_1\gamma_2)=T_0(\gamma_1)T_0(\gamma_2)$ for all $\gamma_1,
\gamma_2\in Z(Q)$. Since $T_0$ is a local homeomorphism at the identity and
$Z(Q)$ is generated by any neighborhood of the identity (as any
connected topological group,
see~\cite[Theorem~15, p.~76]{pontryagin:topgroups}, this
implies that $Z(Q)\subset T_0(Q)$.

Suppose that we have proved that $Z_i(Q)\subset
T_0(Q)$. Let $T_0(\gamma_1), T_0(\gamma_2)\in Z_{i+1}(Q)$.
Then
\begin{multline*}
T_0(\gamma_1)T_0(\gamma_2)=\Phi_0(\gamma_1)   \gamma_1^{-1}\Phi_0(\gamma_2)\gamma_2^{-1}=\\
\Phi_0(\gamma_1)\Phi_0(\gamma_2)\cdot
\left(\Phi_0(\gamma_2)^{-1}\gamma_1^{-1}\Phi_0(\gamma_2)\gamma_2^{-1} \gamma_1\gamma_2\right)\cdot
\gamma_2^{-1}\gamma_1^{-1}.
\end{multline*}
We have 
\begin{multline*}
\gamma'=\Phi_0(\gamma_2)^{-1}\gamma_1^{-1}\Phi_0(\gamma_2)\gamma_2^{-1}
\gamma_1\gamma_2=
\gamma_2^{-1}(\Phi_0(\gamma_2)\gamma_2^{-1})^{-1}\gamma_1^{-1}(\Phi_0(\gamma_2)\gamma_2^{-1})\gamma_1\gamma_2=\\
\gamma_2^{-1}T_0(\gamma_2)^{-1}\gamma_1^{-1}T_0(\gamma_2)\gamma_1\gamma_2=
\gamma_2^{-1}[T_0(\gamma_2), \gamma_1]\gamma_2\in Z_i(Q)
\end{multline*}
(here $[T_0(\gamma_2), \gamma_1]$ is the commutator, and has nothing
to do with the direct product decomposition).
By the inductive hypothesis, there exists $\gamma_3\in Q$ such that
$\gamma'=T_0(\gamma_3)$. Then
\[T_0(\gamma_1)T_0(\gamma_2)=\Phi_0(\gamma_1)\Phi_0(\gamma_2)\Phi_0(\gamma_3)\gamma_3^{-1}\gamma_2^{-1}\gamma_1^{-1}=T_0(\gamma_1\gamma_2\gamma_3)\in
T_0(Q).\]
Since $Z_{i+1}(Q)$ is connected, hence 
generated by any neighborhood of the identity,
it follows that $Z_{i+1}(Q)\subset T(Q)$.

Since $\Phi-I$ is invertible, the only fixed point of $\Phi$ is
$0$. Consequently, the only fixed point of $\Phi_0$ is the unit of
$Q$. If $T(\gamma_1)=T(\gamma_2)$, then
$\Phi_0(\gamma_1)\gamma_1^{-1}=\Phi_0(\gamma_2)\gamma_2^{-1}$, hence
$\Phi_0(\gamma_2^{-1}\gamma_1)=\gamma_2^{-1}\gamma_1$. But then
$\gamma_1=\gamma_2$, as the identity is the only fixed point of $\Phi_0$.

We have proved that $T_0:Q\arr Q$ is a homeomorphism. Let $h_0:\splitting\arr
L$ be any $G$-equivariant map. Consider the map
\[\overline\gamma(x)\mapsto (\phi^{-1}\circ h_0\circ F(x))^{-1}\cdot
h_0(x).\]
It is easy to see that for every $g\in G$ we have
\[\overline\gamma(g(x))=\overline\gamma(x),\]
i.e., $\overline\gamma$ is constant on $G$-orbits, hence it descends
to a continuous map $\gamma:\X\arr L$, which is an element of
$Q$. There exists $\gamma'\in Q$ such that $T(\gamma')=\gamma$. 
Then $F_0(\gamma')(\gamma')^{-1}=\gamma$, which means that
$\gamma'(f(x))=\phi(\gamma(x))\cdot\phi(\gamma'(x))$.

Then
the map $h(x)=h_0(x)\cdot\gamma'(\pi(x))$ is $G$-equivariant, and
\begin{multline*}
h(F(x))=h_0(F(x))\cdot\gamma'(f(\pi(x)))=\\
h_0(F(x))\cdot\phi(\overline\gamma(x))\cdot\phi(\gamma'(\pi(x)))=\\
h_0(F(x))h_0(F(x))^{-1}\phi(h_0(x))\phi(\gamma'(\pi(x)))=\\
\phi(h_0(x)\gamma'(\pi(x)))=\phi(h(x)),
\end{multline*}
which finishes the proof of the proposition.
\end{proof}

Theorem~\ref{th:equivariance} shows now that $(\X, f)$ and $(G\backslash
L, f_L)$ are topologically conjugate.
\end{proof}

Let us finish the proof of Theorem~\ref{th:Smaleinfranil}. Let $(\X,
f)$ be a connected and locally connected Smale space, and let
$\pi:\splitting\arr\X$ be its splitting with a virtually nilpotent group of
deck transformations $G$. Let $F$ be a lift of $f$ to $\splitting$. Let $\phi$
be the automorphism induced by $F$ on $G$.

Every finitely generated virtually nilpotent group $G$ contains a torsion
free nilpotent subgroup $G_0$ of finite index
(see~\cite[17.2.2]{kargm}). For every $g\in G$ and $n\in\Z$ the
subgroup $g^{-1}\phi^n(G_0)g$ has the same index in $G$ as
$G_0$. There exists only a finite number of subgroups of given index
in a finitely generated group. Taking then intersection of all subgroups
of the form $g^{-1}\phi^n(G_0)g$ for $n\in\Z$ and $g\in G$,
we get a normal $\phi$-invariant torsion free nilpotent
subgroup $G_1$ of finite index in $G$. It will be finitely generated
as a finite index subgroup of a finitely generated group.

Then $G_1\backslash\splitting$ together with the map $f_1$ induced by $F$ is a Smale
space. It is a finite covering of $\X$, and its group of deck
transformations is $G_1$. Then, by Proposition~\ref{pr:fixedpoint},
$f_1$ has a fixed point, hence we may assume that $F$ has a fixed
point $x_0$. We assume then that $\phi:G\arr G$ is given by $\phi(g)(x_0)=F(g(x_0))$.

Let $L$ be the Malcev completion of $G_1$. Extend $\phi:G_1\arr G_1$
to an automorphism $\phi:L\arr L$.
Then, by Theorem~\ref{th:nilpotrigidity}, there exists
a homeomorphism $\Phi:L\arr\splitting$ conjugating the actions of $G_1$ on $\splitting$
and $L$, and such that $\Phi\circ F=\phi\circ L$. Note that $\Phi(1)=x_0$,
$\Phi(L_+)=W_+$, and $\Phi(L_-)=W_-$, where $L_+=W_+(1), L_-=W_-(1)$ are the stable
and unstable plaques of the identity element of $L$, and $W_+=W_+(x_0), W_-=W_-(x_0)$
are the stable and unstable plaques of $x_0$.

Note that $L_+=\{g\in L\;:\;\lim_{n\to+\infty}\phi^n(g)=1\}$ and
$L_-=\{g\in L\;:\;\lim_{n\to-\infty}\phi^n(g)=1\}$ are closed
subgroups of $L$ (they are closed since they are plaques of a
splitting). The stable plaques of $L$ are the left cosets of $L_+$; the unstable
plaques of $L$ are the left cosets of $L_-$.

Consider the action of $G$ on $L$ obtained by conjugating by $\Phi$ the action
of $G$ on $\splitting$. The action of its subgroup $G_1\le G$ will
coincide with the natural action of $G_1\le L$ on $L$ by left
multiplication.

The Smale space $(\X, f)$ is then topologically conjugate to the
homeomorphism induced by $\phi$ on $G\backslash L$.

\begin{proposition}
\label{pr:byaffinetransformations}
The group $G$ acts on $L$ by affine transformations.
\end{proposition}

\begin{proof}
The action of $G_1\le G$ on $L$ coincides with the natural left action of $G_1$ on
$L$ as a subgroup of $L$. The action of $G$ on $G_1$ by conjugation
can be uniquely extended to an action of $G$ on $L$ by
automorphism. Denote by $\alpha_g(h)$ for $g\in G$ and $h\in L$ the
image of $h$ under the automorphism of $L$ equal to the extension of the
automorphism $h\mapsto ghg^{-1}$ of $G_1$.

Let $g\in G$. Then $a_g=g(1)=\Phi^{-1}(g(x_0))$ is an element of $L$. Let
us prove that the action of $g$ on $L$ is given by the formula
\[g(x)=\alpha_g(x)\cdot a_g.\]

Consider the map $A_g(x)=a_g^{-1}\alpha_g(x)a_g:L\arr L$. Note that if $h\in G_1$, then $A_{hg}=A_g$, since $a_{hg}=hg(1)=ha_g$ and $\alpha_{hg}(x)=h\alpha_g(x)h^{-1}$. It follows that there is a finite number of possibilities for $A_g$, since $G_1$ has finite index in $G$. Note also that $\phi(a_g)=\phi(g(1))=\phi(g)(1)$ and $\phi(\alpha_g(x))=\alpha_{\phi(g)}(\phi(x))$, so that
$\phi(A_g(x))=A_{\phi(g)}(\phi(x))$.

Suppose that $x\in L_+$. Then $\phi^n(x)\to 1$ as $n\to\infty$. Since
the set of possible maps of the form $A_{\phi^n(g)}$ is finite and
they are continuous, we have 
\[\phi^n(A_g(x))=A_{\phi^n(g)}(\phi^n(x))\to 1,\]
i.e., $A_g(x)\in L_+$. Consequently, the maps $A_g$ preserve $L_+$.

If $x, y$ belong to one stable plaque, then $x^{-1}y\in L_+$, hence
\[A_g(x^{-1}y)=(\alpha_g(x)a_g)^{-1}(\alpha_g(y)a_g)\in L_+.\]
Consequently, the affine map $x\mapsto \alpha_g(x)a_g$ preserves the
stable plaques of $L$. It is proved in the same way that it
preserves the unstable plaques, hence it preserves the local product structure.

Let $h\in G_1$. Then $g(hL_+)=ghg^{-1}(g(L_+))=\alpha_g(h)a_gL_+$,
since $g(L_+)$ is the stable plaque $a_gL_+$ of the point
$a_g=g(1)$. By the same argument, $g(hL_-)=\alpha_g(h)a_gL_-$ for all
$h\in G_1$.

Let $R\subset L$ be a relatively compact open rectangle such that $G_1\backslash
R=G_1\backslash L$ and $1\in R$. Then for every $x\in L_-$ and every $n\in\N$ there
exists $g_n\in G_1$ such that $x\in\phi^n(g_nR)$. Note that then
distance from $x$ to $\phi^n(g_nL_+)$ is exponentially decreasing with
$n$. It follows  that the union of the stable plaques of the form
$hL_+$ for $h\in G_1$ is dense in $L$. Similarly, the union of the
unstable plaques of the form $hL_-$ for $h\in G_1$ is also dense in
$L$.

The actions of the maps $x\mapsto g(x)$  and
$x\mapsto\alpha_g(x)a_g$ on the stable and unstable plaques of the
form $hL_\pm$ for $h\in G_1$ coincide. Both maps are continuous on $L$
and preserve the direct product structure, hence they are equal. 
\end{proof}

This finishes the proof of Theorem~\ref{th:Smaleinfranil}.
\end{proof}

\section{Smale spaces with pinched spectrum}
\label{s:pinched}

\subsection{Splitting}

\begin{defi}
Let $(\X, f)$ be a Smale space such that $\X$ is connected and locally connected.
Let $a_0, a_1$ be the stable lower and upper critical exponents, and let
$b_0, b_1$ be the unstable lower and upper critical exponents.

We say that the Smale space has \emph{pinched spectrum} if
\[\frac{a_0}{a_1}+\frac{b_0}{b_1}>1,\]
\end{defi}

\begin{theorem}
\label{th:developable}
A Smale space with pinched spectrum is splittable.
\end{theorem}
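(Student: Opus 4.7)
The plan is to adapt M.~Brin's developability argument for Anosov diffeomorphisms with pinched Mather spectrum to the Smale space setting, substituting the combinatorial exponent estimates of Sections~\ref{s:lowerexponents} and~\ref{s:upperexponents} for the spectral bounds on vector fields. The covering $\pi:\splitting\arr\X$ will be built so that the lifted stable and unstable plaques through a single lift $\wt x_0$ serve as the two factors of a global direct product decomposition of $\splitting$.

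First I would fix a basepoint $x_0\in\X$ and a finite cover $\mathcal R$ of $\X$ by small connected open subrectangles with Lebesgue number $\delta>0$, and let $\splitting$ be the connected covering obtained by developing chains of $\mathcal R$-rectangles originating at $x_0$, two chains identified whenever they differ by an elementary bracket-preserving deformation inside a common subrectangle. Theorem~\ref{th:connectedness} and Proposition~\ref{pr:uniquerectangle} guarantee that $\splitting$ is a well-defined connected covering of $\X$ on which the lifted bracket is defined on a uniform neighborhood of the diagonal. Writing $A=W_+(\wt x_0)$ and $B=W_-(\wt x_0)$ for the stable and unstable plaques through a chosen lift $\wt x_0\in\pi^{-1}(x_0)$, the candidate splitting map is
\[
\Psi:A\times B\arr\splitting,\qquad \Psi(a,b)=[a,b],
\]
with the bracket on the right computed by iterating the local bracket along an arbitrary rectangular chain in $\splitting$ passing through $\wt x_0$, $a$, and $b$. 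This $\Psi$ is, by construction, a local homeomorphism that preserves local product structures; to finish, one must show $\Psi$ is globally well-defined (i.e.\ path-independent) and bijective, for this endows $\splitting$ with a global product structure $A\times B$ compatible with $\pi$, making $\pi$ a splitting in the sense of the definition.

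The central step, and the main obstacle, is establishing path-independence of $[a,b]$, which is precisely where the inequality $a_0/a_1+b_0/b_1>1$ is used. Following Brin's strategy, I would suppose for contradiction that some closed rectangular loop $\gamma$ in $\splitting$ produces a nontrivial bracket-holonomy defect of combinatorial size $h>0$, and iterate by $F^n$, the lift of $f^n$. Along the unstable legs of $F^n\gamma$, the stable upper-exponent bound (Proposition~\ref{pr:upperexpchar}) controls the combinatorial length of the closing stable leg by at most $Ce^{a_1 n}$, while the unstable lower-exponent bound (Proposition~\ref{pr:lowerexpchar}) forces an actual separation of at least $C^{-1}e^{b_0 n}h$ across it; the symmetric estimates with the roles of stable and unstable reversed give the dual pair of inequalities with exponents $b_1$ and $a_0$. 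Combining all four along the four sides of the holonomy quadrilateral yields, after rearrangement, a bound of the form $h\le C'\exp\bigl(-cn\bigl(\tfrac{a_0}{a_1}+\tfrac{b_0}{b_1}-1\bigr)\bigr)$ for some $c>0$; the pinched hypothesis makes the exponent strictly negative, and letting $n\to\infty$ forces $h=0$. Once holonomy around every loop is trivial, $\Psi$ is well-defined globally; its local-homeomorphism and plaque-bijective properties together with connectedness of $\splitting$ promote it to a homeomorphism $A\times B\arr\splitting$, completing the construction of the splitting.
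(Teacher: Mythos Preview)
Your proposal identifies the right target, but the core argument in the third paragraph does not work as stated and is not Brin's actual strategy. You assert that iterating a rectangular loop with defect $h$ by $F^n$ and ``combining all four'' side estimates yields $h\le C'\exp\bigl(-cn(a_0/a_1+b_0/b_1-1)\bigr)$, but no mechanism is given for this bound. Applying $F^n$ to such a loop stretches the unstable legs and shrinks the stable legs simultaneously; any holonomy defect simply rescales along with them, so bare iteration produces no decay. The quantities $a_0/a_1$ and $b_0/b_1$ are ratios, not exponential rates, and there is no evident way they emerge together in a single exponential from four linear side estimates on one quadrilateral---the asserted inequality is the entire content of the theorem, not a step toward it. A secondary issue: your construction of $\splitting$ as a chain-space modulo local deformations is not justified by Theorem~\ref{th:connectedness} or Proposition~\ref{pr:uniquerectangle}, and you would still need to lift $f$ to it before you could iterate $F^n$.

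The paper's argument, which does follow Brin, is not a holonomy-vanishing contradiction but a constructive two-scale continuation estimate, and no auxiliary covering is built beforehand. One defines $\pi_{x_0}:W_+(x_0)\times W_-(x_0)\to\X$ directly: slide $y\in W_-(x_0)$ via holonomies along a chain in $W_+(x_0)$ from $x_0$ to $x$, and show the slid point never escapes the rectangles in use. Working with a metric $d_+$ of exponent $\alpha_0<a_0$ on the stable leaves (available by Section~\ref{s:lowerexponents}), a chain at log-scale $n$ along $W_+$ has at most $Ce^{\alpha_1 n}$ steps (stable upper exponent $\alpha_1>a_1$) each of $d_+$-size at most $Ce^{-\alpha_0 n}$, so the accumulated $d_+$-drift of the continued point is at most $Ce^{(\alpha_1-\alpha_0)n}$. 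Absorbing this drift forces a switch to rectangles at a second scale $k_2\approx\frac{\alpha_1-\alpha_0}{\alpha_0}n$; repeating the bookkeeping along $W_-$ at that scale gives accumulated unstable drift at most $Ce^{(\beta_1-\beta_0)k_2}$, and the continuation closes provided this stays below $ce^{\beta_0 n}$. The resulting condition $(\beta_1-\beta_0)(\alpha_1-\alpha_0)<\alpha_0\beta_0$ is exactly $\alpha_0/\alpha_1+\beta_0/\beta_1>1$. This bootstrap between the two scales---upper exponents bounding chain lengths, lower exponents (through the associated metrics) bounding step sizes---is the idea your sketch is missing.
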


\begin{proof}
Choose numbers $\alpha_0, \alpha_1, \beta_0, \beta_1$ such that
$0<\alpha_0<a_0\le a_1<\alpha_1$, $0<\beta_0<b_0\le
b_1<\beta_1$, and
\[\frac{\alpha_0}{\alpha_1}+\frac{\beta_0}{\beta_1}>1.\]

Let $d_+$ and $d_-$ be metrics associated with the
internal log-scales $\ell_+$ and $\ell_-$ on the stable and unstable leaves of the exponents
$\alpha_0$ and $\beta_0$, respectively. All distances inside the
leaves will be measured using these metrics.

Let $\mathcal{R}$ be a finite covering of $\X$ by small open
rectangles. We assume (making the rectangles small enough) that the
holonomies inside the rectangles $R\in\mathcal{R}$ are
bi-Lipschitz. Then it follows from equalities~\eqref{eq:ellplusf}
and~\eqref{eq:ellminusf} in Section~\ref{s:Smalespaces} that the holonomies inside $f^k(R)$ for all
$R\in\mathcal{R}$ and $k\in\Z$ are bi-Lipschitz with a common
Lipschitz constant $L>1$.

There exists $\epsilon>0$ such that for any $x\in\X$
there exists a rectangle $R\in\mathcal{R}$ such that the
$\epsilon$-neighborhood of $x$ in $W_+(x)$ (with respect to $d_+$) and
the $\epsilon$-neighborhood of $x$ in $W_-(x)$ (with respect to $d_-$)
are contained in $R$. Then for some constant $c>0$ and
for every $k\in\Z$, $x\in\X$ there exists $R\in\mathcal{R}$
such that the $ce^{-\alpha_0 k}$-neighborhood of $x$ in
$W_+(x)$ is contained in $f^k(R)$, and the $ce^{\alpha_0
  k}$-neighborhood of $x$ in $W_-(x)$ is contained in
$f^k(R)$.

Choose $x_0\in\X$. Let us construct a splitting $\pi_{x_0}:W_+(x_0)\times W_-(x_0)\arr\X$.
Let $x\in W_+(x_0)$, and let $n$ be a positive
integer. Since $\alpha_1$ is a stable upper exponent, there exists a sequence
$x_0, x_1, \ldots, x_{m_1}=x$ of points of $W_+(x_0)$ such that $m_1\le
C_1e^{\alpha_1(n-\ell_-(x_0, x))}$, and $\ell_+(x_i,
x_{i+1})\ge n$, for some constant $C_1$.

Passing to $d_+$, we get that
\[m_1\le C_2d_+(x_0, x)^{\alpha_1/\alpha_0}e^{\alpha_1 n},\qquad d_+(x_i,
x_{i+1})\le C_3e^{-\alpha_0 n}.\]

For every $k\in\Z$ there exist rectangles $R_{i, 1}\in\mathcal{R}$ such
that the $ce^{-\alpha_0 k}$-neighborhood of $x_i$ in
$W_+(x_i)=W_+(x_0)$ and the $ce^{\beta_0
  k}$-neighborhood of $x_i$ in $W_-(x_i)$ belong to
$f^k(R_{i, 1})$. If $ce^{-\alpha_0 k}>C_3e^{-\alpha_0 n}$, then $R_{i,
  1}$ contains $x_{i-1}$ and $x_{i+1}$. The last inequality is equivalent to
$k\le n-r_1$ for some constant $r_1\in\Z$. Choose $k_1=n-r_1$, and find a
sequence of rectangles $R_{i, 1}$ satisfying the above conditions for $k=k_1$.

\begin{figure}
\includegraphics{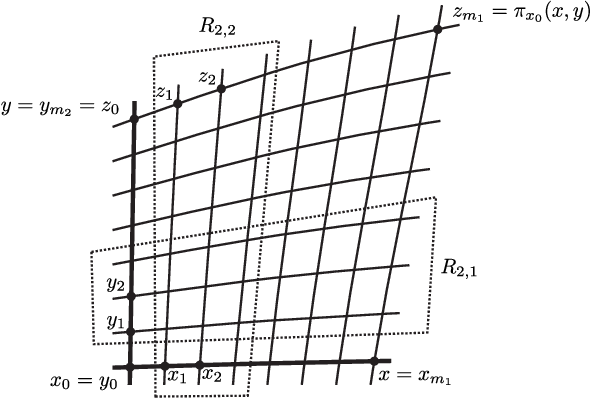}
\caption{Splitting}
\label{fig:pinched}
\end{figure}

Let $y\in W_-(x_0)$ be such that $d_-(x_0, y)\le
ce^{\alpha_0k_1}=C_4 e^{\alpha_0 n}$ (where
$C_4=ce^{-\alpha_0r_1}$). Denote $z_0=y$, $z_1=[x_1,
z_0]_{f^{k_1}(R_{0, 1})}$,
$z_2=[x_2, z_1]_{f^{k_1}(R_{1, 1})}$, e.t.c.. If all points $z_0,
\ldots, z_{m_1}$ are defined, then we say that $y$ \emph{can be continued to
  $x$}, and denote $\pi_{x_0}(x, y)=z_{m_1}$. Note that $x_0$ can be continued to
$x$ and $\pi_{x_0}(x_0, x)=x$.

If $y$ can be continued to $x$, then, in the above
notation, \[d_+(z_i, z_{i+1})\le
Ld_+(x_i, x_{i+1})\le C_5e^{-\alpha_0 n},\] hence
\[d_+(y, \pi_{x_0}(x, y))\le C_5m_1
e^{-\alpha_0 n}\le C_6d_+(x_0, x)^{\alpha_1/\alpha_0}e^{(\alpha_1-\alpha_0)n}. \]

Let $k_2$ be such that
\begin{equation}
\label{eq:k2}
ce^{\alpha_0 k_2}\ge C_6d_+(x_0,
x)^{\alpha_1/\alpha_0}e^{(\alpha_1-\alpha_0)n},
\end{equation}
so that for every point
$z\in\X$ there exists a rectangle $R\in\mathcal{R}$ such that the $C_6d_+(x_0,
x)^{\alpha_1/\alpha_0}e^{(\alpha_1-\alpha_0)n}$-neighborhood of $z$ in
$W_+(z)$ is contained in $f^{-k_2}(R)$, and the
$ce^{-\beta_0k_2}$-neighborhood of $z$ in $W_-(z)$ is also contained in
$f^{-k_2}(R)$.

Inequality~\eqref{eq:k2} follows from an inequality
\[k_2\ge\frac{\alpha_1}{\alpha_0^2}\log d_+(x_0,
x)+\frac{\alpha_1-\alpha_0}{\alpha_0}n+s\]
for some constant $s$. Consequently, we can take
\begin{equation}
\label{eq:k22}
k_2=\frac{\alpha_1}{\alpha_0^2}\log d_+(x_0,
x)+\frac{\alpha_1-\alpha_0}{\alpha_0}n+s_1,
\end{equation}
where $s_1>0$ is bounded from above.

Let $n_2$ be such that $\ell_-(z_1, z_2)\ge n_2$ implies
$d_-(z_1, z_2)\le ce^{-\beta_0 k_2}$. There exists a constant $r_2$
(not depending on $k_2$) such that we can take $n_2=k_2+r_2$. Let
$y\in W_-(x_0)$. There exists a sequence $y_0=x_0, y_1, y_2, \ldots,
y_{m_2}=y$ such that \[m_2\le C_7d_-(y,
x_0)^{\beta_1/\beta_0}e^{\beta_1 n_2},\qquad d_-(y_i, y_{i+1})\le
ce^{-\beta_0 n_2}\le ce^{-\beta_0 k_2}.\]
Choose a sequence of rectangles $R_{i, 2}\in\mathcal{R}$ such that the
$ce^{-\beta_0 n_2}$-neighborhood of $y_i$ in $W_-(y_i)=W_-(x_0)$, and
the $ce^{\alpha_0 n_2}$-neighborhood of $y_i$ in $W_+(y_i)$ are
contained in $f^{-n_2}(R)$.

Suppose that $y_i$ can be continued to $x$. Then $d_+(y_i, \pi_{x_0}(x,
y_i))\le ce^{\alpha_0 k_2}\le ce^{\alpha_0 n_2}$, hence $\pi_{x_0}(x, y_i)\in
f^{-n_2}(R_{i, 2})$. Define then a sequence $z_{0, i}=y_{i+1}$,
$z_{1, i}=[x_1, z_{0, i}]_{f^{k_1}(R_0)}$, $z_{2, i}=[x_2, z_{1,
  i}]_{f^{k_1}(R_1)}$, e.t.c.. 
Each of the points $z_{j, i}$ will be defined, provided $d_-(z_{j-1, i}, x_{j-1})\le
ce^{\beta_0 k_1}$. We have an estimate
\begin{multline*}
d_-(z_{j-1, i}, x_{j-1})\le C_8(d_-(x_0, y_1)+d_-(y_1, y_2)+\cdots+
d_-(y_{j-2}, y_{j-1}))\le\\
C_8m_2ce^{-\beta_0 k_2}\le C_9d_-(y,
x_0)^{\beta_1/\beta_0}e^{(\beta_1-\beta_0)k_2}.
\end{multline*}
(We used that $n_2=k_2+r_1$ for some constant $r_1$.)

It follows that $y$ can be continued to $x$ if
\[C_9d_-(y,
x_0)^{\beta_1/\beta_0}e^{(\beta_1-\beta_0)k_2}\le ce^{\beta_0 k_1},\]
i.e., if
\[(\beta_1-\beta_0)k_2+\frac{\beta_1}{\beta_0}\log d_-(y, x_0)+s_2\le
\beta_0 n\]
for some constant $s_2$.

Replacing $k_2$ by the value given in~\eqref{eq:k22}, we get that $y$
can be continued to $x$ if
\begin{equation}
\label{eq:bign}
(\beta_1-\beta_0)\left(\frac{\alpha_1-\alpha_0}{\alpha_0}n+
\frac{\alpha_1}{\alpha_0^2}\log d_+(x_0,
  x)\right)+\frac{\beta_1}{\beta_0}\log
d_-(y, x_0)+s_3\le \beta_0 n\end{equation}
for some constant $s_3$. If
\begin{equation}
\label{eq:alphabeta}
\frac{(\beta_1-\beta_0)(\alpha_1-\alpha_0)}{\alpha_0}<\beta_0,
\end{equation}
then taking $n$ big enough, we can guarantee that
inequality~\eqref{eq:bign} is satisfied. 
Inequality~\eqref{eq:alphabeta} is equivalent to
\[(\beta_1-\beta_0)(\alpha_1-\alpha_0)<\alpha_0\beta_0,\]
i.e., to
\[\beta_1\alpha_1<\beta_0\alpha_1+\beta_1\alpha_0\Longleftrightarrow\frac{\alpha_0}{\alpha_1}+\frac{\beta_0}{\beta_1}>1.\]

It follows that if the Smale space has pinched spectrum, then every point $y\in
W_-(x_0)$ can be continued to every $x\in W_+(x_0)$, and we can define
$\pi_{x_0}:W_+(x_0)\times W_-(x_0)$ using the rules described above.

Let us show that the map $\pi_{x_0}:W_+(x_0)\times W_-(x_0)\arr\X$ is well
defined, i.e., does not depend on the choice of the rectangles $R_{i,
  1}$ (we did not use the rectangles $R_{i, 2}$ in the definition of
$\pi_{x_0}$).

It follows from the construction that the map $y\mapsto\pi_{x_0}(x, y)$ is
equal to composition of holonomy maps of a sequence of rectangles $R_{i, 1}\in
f^{k_1}(\mathcal{R})$ for some positive $k_1$.

We also showed that for every $y$, the germ of the map $y\mapsto\pi_{x_0}(x,
y)$ is equal to a germ of a holonomy in a rectangle $R_{j, 1}\in
f^{-n_2}(\mathcal{R})$ for some positive $n_2$.

Note also that given such a sequence $R_{i, 1}\in
f^{k_1}(\mathcal{R})$
we can find a sequence $R_i'\in f^{m}(\mathcal{R})$ such that $m$ is arbitrarily big and the map
$y\mapsto\pi_{x_0}(x, y)$ defined by the original sequence $R_{1, i}$ is a
restriction of the maps defined by the new sequence $R_i'$.

Suppose that $h_i:W_-(x_0)\arr W_-(x)$ for $i=1, 2$ are compositions
of holonomies defined using two sequences $R_{1, i}$, and $R_{1,
  i}'$. We may assume that both sequences belong to
$f^{k_1}(\mathcal{R})$ for some fixed $k_1$. Let $y$ belongs to the
domain of both maps $h_i$. We may assume (taking $k_1$ big enough) that $x$ and $y$ belong to
connected components of the domains of $h_i$. Then there exists a
connected chain of rectangles $R_{2, i}\in f^{-n_2}(\mathcal{R})$, for
some $n_2>0$, such that the restrictions of $h_i$ to the corresponding
plaques of $R_{2, i}$ are equal to holonomies in $R_{2, i}$. It
follows then from $h_1(x_0)=h_2(x_0)=x$ that
$h_1(y)=h_2(y)$. Consequently, $\pi_{x_0}$ is well defined.
The map $\pi_{x_0}$ is obviously a local homeomorphism.

Note that if $R\in\mathcal{R}$ is such that $x_0\in R$, then
$\pi_{x_0}:P_+(R, x_0)\times P_-(R, x_0)\arr R$ coincides with
$[\cdot, \cdot]_R$.

Let
$(a, b)\in W_+(x_0)\times W_-(x_0)$ be an arbitrary point, and let
$x_1=\pi_{x_0}(a, b)$. Then it follows from the definition of the maps
$\pi_{x_i}$ and their uniqueness that
\begin{eqnarray*}
\label{eq:pix0x1}
\pi_{x_0}(x, y)=\pi_{x_1}(\pi_{x_0}(x, b), \pi_{x_0}(a, y)).
\end{eqnarray*}
It follows that $\pi_{x_0}$ is onto, since its range contains every
rectangle $R\in\mathcal{R}$ intersecting it.
It also follows that the map $\pi_{x_0}$ is a covering, since every point
of $W_+(x_0)\times W_-(x_0)$ has a neighborhood mapped homeomorphically
by $\pi_{x_0}$ to an element of $\mathcal{R}$.

Another corollary of~\eqref{eq:pix0x1} is that $\pi_{x_0}$
homeomorphically maps the plaques of $W_+(x_0)\times W_-(x_0)$ to the leaves
of $\X$, since $\pi_{x_1}$ maps the direct factors of $W_+(x_1)\times
W_-(x_1)$ identically onto the leaves $W_+(x_1)$ and $W_-(x_1)$.
This finishes the proof of the theorem.
\end{proof}

\subsection{Polynomial growth}

\begin{theorem}
\label{th:growth}
Let $(\X, f)$ be a Smale space with pinched spectrum. Then the group of deck transformations
of the splitting of $(\X, f)$ has polynomial growth.
\end{theorem}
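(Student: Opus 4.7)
The plan is to imitate Brin's original polynomial-growth argument from~\cite{brin:dan1,brin:dan2}, using the critical exponents developed in Sections~\ref{s:lowerexponents} and~\ref{s:upperexponents} in place of the Mather-spectral pinching hypothesis. By Theorem~\ref{th:developable} the Smale space is splittable, so we fix a splitting $\pi:\splitting\arr\X$ with $\splitting=W_+\times W_-$ a global direct product. By Proposition~\ref{pr:finitegeneration} the deck group $G$ is finitely generated; fix a finite generating set $S$ and a basepoint $x_0\in\splitting$. Every deck transformation preserves the direct product structure, so each $g\in G$ factors as a product map $g=(\rho_+(g),\rho_-(g))$, and each $\rho_\pm(s)$ for $s\in S$ moves $x_0$ by a bounded amount in the respective metric $d_\pm$ of some fixed exponent on $W_\pm$.

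The first ingredient is a displacement estimate. For $g\in G$ of word length $|g|_S\le n$, write $g=s_1\cdots s_n$ and track how each successive composition $\rho_+(s_1)\cdots\rho_+(s_k)$ moves the stable coordinate of $x_0$. The propagation is controlled by the upper critical exponent $\alpha_1$ via the inequality $d_n(x,y)\le Ce^{\alpha_1(n-\ell_W(x,y))}$ from Theorem~\ref{th:connectedness}(6), and dually for the unstable direction via $\beta_1$. This yields explicit upper bounds $r_\pm(g)\le\Phi_\pm(n)$ on the displacements $r_\pm(g)=d_\pm(x_0,\rho_\pm(g)x_0)$.

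The second ingredient is a packing bound for the orbit points $Gx_0$ inside a box $B_+(x_0,r_+)\times B_-(x_0,r_-)$. By Theorem~\ref{th:SRBmeasures}, a ball of radius $r$ in $W_+$ (resp.\ $W_-$) has $\mu_+$-mass (resp.\ $\mu_-$-mass) of order $r^{\eta/\alpha_+}$ (resp.\ $r^{\eta/\alpha_-}$); and by Proposition~\ref{pr:lowerexp} the lower exponents $\alpha_0,\beta_0$ impose a minimal combinatorial separation between distinct orbit points in each leaf. Combining these with the displacement bound from the first ingredient, and invoking the pinching inequality $\alpha_0/\alpha_1+\beta_0/\beta_1>1$, converts what would otherwise be an exponential upper bound on $|\{g\in G:|g|_S\le n\}|$ into a polynomial one. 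It is precisely the pinching condition which ensures that the contributions from $\alpha_0/\alpha_1$ and $\beta_0/\beta_1$ add up in the right way for the final estimate to be polynomial.

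The main obstacle is the delicate bookkeeping that couples the upper-exponent displacement estimate to the lower-exponent packing estimate: each of them separately gives only an exponential bound, and the pinching inequality is exactly what is needed to balance them so that the combined count grows polynomially rather than exponentially in $n$. This is the technical heart of Brin's argument, and the adaptation to the Smale-space setting consists in checking that the topological critical exponents defined here play the same role as Brin's Mather-spectral bounds.
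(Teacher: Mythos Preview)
Your high-level plan matches the paper's (and Brin's), but you have mislocated where the crucial coupling occurs, and this is a genuine gap. You propose two decoupled ingredients: displacement bounds $r_\pm(g)\le\Phi_\pm(n)$ coming from the upper exponents alone, followed by a packing bound coming from the lower exponents and the SRB measure. The argument cannot be organized this way. Although each deck transformation does factor as $(\rho_+(g),\rho_-(g))$, the maps $\rho_+(s)$ are \emph{not} isometries of any fixed leaf metric on $W_+$: the identification of $W_+$ with a stable leaf depends on the unstable coordinate, and passing from one identification to another is a global holonomy whose distortion is not uniformly controlled. Hence the naive triangle inequality along a word $s_1\cdots s_n$ does not yield a linear---or even a priori polynomial---bound on $d_+(x_0,[g(x_0),x_0])$, and the upper exponent inequality by itself says nothing about this quantity.

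What the paper actually proves are \emph{coupled} inequalities between the stable and unstable diameters $D_+,D_-$ of a length-$r$ chain of lifted rectangles:
\[
D_+\;\le\; C\,r\,D_-^{(\alpha_1-\alpha_0)/\beta_0},\qquad D_-\;\le\; C\,r\,D_+^{(\beta_1-\beta_0)/\alpha_0}.
\]
To bound $D_+$ one must first transport the whole chain by holonomy into a single stable plaque; this forces one to use rectangles at scale $f^n$ with $n\sim(\log D_-)/\beta_0$, and at that scale the \emph{upper} exponent $\alpha_1$ counts how many such rectangles are needed (about $r\,e^{\alpha_1 n}$) while the metric of exponent $\alpha_0$ (the \emph{lower} exponent) gives each one $d_+$-diameter about $e^{-\alpha_0 n}$. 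Thus both exponents enter each displacement inequality, and the coupling is between the stable and unstable \emph{directions}, not between ``displacement'' and ``packing''. The pinching condition $\alpha_0/\alpha_1+\beta_0/\beta_1>1$ is exactly the inequality $(\alpha_1-\alpha_0)(\beta_1-\beta_0)<\alpha_0\beta_0$ that makes the product of the two exponents in the coupled system less than $1$, so that iterating yields polynomial bounds $D_\pm\le C r^{p_\pm}$. Only after this step does the SRB measure (Theorem~\ref{th:SRBmeasures}) convert the diameter bound into a count. Your final paragraph senses that a balancing is needed, but places it in the wrong place; identifying and deriving the coupled diameter inequalities is the step your outline is missing.
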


\begin{proof}
Our proof essentially repeats the proof of the main theorem
of~\cite{brin:dan2}. Let $\mathcal{R}$ be a finite covering of $\X$ by
open connected rectangles. Let $\pi:\splitting\arr\X$ be the splitting
constructed in Theorem~\ref{th:developable}, where
$\splitting=W_+(x_0)\times W_-(x_0)$. Denote by $\wt{\mathcal{R}}$ the
union of the sets of connected components of $\pi^{-1}(R)$ for $R\in\mathcal{R}$.

Consider the graph $\Gamma$ with the set of vertices identified with
$\wt{\mathcal{R}}$ in which two vertices are connected if the
corresponding sets have non-empty intersection. It is easy to show
(see the proof of Proposition~\ref{pr:finitegeneration}) that the graph $\Xi$
is quasi-isometric to the Cayley graph of the group $G$ of deck
transformations of $\pi$ and has the same growth rate as $G$.

Let $B(r)$ be the set of
elements of $\wt{\mathcal{R}}$ that are on distance at most $r$ in
$\Gamma$ from a vertex $R\in\wt{\mathcal{R}}$ such that $(x_0, x_0)\in R$.

Let $0<\alpha_0<a_0\le a_1<\alpha_1$ and $0<\beta_0<b_0\le b_1<\beta_1$ such that
$\alpha_0/\alpha_1+\beta_0/\beta_1>1$. Denote by $d_+$ and $d_-$ the
metrics of exponents $\alpha_0$ and $\beta_0$ on the
corresponding leaves of $\X$ and plaques of $W_+(x_0)\times W_-(x_0)$.
(Every plaque of $\splitting$ is identified with a leaf
of $\X$ by $\pi$.)

Take $R\in B(r)$. Choose a
sequence $R_0\ni (x_0, x_0), R_1, \ldots, R_m=R$ of elements of 
$\wt{\mathcal{R}}$ forming a chain in $\Gamma$ of
length $m\le r$.

We will denote by $[\cdot, \cdot]$ the direct product
structure on $W_+(x_0)\times W_-(x_0)$. Let $D_-$ and $D_+$ be the suprema of
the $d_-$- and $d_+$-diameters of the sets $[y, \cup R_i]$ and $[\cup
R_i, y]$ for all $y\in \cup R_i$ (see Figure~\ref{fig:growth}).

\begin{figure}
\includegraphics{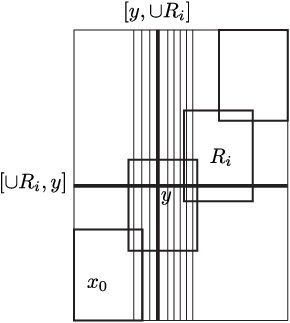}
\caption{Growth estimation}
\label{fig:growth}
\end{figure}

There exist constants $C_1, C_2$ not depending on $r$, and 
a number $n=n(r)$ such that $|n-\log D_-/\beta_0|<C_1$, and
for every point $y\in\splitting$ there exists a rectangle $V\in
f^n(\mathcal{R})$ such that the $C_2e^{-\alpha_0 n}$-neighborhood of $y$ in the stable
plaque of $y$ (with respect to $d_+$) and the set
$[y, \cup R_i]$ are both contained in $V$. 

It follows that we can find a
sequence of rectangles $V_i\in f^n(\mathcal{R})$ of length at most
$C_3e^{\alpha_1n}$ such that $V_i\cap V_{i+1}\ne\emptyset$, the
first rectangle in the sequence contains a given point of $R_{i-1}\cap R_i$, while
the last one contains a given point of $R_i\cap R_{i+1}$.

Consequently, we can find a sequence of rectangles $V_0, \ldots,
V_l\in f^n(\mathcal{R})$ of length at most
\[C_3re^{\alpha_1n}\le C_4rD_-^{\alpha_1/\beta_0}\]
such that $V_i\cap V_{i+1}\ne\emptyset$, $x_0\in V_0$, $V_m$
contains a point $x\in R=R_m$, and for every $V_i$ there exists a point
$z_i\in V_i$ such that $[z_i, \cup R_i]\subset V_i$.  Moreover, we may assume that the
chain $V_i$ covers any given in advance three point $y_1, y_2, y_3\in\cup
R_i$.

It follows that the $d_+$-distance from $[y_1, y_2]$ to $[y_3, y_2]$ is bounded
from above by
\[C_5rD_-^{\alpha_1/\beta_0}\cdot e^{-\alpha_0 n}\le
C_6rD_-^{\alpha_1/\beta_0}D_-^{-\alpha_0/\beta_0}=
C_6rD_-^{\frac{\alpha_1-\alpha_0}{\beta_0}}.\]
It follows that
\[D_+\le C_6 r D_-^{\frac{\alpha_1-\alpha_0}{\beta_0}},\]
and, by the same argument,
\[D_-\le C_7 rD_+^{\frac{\beta_1-\beta_0}{\alpha_0}}.\]
Combining the inequalities, we get
\[D_+\le C_6 r
\left(C_7rD_+^{\frac{\beta_1-\beta_0}{\alpha_0}}\right)^{\frac{\alpha_1-\alpha_0}{\beta_0}}=
C_8r^{1+\frac{\alpha_1-\alpha_0}{\beta_0}}D_+^{\frac{(\alpha_1-\alpha_0)(\beta_1-\beta_0)}{\alpha_0\beta_0}},\]
hence
\[D_+^{\frac{\alpha_1}{\alpha_0}+\frac{\beta_1}{\beta_0}-\frac{\alpha_1\beta_1}{\alpha_0\beta_0}}\le
C_8r^{1+\frac{\alpha_1-\alpha_0}{\beta_0}}.\]
Note that
$\frac{\alpha_1}{\alpha_0}+\frac{\beta_1}{\beta_0}-\frac{\alpha_1\beta_1}{\alpha_0\beta_0}=
\frac{\alpha_1\beta_1}{\alpha_0\beta_0}\left(\frac{\alpha_0}{\alpha_1}+\frac{\beta_0}{\beta_1}-1\right)>0$,
hence
\[D_+\le C_8r^{p_+}\]
for
$p_+=\left(1+\frac{\alpha_1-\alpha_0}{\beta_0}\right)
\left(\frac{\alpha_1}{\alpha_0}+\frac{\beta_1}{\beta_0}-
\frac{\alpha_1\beta_1}{\alpha_0\beta_0}\right)^{-1}$.

Similarly,
\[D_-\le C_9r^{p_-}\]
for
$p_-=\left(1+\frac{\beta_1-\beta_0}{\alpha_0}\right)\left(\frac{\alpha_1}{\alpha_0}+\frac{\beta_1}{\beta_0}-\frac{\alpha_1\beta_1}{\alpha_0\beta_0}\right)^{-1}$.

In particular (taking $C_{10}=\max(C_8, C_9)$ and $p=\max(p_+, p_-)$) we
have that  $d_-([x_0, x], x_0)$ and $d_+([x, x_0], x_0)$ are
less than $C_{10}r^p$ for any $x\in\bigcup_{R\in B(r)}R$.

Let $\mu_+$ and $\mu_-$ be the measures satisfying the conditions
of Theorem~\ref{th:SRBmeasures} (note that $(\X, f)$ is mixing by Proposition~\ref{pr:splittingmixing}). Let $\mu$ be their direct product on
$\splitting=W_+(x_0)\times W_-(x_0)$. Since the measures $\mu_+$ and $\mu_-$ on the
leaves of $\X$ are invariant under holonomies, $G$ acts by measure
preserving transformations on $\splitting$. It follows that
there exist positive constants $A_1$ and $A_2$ such that
\[A_1|B(r)|\le\mu\left(\bigcup_{R\in B(r)}R\right)\le A_2|B(r)|.\]

By the proven above, the set $\bigcup_{R\in B(r)}R$ is contained in
the direct product of the balls of radius $C_{10}r^p$ with center in
$x_0$ in $W_+(x_0)$ and $W_-(x_0)$. By condition (1) of
Theorem~\ref{th:SRBmeasures} volumes of these balls are bounded from
above by $C(C_{10}r^p)^{\eta/\alpha_0}$ and
$C(C_{10}r^p)^{\eta/\beta_0}$ for some constant $C$. It follows that
$|B(r)|$ is bounded above by a polynomial in $r$.
\end{proof}

By the Gromov's theorem on groups of polynomial growth~\cite{gro:gr}, 
$G$ is virtually
nilpotent. Theorem~\ref{th:Smaleinfranil} now implies the following
description of Smale spaces with pinched spectrum.

\begin{theorem}
\label{th:pinchedinfranil}
Every connected and locally connected Smale space with pinched
spectrum is topologically conjugate to an infra-nilmanifold automorphism.
\end{theorem}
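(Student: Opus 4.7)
The proof is essentially a three-step chain, with each step being a theorem already established in the paper. The plan is to feed the pinching hypothesis through the pipeline Theorem~\ref{th:developable} $\to$ Theorem~\ref{th:growth} $\to$ Gromov's theorem $\to$ Theorem~\ref{th:Smaleinfranil}.

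First I would invoke Theorem~\ref{th:developable}: the numerical pinching inequality $\frac{a_0}{a_1}+\frac{b_0}{b_1}>1$ together with local connectivity and connectivity of $\X$ directly yields a splitting $\pi:\splitting\arr\X$ of $(\X,f)$ in the sense of the definition from Section~\ref{s:splitting}. This is where the pinching hypothesis really gets used as a numerical condition; after this step it is no longer needed in that form. By Proposition~\ref{pr:finitegeneration}, the group $G$ of deck transformations of $\pi$ is finitely generated.

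Second, I would apply Theorem~\ref{th:growth} to this splitting, which gives that $G$ has polynomial growth (the proof actually provides an explicit polynomial bound whose degree is controlled by $\alpha_0,\alpha_1,\beta_0,\beta_1$). Gromov's theorem on groups of polynomial growth~\cite{gro:gr} then upgrades this to the conclusion that $G$ is virtually nilpotent. This is the key structural input, and it is the one external ingredient in the argument.

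Third and finally, having exhibited a splitting $\pi:\splitting\arr\X$ whose group of deck transformations is virtually nilpotent, I would apply Theorem~\ref{th:Smaleinfranil} (i.e., the main Theorem~\ref{th:main}) to conclude that $(\X,f)$ is topologically conjugate to a hyperbolic infra-nilmanifold automorphism, which implies the statement claimed.

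There is no real obstacle beyond verifying that the hypotheses line up cleanly between the three invoked theorems, and this matching is automatic: Theorem~\ref{th:developable} produces a splitting of the exact kind required by Theorem~\ref{th:Smaleinfranil}, and Proposition~\ref{pr:finitegeneration} supplies the finite generation needed to quote Gromov. The only conceptual point worth stating explicitly is that local connectivity and connectivity of $\X$ are preserved throughout and are used in each of the three steps.
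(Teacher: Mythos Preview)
Your proposal is correct and follows exactly the paper's own argument: invoke Theorem~\ref{th:developable} to get a splitting, Theorem~\ref{th:growth} for polynomial growth, Gromov's theorem for virtual nilpotence, and then Theorem~\ref{th:Smaleinfranil} to conclude. The only addition you make is the explicit appeal to Proposition~\ref{pr:finitegeneration} for finite generation of $G$ (needed to apply Gromov), which the paper leaves implicit.
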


\section{Mather spectrum of Anosov diffeomorphisms}
\label{s:mather}

Let $f:\X\arr\X$ be a diffeomorphism of a compact Riemann manifold $\X$. It induces a linear
operator $f_*$ on the Banach space of continuous vector fields by
\[f_*(\vec X)(x)=Df\circ \vec X(f^{-1}(x)).\]
By a theorem of J.~Mather~\cite{mather:charanosov}, $f$ is an Anosov
diffeomorphism if and only if the spectrum of $f_+$ does not intersect the unit
circle. It belongs then to the set
\[\{z\in\C\;:\;\lambda_1<
|z|<\lambda_2\}\cup\{z\in\C\;:\;\mu_2<|z|<\mu_1\},\]
where $0<\lambda_1<\lambda_2<1<\mu_2<\mu_1$. The tangent bundle $T\X$
is decomposed into a direct sum $W^s\oplus W^u$ such that there
exists a constant $C>1$ such that for all
vectors $\vec v_+\in W^s$, $\vec v_-\in W^u$, and for every positive integer $n$ we have
\[C^{-1}\lambda_1^n\|\vec v_+\|\le\|Df^n\vec v_+\|\le C\lambda_2^n\|\vec
v_+\|\]
and
\[C^{-1}\mu_2^n\|\vec v_-\|\le\|Df^n\vec v_-\|\le C\mu_1^n\|\vec
v_-\|.\]
For more detail,
see~\cite{mather:charanosov,brin:dan1,brin:dan2}. Every Anosov
diffeomorphism is a Smale space,
see~\cite[Proposition~5.10.1]{brin:book}.
Stable and unstable leaves of $(\X, f)$ are manifolds, and the vectors of $W^s$
and $W^u$ are tangent to the stable and unstable leaves, respectively.

\begin{proposition} Let $\lambda_1, \lambda_2, \mu_1, \mu_2$ be as above.
Then $\log\mu_2$ and $\log\mu_1$ are unstable lower and upper
exponents of $(\X, f)$,
and $-\log\lambda_2$ and $-\log\lambda_1$ are stable lower and upper
exponents of $(\X, f)$.
\end{proposition}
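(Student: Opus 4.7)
My plan is to establish a tight two-sided bracket between the intrinsic Riemannian distance $d_s$ on a stable leaf $W$ and the internal log-scale $\ell_+$, and then read off both stable exponents from Propositions~\ref{pr:lowerexpchar} and~\ref{pr:upperexpchar}. The unstable half of the statement will follow from the stable half applied to $(\X, f^{-1})$: the stable bundle of $f^{-1}$ is $W^u$, on which the Mather spectrum of $(f^{-1})_*$ is bracketed by $\mu_1^{-1}$ and $\mu_2^{-1}$, so that $-\log(\mu_2^{-1})=\log\mu_2$ and $-\log(\mu_1^{-1})=\log\mu_1$ become the corresponding lower and upper exponents.

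The key lemma is the comparison
\[C^{-1}\lambda_1^{\ell_+(x,y)} \le d_s(x,y) \le C\lambda_2^{\ell_+(x,y)}\]
for all $x,y$ in a common stable leaf. I would prove it by combining the shift identity $\ell_+(f(z_1),f(z_2))=\ell_+(z_1,z_2)+1$ with the Mather contraction estimates: setting $n_0=\ell_+(x,y)$, the pair $(f^{-n_0}(x), f^{-n_0}(y))$ lies at $\ell_+=0$, so its $d_s$-distance is squeezed between two positive constants (the near-escape radius of $U$ and the plaque diameter); applying $f^{n_0}$ and integrating the uniform pointwise estimate $C^{-1}\lambda_1^{n_0}\|\vec v\|\le\|Df^{n_0}\vec v\|\le C\lambda_2^{n_0}\|\vec v\|$ for $\vec v\in W^s$ along a stable geodesic then yields the displayed inequality.

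With this bracket, the lower stable exponent $-\log\lambda_2$ follows from Proposition~\ref{pr:lowerexpchar}: for pairs with $\ell_+(x,y)\le l$, the lower half of the bracket gives a uniform $d_s(x,y)\ge C^{-1}\lambda_1^l$, while every edge of $\Gamma_n(W)$ has $\ell_+\ge n$ and hence $d_s\le C\lambda_2^n$ by the upper half; the $d_s$-triangle inequality along any path in $\Gamma_n(W)$ then forces
\[d_n(x,y)\ge d_s(x,y)/(C\lambda_2^n)\ge C_l\lambda_2^{-n}=C_l e^{(-\log\lambda_2)n}.\]
For the upper exponent $-\log\lambda_1$ I would use Proposition~\ref{pr:upperexpchar}. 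A pair with $\ell_+(x,y)\ge l$ has $d_s(x,y)\le C\lambda_2^l$, and I subdivide a minimizing stable geodesic from $x$ to $y$ into $\lceil Cd_s(x,y)\lambda_1^{-n}\rceil$ pieces of $d_s$-length at most $\lambda_1^n/C$; the lower half of the bracket then forces $\ell_+(x_i,x_{i+1})\ge n$ at each step, exhibiting a path in $\Gamma_n(W)$ of length $O(\lambda_2^l\lambda_1^{-n})=C_l e^{(-\log\lambda_1)n}$.

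The main obstacle is the comparison lemma itself: the Mather spectrum is a spectral statement about $f_*$ on continuous vector fields, and extracting from it the uniform pointwise bracket on $Df^n$ restricted to $W^s$ and then integrating that bracket along stable geodesics to produce the $d_s$-bracket requires the standard dictionary between Mather's spectral condition and uniform exponential hyperbolicity, together with the fact that stable leaves are smooth complete Riemannian submanifolds whose intrinsic distance is comparable to the ambient one on short scales. Once the bracket is secured, compactness of $\X$ supplies uniform constants and the remainder of the argument is mechanical.
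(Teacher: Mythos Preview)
Your proposal is correct and follows essentially the same approach as the paper. The paper packages the argument slightly differently: instead of your comparison lemma $C^{-1}\lambda_1^{\ell_+(x,y)}\le d_s(x,y)\le C\lambda_2^{\ell_+(x,y)}$, it asserts that the combinatorial metric $d_0$ is uniformly quasi-isometric to the intrinsic Riemannian metric $\tilde d_+$ on each leaf, then combines this with the identity $d_n(x,y)=d_0(f^{-n}(x),f^{-n}(y))$ and the Mather length estimate $\tilde d_+(f^{-n}(x),f^{-n}(y))\asymp\lambda_*^{-n}\tilde d_+(x,y)$ to obtain the two-sided bound on $d_n$ directly; your triangle-inequality and geodesic-subdivision steps are exactly what proves that quasi-isometry, so the two arguments are the same in content.
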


\begin{proof}
It is easy to show that for every fixed $k_0$ the metrics $d_{k_0}$ on leaves
$W_+(x_0)$ and $W_-(x_0)$ are quasi-isometric to the
restrictions of the Riemannian metric of $\X$ onto $W_+(x_0)$ and $W_-(x_0)$,
with the quasi-isometry constants depending only on $k_0$ (and
$(\X, f)$). For every stable leaf $W_+$ of $(\X, f)$ and all $x, y\in
W_+$, $n\in\Z$,
\[d_0(f^{-n}(x), f^{-n}(y))=d_n(x, y).\]

It follows that there exist constants $C_1>1, \Delta>0$ such that
\[C_1^{-1}\wt d_+(f^{-n}(x), f^{-n}(y))-\Delta\le d_n(x, y)\le C_1\wt
d_+(f^{-n}(x), f^{-n}(y))+\Delta,\]
where $\wt d_+$ is the Riemannian metrics on the stable
leaves.

If $\gamma$ is a curve in the stable leaf connecting $f^{-n}(x)$ to
$f^{-n}(y)$, then $f^n(\gamma)$ is a curve connecting $x$ to $y$, and
\[\mathop{\mathrm{length}}(f^n(\gamma))=
\int\left\|\frac{d}{dt}f^n\circ\gamma(t)\right\|\;dt=
\int\left\|Df^n\circ\frac{d}{dt}\gamma(t)\right\|\;dt,\]
hence
\[C^{-1}\lambda_1^n\mathop{\mathrm{length}}(\gamma)\le\mathop{\mathrm{length}}(f^n(\gamma))\le
C\lambda_2^n\mathop{\mathrm{length}}(\gamma),\]
and
\[C^{-1}\lambda_1^n\cdot\wt d_+(f^{-n}(x), f^{-n}(y))\le\wt d_+(x, y)\le
C\lambda_2^n\cdot\wt d_+(f^{-n}(x), f^{-n}(y)).\]
It follows that
\[C_1^{-1}C^{-1}d_+(x, y)\cdot\lambda_2^{-n}-\Delta\le d_n(x, y)\le
C_1C\wt d_+(x, y)\cdot \lambda_1^{-n}+\Delta\]
for all stably equivalent $x, y$ and all positive $n$.
Then, by Propositions~\ref{pr:lowerexpchar} and~\ref{pr:upperexpchar},
$-\log\lambda_1$ and $-\log\lambda_2$ are upper and lower exponents.

The case of unstable leaves is proved in the same way.
\end{proof}

M.~Brin considers
in~\cite{brin:dan1,brin:dan2,brin:israel,brinmanning} Anosov
diffeomorphisms such that either
\begin{equation}
\label{eq:brin1}
1+\frac{\log\mu_2}{\log\mu_1}>\frac{\log\lambda_1}{\log\lambda_2}
\end{equation}
or
\begin{equation}
\label{eq:brin2}
1+\frac{\log\lambda_2}{\log\lambda_1}>\frac{\log\mu_1}{\log\mu_2}.
\end{equation}
Note that since $\frac{\log\mu_1}{\log\mu_2}$ and
$\frac{\log\lambda_1}{\log\lambda_2}$ are both greater than one, each
of the inequalities~\eqref{eq:brin1} and~\eqref{eq:brin2} implies
\begin{equation}
\label{eq:mathersp}
\frac{\log\lambda_2}{\log\lambda_1}+\frac{\log\mu_2}{\log\mu_1}
>1.
\end{equation}
For instance, in the case of~\eqref{eq:brin1}:
\[\frac{\log\lambda_1}{\log\lambda_2}+\frac{\log\mu_1}{\log\mu_2}
>1+\frac{\log\mu_1}{\log\mu_2}=\frac{\log\mu_1}{\log\mu_2}\left(\frac{\log\mu_2}{\log\mu_1}+1\right)
>\frac{\log\mu_1}{\log\mu_2}\cdot\frac{\log\lambda_1}{\log\lambda_2}.\]
Multiplying by
$\frac{\log\mu_2}{\log\mu_1}\cdot\frac{\log\lambda_2}{\log\lambda_1}$,
we get~\eqref{eq:mathersp}.

Note that if $a_0$ and $a_1$ are stable lower and upper
critical exponents, and $b_0$ and $b_1$ are unstable lower and upper
critical exponents of the Smale space,
then $-\log\lambda_2\le a_0$, and $-\log\lambda_1\ge a_1$,
 $\log\mu_2\le b_0$, and $\log\mu_1\ge b_1$,
so that
\[\frac{\log\mu_2}{\log\mu_1}\le\frac{a_0}{a_1},\qquad
\frac{\log\lambda_2}{\log\lambda_1}\le\frac{b_0}{b_1},\]
and therefore
\[\frac{a_0}{a_1}+\frac{b_0}{b_1}\ge
\frac{\log\mu_2}{\log\mu_1}+\frac{\log\lambda_2}{\log\lambda_1}. \]

Consequently, each of the conditions~\eqref{eq:brin1}, \eqref{eq:brin2}
implies the inequality
\[\frac{a_0}{a_1}+\frac{b_0}{b_1}>1,\]
which is the condition of Theorems~\ref{th:developable}
and~\ref{th:growth}. In particular, we conclude that
Theorem~\ref{th:pinchedinfranil} is a generalization of the results
of~\cite{brinmanning}.

\section{Co-dimension one Smale spaces}
\label{s:codimone}

We say that a Smale space is of \emph{co-dimension one} if its stable
or unstable leaves are homeomorphic to $\R$ with respect to their
intrinsic topology.

It was proved by Franks~\cite[Theorem~6.3]{franks} and Newhouse~\cite{newhouse:codimone} that every co-dimension one
\emph{Anosov diffeomorphism} is topologically conjugate to a linear Anosov diffeomorphism of a
torus. Here we prove this statement for all locally connected and
connected Smale spaces.

\begin{theorem}
\label{th:dim1}
A locally connected and connected co-dimension one Smale space is
topologically conjugate to a co-dimension one hyperbolic automorphism
of a torus $\R^d/\Z^d$.
\end{theorem}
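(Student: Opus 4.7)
The plan is to deduce Theorem~\ref{th:dim1} from Theorem~\ref{th:brin} plus the classical rigidity of co-dimension one hyperbolic nilmanifold automorphisms. The central observation is that for 1-dimensional stable leaves the stable lower and upper critical exponents must coincide, so that the pinching inequality $a_0/a_1+b_0/b_1>1$ of Theorem~\ref{th:brin} is automatic.

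Replacing $f$ by $f^{-1}$ if necessary, I will assume the stable leaves are homeomorphic to $\R$. I then claim $a_0=a_1=\eta$, the entropy from Theorem~\ref{th:SRBmeasures}. By Lemma~\ref{lem:inclusiongn} it suffices to estimate the combinatorial distance $d'_n(x,y)$ in the plaque graph $\Gamma'_n(W)$ of a stable leaf $W$. Each edge of $\Gamma'_n(W)$ is a plaque of the form $f^n(T)$ for $T$ ranging over a fixed finite cover of $\X$; by the renormalization property $f_*\mu_+=e^\eta\mu_+$ from Theorem~\ref{th:SRBmeasures}(3), such a plaque has $\mu_+$-measure comparable to $e^{-\eta n}$. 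Using the linear order on $W\cong\R$, a chain of pairwise intersecting level-$n$ plaques connecting $x$ to $y$ has union containing the interval $[x,y]$, giving the lower bound $d'_n(x,y)\gtrsim\mu_+([x,y])\,e^{\eta n}$; a greedy construction walking along $[x,y]$ and always picking the plaque extending farthest gives the matching upper bound. To relate $\mu_+([x,y])$ to $\ell_+(x,y)$, apply $f^{\ell_+(x,y)}$: by equation~\eqref{eq:ellplusf} this sends $x,y$ to a pair at $d_+$-distance of order one, so the image interval has $\mu_+$-measure bounded above and below (by Theorem~\ref{th:SRBmeasures}(1)), while the iterate scales $\mu_+$ by $e^{\eta\ell_+(x,y)}$; thus $\mu_+([x,y])\asymp e^{-\eta\ell_+(x,y)}$. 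Combining yields $d_n(x,y)\asymp e^{\eta(n-\ell_+(x,y))}$, so both stable critical exponents equal $\eta$.

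Hence $a_0/a_1+b_0/b_1=1+b_0/b_1>1$ (since $b_0>0$ by Proposition~\ref{pr:lowerexp}), and Theorem~\ref{th:brin} gives that $(\X,f)$ is topologically conjugate to a hyperbolic automorphism of an infra-nilmanifold $G\backslash L$. Topological conjugacy preserves the intrinsic topology of leaves, so the contracting subalgebra of the resulting Lie-algebra automorphism is still a line; the classical rigidity statement cited in the introduction then forces $L$ to be abelian, and $(\X,f)$ is a hyperbolic linear automorphism of a torus $\R^d/\Z^d$. The main technical obstacle is the precise comparison $d'_n(x,y)\asymp\mu_+([x,y])\,e^{\eta n}$: the one-dimensionality of $W$ is essential for producing the matching upper bound via a greedy chain of plaques, since balls in $d_+$ need not themselves be connected in $W\cong\R$, so the argument runs through the canonical linear ordering of $W$ rather than through ball coverings.
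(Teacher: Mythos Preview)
Your core argument---that the SRB measure on one-dimensional stable leaves forces $a_0=a_1=\eta$---is exactly the paper's, and your upper/lower bounds on $d_n'$ via $\mu_+$-measure of intervals match the paper's computation. (One cosmetic slip: applying $f^{\ell_+(x,y)}$ brings the points \emph{closer}, not to unit scale; you want $f^{-\ell_+(x,y)}$. The paper sidesteps this by simply defining $d_+(x,y)=\mu_+((x,y))$ and invoking Theorem~\ref{th:SRBmeasures}(1) to see this is already a metric associated with $\ell_+$.)

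The endgame differs. You invoke the full Theorem~\ref{th:brin}---which passes through Gromov's polynomial-growth theorem via Theorem~\ref{th:growth}---to land on an infra-nilmanifold, and then cite the classical fact (mentioned in the introduction) that co-dimension one hyperbolic infra-nilmanifold automorphisms are toral. The paper instead uses only Theorem~\ref{th:developable} to obtain a splitting, and then argues directly: the deck group $G$ acts on $W_+(x_0)\cong\R$ by $x\mapsto[g(x),x_0]$; since this action preserves $\mu_+$ and is free (else an unstable plaque would fail to inject into $\X$), $G$ acts by translations on $(\R,\text{Lebesgue})$ and is therefore free abelian; Theorem~\ref{th:virtuallyabelian} then gives the torus. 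The paper's route is more self-contained---it avoids both Gromov's theorem and the external nilpotent rigidity fact---while yours is shorter once the full machinery of Sections~\ref{s:nilpotentsplitting}--\ref{s:pinched} is already in hand.
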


\begin{proof}
Let us prove at first that every co-dimension one locally connected and connected Smale space $f:\X\arr\X$ is splittable and irreducible. We assume that the stable leaves of $\X$ are homeomorphic to $\R$. 

Let $d_+(x, y)$ and $d_-(x, y)$ be the metrics on the stable and unstable leaves of $\X$ associated with the natural log-scales and some exponents, as in the proof of Theorem~\ref{th:developable}. 
Let $d_n(x, y)$ be the distance inside the stable leaves of $\X$ defined in~\ref{s:lowerexponents}. Changing the log-scale $\ell$ used in the definition of $d_n(x, y)$ to a bi-Lipschitz equivalent one (e.g., changing the entourage $U$ in the definition of $\ell$) will change $d_n$ to a metric $d_n'$ satisfying $L^{-1}d_n(x, y)-C\le d_n'(x, y)\le Ld_n(x, y)+C$. In other words, the identity map will be a quasi-isometry.

In particular, up to quasi-isometry, the metric $d_n$ can be defined in the following way. Fix a finite cover $\mathcal{R}$ of $\X$ by small open rectangles, such that the stable direction of every rectangle $R_i\in\mathcal{R}$ is homeomorphic to $\R$ (equivalently, to an open interval in $\R$). We will denote $f^n(\mathcal{R})=\{f^n(R)\;:\;R\in\mathcal{R}\}$.   By the Lebesgue covering lemma there exists $\delta_0>0$ such that for every $x\in\X$ there exists $R\in\mathcal{R}$ such that the $\delta_0$-neighborhoods of $x$ in its stable and unstable leaves belong to $R$.

Then $d_n(x, y)$ is equal (up to linear lower and upper bounds) to the smallest number $k$ such that there exists a sequence of stable plaques $P_0, P_1, \ldots, P_k$ of rectangles $R_i$ from $f^n(\mathcal{R})$ such that $P_i\cap P_{i+1}\ne\emptyset$, $x\in P_0$, $y\in P_k$. 

Fix $\epsilon>0$. For every $n\ge 1$, let $D_n$ be the smallest value of $d_n(x, y)$ for stably equivalent $x, y$ and such that $d(x, y)>\epsilon$. Let $x$ and $y$ be some points realizing the minimum $D_n$. Let $R_0, R_1, \ldots, R_{D'}\in f^n(\mathcal{R})$ be a sequence of rectangles as in the previous paragraph, were $L^{-1}D_n-C\le D'\le LD_n+C$ for some fixed $L$ and $C$.

Note that it follows from the inequality $d_{n-\Delta}(x, y)\le \frac 12(d_n(x, y)+1)$ for some $\Delta>0$ (see the beginning of the proof of Proposition~\ref{pr:bLipschlexp}) and the fact that $f^k$ induces an isomorphism $\Gamma_n(W)\arr\Gamma_{n+k}(f^k(W))$ that there exist positive constants $k$ and $l$ such that $d_n(f^k(x), f^k(y))<d_n(x, y)$. Consequently, $d(f^k(x_n), f^k(y_n))<\epsilon$. It follows (as $f$ is bi-Lipschitz) that there exists a constant $C_1>0$ such that $d(x_n, y_n)<C_1\epsilon$ for all $n$.  We may assume that $C_1\epsilon<\delta_0$ by choosing $\epsilon$ small enough.

Consider the composition of the holonomies between the unstable plaques of the rectangles $R_i$ defined by the intersections $R_i\cap R_{i+1}$ and seen as a homeomorphism from a neighborhood $U_x\subset W_-(x)$ of $x$ to a neighborhood $U_y\subset W_-(y)$ of $y$. Suppose that $x'\in U_x$, and let $y'\in U_y$ be its image under the composition. Then we get a chain of intersecting stable plaques of the rectangles $R_i$ starting from the plauqe of $x'$  and ending in the plaque of $y'$. It follows that $d_n(x', y')\le L'd_n(x, y)+\Delta'$ for some fixed $L', \Delta'$. Applying an appropriate iteration $f^s$ (where $s$ depends only on $L'$ and $\Delta'$) we get $d_n(f^s(x'), f^s(y'))<d_n(x, y)$, which implies that $d(f^s(x'), f^s(y'))<\epsilon$. Consequently, there exists a constant $r>0$ (not depending on $n$) such that $d(x', y')<r$. 

We are using now arguments similar to the agruments of the proof of Theorem~\ref{th:developable}.
By the Lebesgue covering condition for the rectangles $R_i$ and the fact that $f^{-1}$ uniformly expands the unstable leaves, we get that  the rectangles $R_i$ contain $C_2\exp(\alpha n)$-neighborhoods in the unstable direction of their intersections with the stable leaf of $x$ and $y$. If $x'\in U_x, y'\in U_y$ are as in the previous paragraph, then there exists a rectangle $R\in f^{-m}(\mathcal{R})$ containing $x'$ and $y'$ in one stable plaque and such that the $\delta_1$-neighborhood of $x'$ in the ustable direction belongs to $R$ (where $m$ and hence $\delta_1$ do not depend on $n$, $x$, and $y$). Then the holonomy of the unstable leaf from $x'$ to $y'$ extends to this neighborhood (but may be not contained in $U_x$). The smallest number of steps of size at most $\delta$ from $x$ to a point $z\in W_-(x)$ is bounded from above by a function of the form $C_3d(x, z)^p+C_4$ (where $p$ depends on the ratio of the exponent of the metric and the upper exponent of the unstable direction, see the proof of Theorem~\ref{th:developable}). The holonomy inside the rectangles $R\in f^{-s}(\mathcal{R})$ are uniformly Lipschitz. It follows that when we apply the holonomies between the unstable directions coming from the intersections $R_i\cap R_{i+1}$, one by one, then the distances between the images of $x$ and $z$ are bounded from above by a function of the form $C_5d(x, z)^p+C_6$. It follows that $z\in U_x$ if $C_5d(x, z)^p+C_6<C_2\exp(\alpha n)$. Consequently, the holonomy from $x$ to $y$ can be extended to a ball with center in $x$ of exponentially big in $n$ radius.

Let $x_n, y_n$ be the points realizing the minimum $D_n$.
Choose a strictly increasing sequence $n_k$ such that each of the sequence $x_n$ and $y_n$ converge to points $x, y$. Since $d(x_n, y_n)<C_1\epsilon$ for all $n$, such a sequence exists and the points $x, y$ are stably equivalent with $\epsilon\le d(x, y)\le C_1\epsilon$. The holonomy from the unstable plaque of $x_{n_k}$ to the unstable plaque of $x$ is a bi-Lipschitz map with the same bi-Lipschitz constant $L$ defined on an exponentially big in $n_k$ ball inside $W_-(x_{n_k})$, the same is true for $y$ and $y_{n_k}$. By the proven above, the holonomy from $W_-(x_{n_k})$ to $W_-(y_{n_k})$ is defined on an exponentially big ball with center in $x_{n_k}$. It follows that the holonomy between the unstable leaves of $x$ and $y$ is everywhere defined. 

Note that we have not used the fact that the Smale space is of co-dimension one so far. We have shown that for every locally connected Smale space there exist two different stably equivalent points $x$ and $y$ such that local holonomies between their unstable plaques can be extended to a global holonomy of their unstable leaves $W_-(x)$ and $W_-(y)$. In the case when the stable leaves are one-dimensional, this is enough to show splittability of the Smale space. It is unclear, however, what we can deduce from this fact in the general case.

Namely, let $x$ and $y$ be as above.
Then for every point $x'\in W_-(x)$ there exists $N$ and a chain of rectangles $R_0, R_1, \ldots, R_D\in f^N(\mathcal{R})$ such that $x', x\in R_0$, $y\in R_D$, $R_i\cap R_{i+1}\ne\emptyset$, and the composition of all the holonomies between the unstable plaques of $R_i$ defined by the intersections $R_i\cap R_{i+1}$ is a holonomy from $U_-(x)\subset W_-(x)$ to $U_-(y)$ such that $x, x'\in U_-(x)$ and $y\in U_-(y)$. 

The stable directions of $R_i$ are open sub-intervals of the stable leaf of $x$ and $y$ such that $x$ and $y$ are contained in the initial and the final intervals, and every two neighboring intervals intersect. It follows that these intervals cover the interval $I\subset W_+(x)$ with the endpoints $x$ and $y$. Consequently, there exists a local product preserving map $\pi:I\times W_-(x)\arr \X$ identical on $I$ and $W_-(x)$. Denote by $\pi_n$ the map $f^{-n}\circ\pi\circ f^n:f^{-n}(I)\times W_-(f^{-n}(x))\arr \X$. It is also local product preserving and identical on $f^{-n}(I)\times W_-(f^{-n}(x))$. Note  that the diameter of $f^{-n}(I)$ inside $W_+(f^{-n}(x))$ grows exponentially with $n>0$.

Let $t$ be an interior point of the interval $I$. Let $m_k$ be a sequence of positive integers converging to infinity such that the limit $t'=\lim_{k\to\infty}f^{-m_k}(t)$ exists. The maps $\pi_{m_k}:f^{-m_k}(I)\times W_-(f^{-m_k}(t))\arr\X$ as $k\to\infty$ will converge to a splitting $W_+(t')\times W_-(t')\arr\X$.

Consequently, $f:\X\arr\X$ is irreducible  (see Proposition~\ref{pr:splittingmixing}).
Let $\mu_+$ be the measure on stable leaves described in
Theorem~\ref{th:SRBmeasures}.
The group $G$ of deck transformations of a splitting of $\X$ acts on a
stable leaf $W_+(x_0)\cong\R$ by the transformations $x\mapsto[g(x), x_0]$. This
action preserves the measure $\mu_+$. Let us identify $(W_+(x_0), \mu_+)$ and
$\R$ with the Lebesgue measure by a measure-preserving
homeomorphism. Since $G$ acts by measure-preserving transformations,
the corresponding action of $G$ on $\R$ is by transformations of the
form $x\mapsto\pm x+a$ for $a\in\R$.

The action of $G$ on $W_+(x_0)$ is free, since
otherwise an unstable leaf is not mapped homeomorphically onto its
image in $\X$. But this implies that $G$ acts on $\R$ by translations,
hence it is torsion-free abelian. Therefore, by Proposition~\ref{pr:finitegeneration} and
Theorem~\ref{th:virtuallyabelian}, $(\X, f)$ is topologically
conjugate to a hyperbolic automorphism of the torus $\R^d/\Z^d$ for
some $d$.
\end{proof}

Note that the proof of Franks-Newhouse theorem due to K.~Hiraide~\cite{hiraide:franksnewhouse} also uses the measure $\mu_+$ on the stable leave.

\end{document}